%% file: main.tex
\documentclass[10pt]{article}
\usepackage{hyperref} \hypersetup{hidelinks}
\usepackage{mathtools}
\usepackage{amsmath}
\usepackage{amsfonts}
\usepackage{amssymb}
\usepackage{amsthm}
\usepackage{tabularx}
\usepackage{booktabs}

\usepackage[shortlabels]{enumitem}
\usepackage{microtype}
\usepackage{fullpage}
\usepackage{mathrsfs}
\usepackage{mleftright}
\mleftright

\input{commands}





\begin{document}

\newtheorem{theorem}{Theorem}[section]
\newtheorem{corollary}[theorem]{Corollary}
\newtheorem{proposition}[theorem]{Proposition}
\newtheorem{lemma}[theorem]{Lemma}
\newtheorem{conjecture}[theorem]{Conjecture}
\newtheorem{problem}[theorem]{Problem}

\theoremstyle{remark}
\newtheorem{remark}[theorem]{Remark}

\theoremstyle{definition}
\newtheorem{definition}[theorem]{Definition}

\theoremstyle{definition}
\newtheorem{assumption}[theorem]{Assumption}

\theoremstyle{remark}
\newtheorem{example}[theorem]{Example}

\numberwithin{equation}{section}

\title{Higher-order Gaussian bounds for maximally subelliptic boundary value problems}

\author{Brian Street\footnote{The author was partially supported by National Science Foundation Grant 2153069.}}
\date{}

\maketitle

\begin{abstract}
    \input{abstract}
\end{abstract}

\section{Introduction}\label{Section::Intro}
\input{intro}

    \subsection{Non-characteristic points}\label{Section::Intro::NonChar}
    \input{intro_nonchar}

    \subsection{Outline of the proof}
    \input{intro_proof}

    \subsection{A motivation}
    \input{intro_motivation}

\section{Scaling}\label{Section::Scaling}
\input{scaling_intro}

    \subsection{The unit scale}\label{Section::Scaling::UnitScale}
    \input{scaling_unitscale}

    \subsection{The scaling result}\label{Section::Scaling::Result}
    \input{scaling_result}

    \subsection{Proof of Theorem \texorpdfstring{\ref{Thm::Scaling::MainScalingThm}}{\ref*{Thm::Scaling::MainScalingThm}}}
    \input{scaling_proof}

\section{The main technical theorem}\label{Section::Result}
\input{result}

\section{A core criterion for the Gaussian bounds}\label{Section::Core}
\input{core_intro}

    \subsection{Proof of Corollary \texorpdfstring{\ref{Cor::Intro::MainIntroResult}}{\ref*{Cor::Intro::MainIntroResult}}}\label{Section::Core::PfOfIntroCor}
    \input{cor_intropf}

\section{Proof of Theorem \texorpdfstring{\ref{Thm::Results:MainResult}}{\ref*{Thm::Results:MainResult}}}
\input{proof_intro}

    \subsection{Subellipticity and qualitative consequences}
    \input{proof_subellip}

    \subsection{Completion of the proof of Theorem \texorpdfstring{\ref{Thm::Results:MainResult}}{\ref*{Thm::Results:MainResult}}}
    \input{proof_completion}

\bibliographystyle{amsalpha}

\bibliography{bibliography}

\center{\it{University of Wisconsin-Madison, Department of Mathematics, 480 Lincoln Dr., Madison, WI 53706}}

\center{\it{street@math.wisc.edu}}

\center{MSC 2020:  35K08 (Primary), 35H20, 35H10, 53C17 (Secondary)}

\center{Keywords: heat kernel; Gaussian upper bounds; H\"ormander vector fields; maximal subellipticity; boundary value problems; Carnot--Carath\'eodory geometry; sectorial forms; non-characteristic boundary}

\end{document}

%% file: commands.tex
\DeclareMathOperator{\Real}{Re}
\DeclareMathOperator{\Imaginary}{Im}
\DeclareMathOperator{\supp}{supp}

\NewDocumentCommand{\R}{}{\mathbb{R}}
\NewDocumentCommand{\C}{}{\mathbb{C}}

\NewDocumentCommand{\Z}{}{\mathbb{Z}}

\NewDocumentCommand{\sI}{}{\mathcal{I}}

\NewDocumentCommand{\at}{}{\tilde{a}}
\NewDocumentCommand{\bh}{}{\hat{b}}
\NewDocumentCommand{\uh}{}{\hat{u}}
\NewDocumentCommand{\Wh}{}{\widehat{W}}
\NewDocumentCommand{\Vh}{}{\widehat{V}}
\NewDocumentCommand{\Tt}{}{\widetilde{T}}
\NewDocumentCommand{\Ut}{}{\widetilde{U}}
\NewDocumentCommand{\Omegah}{}{\widehat{\Omega}}
\NewDocumentCommand{\OmegahClosure}{}{\overline{\Omegah}}

\NewDocumentCommand{\opP}{}{\mathscr{P}}

\NewDocumentCommand{\M}{}{\mathbb{M}}
\NewDocumentCommand{\EmptySet}{}{\varnothing}
\NewDocumentCommand{\Zgeq}{}{\Z_{\geq}}
\NewDocumentCommand{\Zg}{}{\Z_{>}}
\NewDocumentCommand{\Rn}{}{\R^n}
\NewDocumentCommand{\Rngeq}{}{\Rn_{\geq}}

\NewDocumentCommand{\Rnmo}{}{\R^{n-1}}
\NewDocumentCommand{\Ropn}{}{\R^{1+n}}
\NewDocumentCommand{\Ropngeq}{}{\R^{1+n}_{\geq}}

\NewDocumentCommand{\JSet}{}{J}

\NewDocumentCommand{\BSet}{}{\mathscr{B}}
\NewDocumentCommand{\CoreB}{}{\BSet}

\NewDocumentCommand{\SSet}{o}{\mathcal{S}\IfValueT{#1}{_{#1}}}
\NewDocumentCommand{\SSetOne}{}{\SSet[1]}
\NewDocumentCommand{\SSetTwo}{}{\SSet[2]}

\NewDocumentCommand{\Compact}{}{\mathcal{K}}
\NewDocumentCommand{\CompactOne}{}{\mathcal{K}_1}
\NewDocumentCommand{\CompactZero}{}{\mathcal{K}_0}

\NewDocumentCommand{\ManifoldM}{}{\mathfrak{M}}
\NewDocumentCommand{\ManifoldN}{}{\mathfrak{N}}
\NewDocumentCommand{\ManifoldNZero}{}{\mathfrak{N}_0}
\NewDocumentCommand{\ManifoldNOne}{}{\mathfrak{N}_1}
\NewDocumentCommand{\BoundaryN}{}{\partial \ManifoldN}
\NewDocumentCommand{\BoundaryNOne}{}{\partial \ManifoldNOne}

\DeclareMathOperator{\Int}{Int}
\DeclareMathOperator{\Dimension}{dim}
\NewDocumentCommand{\InteriorN}{}{\Int(\ManifoldN)}
\NewDocumentCommand{\DimensionN}{}{\Dimension(\ManifoldN)}

\NewDocumentCommand{\TangentSpace}{m m}{T_{#1}#2}

\NewDocumentCommand{\TangentSpaceBoundaryN}{m}{\TangentSpace{#1}{\BoundaryN}}

\NewDocumentCommand{\CinftySpace}{s o o}{C^\infty\IfValueT{#2}{\IfBooleanT{#1}{\left}  (#2 \IfValueT{#3}{;#3}  \IfBooleanT{#1}{\right})}}
\NewDocumentCommand{\CinftycptSpace}{s o o}{C^\infty_{\mathrm{cpt}}\IfValueT{#2}{\IfBooleanT{#1}{\left}(#2 \IfValueT{#3}{;#3}  \IfBooleanT{#1}{\right})}}

\NewDocumentCommand{\CjSpace}{s m o o}{C^{#2}\IfValueT{#3}{ \IfBooleanT{#1}{\left}( #3 \IfValueT{#4}{;#4} \IfBooleanT{#1}{\right})  }}

\NewDocumentCommand{\CbjSpace}{s m o o}{C_b^{#2}\IfValueT{#3}{ \IfBooleanT{#1}{\left}( #3 \IfValueT{#4}{;#4} \IfBooleanT{#1}{\right})  }}
\NewDocumentCommand{\CbjNorm}{s m m o o}{\IfBooleanT{#1}{\left}\| #2 \IfBooleanT{#1}{\right}\|_{\CbjSpace{#3}[#4][#5]} }
\NewDocumentCommand{\CbinftySpace}{s}{\IfBooleanTF{#1}{\CbjSpace*{\infty}}{\CbjSpace{\infty}}}

\NewDocumentCommand{\LpSpace}{s m o o}{L^{#2}\IfNoValueTF{#3}{}{\IfBooleanT{#1}{\left}(#3\IfValueT{#4}{;#4} \IfBooleanT{#1}{\right})}}
\NewDocumentCommand{\LpNorm}{s m m o o}{\IfBooleanTF{#1}{\left\|#2\right\|}{\|#2\|}_{\LpSpace{#3}[#4][#5]}}

\NewDocumentCommand{\LtSpace}{s o o}{L^2\IfNoValueTF{#2}{}{\IfBooleanT{#1}{\left}(#2\IfValueT{#3}{;#3} \IfBooleanT{#1}{\right})}}
\NewDocumentCommand{\LtwSpace}{s o o}{L^2_w\IfNoValueTF{#2}{}{\IfBooleanT{#1}{\left}(#2\IfValueT{#3}{;#3} \IfBooleanT{#1}{\right})}}
\NewDocumentCommand{\LtNorm}{s m o o}{\IfBooleanTF{#1}{\left\|#2\right\|}{\|#2\|}_{\LtSpace[#3][#4]}}

\NewDocumentCommand{\LtlocSpace}{s o o}{L^2_{\mathrm{loc}}\IfNoValueTF{#2}{}{\IfBooleanT{#1}{\left}(#2\IfValueT{#3}{;#3} \IfBooleanT{#1}{\right})}}

\NewDocumentCommand{\FormQSymbol}{}{\mathcal{Q}}
\NewDocumentCommand{\FormRpsi}{o o}{\mathcal{R}_{\psi}\IfValueT{#1}{(#1,#2)}}
\NewDocumentCommand{\FormQ}{s o o}{\FormQSymbol\IfValueT{#2}{ \IfBooleanTF{#1}{\left( #2 \IfValueT{#3}{,#3} \right)}{(#2\IfValueT{#3}{,#3})}     }}
\NewDocumentCommand{\FormQF}{s o o}{\FormQSymbol_{\mathrm{F}}\IfValueT{#2}{ \IfBooleanTF{#1}{\left( #2 \IfValueT{#3}{,#3} \right)}{(#2\IfValueT{#3}{,#3})}     }}
\NewDocumentCommand{\FormQb}{s o o}{\overline{\FormQSymbol}\IfValueT{#2}{ \IfBooleanTF{#1}{\left( #2 \IfValueT{#3}{,#3} \right)}{(#2\IfValueT{#3}{,#3})}     }}
\NewDocumentCommand{\FormQStar}{s o o}{\FormQSymbol^{*}\IfValueT{#2}{ \IfBooleanTF{#1}{\left( #2 \IfValueT{#3}{,#3} \right)}{(#2\IfValueT{#3}{,#3})}     }}
\NewDocumentCommand{\FormQOne}{s o o}{\FormQSymbol^{\xi,\delta,1}\IfValueT{#2}{ \IfBooleanTF{#1}{\left( #2 \IfValueT{#3}{,#3} \right)}{(#2\IfValueT{#3}{,#3})}     }}
\NewDocumentCommand{\FormQTwo}{s o o}{\FormQSymbol^{\xi,\delta,2}\IfValueT{#2}{ \IfBooleanTF{#1}{\left( #2 \IfValueT{#3}{,#3} \right)}{(#2\IfValueT{#3}{,#3})}     }}

\NewDocumentCommand{\FormQHTwo}{s o o}{\FormQSymbol^{\xi,\delta,2}_H\IfValueT{#2}{ \IfBooleanTF{#1}{\left( #2 \IfValueT{#3}{,#3} \right)}{(#2\IfValueT{#3}{,#3})}     }}

\NewDocumentCommand{\DomainSymbol}{}{\mathrm{Dom}}
\NewDocumentCommand{\Domain}{s m}{\DomainSymbol\IfBooleanT{#1}{\left}(#2\IfBooleanT{#1}{\right})}
\NewDocumentCommand{\DomainQ}{}{\Domain{\FormQSymbol}}
\NewDocumentCommand{\QDomainQ}{}{(\FormQ,\DomainQ)}

\NewDocumentCommand{\QStarDomainQ}{}{(\FormQStar,\DomainQ)}

\NewDocumentCommand{\Ltip}{s m m o o}{\IfBooleanTF{#1}{\left \langle #2,#3 \right\rangle}{\langle#2,#3 \rangle}_{\LtSpace[#4][#5]}}

\NewDocumentCommand{\opL}{}{\mathscr{L}}
\NewDocumentCommand{\DomainL}{}{\Domain{\opL}}
\NewDocumentCommand{\LDomainL}{}{(\opL, \DomainL)}
\NewDocumentCommand{\MinusLDomainL}{}{(-\opL, \DomainL)}
\NewDocumentCommand{\DomainLStar}{}{\Domain*{\opL^{*}}}
\NewDocumentCommand{\LStarDomainLStar}{}{\left(\opL^{*}, \DomainLStar\right)}
\NewDocumentCommand{\MinusLStarDomainLStar}{}{\left(-\opL^{*}, \DomainLStar\right)}
\NewDocumentCommand{\DomainLinfty}{}{\Domain*{\opL^{\infty}}}
\NewDocumentCommand{\DomainLStarinfty}{}{\Domain*{\left( \opL^{*} \right)^{\infty}}}

\NewDocumentCommand{\Vol}{s o}{\mathrm{Vol}\IfValueT{#2}{\IfBooleanT{#1}{\left}(#2\IfBooleanT{#1}{\right})}}
\NewDocumentCommand{\Volh}{s o}{\widehat{\mathrm{Vol}}\IfValueT{#2}{\IfBooleanT{#1}{\left}(#2\IfBooleanT{#1}{\right})}}

\NewDocumentCommand{\HsSpace}{m o}{H^{#1}\IfValueT{#2}{(#2)}}
\NewDocumentCommand{\HsNorm}{s m m o}{\IfBooleanT{#1}{\left}\| #2  \IfBooleanT{#1}{\right}\|_{\HsSpace{#3}[#4]} }

\NewDocumentCommand{\SchwartzSymbol}{}{\mathscr{S}}
\NewDocumentCommand{\TemperedDistributions}{o}{\SchwartzSymbol'\IfValueT{#1}{(#1)}}
\NewDocumentCommand{\SchwartzSpace}{o}{\SchwartzSymbol\IfValueT{#1}{(#1)}}

\NewDocumentCommand{\TestFunctionsSymbol}{}{\mathscr{D}}
\NewDocumentCommand{\TestFunctionsZero}{s o o}{\TestFunctionsSymbol_0\IfValueT{#2}{\IfBooleanTF{#1}{\left(#2\IfValueT{#3}{;#3}\right)}{(#2 \IfValueT{#3}{;#3} )}}} 
\NewDocumentCommand{\DistributionsZero}{s o}{\TestFunctionsSymbol_0'\IfValueT{#2}{\IfBooleanTF{#1}{\left(#2\right)}{(#2)}}} 
\NewDocumentCommand{\Distributions}{s o o}{\TestFunctionsSymbol'\IfValueT{#2}{\IfBooleanT{#1}{\left}(#2\IfValueT{#3}{;#3}\IfBooleanT{#1}{\right})}}
\NewDocumentCommand{\TestFunctionsZeroN}{s}{\TestFunctionsZero[\ManifoldN]} 
\NewDocumentCommand{\DistributionsZeroN}{s}{\DistributionsZero[\ManifoldN]}

\NewDocumentCommand{\floor}{s m}{\IfBooleanT{#1}{\left}\lfloor #2\IfBooleanT{#1}{\right}\rfloor}

\NewDocumentCommand{\LtOpNorm}{s m}{\IfBooleanT{#1}{\left}\|#2\IfBooleanT{#1}{\right}\|_{L^2\rightarrow L^2}}

\NewDocumentCommand{\BW}{s m m}{B_W\IfBooleanT{#1}{\left}(#2,#3\IfBooleanT{#1}{\right})}
\NewDocumentCommand{\MetricW}{o o}{\rho_W\IfValueT{#1}{(#1\IfValueT{#2}{,#2})}}
\NewDocumentCommand{\BV}{s m m}{B_V\IfBooleanT{#1}{\left}(#2,#3\IfBooleanT{#1}{\right})}
\NewDocumentCommand{\BZ}{s m m}{B_Z\IfBooleanT{#1}{\left}(#2,#3\IfBooleanT{#1}{\right})}
\NewDocumentCommand{\MetricV}{o o}{\rho_V\IfValueT{#1}{(#1\IfValueT{#2}{,#2})}}
\NewDocumentCommand{\MetricZ}{o o}{\rho_Z\IfValueT{#1}{(#1\IfValueT{#2}{,#2})}}
\NewDocumentCommand{\Metric}{o o}{\rho\IfValueT{#1}{(#1\IfValueT{#2}{,#2})}}
\NewDocumentCommand{\BMetric}{s m m}{B_{\Metric}\IfBooleanT{#1}{\left}(#2,#3\IfBooleanT{#1}{\right})}

\NewDocumentCommand{\Cubengeq}{m}{Q^n_{\geq}(#1)}
\NewDocumentCommand{\Cubengeqc}{m}{Q^n_{\geq c}(#1)}
\NewDocumentCommand{\Cuben}{m}{Q^n(#1)}
\NewDocumentCommand{\Cubenmo}{m}{Q^{n-1}(#1)}
\NewDocumentCommand{\Cubeng}{m}{Q^n_{>}(#1)}

\NewDocumentCommand{\Zd}{}{{d\hspace*{-0.08em}\widetilde{}\hspace*{0.1em}}}
\NewDocumentCommand{\FilteredSheafGenBy}{m o o}{\left\langle #1\right\rangle_{%
\IfValueTF{#3}{#3}{\bullet}%
}%
\IfValueT{#2}{(#2)}}
\NewDocumentCommand{\FilteredSheaf}{m o o}{%
  \mathscr{#1}_{%
    \IfValueTF{#3}{#3}{\bullet}%
  }%
  \IfValueT{#2}{(#2)}%
}
\NewDocumentCommand{\FilteredSheafF}{o o}{\FilteredSheaf{F}[#1][#2]}

\NewDocumentCommand{\HsWSpace}{m o o}{H^{#1}_W\IfValueT{#2}{( #2\IfValueT{#3}{;#3} )}}
\NewDocumentCommand{\HsWNorm}{s m m o o}{\IfBooleanT{#1}{\left}\| #2  \IfBooleanT{#1}{\right}\|_{\HsWSpace{#3}[#4][#5]}}
\NewDocumentCommand{\HsWlocSpace}{m o o}{H^{#1}_{W,\mathrm{loc}}\IfValueT{#2}{( #2\IfValueT{#3}{;#3} )}}

\NewDocumentCommand{\HsVSpace}{m o o}{H^{#1}_V\IfValueT{#2}{( #2\IfValueT{#3}{;#3} )}}
\NewDocumentCommand{\HsVNorm}{s m m o o}{\IfBooleanT{#1}{\left}\| #2  \IfBooleanT{#1}{\right}\|_{\HsVSpace{#3}[#4][#5]}}
\NewDocumentCommand{\HsVTwoSpace}{m o o}{H^{#1}_{V^{\xi,\delta,2}}\IfValueT{#2}{( #2\IfValueT{#3}{;#3} )}}

\NewDocumentCommand{\NSubsetx}{}{\mathfrak{N}_x}
\NewDocumentCommand{\NSubsety}{}{\mathfrak{N}_y}

\NewDocumentCommand{\opLk}{m}{\opL^{\xi,\delta,#1}}
\NewDocumentCommand{\opLOne}{}{\opLk{1}}
\NewDocumentCommand{\opLTwo}{}{\opLk{2}}

\NewDocumentCommand{\DSet}{}{\mathcal{D}_{\xi,\delta}}

\NewDocumentCommand{\DomainQNorm}{s m}{\IfBooleanT{#1}{\left}\| #2 \IfBooleanT{#1}{\right}\|_{\DomainQ}}

\NewDocumentCommand{\Extend}{}{\mathscr{E}}
\NewDocumentCommand{\Extendx}{}{\Extend_x}
\NewDocumentCommand{\Extendy}{}{\Extend_y}

\NewDocumentCommand{\HilbertSpace}{}{\mathscr{H}}
\NewDocumentCommand{\Mt}{}{\widetilde{M}}

\NewDocumentCommand{\Kt}{}{\widetilde{K}}

%% file: abstract.tex


We establish higher-order Gaussian upper bounds for the heat semigroups associated with a broad class of sectorial maximally subelliptic quadratic forms on manifolds with boundary. Near non-characteristic boundary points and in the interior, we obtain pointwise bounds for all mixed derivatives of the heat kernel in time and along the H\"ormander vector fields, expressed in the associated Carnot--Carath\'eodory geometry. The results apply to operators of arbitrary even order, systems, nonsymmetric forms, and boundary conditions beyond the Dirichlet case.

%% file: intro.tex
In this paper, we establish higher-order Gaussian bounds near the non-characteristic boundary (see Definition \ref{Defn::Intro::NonChar}) and in the interior for heat semigroups whose generators correspond to certain maximally subelliptic quadratic forms.
These are quadratic forms defined in terms of H\"ormander vector fields which satisfy a maximally subelliptic estimate (see Assumption \ref{Assumption::Intro::MaximalSub}), and they can be viewed as maximally subelliptic boundary value problems; we do not restrict attention to Dirichlet boundary conditions. To establish these bounds, we use the abstract result proved in \cite{StreetHypoellipticityAndHigherOrderGaussianBounds}, which was developed with this paper in mind.
This is the fourth paper in a series studying general maximally subelliptic boundary value problems.
The first
\cite{StreetCarnotCaratheodoryBallsOnManifoldsWithBoundary}
and third 
\cite{StreetAPrioriEstimatesMaximallySubellipticQuadraticForms}
papers 
in the series play key roles in the proof.

The main technical theorem of this paper is Theorem \ref{Thm::Results:MainResult}; 
however, Corollary \ref{Cor::Core::MainCoreCorollary} is easier to use in practice.
In this introduction, we state a further corollary (Corollary \ref{Cor::Intro::MainIntroResult}), which is easier to understand and already covers some of the main examples.

For any smooth manifold with boundary\footnote{For the easier case of manifolds without boundary, see \cite[Section \ref*{Heat::Section::ApplicationNew}]{StreetHypoellipticityAndHigherOrderGaussianBounds}.}
 \(\ManifoldN\), let \(\InteriorN\) denote the interior, \(\BoundaryN\)
the boundary,
\(\DimensionN\) the dimension,
and let \(\TestFunctionsZeroN\) be the closed subspace of \(\CinftycptSpace[\ManifoldN]\) consisting of 
those functions that vanish
to infinite order at \(\BoundaryN\).  Let \(\DistributionsZeroN\) be the dual space---this is the space
of distributions we use throughout the paper.

Let \(\ManifoldN\) be a compact,\footnote{The results in this paper are local, and in the main results (Theorem \ref{Thm::Results:MainResult} and Corollary \ref{Cor::Core::MainCoreCorollary})
we do not assume compactness.} smooth manifold with boundary with \(\DimensionN\geq 2\), and let \(\Vol\)
be a smooth, strictly positive density\footnote{A density induces a measure on \(\ManifoldN\). That it is a smooth, strictly
positive density means that this measure is given by integration against a smooth, positive function in any coordinate system.}
on \(\ManifoldN\). Let \(W_1,\ldots, W_r\)
be smooth vector fields on \(\ManifoldN\) satisfying H\"ormander's condition:
the Lie algebra generated by \(W_1,\ldots, W_r\) spans the tangent space at every point.
For a list \(\alpha=\left( \alpha_1,\ldots, \alpha_L \right)\in \left\{ 1,\ldots, r \right\}^L\),
set \(W^{\alpha}=W_{\alpha_1}W_{\alpha_2}\cdots W_{\alpha_L}\) and \(|\alpha|=L\);
we call such lists ``ordered multi-indices.''

Fix \(\kappa\in \Zg:=\left\{ 1,2,\ldots \right\}\) and for lists \(|\alpha|,|\beta|\leq \kappa\)
let \(a_{\alpha,\beta}\in \CinftySpace[\ManifoldN]\).  Define a sesquilinear form
\begin{equation*}
    \FormQF[f][g]:=\sum_{|\alpha|,|\beta|\leq \kappa} \Ltip*{W^{\alpha} f}{a_{\alpha,\beta}W^{\beta}g}[\ManifoldN,\Vol].
\end{equation*}
Note that
\begin{equation*}
    \FormQF[f][g]=\Ltip{f}{\opL_0 g}[\ManifoldN,\Vol], \quad f\in\TestFunctionsZeroN, g\in \DistributionsZeroN,
\end{equation*}
where
\begin{equation}\label{Eqn::Intro::OpL0InTermsOfQF}
    \opL_0 = \sum_{|\alpha|,|\beta|\leq \kappa} \left( W^{\alpha} \right)^{*} a_{\alpha,\beta} W^\beta,
\end{equation}
and \(*\) denotes the formal \(\LtSpace\)-adjoint. Many different forms \(\FormQF\) give rise to the same formula
\eqref{Eqn::Intro::OpL0InTermsOfQF}. As is well known, the choice of \(\FormQF\) and the domain for \(\FormQF\)
correspond to boundary conditions for \(\opL_0\)
(see Example \ref{Example::Intro::Examples} and \cite[Example \ref*{Form::Example::Intro::Examples} and Section \ref*{Form::Section::BoundaryCond}]{StreetAPrioriEstimatesMaximallySubellipticQuadraticForms}
for an explanation and examples, as well as \cite[Section 7.B]{FollandIntroductionPDE} for the classical elliptic setting).

We now turn to an example of the type of boundary conditions we consider.
Let \(X\) be in the \(\CinftySpace[\ManifoldN][\R]\) module generated by \(W_1,\ldots, W_r\),
let \(\JSet\subseteq \left\{ 0,1,\ldots, \kappa-1 \right\}\) be any subset, and set
\begin{equation*}
    \BSet:=\left\{ f\in \CinftySpace[\ManifoldN] : X^j f\big|_{\BoundaryN}=0, j\in \JSet \right\}.
\end{equation*}
When \(\JSet=\left\{ 0,1,\ldots, \kappa-1 \right\}\), \(\BSet\) corresponds to Dirichlet boundary conditions
where \(X\) is transverse to \(\BoundaryN\).
For a further explanation of the boundary conditions induced by \(\BSet\), see Example \ref{Example::Intro::Examples}
and
\cite[Section \ref*{Form::Section::BoundaryCond}]{StreetAPrioriEstimatesMaximallySubellipticQuadraticForms}.

Our main assumption is

\begin{assumption}[Maximal Subellipticity]
    \label{Assumption::Intro::MaximalSub}
    We assume \(\exists C_1,C_2\geq 0\), \(\forall f\in \BSet\),
    \begin{equation*}
        \sum_{|\alpha|\leq \kappa} \LtNorm*{W^{\alpha}f}[\ManifoldN,\Vol]^2
        \leq C_1 \Real \FormQF[f][f]+C_2 \LtNorm*{f}[\ManifoldN,\Vol]^2.
    \end{equation*}
\end{assumption}

\begin{lemma}[{\cite[Lemma \ref*{Form::Lemma::Intro2::QFIsCloseable}]{StreetAPrioriEstimatesMaximallySubellipticQuadraticForms}}]
    \(\left( \FormQF, \BSet \right)\) is closable. Let \(\QDomainQ\) denote its closure.
    \(\QDomainQ\) is a closed, densely defined, sectorial form.
    See \cite[Chapter 6, Sections 1.1-1.4]{KatoPerturbationTheory} for the relevant definitions.
\end{lemma}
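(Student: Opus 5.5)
The plan is to reduce everything to Kato's abstract theory, working in the Hilbert space \(\LtSpace[\ManifoldN,\Vol]\) with the norm
\begin{equation*}
    \|f\|_{W,\kappa}^2:=\sum_{|\alpha|\leq \kappa}\LtNorm*{W^{\alpha}f}[\ManifoldN,\Vol]^2 ,
\end{equation*}
which is finite for \(f\in \BSet\) because \(\ManifoldN\) is compact and \(f\) is smooth up to the boundary. The argument has four steps: a two-sided comparison of forms, sectoriality, closability, and density.

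\textbf{Step 1: comparison of norms.} The upper bound is immediate. The \(a_{\alpha,\beta}\) are smooth on the compact \(\ManifoldN\), hence bounded, so Cauchy--Schwarz gives
\begin{equation*}
    |\FormQF[f][g]|\leq C\|f\|_{W,\kappa}\|g\|_{W,\kappa}.
\end{equation*}
Combined with Assumption \ref{Assumption::Intro::MaximalSub}, this shows that on \(\BSet\) the form norm
\begin{equation*}
    \|f\|_{\FormQSymbol}^2:=\Real \FormQF[f][f]+C'\LtNorm*{f}^2,
\end{equation*}
with \(C'\geq C_2/C_1+1\), is equivalent to \(\|f\|_{W,\kappa}^2+\LtNorm*{f}^2\).

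\textbf{Step 2: sectoriality.} We have \(|\Imaginary \FormQF[f][f]|\leq C\|f\|_{W,\kappa}^2\), and by Step 1 the right side is at most
\begin{equation*}
    C C_1\left(\Real \FormQF[f][f]+(C_2/C_1)\LtNorm*{f}^2\right).
\end{equation*}
So the numerical range of \(\FormQF\) on \(\BSet\) lies in a sector \(\{|\arg(z+\gamma)|\leq\theta\}\) with \(\theta<\pi/2\) and \(\gamma=C_2/C_1\).

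\textbf{Step 3: closability.} Take \(f_n\in\BSet\) with \(f_n\to0\) in \(L^2\) and \(f_n\) Cauchy in the form norm. By Step 1, \(f_n\) is Cauchy in \(\|\cdot\|_{W,\kappa}\). Hence \(W^\alpha f_n\to g_\alpha\) in \(L^2\) for every \(|\alpha|\leq\kappa\). Pairing with a test function \(\phi\in\TestFunctionsZeroN\) and integrating by parts, which produces no boundary terms because \(\phi\) vanishes to infinite order at \(\BoundaryN\), gives
\begin{equation*}
    \Ltip{g_\alpha}{\phi}=\lim_n \Ltip{f_n}{(W^\alpha)^{*}\phi}=0 .
\end{equation*}
Since \(\TestFunctionsZeroN\) is dense in \(L^2\), every \(g_\alpha=0\), so \(\|f_n\|_{W,\kappa}\to0\). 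By the upper bound in Step 1, \(\FormQF[f_n][f_n]\to 0\). This is exactly Kato's closability criterion (Kato VI, Theorem 1.17).

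\textbf{Step 4: conclusion.} The closure \(\QDomainQ\) is closed by construction. Sectoriality passes to the closure with the same vertex and angle, since the numerical range is preserved under limits in the form norm. Dense definition follows from \(\BSet\supseteq\TestFunctionsZeroN\), which is dense in \(L^2\).

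The only step requiring care is Step 3, the identification of the limits \(g_\alpha\). What makes it work is that the distribution pairing uses test functions in \(\TestFunctionsZeroN\), so the integration by parts produces no boundary terms. Everything else is bookkeeping from Assumption \ref{Assumption::Intro::MaximalSub} and the boundedness of the coefficients.
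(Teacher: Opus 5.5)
Your proof is correct and is the standard argument. Assumption \ref{Assumption::Intro::MaximalSub} makes the form norm on \(\BSet\) equivalent to \(\|\cdot\|_{W,\kappa}\), and closability then reduces to identifying the \(L^2\)-limits of \(W^{\alpha}f_n\) by testing against \(\TestFunctionsZeroN\), which is what you do. The paper does not prove this lemma itself but quotes it from the companion paper; your route is the natural one and uses the same norm-equivalence-plus-density mechanism that the paper applies in Lemma \ref{Lemma::Core::BigLemma} to pass from a core to \(\DomainQ\).
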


By \cite[Chapter 6, Theorem 2.1]{KatoPerturbationTheory}, there is a unique, closed,
densely defined, m-sectorial operator associated with \(\QDomainQ\), which we denote by \(\LDomainL\);
it satisfies
\begin{equation}\label{Eqn::Intro::OpLInTermsOfQ}
    \Ltip{f}{\opL g}[\ManifoldN,\Vol]
    =\FormQ[f][g],\quad \forall f\in \DomainQ,\: \forall g\in \DomainL,
\end{equation}
and is maximal with respect to this---see \cite[Chapter 6, Section 2]{KatoPerturbationTheory} for details.
Moreover, if \(\gamma\in \R\) is a vertex for \(\left( \FormQF, \BSet \right)\), then \(\gamma\) is also a vertex for \(\LDomainL\)
(see \eqref{Eqn::Result::Sectorial} for the definition of a vertex).
\(\MinusLDomainL\) is the generator of a strongly continuous semigroup, \(T(t)=e^{-t\opL}\),
which satisfies \(\LtOpNorm{T(t)}\leq e^{-\gamma t}\), \(\forall t\geq 0\) (see Lemma \ref{Lemma::Result::ExistsSemigroup}).

\begin{remark}\label{Rmk::Intro::OpLEqualsOpL0}
On \(\DomainL\), \(\opL\) agrees with \(\opL_0\) defined by \eqref{Eqn::Intro::OpL0InTermsOfQF}
(as can be seen by taking \(f\in \TestFunctionsZeroN\subseteq \BSet\subseteq \DomainQ\) in \eqref{Eqn::Intro::OpLInTermsOfQ}
and comparing it to \eqref{Eqn::Intro::OpL0InTermsOfQF}).  Furthermore, \(\TestFunctionsZeroN\subseteq \DomainL\);
indeed, for \(g\in \TestFunctionsZeroN\), we have
\begin{equation*}
    \Ltip{f}{\opL_0 g}[\ManifoldN,\Vol]=\FormQ[f][g],\quad \forall f\in\BSet.
\end{equation*}
Since \(\BSet\) is a core for \(\QDomainQ\), \cite[Chapter 6, Theorem 2.1]{KatoPerturbationTheory}
shows \(g\in \DomainL\) and \(\opL g=\opL_0 g\).
\end{remark}

\(W_1,\ldots, W_r\) induce a Carnot--Carath\'eodory extended metric\footnote{An extended metric
satisfies all the same axioms as a metric, except it can take the value \(+\infty\).} on \(\ManifoldN\). We define this extended metric through its metric balls as follows:
\begin{equation}\label{Eqn::Intro::DefineCCBall}
\begin{split}
     \BW{\xi}{\delta}:= \Bigg\{
        \eta\in \ManifoldN \: \Bigg|\:& \exists \gamma:[0,1]\rightarrow \ManifoldN, \gamma(0)=\xi,\gamma(1)=\eta,
        \\&\gamma \text{ is absolutely continuous},
        \\&\gamma'(t) =\sum_{j=1}^r e_j(t)\delta W_j(\gamma(t))\text{ almost everywhere},
        \\& e_j \in \LpSpace{\infty}[{[0,1]}], \LpNorm*{\sum_{j=1}^r |e_j|^2}{\infty}[{[0,1]}]<1
     \Bigg\},
\end{split}
\end{equation}
and we let 
\begin{equation}\label{Eqn::Intro::DefineCCMetric}
    \MetricW[\xi][\eta]:=\inf\left\{ \delta>0 : \eta\in \BW{\xi}{\delta} \right\}
\end{equation}
 be the corresponding extended metric.

Let \(\Omega:=\InteriorN\bigcup \left\{ \xi\in \BoundaryN : X(\xi)\not\in \TangentSpaceBoundaryN{\xi} \right\}\),
which is an open submanifold of \(\ManifoldN\) with boundary.
By \cite[Theorem \ref*{CC::Thm::Metrics::Results::GivesUsualTopology}]{StreetCarnotCaratheodoryBallsOnManifoldsWithBoundary},
\(\MetricW\) is a metric on each connected component of \(\Omega\), and the topology it induces agrees with the topology
on \(\Omega\) as a manifold.
Corollary \ref{Cor::Intro::MainIntroResult}, below, establishes higher-order Gaussian bounds for
the Schwartz kernel of \(T(t)\) in terms of \(\MetricW\), but only on the set \(\Omega\).

\begin{corollary}\label{Cor::Intro::MainIntroResult}
    In the above setting (assuming Assumption \ref{Assumption::Intro::MaximalSub}), the following holds.
    Fix a vertex \(\gamma\in \R\) for \(\left( \FormQF, \BSet \right)\).
    There exists a unique \(K_t(\xi,\eta)\in \CinftySpace[(0,\infty)\times \Omega\times \Omega]\)
    such that for \(g\in \LpSpace{2}[\Omega,\Vol]\),
    \begin{equation*}
        T(t) g(\xi) = \int K_t(\xi,\eta) g(\eta)\: d\Vol[\eta], \quad \forall (t,\xi)\in (0,\infty)\times \Omega,
    \end{equation*}
    where the above integral converges absolutely.\footnote{Here, and in other similar situations,
    we identify \(g\) with a function in \(\LpSpace{2}[\ManifoldN,\Vol]\) by setting it to be zero
    on \(\ManifoldN\setminus\Omega\).}
    Furthermore, for every compact \(\Compact\Subset \Omega\),\footnote{Throughout the paper, we write \(A\Subset B\) to denote that \(A\) is a relatively compact subset of \(B\).} there exist \(\delta_\Compact>0\) and \(c_\Compact>0\) such that, for all ordered multi-indices \(\alpha\) and \(\beta\) and every \(j\in \Zgeq\), there exists \(C_{\alpha,\beta,j,\Compact}\geq 0\) such that the following estimate holds for all \(\xi,\eta\in \Compact\) and all \(t>0\):
    \begin{equation}\label{Eqn::Intro::MainIntroResult::GaussianBounds}
    \begin{split}
         \left| \partial_t^j W_\xi^{\alpha} W_\eta^{\beta} K_t(\xi,\eta) \right|
         \leq& C_{\alpha,\beta,j,\Compact}
         e^{-\gamma t}
         \left( \left( \MetricW[\xi][\eta] +t^{1/2\kappa} \right)\wedge \delta_\Compact \right)^{-2\kappa j-|\alpha|-|\beta|}
         \\&\times \exp \left(  -c_\Compact \left( \frac{\left( \MetricW[\xi][\eta]\wedge \delta_\Compact \right)^{2\kappa}}{t}  \right)^{\frac{1}{2\kappa-1}}  \right)
         \\&\times \Vol*[ \BW*{\xi}{\left( \MetricW[\xi][\eta]+t^{1/2\kappa} \right)\wedge \delta_\Compact}]^{-1}.
    \end{split}
    \end{equation}
    Here and throughout the paper, \(a\wedge b=\min\{a,b\}\).
    Also, \(W_\xi\) denotes the list \(\left( W_1,\ldots, W_r \right)\) thought of as 
    first-order partial differential operators
    acting in the \(\xi\)-variable; similarly for \(W_\eta\).
\end{corollary}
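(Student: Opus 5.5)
The plan is to deduce the statement from the general machinery developed later in the paper: the main technical theorem (Theorem \ref{Thm::Results:MainResult}) and its easier-to-use form (Corollary \ref{Cor::Core::MainCoreCorollary}). These are built on the abstract hypoellipticity-to-Gaussian-bounds result of \cite{StreetHypoellipticityAndHigherOrderGaussianBounds}. The abstract result says roughly the following. Suppose one has scale-invariant hypoelliptic estimates for \(\partial_t+\opL\) and \(\partial_t+\opL^*\), uniformly over a family of rescalings of Carnot--Carath\'eodory balls of radius \(\delta\le\delta_\Compact\), together with a Davies--Gaffney type off-diagonal \(L^2\) bound. Then one obtains kernel bounds of the form \eqref{Eqn::Intro::MainIntroResult::GaussianBounds}. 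So the proof reduces to verifying these hypotheses for \((\FormQF,\BSet)\) near points of \(\Omega\).

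\textbf{Step 1 (core and sectoriality).} Check that \(\BSet\) fits the class of cores allowed in Corollary \ref{Cor::Core::MainCoreCorollary}. The conditions are:
\begin{enumerate}[(i)]
\item \(\BSet\) is stable under multiplication by smooth cutoffs, since \(X^j(\phi f)\) is a combination of \(X^k f\) with \(k\le j\), and \(J\) may need to be downward closed or else handled via the Leibniz rule;
\item \(\BSet\) contains \(\TestFunctionsZeroN\);
\item \(\BSet\) is defined by boundary conditions invariant under the scaling maps.
\end{enumerate}
Also check that \(\gamma\) being a vertex of \((\FormQF,\BSet)\) passes to \(\LDomainL\) and gives \(\LtOpNorm{T(t)}\le e^{-\gamma t}\), by Lemma \ref{Lemma::Result::ExistsSemigroup}. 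After replacing \(\opL\) with \(\opL-\gamma\), we may assume \(\gamma=0\); this accounts for the factor \(e^{-\gamma t}\).

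\textbf{Step 2 (scaling near non-characteristic points).} Near a point \(\xi_0\in\Omega\), either \(\xi_0\) is interior or \(X(\xi_0)\) is transverse to \(\BoundaryN\). Use the scaling maps of Section \ref{Section::Scaling} (Theorem \ref{Thm::Scaling::MainScalingThm}, relying on \cite{StreetCarnotCaratheodoryBallsOnManifoldsWithBoundary}). These pull \(\BW{\xi}{\delta}\) back to a unit-scale region of a half-space, with rescaled vector fields \(\delta W_j\) smooth uniformly in \((\xi,\delta)\) and satisfying H\"ormander's condition uniformly. Near non-characteristic points, \(X\) pulls back to a vector field uniformly transverse to the boundary of the half-space. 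Consequently the rescaled boundary conditions \(\BSet\) keep the same form, and the form \(\delta^{2\kappa}\FormQF\) pulled back has uniformly smooth coefficients. Assumption \ref{Assumption::Intro::MaximalSub} localizes and rescales, using that \(\BSet\) is preserved and lower-order terms gain powers of \(\delta\). The result is a uniform maximally subelliptic estimate at unit scale.

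\textbf{Step 3 (uniform regularity and conclusion).} The key input is the a priori estimates of \cite{StreetAPrioriEstimatesMaximallySubellipticQuadraticForms}. They upgrade the uniform maximal subellipticity into uniform higher-order estimates up to the boundary for solutions of \(\opL u=f\) with boundary conditions coming from \(\DomainQ\), and likewise for the adjoint form. Feeding these into the abstract theorem yields existence, uniqueness and smoothness of \(K_t\) on \((0,\infty)\times\Omega\times\Omega\); smoothness follows from hypoellipticity of \(\partial_t+\opL\) in \(\xi\) and of \(\partial_t+\opL^*\) in \(\eta\). It also yields the bound \eqref{Eqn::Intro::MainIntroResult::GaussianBounds} on compact \(\Compact\Subset\Omega\), covered by finitely many such charts.

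\textbf{Main obstacle.} I expect Step 2/3 to be the main difficulty: showing that the boundary conditions and the maximal subelliptic estimate survive rescaling with constants uniform in \(\delta\) and \(\xi\). In particular, the estimate has to hold on regions that become thin near the boundary, and the qualitative subellipticity has to give \(\DomainL^\infty\subset\CinftySpace\) near non-characteristic points. To handle this, I would first deal with the interior case separately, using the no-boundary result of \cite{StreetHypoellipticityAndHigherOrderGaussianBounds}. I would then treat boundary points by straightening \(X\) to \(\partial_{x_n}\), so that \(\BSet\) becomes a condition on normal derivatives that is invariant under the scaling.
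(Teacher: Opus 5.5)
You have the right target: the paper also proves this by applying Corollary \ref{Cor::Core::MainCoreCorollary}. It takes \(\ManifoldNZero=\ManifoldN\), \(\ManifoldNOne\) equal to the set \(\Omega\) of the statement, \(M=1\), \(\CoreB=\BSet\), lets some \(\Omega'\) with \(U\Subset\Omega'\Subset\Omega\) play the corollary's \(\Omega\), and then lets \(U\uparrow\Omega\). However, your list of what must be checked is wrong in one place and omits the one hypothesis that actually requires an argument.

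\textbf{Condition (i) is false and unnecessary.} Take \(\JSet=\{1\}\) (with \(\kappa\geq 2\)). For \(f\in\BSet\) you get \(X(\phi f)\big|_{\BoundaryN}=(X\phi)f\big|_{\BoundaryN}\), which need not vanish, and no use of the Leibniz rule repairs this. You also do not need it. Assumption \ref{Assumption::Core::MultPsiIsCont} asks for a single \(\psi\) equal to \(1\) near \(\overline{\Omega'}\), and since \(\ManifoldN\) is compact you may take \(\psi\equiv 1\). Then Assumptions \ref{Assumption::Core::MultPsiIsCont}, \ref{Assumption::Core::FormQEqualsFormQF}, and \ref{Assumption::Core::MaxSub} follow at once from Assumption \ref{Assumption::Intro::MaximalSub} and the definition of \(\FormQ\) as the closure of \((\FormQF,\BSet)\). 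Assumption \ref{Assumption::Core::SmoothWithCompactSupportInDomain} is Remark \ref{Rmk::Intro::OpLEqualsOpL0}. The same defect affects your Step 2: you cannot localize Assumption \ref{Assumption::Intro::MaximalSub} to scale \(\delta\) with ordinary cutoffs, because they do not preserve \(\BSet\).

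\textbf{The missing step is Assumption \ref{Assumption::Core::PullBackSMapping}.} The tangential (Kohn--Nirenberg type) smoothing operators \(T_{\xi,\delta}=(\Psi_{\xi,\delta})_*S\Psi_{\xi,\delta}^{*}\), with \(S\) as in \eqref{Eqn::Core::SOperator}, must map \(\BSet\) into \(\DomainQ\). Their kernels are independent of \(x_n\) near \(x_n=0\), and they are the boundary-compatible localizers the main theorem relies on. Your condition (iii), ``invariance of \(\BSet\) under the scaling maps,'' is not a substitute for this.

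The verification is short. Theorem \ref{Thm::Scaling::MainScalingThm}\ref{Item::Scaling::MainScalingThm::PullBackV0IsDxn} gives \(\Psi_{\xi,\delta}^{*}X=c\,\partial_{x_n}\) with \(c\) constant, and \(\partial_{x_n}K=0\) for \(0\leq x_n<\delta'\). So \(T_{\xi,\delta}\) preserves \(C^\infty(\ManifoldN)\) and vanishing on \(\BoundaryN\), and \(X^l[X,T_{\xi,\delta}]g\big|_{\BoundaryN}=0\) for every \(l\) and every smooth \(g\). Hence, for \(j\in\JSet\) and \(f\in\BSet\),
\[
X^jT_{\xi,\delta}f\big|_{\BoundaryN}=T_{\xi,\delta}X^jf\big|_{\BoundaryN}+\sum_{l=0}^{j-1}X^l[X,T_{\xi,\delta}]X^{j-1-l}f\big|_{\BoundaryN}=0,
\]
so \(T_{\xi,\delta}\BSet\subseteq\BSet\subseteq\DomainQ\) for arbitrary \(\JSet\). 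With this checked, your Steps 2--3 are already contained in Theorem \ref{Thm::Results:MainResult} and need not be redone.

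\textbf{Two further corrections.} The abstract theorem of \cite{StreetHypoellipticityAndHigherOrderGaussianBounds} needs no Davies--Gaffney input; its hypotheses are essentially locality of \(\opL\), local volume doubling, and the uniform hypoelliptic estimates. Also, interior points cannot be treated with the no-boundary result, because even there the semigroup is that of the boundary problem. The paper handles interior scales (\(\SSetOne\)) and boundary scales (\(\SSetTwo\)) together in one application.
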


\begin{remark}
    We emphasize that the constant \(c_\Compact>0\) in Corollary \ref{Cor::Intro::MainIntroResult} does not depend on \(\alpha\), \(\beta\), or \(j\). This independence is implicit in the order of the quantifiers in the corollary.
\end{remark}

\begin{remark}\label{Rmk::Intro::AssumptionsSymmetric}
    The assumptions of Corollary \ref{Cor::Intro::MainIntroResult} are symmetric in
    \(\opL\) and \(\opL^{*}\) (see Remark \ref{Rmk::Result::SummetricInLAndLStar} for a discussion of the symmetry
    in the context of our main theorem), and therefore the conclusions hold for \(\opL^{*}\) as well.
\end{remark}

\begin{remark}
    While Corollary \ref{Cor::Intro::MainIntroResult} concerns scalar valued operators,
    our main results (Theorem \ref{Thm::Results:MainResult} and Corollary \ref{Cor::Core::MainCoreCorollary})
    concern systems.
\end{remark}

For the proof of Corollary \ref{Cor::Intro::MainIntroResult}, see Section \ref{Section::Core::PfOfIntroCor}.

We describe some examples to keep in mind.

\begin{example}\label{Example::Intro::Examples}
    For more details on the examples which follow, see \cite[Example \ref*{Form::Example::Intro::Examples}]{StreetAPrioriEstimatesMaximallySubellipticQuadraticForms}.
    \begin{enumerate}[(i)]
        \item\label{Item::Intro::Examples::DirichletBC} When \(\JSet=\left\{ 0,1,\ldots, \kappa-1 \right\}\), 
        locally near a \(W\)-non-characteristic boundary point,
        the construction above corresponds to Dirichlet boundary conditions for \(\opL_0\).  In this case, the particular choice of \(X\) does not matter---only
            that such an \(X\) exists (see Section \ref{Section::Intro::NonChar}).  Also, in this case, the particular choice of \(\FormQF\) does not matter, provided that it gives rise to the operator \(\opL_0\); however, the choice of \(\FormQF\) does matter when \(\JSet\ne \left\{ 0,1,\ldots, \kappa-1 \right\}\).

        \item\label{Item::Intro::Examples::SecondOrder} Consider an operator of the form
            \begin{equation*}
                \opL_0=-\sum_{1\leq i,j\leq r} a_{i,j}(x) W_i W_j + \sum_{j=1}^r a_j(x) W_j+a(x),
            \end{equation*}
            where \(a_{i,j}\in \CinftySpace[\ManifoldN][\R]\), \(a_j,a\in \CinftySpace[\ManifoldN][\C]\),
            and \(\left(a_{i,j} \right)_{1\leq i,j\leq r}\) is a symmetric, strictly positive definite matrix
            for each \(x\). Let \(X\) and \(\JSet\subseteq \{0\}\) be as described above.
            In \cite[Proposition \ref*{Form::Prop::BoundaryConds::SecondOrder::MainSecondOrder}]{StreetAPrioriEstimatesMaximallySubellipticQuadraticForms},
            it is shown that the assumptions of this introduction apply in this situation (and therefore
            the conclusions of Corollary \ref{Cor::Intro::MainIntroResult} hold).  
            When \(\JSet=\{0\}\), this corresponds to Dirichlet boundary conditions and, as noted in  \ref{Item::Intro::Examples::DirichletBC}, the particular choice of \(\FormQF\) does not play a role.
            However, \cite[Proposition \ref*{Form::Prop::BoundaryConds::SecondOrder::MainSecondOrder}]{StreetAPrioriEstimatesMaximallySubellipticQuadraticForms} shows that, when \(\JSet=\EmptySet\), different choices of \(\FormQF\) can give rise to different ``free'' boundary conditions (also known as ``natural'' or ``unstable'' boundary conditions---see \cite[Chapter 7, Section C]{FollandIntroductionPDE} for a discussion in the elliptic case).
        
        \item More generally, in \cite[Section \ref*{Form::Section::BoundaryCond::GeneralOps}]{StreetAPrioriEstimatesMaximallySubellipticQuadraticForms}, we show how to derive the boundary conditions induced by \(\FormQF\) and \(\BSet\) in the setting of Corollary \ref{Cor::Intro::MainIntroResult} when \(\JSet\subsetneq \left\{ 0,1,\ldots, \kappa-1 \right\}\).

        \item\label{Item::Intro::Examples::HormanderSubLap} The quadratic form
            \begin{equation}\label{Eqn::Intro::Examples::HormanderSubLapForm}
                \FormQF[f][g]=\sum_{j=1}^r \Ltip*{W_j f}{W_j g}[\ManifoldN]
            \end{equation}
            corresponds to the sub-Laplacian \(\opL_0=\sum_{j=1}^r W_j^{*} W_j\). It clearly satisfies Assumption \ref{Assumption::Intro::MaximalSub}
            (with \(\kappa=1\))
            for any subspace \(\BSet\subseteq \CinftySpace[\ManifoldN]\) of the type considered above.
        
        \item\label{Item::Intro::Examples::NonSymmetric} In \ref{Item::Intro::Examples::SecondOrder} and \ref{Item::Intro::Examples::HormanderSubLap},  \(\opL_0\) is symmetric
        modulo lower-order operators.
            However, this need not be the case in general. For example, let \(\FormQF\) be given by \eqref{Eqn::Intro::Examples::HormanderSubLapForm},
            and consider the form \((f,g)\mapsto \FormQF[f][g]+\alpha \Ltip{W_1 f}{W_2 g}\), where \(\alpha\) is a complex number with \(\left\lvert \alpha \right\rvert <2\).
            This form satisfies Assumption \ref{Assumption::Intro::MaximalSub}
            (with \(\kappa=1\))
            for any subspace \(\BSet\subseteq \CinftySpace[\ManifoldN]\) (see \cite[Lemma \ref*{Form::Lemma::Intro2::PerturbForm}]{StreetAPrioriEstimatesMaximallySubellipticQuadraticForms} for a proof of a more general version of this). In this case, 
            \(\opL_0 = \sum_{j=1}^r W_j^{*} W_j + \alpha W_1^{*} W_2\).  

        \item \ref{Item::Intro::Examples::SecondOrder}, \ref{Item::Intro::Examples::HormanderSubLap}, and \ref{Item::Intro::Examples::NonSymmetric}
            all concern second-order operators (\(\kappa=1\)), and in these cases \eqref{Eqn::Intro::MainIntroResult::GaussianBounds} gives rise to standard Gaussian bounds. 
            However, our results do not require \(\kappa=1\),
            and when \(\kappa>1\),  \eqref{Eqn::Intro::MainIntroResult::GaussianBounds} gives a ``higher-order'' Gaussian bound.
            For example, let
            \begin{equation*}
                \FormQF[f][g]=\sum_{|\alpha|\leq \kappa} \Ltip*{W^\alpha f}{W^{\alpha} g}[\ManifoldN],
            \end{equation*}
            which corresponds to the operator \(\opL_0=\sum_{|\alpha|\leq \kappa} \left( W^\alpha \right)^{*} W^\alpha\).
            \(\FormQF\) clearly satisfies Assumption \ref{Assumption::Intro::MaximalSub}
            for any subspace \(\BSet\subseteq \CinftySpace[\ManifoldN]\) of the type considered above.
    \end{enumerate}
\end{example}

On manifolds without boundary, the interior theory of Gaussian estimates
for subelliptic heat kernels is by now well developed. For second-order
H\"ormander operators, estimates for the heat kernel and its time and
vector-field derivatives are known in a variety of settings; see, for
example,
\cite{SanchezCalleFundamentalSolutionsAndGeometryOfTheSumOfSquaresOfVectorFields,
JerisonSanchezCalleEstimatesForTheHeatKernelForASumOfSquaresOfVectorFields,
KusuokaStroockApplicationsOfTheMalliavinCalculusIII,
KusuokaStroockLongTimeEstimatesForTheHeatKernelAssociatedWithAUniformlySubellipticSymmetricSecondOrderOperator,
BonfiglioliLanconelliUguzzoniUniformGaussianEstimatesForTheFundamentalSolutionsForHeatOperatorsOnCarnotGroups,
BramantiBrandoliniLanconelliUguzzoniNonDivergenceEquationsStructuredOnHormanderVectorFieldsHeatKernelsAndHarnackInequalities,
BiagiBramantiGlobalGaussianEstimatesForTheHeatKernelOfHomogeneousSumsOfSquares}.
Higher-order analogues have also been obtained 
on homogeneous, nilpotent, and polynomial-growth Lie groups; see
\cite{HebischSharpPointwiseEstimateForTheKernelsOfTheSemigroupGeneratedBySumsOfEvenPowersOfVectorFieldsOnHomogeneousGroups,
terElstRobinsonWeightedSubcoerciveOperatorsOnLieGroups,
terElstRobinsonSikoraHeatKernelsAndRieszTransformsOnNilpotentLieGroups,
DungeyHigherOrderOperatorsAndGaussianBoundsOnLieGroupsOfPolynomialGrowth,
DungeyHeatKernelEstimatesForAClassOfHigherOrderOperatorsOnLieGroups}.
Beyond nilpotent Lie groups, the author established interior higher-order
Gaussian bounds for heat operators corresponding to non-negative symmetric maximally subelliptic
operators in \cite[Theorem 8.4.2]{StreetMaximalSubellipticity}; this result was then
generalized to possibly nonsymmetric operators
in \cite[Theorem \ref*{Heat::Thm::SubMainThm::MainThm}]{StreetHypoellipticityAndHigherOrderGaussianBounds}
(see \cite[Section \ref*{Heat::Section::Subellip::Examples}]{StreetHypoellipticityAndHigherOrderGaussianBounds} for examples).
In particular, some of these results allow complex coefficients and give
estimates for derivatives of arbitrary order. These interior results do
not, however, address the boundary phenomena considered here.

The corresponding boundary theory is less developed. Much of the
existing literature on subelliptic heat operators on domains concerns
the Dirichlet heat kernel or the initial-Dirichlet problem; see, for
example,
\cite{LierlSaloffCosteTheDirichletHeatKernelInInnerUniformDomainsLocalResultsCompactDomainsAndNonSymmetricForms,
ChenChenEstimatesOfDirichletEigenvaluesForAClassOfSubEllipticOperators,
GarofaloMuniveEstimatesOfTheGreenFunctionAndTheInitialDirichletProblemForTheHeatEquationInSubRiemannianSpaces}.
Some such Dirichlet results also allow nonsymmetric forms. There are
results for particular non-Dirichlet problems, such as the
$\bar\partial$-Neumann problem
\cite{PerezStollmannHeatKernelEstimatesForTheDbarNeumannProblemOnGManifolds},
but these concern boundary conditions tied to a specific operator or
geometric structure. In contrast, we begin with a sectorial quadratic
form and its domain, which together determine the boundary conditions.
The Dirichlet problem is included, but we do not restrict attention to
it: the framework also permits free or natural boundary conditions and,
in higher order, problems in which only a selected subset of the traces
\[
    \left. X^j f \right|_{\BoundaryN},
    \qquad 0 \leq j \leq \kappa-1,
\]
is prescribed, with the remaining boundary conditions induced by the
form; see Example \ref{Example::Intro::Examples}.
To our knowledge, no existing general result simultaneously treats
arbitrary order $2\kappa$, systems, nonsymmetric sectorial forms,
form-induced boundary conditions not restricted to the Dirichlet case,
and bounds for arbitrary time and 
derivatives along the H\"ormander vector fields
up to non-characteristic boundary points.

In the elliptic setting, heat-kernel estimates on domains are available
for substantially broader classes of boundary conditions, including
Robin and other form-defined conditions; see, for example,
\cite{ArendtElstGaussianEstimatesForSecondOrderEllipticOperatorsWithBoundaryConditions,
DanersHeatKernelEstimatesForOperatorsWithBoundaryConditions,
ArendtWarmaTheLaplacianWithRobinBoundaryConditionsOnArbitraryDomains,
OuhabazGaussianUpperBoundsForHeatKernelsOfSecondOrderEllipticOperatorsWithComplexCoefficientsOnArbitraryDomains,
ClaireGaussianUpperBoundsOnHeatKernelsOfUniformlyEllipticOperatorsOnBoundedDomains,
BohnleinCianiEgertGaussianEstimatesVsEllipticRegularityOnOpenSets}.
These works use different methods and address different difficulties.

\begin{remark}
    The reader unfamiliar with the general theory of maximal subellipticity and the difficulties that can arise
    can find an introduction in \cite[Chapter 1]{StreetMaximalSubellipticity}.
\end{remark}

\paragraph{AI Usage.} Once a nearly final draft of this paper was written, and after the mathematical content and proofs were complete, the author used ChatGPT for proofreading and improving the exposition. 
Otherwise, this article is human generated.

%% file: intro_nonchar.tex
\begin{definition}\label{Defn::Intro::NonChar}
    We say that \(\xi\in \BoundaryN\) is \(W\)-non-characteristic if there exists \(j\) such that \(W_j(\xi)\notin \TangentSpaceBoundaryN{\xi}\).
    Equivalently, \(\BoundaryN\) is non-characteristic near \(\xi\), in the classical sense, for the operator \(f\mapsto\left( W_1 f,\ldots,W_r f \right)\).
 \end{definition}

Kohn and Nirenberg \cite{KohnNirenbergNonCoerciveBoundaryValueProblems}, Derridj \cite{DerridjSurUnTheoremeDeTraces}, and Jerison \cite{JerisonDirichletProblemForTheKohnLaplacianI,JerisonDirichletProblemForTheKohnLaplacianII} first pointed out that whether a boundary point is \(W\)-non-characteristic has important implications for the subellipticity of operators such as \(\opL\). 
Indeed, Jerison \cite{JerisonDirichletProblemForTheKohnLaplacianII} showed that, even for the sub-Laplacian 
on the Heisenberg group in a simple model domain, hypoellipticity for the Dirichlet problem can fail at a characteristic boundary point. 
This observation helps explain why we restrict attention to non-characteristic boundary points.

In the above setting, \(\partial \Omega\) consists entirely of \(W\)-non-characteristic points.
In fact, given any \(\xi_0\in \BoundaryN\), one can pick \(X\) as above such that \(\xi_0\in \Omega\)
if and only if \(\xi_0\) is \(W\)-non-characteristic.

Many of the results on which this work is based apply only near \(W\)-non-characteristic points; examples include the main results of \cite{StreetCarnotCaratheodoryBallsOnManifoldsWithBoundary} (see Theorem \ref{Thm::Scaling::MainScalingThm}) and \cite{StreetAPrioriEstimatesMaximallySubellipticQuadraticForms}.
Furthermore, although they do not play a direct role in this work, the trace theorems established in \cite{StreetFunctionSpacesAndTraceTheoremsForMaximallySubellipticBoundaryValueProblems} work well near \(W\)-non-characteristic boundary points (see \cite[Chapter \ref*{FS::Chapter::Trace}]{StreetFunctionSpacesAndTraceTheoremsForMaximallySubellipticBoundaryValueProblems}) but less well near \(W\)-characteristic boundary points (see \cite[Section \ref*{FS::Section::Trace::CharacteristicFailure}]{StreetFunctionSpacesAndTraceTheoremsForMaximallySubellipticBoundaryValueProblems}).
This will be used in forthcoming work on maximally subelliptic boundary value problems, based on this paper.

%% file: intro_proof.tex
Both the statement and the proof of our main technical theorem (Theorem \ref{Thm::Results:MainResult})
are somewhat technical.  However, the basic outline of the proof is easier to understand.
The main ideas are as follows.

\begin{enumerate}[(I)]
    \item \cite{StreetAPrioriEstimatesMaximallySubellipticQuadraticForms} shows that \(\partial_t+\opL\) and \(\partial_t+\opL^{*}\)
        are subelliptic on \((0,\infty)\times \Omega\), and therefore hypoelliptic. As a consequence,
        the function \(K_t(\xi,\eta)\) in Corollary \ref{Cor::Intro::MainIntroResult}
        is smooth on \((0,\infty)\times \Omega\times\Omega\); see Proposition \ref{Prop::Proof::Subellip::ExistsSmoothKernel}.
        The subelliptic estimates have their roots in work of Kohn and Nirenberg \cite{KohnNirenbergNonCoerciveBoundaryValueProblems}
        and Kohn \cite{KohnLecturesOnDegenerateEllipticProblems}.

    \item\label{Item::Intro::Proof::Scaling} \cite{StreetCarnotCaratheodoryBallsOnManifoldsWithBoundary} provides scaling maps adapted to the Carnot--Carath\'eodory
        geometry given by \(\MetricW\); see Section \ref{Section::Scaling}.  Using these scaling maps, 
        for \(\delta\) small,
        one can rescale
        \(\BW{\xi}{\delta}\) to be essentially the unit ball,
        in such a way that
        \(\delta^{2\kappa}\opL\) 
        is rescaled to a new operator on the unit ball \(\opL^{\xi,\delta}\) 
        where \(\opL^{\xi,\delta}\) satisfies maximally subelliptic estimates similar to those
        satisfied by \(\opL\), uniformly in \(\xi\) and \(\delta\).
        The scaling has its roots in ideas of Nagel, Stein, and Wainger \cite{NagelSteinWaingerBallsAndMetricsDefinedByVectorFieldsI}
        and Tao and Wright \cite{TaoWrightLpImprovingBoundsForAveragesAlongCurves},
        and is based on results of Stovall and the author \cite{StovaStreetCoordinatesAdaptedToVectorFieldsCanonicalCoordinates}.
    
    \item\label{Item::Intro::Proof::UnitScaleSubellip} The subelliptic estimates from \cite{StreetAPrioriEstimatesMaximallySubellipticQuadraticForms} apply
        uniformly to \(\partial_t +\opL^{\xi,\delta}\).  These imply hypoelliptic estimates
        for \(\partial_t +\opL^{\xi,\delta}\), which are uniform in \(\xi\) and \(\delta\).
        See, for example, Lemmas \ref{Lemma::Proof::Completion::APriori} and \ref{Lemma::Proof::Completion::APrioriHeat}.

    \item\label{Item::Intro::Proof::UniformHypo} Undoing the scaling from \ref{Item::Intro::Proof::Scaling}, the hypoelliptic estimates
        from \ref{Item::Intro::Proof::UnitScaleSubellip} turn into hypoelliptic
        estimates for \(\partial_t+\delta^{2\kappa}\opL\) on \(\BV{\xi}{\delta}\), which are
        appropriately uniform in \(\xi\) and \(\delta\).  Similarly, one also obtains
        hypoelliptic estimates for \(\partial_t+\delta^{2\kappa}\opL^{*}\)
        which are appropriately uniform in \(\xi\) and \(\delta\).
        See Proposition \ref{Prop::Proof::Completion::Hypoellip}.

    \item Such hypoelliptic estimates, uniform across base points and scales, constitute the main hypothesis of \cite[Theorem \ref*{Heat::Thm::Results::MainThm}]{StreetHypoellipticityAndHigherOrderGaussianBounds}, which in turn gives the desired higher-order Gaussian bounds \eqref{Eqn::Intro::MainIntroResult::GaussianBounds}.
        The proof of \cite[Theorem \ref*{Heat::Thm::Results::MainThm}]{StreetHypoellipticityAndHigherOrderGaussianBounds}
        is based on ideas of Jerison and S\'anchez-Calle \cite{JerisonSanchezCalleEstimatesForTheHeatKernelForASumOfSquaresOfVectorFields}
        and Hebisch \cite{HebischSharpPointwiseEstimateForTheKernelsOfTheSemigroupGeneratedBySumsOfEvenPowersOfVectorFieldsOnHomogeneousGroups}.
\end{enumerate}

A principal source of technicality in this paper is the uniformity required in \ref{Item::Intro::Proof::UniformHypo}, and hence in \ref{Item::Intro::Proof::Scaling} and \ref{Item::Intro::Proof::UnitScaleSubellip}. We must therefore keep track of what the various constants in our inequalities depend on and of the order of the quantifiers.
Thus, the statements of many of our estimates contain a series of quantifiers whose ordering is an essential part of the estimate.

%% file: intro_motivation.tex
Although the results of this paper are of independent interest, a principal motivation for this paper is to facilitate the study of parametrices for homogeneous maximally subelliptic boundary value problems.
Indeed, if \(\LDomainL\) is as in Section \ref{Section::Intro}, then
\begin{equation*}
    S:=\int_0^1 T(t)\: dt
\end{equation*}
is a parametrix for \(\opL\) on \(\Omega\).
Indeed, \cite[Chapter 2, Lemma 1.3]{EngelNagelAShortCourseOnOperatorSemigroups} shows
    \begin{equation*}
        \opL S f = f-T(1) f, \quad\forall f\in \LtSpace[\ManifoldN,\Vol],
    \end{equation*}
    \begin{equation*}
        S \opL f= f-T(1) f, \quad\forall f\in \DomainL\supseteq \TestFunctionsZero[\ManifoldN].
    \end{equation*}
    Since \(T(1)\) is given by integration against a smooth function on \(\Omega\times \Omega\) (by Corollary \ref{Cor::Intro::MainIntroResult}), 
    this shows that \(S\) is a two-sided parametrix for \(\opL\) on \(\Omega\).
    The estimates for \(K_t(\xi,\eta)\) in Corollary \ref{Cor::Intro::MainIntroResult}
    can then be used to prove good mapping properties for this parametrix.
    For a discussion of these ideas on manifolds without boundary,
    see \cite[Section \ref*{Heat::Section::Subellip::MaximalSubellip}]{StreetHypoellipticityAndHigherOrderGaussianBounds}.
    In a future paper, we will use these ideas to deduce sharp regularity properties for maximally subelliptic
    boundary value problems.

%% file: scaling_intro.tex
Before we can state our main theorem (Theorem \ref{Thm::Results:MainResult}), we need to introduce the main theorem of 
\cite{StreetCarnotCaratheodoryBallsOnManifoldsWithBoundary},
though we state it slightly differently than that reference.
This result gives scaling maps which turn small Carnot--Carath\'eodory
scales into the ``unit scale'' in an appropriately uniform way.
In Section \ref{Section::Scaling::UnitScale} we describe what we mean by the unit scale
and in Section \ref{Section::Scaling::Result} we state the main scaling result.
The scaling maps defined here have their roots in the influential paper of Nagel, Stein,
and Wainger \cite{NagelSteinWaingerBallsAndMetricsDefinedByVectorFieldsI} for manifolds without boundary.
The work in \cite{StreetCarnotCaratheodoryBallsOnManifoldsWithBoundary} builds on the results of \cite{StovaStreetCoordinatesAdaptedToVectorFieldsCanonicalCoordinates}, which in turn draw on ideas of Nagel, Stein, and Wainger \cite{NagelSteinWaingerBallsAndMetricsDefinedByVectorFieldsI} and of Tao and Wright \cite{TaoWrightLpImprovingBoundsForAveragesAlongCurves}.

This section is somewhat technical, and on a first reading, the reader may find it helpful to skip
this section and refer back to it as needed.

%% file: scaling_unitscale.tex
The main scaling result (Theorem \ref{Thm::Scaling::MainScalingThm}) allows us to rescale H\"ormander vector fields at a small scale,
so that they are H\"ormander vector fields at the unit scale ``uniformly'' in various parameters.
This uniformity is explained here.

Define \(\Rngeq:=\left\{ (x',x_n)\in \Rnmo\times \R : x_n\geq 0 \right\}\).
Let \(U\) be an open subset of either \(\Rn\) or \(\Rngeq\), where the latter carries its usual topology as a subspace of \(\Rn\).
For \(L\in \Zgeq=\left\{ 0,1,2,\ldots \right\}\), set
\begin{equation*}
    \CbjSpace{L}[U]:=\left\{ f\in \CjSpace{L}[U] : \partial_x^{\alpha} f \text{ is bounded }\forall |\alpha|\leq L \right\},
\end{equation*}
\begin{equation*}
    \CbjNorm{f}{L}[U]:=\sup_{x\in U} \max_{|\alpha|\leq L}  \left| \partial_x^{\alpha} f(x) \right|.
\end{equation*}
Set \(\CbinftySpace[U]:=\bigcap_{L}\CbjSpace{L}[U]\), endowed with the usual Fr\'echet topology.
We similarly define the vector-valued analogs \(\CbjSpace{L}[U][\R^k]\) and \(\CbinftySpace[U][\R^k]\).

For a smooth vector field \(X\) on \(U\), we write \(X\) with respect to the standard basis
\(X(x)=\sum a_j(x)\partial_{x_j}\), and identify \(X\) with the function
\(x\mapsto \left( a_1(x),\ldots, a_n(x) \right)\in \CinftySpace[U][\Rn]\).
It therefore makes sense to ask whether \(X\in \CbinftySpace[U][\Rn]\).

\begin{definition}\label{Defn::Scaling::UnitScale::SpanTangentSpaceUniformly}
    Let \(\sI\) be an index set, and for \(\iota \in \sI\) let \(X_1^\iota,\ldots, X_{q_\iota}^\iota\in \CbinftySpace[U][\Rn]\)
    be smooth vector fields on \(U\). We say \(X_1^\iota,\ldots, X_{q_\iota}^\iota\) \emph{span the tangent space to \(U\),
    uniformly for \(\iota\in \sI\)} if
    \begin{equation*}
        \inf_{\iota\in \sI} \inf_{x\in U} \max_{j_1,\ldots, j_n\in \left\{ 1,\ldots, q_\iota \right\}}
         \left|\det \left(  X_{j_1}^\iota (x) | X_{j_2}^{\iota}(x) | \cdots | X_{j_n}^\iota(x) \right)  \right|>0,
    \end{equation*}
    where \( \left(  X_{j_1}^\iota (x) | X_{j_2}^{\iota}(x) | \cdots | X_{j_n}^\iota(x) \right) \)
    is the \(n\times n\) matrix whose \(k\)-th column is \(X_{j_k}^\iota(x)\) written as a column vector.
\end{definition}

\begin{definition}\label{Defn::Scaling::UnitScale::UniformlyHormander}
    Let \(\sI\) be an index set, and for \(\iota\in \sI\) let \(W_1^\iota,\ldots, W_{r_\iota}^{\iota}\in \CbinftySpace[U][\Rn]\)
    be smooth vector fields on \(U\). We say that \(W_1^\iota,\ldots, W_{r_{\iota}}^{\iota}\)
    are \emph{H\"ormander vector fields on \(U\), uniformly for \(\iota\in \sI\)} if the following conditions hold:
    \begin{itemize}
        \item \(\sup_{\iota\in \sI} r_\iota<\infty\).
        \item \(\left\{ W_j^\iota : \iota\in \sI, 1\leq j\leq r_\iota \right\}\subseteq \CbinftySpace[U][\Rn]\) is a bounded set.
        \item \(\exists m\in \Zg\) (independent of \(\iota\)) such that if \(X_1^\iota,\ldots, X_{q_\iota}^\iota\)
            is an enumeration of the commutators of \(W_1^\iota,\ldots, W_{r_\iota}^\iota\) up to order \(m\),
            then \(X_1^\iota,\ldots, X_{q_\iota}^\iota\) span the tangent space to \(U\),
            uniformly for \(\iota\in \sI\) (see Definition \ref{Defn::Scaling::UnitScale::SpanTangentSpaceUniformly}).
    \end{itemize}
\end{definition}

\begin{remark}
    The main point of Definition \ref{Defn::Scaling::UnitScale::UniformlyHormander} is that the subelliptic estimates used below hold uniformly under suitable uniform hypotheses; a key such hypothesis is that the vector fields are uniformly H\"ormander in the sense of Definition \ref{Defn::Scaling::UnitScale::UniformlyHormander}.
\end{remark}

%% file: scaling_result.tex
Let \(\ManifoldN\) be a smooth manifold with boundary of dimension \(n\geq 2\), and let \(\Vol\)
be a smooth strictly positive density on \(\ManifoldN\).
Let \(W_1,\ldots, W_r\) be H\"ormander vector fields on a manifold with boundary \(\ManifoldN\).
That is, \(W_1,\ldots, W_r\) are smooth vector fields on \(\ManifoldN\) satisfying H\"ormander's condition:
the Lie algebra generated by \(W_1,\ldots, W_r\) spans the tangent space at every point.
Let \(\ManifoldNOne\subseteq \ManifoldN\) be an open subset (viewed as a submanifold with boundary).

Let \(X\) belong to the \(\CinftySpace[\ManifoldNOne][\R]\) module generated by \(W_1,\ldots, W_r\),
and assume that \(X(\xi)\not \in \TangentSpaceBoundaryN{\xi}\), \(\forall \xi\in \BoundaryNOne\).
In particular, this implies every point of \(\BoundaryNOne\) is \(W\)-non-characteristic
(see Definition \ref{Defn::Intro::NonChar}).

Let \(b_j\in \CinftySpace[\BoundaryNOne][\R]\) be the unique function such that
\(W_j(\xi)-b_j(\xi)X(\xi)\in \TangentSpaceBoundaryN{\xi}\), \(\forall \xi\in\BoundaryNOne\).
Extend \(b_j\) to a function \(\bh_j\in \CinftySpace[\ManifoldNOne][\R]\).
Set
\begin{equation}\label{Eqn::Scaling::Result::DefineV}
    V_0:=X,\quad V_j:=W_j-\bh_j X,\quad 1\leq j\leq r,
\end{equation}
and \(V=(V_0,V_1,\ldots, V_r)\).
Let \(\BW{\xi}{\delta}\) and \(\BV{\xi}{\delta}\)
be as in \eqref{Eqn::Intro::DefineCCBall},
and let \(\MetricW\) and \(\MetricV\) be as in \eqref{Eqn::Intro::DefineCCMetric}.

Let \(\Cuben{\delta}:=\left\{ x\in \Rn : |x|_{\infty}<\delta \right\}\)
and \(\Cubengeq{\delta}:=\left\{ (x',x_n)\in \Cuben{\delta} : x_n\geq 0\right\}\).
We identify \(\Cubenmo{\delta}\) with the subset \(\left\{ (x',0)\in \Cubengeq{\delta} \right\}\) of \(\Cubengeq{\delta}\).

Fix \(\CompactOne\Subset \ManifoldNOne\) compact. In the following theorem, there are parameters \(\xi,\eta\in \CompactOne\) and \(\delta>0\).
We write \(A\lesssim B\) for \(A\leq CB\), where \(C\geq 0\) is independent of \(\xi\), \(\eta\), and \(\delta\).
We write \(A\approx B\) for \(A\lesssim B\) and \(B\lesssim A\).

The following theorem introduces scaling maps on \(\ManifoldN\) adapted to the Carnot--Carath\'eodory metric \(\MetricV\).
Roughly, the maps \(\Phi_{\xi,\delta}\) are scaling maps away from the boundary (relative to the scale \(\delta\)), while \(\Psi_{\xi,\delta}\)
are scaling maps near the boundary.

\begin{theorem}\label{Thm::Scaling::MainScalingThm}
    The following statements hold:
    \begin{enumerate}[(a)]
        \item\label{Item::Scaling::MainScalingThm::AreMetrics} \(\MetricW\) and \(\MetricV\) are metrics on each connected component of \(\ManifoldNOne\),
            and the topology generated by either of them agrees with the topology on \(\ManifoldNOne\) as a manifold.
        \item\label{Item::Scaling::MainScalingThm::MetricsEquiv} \(\MetricV[\xi][\eta]\wedge 1\approx \MetricW[\xi][\eta]\wedge 1\), \(\forall \xi,\eta\in \CompactOne\).
    \end{enumerate}
    Fix \(\Omega\Subset \ManifoldNOne\) open and relatively compact with \(\CompactOne\Subset \Omega\).
    There exists \(\delta_1\in (0,1]\)
     such that the following hold:
     \begin{enumerate}[resume*]
        \item\label{Item::Scaling::MainScalingThm::VolsEquiv} \(\forall \xi\in \CompactOne\), \(\delta\in (0,\delta_1]\), \(\Vol[\BV{\xi}{\delta}]\approx \Vol[\BW{\xi}{\delta}]\).
        \item\label{Item::Scaling::MainScalingThm::Doubling} \(\forall \xi\in \CompactOne\), \(\forall \delta\in (0,\delta_1]\), \(\Vol[\BV{\xi}{2\delta}]\lesssim \Vol[\BV{\xi}{\delta}]\).
        \item\label{Item::Scaling::MainScalingThm::WedgeInsideVsOutsideVol} \(\forall \xi\in \CompactOne\), \(\forall \delta\in [\delta_1,\infty)\), \(\Vol[\BV{\xi}{\delta}]\wedge 1\approx 1 \approx \Vol[\BV{\xi}{\delta_1}]\).
        \item\label{Item::Scaling::MainScalingThm::VolBoundedBelowOnCpt} For each fixed \(\delta_0\in (0,\delta_1]\), 
        \begin{equation*}
            \Vol[\BV{\xi}{\delta_0}]\approx_{\delta_0} 1,\quad \forall \xi\in \CompactOne,
        \end{equation*}
        where the implicit constants depend on \(\delta_0\) but not on \(\xi\).
        \item\label{Item::Scaling::MainScalingThm::BallInsideOmega::Union} \(\overline{\bigcup_{\xi\in \CompactOne,\delta\in (0,\delta_1]} \BV{\xi}{\delta}}\Subset \Omega\).
     \end{enumerate}
     There exist two sets, \(\SSetOne,\SSetTwo\subseteq \CompactOne\times (0,\delta_1]\),
     such that \(\SSetOne\cup\SSetTwo = \CompactOne\times (0,\delta_1]\);
     for \((\xi,\delta)\in \SSetTwo\) 
     there exists \(\Psi_{\xi,\delta}:\Cubengeq{1}\rightarrow \BV{\xi}{\delta/2}\);
     and for \((\xi,\delta)\in \SSetOne\) there exists 
     \(\Phi_{\xi,\delta}:\Cuben{1}\rightarrow \BV{\xi}{\delta/2}\)
     such that the following hold:
     \begin{enumerate}[resume*]
        \item\label{Item::Scaling::MainScalingThm::SSetTwoContainsBoundary} \(\left\{ (\xi, \delta) : \xi\in \CompactOne\cap \BoundaryN, \delta\in (0,\delta_1] \right\}\subseteq \SSetTwo\).
        \item\label{Item::Scaling::MainScalingThm::xiInImage} \(\forall (\xi,\delta)\in \SSetOne\), \(\Phi_{\xi,\delta}(0)=\xi\).
            \(\forall (\xi,\delta)\in \SSetTwo\), \(\exists u_n\in [0,1/8)\) with \(\Psi_{\xi,\delta}(0,u_n)=\xi\);
            when \(\xi\in \BoundaryN\), \(u_n=0\).
            
        \item\label{Item::Scaling::MainScalingThm::Diffeomorphism} \(\forall (\xi,\delta)\in \SSetOne\), \(\Phi_{\xi,\delta}\left( \Cuben{1} \right)\subseteq \Omega\)
            is open and \(\Phi_{\xi,\delta}:\Cuben{1}\xrightarrow{\sim} \Phi_{\xi,\delta}\left( \Cuben{1} \right)\)
            is a \(\CinftySpace\)-diffeomorphism.
            \(\forall(\xi,\delta)\in \SSetTwo\), \(\Psi_{\xi,\delta}\left( \Cubengeq{1} \right)\subseteq \Omega\)
            is open and \(\Psi_{\xi,\delta}:\Cubengeq{1}\xrightarrow{\sim}\Psi_{\xi,\delta}\left( \Cubengeq{1} \right)\)
            is a \(\CinftySpace\)-diffeomorphism.
        \item\label{Item::Scaling::MainScalingThm::Containments} \(\exists \gamma_0\in (0,1]\) such that the following hold:
            \begin{enumerate}[label=(\alph{enumi}.\roman*)]
                \item\label{Item::Scaling::MainScalingThm::Containments::One} \(\forall (\xi,\delta)\in \SSetOne\),
                    \begin{equation*}
                        \BV{\xi}{\gamma_0 \delta} 
                        \subseteq \Phi_{\xi,\delta}\left( \Cuben{1/4} \right)
                        \subseteq \Phi_{\xi,\delta}\left( \Cuben{1} \right)
                        \subseteq \BV{\xi}{\delta/2}.
                    \end{equation*}
                \item\label{Item::Scaling::MainScalingThm::Containments::Two} \(\forall (\xi,\delta)\in \SSetTwo\),
                    \begin{equation*}
                        \BV{\xi}{\gamma_0 \delta} 
                        \subseteq \Psi_{\xi,\delta}\left( \Cubengeq{1/4} \right)
                        \subseteq \Psi_{\xi,\delta}\left( \Cubengeq{1} \right)
                        \subseteq \BV{\xi}{\delta/2}.
                    \end{equation*}
            \end{enumerate}
     \end{enumerate}
     For \((\xi,\delta)\in \SSetOne\), set \(V_j^{\xi,\delta,1}:=\Phi_{\xi,\delta}^{*} \delta V_j\)
     (which is a smooth vector field on \(\Cuben{1}\))
     and define \(h_{\xi,\delta,1}\in \CinftySpace[\Cuben{1}][(0,\infty)]\) by
     \(\Phi_{\xi,\delta}^{*} d\Vol = h_{\xi,\delta,1}(u) \Vol[\BV{\xi}{\delta}] du\).
     Similarly, for  \((\xi,\delta)\in \SSetTwo\), set \(V_j^{\xi,\delta,2}:=\Psi_{\xi,\delta}^{*} \delta V_j\)
     (which is a smooth vector field on \(\Cubengeq{1}\))
     and define \(h_{\xi,\delta,2}\in \CinftySpace[\Cubengeq{1}][(0,\infty)]\) by
     \(\Psi_{\xi,\delta}^{*} d\Vol = h_{\xi,\delta,2}(u) \Vol[\BV{\xi}{\delta}] du\).
     Then,
     \begin{enumerate}[resume*]
        \item\label{Item::Scaling::MainScalingThm::PullBackV0IsDxn} \(\exists D\geq 1\), \(\forall (\xi,\delta)\in \SSetTwo\), \(\Psi_{\xi,\delta}^{*} \delta V_0=\pm D \partial_{x_n}\). In other words,
            \(V_0^{\xi,\delta,2}=\pm D \partial_{x_n}\), \(\forall (\xi,\delta)\in \SSetTwo\), where \(D\geq 1\) does not depend on \((\xi,\delta)\).
        \item\label{Item::Scaling::MainScalingThm::PullBackVjDoesntContainDxn} \(\forall (\xi,\delta)\in \SSetTwo\), \(\forall 1\leq j\leq r\), \(\forall u\in \Cubenmo{1}\), \(\Psi_{\xi,\delta}^{*} \delta V_j (u',0)\)
            does not have a \(\partial_{x_n}\) component.
            In other words, \(V_j^{\xi,\delta,2}(x',0)\) does not have a \(\partial_{x_n}\) component for \((\xi,\delta)\in \SSetTwo\) and \(x'\in \Cubenmo{1}\).

        \item\label{Item::Scaling::MainScalingThm::UniformlyHormander} \(V_0^{\xi,\delta,1},V_1^{\xi,\delta,1},\ldots, V_r^{\xi,\delta,1}\) 
            are H\"ormander vector fields on \(\Cuben{1}\), uniformly for \((\xi,\delta)\in \SSetOne\)
            (see Definition \ref{Defn::Scaling::UnitScale::UniformlyHormander}).
            Similarly, \(V_0^{\xi,\delta,2},V_1^{\xi,\delta,2},\ldots, V_r^{\xi,\delta,2}\) 
            are H\"ormander vector fields on \(\Cubengeq{1}\), uniformly for \((\xi,\delta)\in \SSetTwo\).
        \item\label{Item::Scaling::MainScalingThm::DensityUniformlyBounded} \(\left\{ h_{\xi,\delta,1} : (\xi,\delta)\in \SSetOne \right\}\subset \CbinftySpace[\Cuben{1}]\)
            is a bounded set and
            \(\left\{ h_{\xi,\delta,2} : (\xi,\delta)\in \SSetTwo \right\}\subset \CbinftySpace[\Cubengeq{1}]\)
            is a bounded set.
        \item\label{Item::Scaling::MainScalingThm::DensityUniformlyPos} We have:
            \begin{equation*}
                \inf_{(\xi,\delta)\in \SSetOne} \inf_{u\in \Cuben{1}} h_{\xi,\delta,1}(u)>0,
            \end{equation*}
            \begin{equation*}
                \inf_{(\xi,\delta)\in \SSetTwo} \inf_{u\in \Cubengeq{1}} h_{\xi,\delta,2}(u)>0.
            \end{equation*}
     \end{enumerate}
\end{theorem}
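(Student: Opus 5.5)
The plan is to derive Theorem \ref{Thm::Scaling::MainScalingThm} from the main results of \cite{StreetCarnotCaratheodoryBallsOnManifoldsWithBoundary}, of which it is essentially a restatement. The work is to check that the hypotheses there are met by the list \(V=(V_0,\ldots,V_r)\) of \eqref{Eqn::Scaling::Result::DefineV}, and that the conclusions can be rephrased as stated.

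\textbf{Step 1: the vector fields \(V\).} Since \(X\) lies in the \(\CinftySpace[\ManifoldNOne][\R]\)-module generated by \(W_1,\ldots,W_r\), each \(V_j\) is a smooth combination of the \(W_k\). Conversely, \(W_j=V_j+\bh_j V_0\). Hence the two lists generate the same module, so \(V\) also satisfies H\"ormander's condition. Two further facts hold by construction:
\begin{itemize}
\item \(V_0=X\) is transverse to \(\BoundaryNOne\);
\item \(V_j|_{\BoundaryNOne}\) is tangent to the boundary for \(j\geq 1\), by the choice of \(b_j\).
\end{itemize}
This is exactly the normal form required by \cite{StreetCarnotCaratheodoryBallsOnManifoldsWithBoundary}.

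\textbf{Step 2: items \ref{Item::Scaling::MainScalingThm::AreMetrics} and \ref{Item::Scaling::MainScalingThm::MetricsEquiv}.} Because the two lists generate the same module, and the coefficients are bounded on compact sets, any admissible curve for one list at scale \(\delta\) is admissible for the other at scale \(C\delta\), at least for \(\delta\lesssim 1\). Here \(C\) is uniform on a neighborhood of \(\CompactOne\). This gives \(\MetricV\wedge 1\approx \MetricW\wedge 1\) on \(\CompactOne\).

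The metric and topology statement of item \ref{Item::Scaling::MainScalingThm::AreMetrics} is quoted from \cite[Theorem \ref*{CC::Thm::Metrics::Results::GivesUsualTopology}]{StreetCarnotCaratheodoryBallsOnManifoldsWithBoundary}, applied on \(\ManifoldNOne\).

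The ball comparison \(\BW{\xi}{c\delta}\subseteq \BV{\xi}{\delta}\subseteq \BW{\xi}{C\delta}\) also reduces item \ref{Item::Scaling::MainScalingThm::VolsEquiv} to doubling (item \ref{Item::Scaling::MainScalingThm::Doubling}). Doubling itself follows from the scaling maps, as explained in Step 3.

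\textbf{Step 3: the scaling maps.} For the maps themselves I would invoke the main scaling theorem of \cite{StreetCarnotCaratheodoryBallsOnManifoldsWithBoundary}. That theorem builds on the canonical coordinates of \cite{StovaStreetCoordinatesAdaptedToVectorFieldsCanonicalCoordinates}, and I would split into cases according to the distance to the boundary:
\begin{itemize}
\item \(\SSetOne\) consists of those \((\xi,\delta)\) with \(\BV{\xi}{\delta/2}\) far from \(\BoundaryN\) relative to \(\delta\). On these, interior Nagel--Stein--Wainger coordinates \(\Phi_{\xi,\delta}\) apply.
\item \(\SSetTwo\) consists of the remaining pairs. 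For these, we take \(\Psi_{\xi,\delta}(u',u_n)=e^{\pm u_n D\delta V_0}\phi_{\xi',\delta}(u')\).
\end{itemize}
In the second case, \(\phi_{\xi',\delta}\) are boundary scaling maps for the tangential fields \(V_j|_{\BoundaryN}\) at a boundary point \(\xi'\) near \(\xi\). This exponential-flow form yields item \ref{Item::Scaling::MainScalingThm::PullBackV0IsDxn} immediately, and item \ref{Item::Scaling::MainScalingThm::PullBackVjDoesntContainDxn} follows from the tangency in Step 1.

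The remaining items come from the cited theorem:
\begin{itemize}
\item the uniform H\"ormander property, item \ref{Item::Scaling::MainScalingThm::UniformlyHormander};
\item the bounds on the Jacobian densities, items \ref{Item::Scaling::MainScalingThm::DensityUniformlyBounded} and \ref{Item::Scaling::MainScalingThm::DensityUniformlyPos}, which give \(\Vol[\BV{\xi}{\delta}]\approx|\det d\Psi_{\xi,\delta}(0)|\) and hence doubling;
\item the containments in item \ref{Item::Scaling::MainScalingThm::Containments}.
\end{itemize}
Items \ref{Item::Scaling::MainScalingThm::WedgeInsideVsOutsideVol}--\ref{Item::Scaling::MainScalingThm::BallInsideOmega::Union} follow by compactness once \(\delta_1\) is chosen small enough that all balls stay inside \(\Omega\).

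\textbf{Main obstacle.} The hard step is the uniformity across the transition between the interior and boundary regimes. Specifically, one must show that for \(\xi\) at distance comparable to \(\delta\) from \(\BoundaryN\), the flow parameter \(u_n\in[0,1/8)\) can be chosen so that \(\xi\) lies in the image, and that the containment \(\BV{\xi}{\gamma_0\delta}\subseteq\Psi_{\xi,\delta}(\Cubengeq{1/4})\) holds with a uniform \(\gamma_0\). My first attempt would be to use that \(V_0\)-flows commute with the tangential structure at the boundary up to controlled errors. I would then define \(\SSetTwo\) as the set where \(\MetricV\)-distance to \(\BoundaryN\) is at most \(\delta/16\), say, and take \(\SSetOne\) to be its complement, possibly with an overlap.
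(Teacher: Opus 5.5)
Your Steps 1 and 2 are essentially fine and differ only mildly from the paper. For \ref{Item::Scaling::MainScalingThm::MetricsEquiv}, the paper quotes the cited local-weak-equivalence theorem plus a compactness lemma instead of comparing curves by hand. Your comparison additionally needs a first-exit argument, using \ref{Item::Scaling::MainScalingThm::AreMetrics}, to keep curves in a compact set where the change-of-frame coefficients are bounded. For \ref{Item::Scaling::MainScalingThm::VolsEquiv}, the paper applies the cited scaling theorem to both $W$ and $V$ and compares the resulting volume quantities $\Lambda_W\approx\Lambda_V$, rather than using ball inclusions plus doubling.

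The genuine gap is Step 3. You introduce your own maps $\Psi_{\xi,\delta}(u',u_n)=e^{\pm u_n D\delta V_0}\phi_{\xi',\delta}(u')$ and your own split of $\CompactOne\times(0,\delta_1]$ by $\MetricV$-distance to $\BoundaryN$. You then attribute \ref{Item::Scaling::MainScalingThm::Diffeomorphism}, \ref{Item::Scaling::MainScalingThm::Containments}, \ref{Item::Scaling::MainScalingThm::UniformlyHormander}, \ref{Item::Scaling::MainScalingThm::DensityUniformlyBounded} and \ref{Item::Scaling::MainScalingThm::DensityUniformlyPos} to the cited theorem, but that theorem concerns its own maps, not yours. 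Establishing these uniform properties for a flow-out chart would amount to reproving the main result of \cite{StreetCarnotCaratheodoryBallsOnManifoldsWithBoundary}. This includes reaching $\xi$ at height $u_n<1/8$ with a uniform $\gamma_0$, even though a $\MetricV$-nearest boundary point need not lie on the $V_0$-orbit through $\xi$. Your ``main obstacle'' paragraph leaves all of this open. (Also, $e^{\pm u_n D\delta V_0}$ gives $\Psi_{\xi,\delta}^{*}\delta V_0=\pm D^{-1}\partial_{x_n}$, not $\pm D\partial_{x_n}$.)

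The missing idea is that neither a new construction nor any transition analysis is needed, only a reparametrization of the cited maps. Embed $\ManifoldNOne$ in a manifold $\ManifoldM$ without boundary, extend $V$ and $\Vol$, and apply the cited scaling theorem at scale $\delta/2$. This yields a single family $\psi_{\xi,\delta}$ defined on $\{x\in\Cuben{1}: x_n\geq c(\xi,\delta)\}$, where $c(\xi,\delta)\in[-1,0]$, $c=0$ when $\xi\in\BoundaryN$, and $\psi_{\xi,\delta}(0)=\xi$. The same theorem supplies uniform containments, density bounds, H\"ormander bounds, and some $V_{j_0}$ pulled back to $\pm D\partial_{x_n}$. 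Now split by a threshold on $c$ instead of by metric distance:
\begin{itemize}
\item put $(\xi,\delta)\in\SSetOne$ iff $c(\xi,\delta)\leq -1/8$, and set $\Phi_{\xi,\delta}(x)=\psi_{\xi,\delta}(x/8)$; this is legitimate because $\Cuben{1/8}$ lies in the domain;
\item otherwise set $\Psi_{\xi,\delta}(x',x_n)=\psi_{\xi,\delta}(x',x_n+c(\xi,\delta))$, so that $u_n=-c(\xi,\delta)\in[0,1/8)$ in \ref{Item::Scaling::MainScalingThm::xiInImage}.
\end{itemize}
The remaining items then follow quickly.
\begin{itemize}
\item \ref{Item::Scaling::MainScalingThm::SSetTwoContainsBoundary} holds since $c=0$ at boundary points. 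The paper argues equivalently that a diffeomorphism of $\Cuben{1}$ onto an open set cannot send $0$ to a boundary point.
\item \ref{Item::Scaling::MainScalingThm::Containments} comes from the cited containments with $\eta_1=1/32$.
\item \ref{Item::Scaling::MainScalingThm::PullBackVjDoesntContainDxn} follows from tangency of $V_j$, $j\geq 1$, together with $\Psi_{\xi,\delta}(\Cubenmo{1})\subseteq\BoundaryN$.
\item \ref{Item::Scaling::MainScalingThm::PullBackV0IsDxn} then follows because \ref{Item::Scaling::MainScalingThm::PullBackVjDoesntContainDxn} forces $j_0=0$.
\item \ref{Item::Scaling::MainScalingThm::DensityUniformlyBounded} and \ref{Item::Scaling::MainScalingThm::DensityUniformlyPos} follow by renormalizing the cited densities via $\Vol[\BV{\xi}{\delta}]\approx\Lambda_V(\xi,\delta)\approx\Lambda_V(\xi,\delta/2)$.
\end{itemize}
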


%% file: scaling_proof.tex
In this section, we present the proof of Theorem \ref{Thm::Scaling::MainScalingThm}.
The result is largely contained in \cite{StreetCarnotCaratheodoryBallsOnManifoldsWithBoundary}, so it is merely
a matter of translating the results there into the setting we require.
The reader wishing to follow the proof should have a copy of \cite{StreetCarnotCaratheodoryBallsOnManifoldsWithBoundary}
at hand.

For \ref{Item::Scaling::MainScalingThm::AreMetrics} and
\ref{Item::Scaling::MainScalingThm::MetricsEquiv}, we use 
\cite[Theorems \ref*{CC::Thm::Metrics::Results::GivesUsualTopology} and \ref*{CC::Thm::Metrics::Results::LocalWeakEquivImpliesEquiv}]{StreetCarnotCaratheodoryBallsOnManifoldsWithBoundary}.
Indeed, we take \(\ManifoldN\) in those theorems to be \(\ManifoldNOne\) and \((W,d)=\left\{ (W_1,1),\ldots, (W_r,1) \right\}\).
With these choices, every point of \(\BoundaryNOne\) is \((W,d)\)-non-characteristic
(see \cite[Definition \ref*{CC::Defn::BasicDefns::NoncharteristicPoint}]{StreetCarnotCaratheodoryBallsOnManifoldsWithBoundary})
since it is \(W\)-non-characteristic by the choice of \(X\) and \(\ManifoldNOne\).
In short, \(\mathfrak{N}_{(W,d)}^{\mathrm{nc}}\) from 
\cite[Theorem \ref*{CC::Thm::Metrics::Results::GivesUsualTopology}]{StreetCarnotCaratheodoryBallsOnManifoldsWithBoundary} 
equals \(\ManifoldNOne\).
From here, \ref{Item::Scaling::MainScalingThm::AreMetrics} for \(W\) follows immediately from 
\cite[Theorem \ref*{CC::Thm::Metrics::Results::GivesUsualTopology}]{StreetCarnotCaratheodoryBallsOnManifoldsWithBoundary}.
The same proof, with \(W\) replaced by \(V\), gives the same result for \(V\).

For \ref{Item::Scaling::MainScalingThm::MetricsEquiv}, we set
\((Z,\Zd)=\left\{ (V_0,1),(V_1,1),\ldots, (V_r,1) \right\}\).
The construction of \(V\) implies \((W,d)\) and \((Z,\Zd)\) are locally strongly equivalent
and hence locally weakly equivalent in the sense of
\cite[Definition \ref*{CC::Defn::BasicDefns::StrongWeakEquivalnce}]{StreetCarnotCaratheodoryBallsOnManifoldsWithBoundary}.
From here, \cite[Theorem \ref*{CC::Thm::Metrics::Results::LocalWeakEquivImpliesEquiv}]{StreetCarnotCaratheodoryBallsOnManifoldsWithBoundary}
implies \(\MetricW\) and \(\MetricV\) are locally Lipschitz equivalent on \(\ManifoldNOne\)
(see \cite[Definition \ref*{CC::Defn::Metrics::Intro::LocallyEquivalent}]{StreetCarnotCaratheodoryBallsOnManifoldsWithBoundary}).
Thus, \(\MetricW\) and \(\MetricV\) are locally Lipschitz equivalent extended metrics on \(\ManifoldNOne\), so \(\MetricW\wedge 1\) and \(\MetricV\wedge 1\) are locally Lipschitz equivalent metrics there. By \ref{Item::Scaling::MainScalingThm::AreMetrics}, \(\CompactOne\) is compact with respect to both \(\MetricW\wedge 1\) and \(\MetricV\wedge 1\); hence, \cite[Lemma \ref*{CC::Lemma::Metrics::Lemmas::CompactLocalEquivMeansEquiv}]{StreetCarnotCaratheodoryBallsOnManifoldsWithBoundary} implies that they are Lipschitz equivalent on \(\CompactOne\), establishing \ref{Item::Scaling::MainScalingThm::MetricsEquiv}.

The remainder of the result follows by applying \cite[Theorem \ref*{CC::Thm::Scaling::MainResult}]{StreetCarnotCaratheodoryBallsOnManifoldsWithBoundary}, as we now explain.
Let \(\ManifoldM\) be a smooth manifold without boundary, such that \(\ManifoldNOne\hookrightarrow \ManifoldM\) 
is a closed, embedded submanifold with boundary of codimension \(0\), and such that \(W_1,\ldots, W_r\)
and \(V_0,\ldots, V_r\) extend to H\"ormander vector fields \(\Wh_1,\ldots, \Wh_r\) and
\(\Vh_0,\ldots, \Vh_r\) on \(\ManifoldM\),
and such that \(\Vol\) extends to a smooth strictly positive density \(\Volh\) on \(\ManifoldM\)
 (such a choice of \(\ManifoldM\), \(\Wh\), \(\Vh\), and \(\Volh\)
always exists; see \cite[Remark \ref*{CC::Rmk::Scaling::MainResult::DontNeedAmbientMfld}]{StreetCarnotCaratheodoryBallsOnManifoldsWithBoundary}).

Fix \(\Omegah\Subset \ManifoldM\) open and relatively compact with 
\(\CompactOne\Subset\Omegah\cap \ManifoldNOne\Subset\Omega\).
Because \(\Vh_0,\ldots, \Vh_r\) and \(\Wh_1,\ldots, \Wh_r\)  both satisfy H\"ormander's condition on \(\ManifoldM\),
there exists \(m\in \Zg\) such that
both \(\Wh_1,\ldots, \Wh_r\) and \(\Vh_0,\ldots, \Vh_r\) satisfy H\"ormander's condition of order \(m\)
on \(\OmegahClosure\):
the commutators of \(\Vh_0,\ldots, \Vh_r\) up to order \(m\) span the tangent space
at every point of \(\OmegahClosure\), and the commutators of \(\Wh_1,\ldots, \Wh_r\) up to order \(m\)
span the tangent space at every point of \(\OmegahClosure\).

In 
\cite[Theorem \ref*{CC::Thm::Scaling::MainResult}]{StreetCarnotCaratheodoryBallsOnManifoldsWithBoundary},
\(W_1,\ldots, W_r\) are assigned formal degrees; here we take all these formal degrees equal to \(1\).
Let \((W,1)=\left\{ \left( W_1,1 \right),\ldots, (W_r,1) \right\}\), \((V,1)=\left\{ (V_0,1),(V_1,1),\ldots, (V_r,1) \right\}\),
and similarly define \((\Wh,1)\) and \((\Vh,1)\).

Recursively define vector fields with formal degrees as follows:
\begin{itemize}
    \item \(W_1,\ldots, W_r\) have degree \(1\).
    \item if \(X_1\) has degree \(d_1\) and \(X_2\) has degree \(d_2\), assign \([X_1,X_2]\) degree \(d_1+d_2\).
\end{itemize}
Let \((X,d)=\left\{ (X_1,d_1),\ldots, (X_q,d_q) \right\}\) be an enumeration of all such vector fields
with formal degrees \(\leq m\). Since \(W_1,\ldots, W_r\) satisfy H\"ormander's condition of order \(m\)
on \(\OmegahClosure\cap \ManifoldNOne\), \(X_1,\ldots, X_q\) span the tangent space at every point of \(\OmegahClosure\cap \ManifoldNOne\).
Similarly, define \((Y,e)=\left\{ (Y_1,e_1),\ldots, (Y_s,e_s) \right\}\) by replacing \(W_1,\ldots,W_r\)
with \(V_0,\ldots, V_r\) throughout, so that \(Y_1,\ldots, Y_s\) also span the tangent space at every point of \(\OmegahClosure\cap \ManifoldNOne\).
Define 
\begin{equation*}
    \Lambda_W(\xi,\delta):=\max_{i_1,\ldots, i_n\in \left\{ 1,\ldots, q \right\}}
        \Vol(\xi)\left( \delta^{d_{i_1}}X_{i_1}(\xi),\ldots, \delta^{d_{i_n}}X_{i_n}(\xi) \right),
\end{equation*}
and similarly define \(\Lambda_V(\xi,\delta)\) by replacing \((X,d)\) with \((Y,e)\).
Note that, by the definition of \(V_0,\ldots, V_r\) we have\footnote{Here, we are using that \(X\) in the definition of \(V\)
lies in the \(\CinftySpace[\ManifoldNOne][\R]\) module generated by \(W_1,\ldots, W_r\). This \(X\) is not to be confused
with \(X_1,\ldots, X_q\) which are different vector fields used only temporarily in this proof.}
\begin{equation}\label{Eqn::Scaling::Proof::LambdasEquiv}
    \Lambda_W(\xi,\delta)\approx \Lambda_V(\xi,\delta),\quad \forall \xi\in \CompactOne,\: \delta\in (0,1].
\end{equation}

We turn to applying \cite[Theorem \ref*{CC::Thm::Scaling::MainResult}]{StreetCarnotCaratheodoryBallsOnManifoldsWithBoundary}.
In fact, we apply it twice: once with \((W,1)\) and once with \((V,1)\).
In both cases, take \(\ManifoldM\) and \(\Volh\) as chosen above, use \(\ManifoldNOne\) for
both \(\mathfrak{N}\) and
\(\mathfrak{N}_{(W,d)}^{\mathrm{nc}}\) in that theorem,
replace \(\Omega\) in that theorem with \(\Omegah\),
and use \(\CompactOne\) in place of \(\Compact\).
Here, it is important that every point of \(\BoundaryNOne\) is \(W\)-non-characteristic (equivalently, \(V\)-non-characteristic);
see Definition \ref{Defn::Intro::NonChar}. This is what allows us to use \(\ManifoldNOne\)
for \(\mathfrak{N}_{(W,d)}^{\mathrm{nc}}\).

Let \(\delta_1^W,\delta_1^V\in (0,1]\) be the quantities denoted \(\delta_1\) in  
\cite[Theorem \ref*{CC::Thm::Scaling::MainResult}]{StreetCarnotCaratheodoryBallsOnManifoldsWithBoundary}
when applied to \((W,1)\) and \((V,1)\), respectively.
Set \(\delta_1:=\min\left\{ \delta_1^W, \delta_1^V \right\}\in (0,1]\).

\cite[Theorem \ref*{CC::Thm::Scaling::MainResult}\ref*{CC::Item::Scaling::MainResult::EstimateVolume}]{StreetCarnotCaratheodoryBallsOnManifoldsWithBoundary}
shows that, for every \(\xi\in \CompactOne\) and every \(\delta\in (0,\delta_1]\),
\begin{equation}\label{Eqn::Scaling::Proof::VolIsLambda}
    \Vol[\BW{\xi}{\delta}]\approx \Lambda_W(\xi,\delta),\quad \Vol[\BV{\xi}{\delta}]\approx \Lambda_V(\xi,\delta).
\end{equation}
Combining this with \eqref{Eqn::Scaling::Proof::LambdasEquiv}
establishes \ref{Item::Scaling::MainScalingThm::VolsEquiv}.

The remainder of Theorem \ref{Thm::Scaling::MainScalingThm} does not involve the vector fields \(W_1,\ldots, W_r\). Henceforth, we work only with \(V_0,V_1,\ldots, V_r\) and apply \cite[Theorem \ref*{CC::Thm::Scaling::MainResult}]{StreetCarnotCaratheodoryBallsOnManifoldsWithBoundary} to \((V,1)\).

\ref{Item::Scaling::MainScalingThm::Doubling} follows from
\cite[Theorem \ref*{CC::Thm::Scaling::MainResult}\ref*{CC::Item::Scaling::MainResult::Doubling}]{StreetCarnotCaratheodoryBallsOnManifoldsWithBoundary}.

Fix \(\delta_0\in (0,\delta_1]\).
Using \cite[Theorem \ref*{CC::Thm::Scaling::MainResult}\ref*{CC::Item::Scaling::MainResult::EstimateVolume}]{StreetCarnotCaratheodoryBallsOnManifoldsWithBoundary}, we have \(\Vol[\BV{\xi}{\delta_0}]\approx \Lambda_V(\xi,\delta_0)\approx_{\delta_0}1\) for every \(\xi\in \CompactOne\), where the \(\approx_{\delta_0} 1\) follows from the formula for \(\Lambda_V\) and the compactness of \(\CompactOne\).
This establishes \ref{Item::Scaling::MainScalingThm::VolBoundedBelowOnCpt}.

For \ref{Item::Scaling::MainScalingThm::WedgeInsideVsOutsideVol}, using 
\ref{Item::Scaling::MainScalingThm::VolBoundedBelowOnCpt}
we have
\(\Vol[\BV{\xi}{\delta_1}]\approx  1\), 
\(\forall \xi\in \CompactOne\). In particular,
for \(\delta\geq \delta_1\), we have \(\Vol[\BV{\xi}{\delta}]\gtrsim 1\).
From here, \ref{Item::Scaling::MainScalingThm::WedgeInsideVsOutsideVol} follows.

By \cite[Theorem \ref*{CC::Thm::Scaling::MainResult}\ref*{CC::Item::Scaling::MainResult::InOmega}]{StreetCarnotCaratheodoryBallsOnManifoldsWithBoundary},
we have \(B_{(\Vh,1)}(\xi,\delta)\subseteq \Omegah\), \(\forall \xi\in \CompactOne\), \(\delta\in (0,\delta_1]\).
It follows directly from the definitions\footnote{\(\Vh_0,\ldots, \Vh_r\) are vector fields on \(\ManifoldM\),
so the ball \(B_{(\Vh,1)}(\xi,\delta)\) is defined as a subset of \(\ManifoldM\), whereas
\(\BV{\xi}{\delta}\) is defined as a subset of \(\ManifoldNOne\).}
that \(\BV{\xi}{\delta}\subseteq B_{(\Vh,1)}(\xi,\delta)\cap \ManifoldNOne\).
Thus, \(\BV{\xi}{\delta}\subseteq \Omegah\cap \ManifoldNOne\Subset \Omega\), \(\forall \xi\in \CompactOne\), \(\delta\in (0,\delta_1]\),
establishing \ref{Item::Scaling::MainScalingThm::BallInsideOmega::Union}.

Let \(\psi_{\xi,\delta}\) be the map \(\Psi_{\xi,\delta/2}\) from
\cite[Theorem \ref*{CC::Thm::Scaling::MainResult}]{StreetCarnotCaratheodoryBallsOnManifoldsWithBoundary},
and let \(c_0(\xi,\delta)\in [-1,0]\) be as in that theorem;
in particular, by
\cite[Theorem \ref*{CC::Thm::Scaling::MainResult}\ref*{CC::Item::Scaling::MainResult::Existencec0}]{StreetCarnotCaratheodoryBallsOnManifoldsWithBoundary},
when \(\xi\in \BoundaryN\), \(c_0(\xi,\delta)=0\).
Define \(c(\xi,\delta):=c_0(\xi,\delta/2)\), so that \(c(\xi,\delta)\) is associated with \(\psi_{\xi,\delta}\) just as \(c_0(\xi,\delta)\) is associated with the map \(\Psi_{\xi,\delta}\) from \cite[Theorem \ref*{CC::Thm::Scaling::MainResult}]{StreetCarnotCaratheodoryBallsOnManifoldsWithBoundary}.
Set, for \(r>0\),
\begin{equation*}
    \Cubengeqc{r}:=\left\{ (x',x_n)\in \Cuben{r} : x_n\geq c(\xi,\delta) \right\}.
\end{equation*}
Note that, even though the notation does not explicitly mention it,
\(\Cubengeqc{r}\) depends on \(\xi\) and \(\delta\).
As described in 
\cite[Theorem \ref*{CC::Thm::Scaling::MainResult}\ref*{CC::Item::Scaling::MainResult::Containments}]{StreetCarnotCaratheodoryBallsOnManifoldsWithBoundary},
\(\psi_{\xi,\delta}:\Cubengeqc{1}\rightarrow \BV{\xi}{\delta}\).

Set
\begin{equation*}
    \SSetOne:=\left\{ (\xi,\delta)\in \CompactOne\times (0,\delta_1] : c(\xi,\delta)\leq -1/8 \right\},
\end{equation*}
\begin{equation*}
    \SSetTwo:=\left\{ (\xi,\delta)\in \CompactOne\times (0,\delta_1] : c(\xi,\delta)\in (-1/8,0] \right\}.
\end{equation*}
For \((\xi,\delta)\in \SSetOne\), define, for \(x\in \Cuben{1}\),
\begin{equation*}
    \Phi_{\xi,\delta}(x)=\psi_{\xi,\delta}(x/8).
\end{equation*}
For \((\xi,\delta)\in \SSetTwo\), define, for \(x=(x',x_n)\in \Cubengeq{1}\),
\begin{equation*}
    \Psi_{\xi,\delta}(x)=\psi_{\xi,\delta}(x', x_n+c(\xi,\delta)):\Cubengeq{1}\rightarrow \BV{\xi}{\delta}.
\end{equation*}
Note that \(\Psi_{\xi,\delta}\) is \emph{not} the map of the same name in \cite[Theorem \ref*{CC::Thm::Scaling::MainResult}]{StreetCarnotCaratheodoryBallsOnManifoldsWithBoundary}; we instead denote that map by \(\psi_{\xi,2\delta}\).
\cite[Theorem \ref*{CC::Thm::Scaling::MainResult}\ref*{CC::Item::Scaling::MainResult::ImageOf0}]{StreetCarnotCaratheodoryBallsOnManifoldsWithBoundary}
shows \(\psi_{\xi,\delta}(0)=\xi\), and from this,
\ref{Item::Scaling::MainScalingThm::xiInImage} follows immediately from the definitions (and the already mentioned fact
that \(c_0(\xi,\delta)=0\) when \(\xi\in \BoundaryN\)).

\ref{Item::Scaling::MainScalingThm::Diffeomorphism} follows from 
\cite[Theorem \ref*{CC::Thm::Scaling::MainResult}\ref*{CC::Item::Scaling::MainResult::Diffeo},\ref*{CC::Item::Scaling::MainResult::Existencec0}]{StreetCarnotCaratheodoryBallsOnManifoldsWithBoundary}.

For \ref{Item::Scaling::MainScalingThm::SSetTwoContainsBoundary}, suppose for contradiction
that \((\xi,\delta)\in \SSetOne\) with \(\xi\in \BoundaryN\).  By \ref{Item::Scaling::MainScalingThm::xiInImage},
\(\Phi_{\xi,\delta}(0)=\xi\), and by \ref{Item::Scaling::MainScalingThm::Diffeomorphism},
\(\Phi_{\xi,\delta}:\Cuben{1}\xrightarrow{\sim} \Phi_{\xi,\delta}\left( \Cuben{1} \right)\)
            is a \(\CinftySpace\)-diffeomorphism.
            It follows that \(\xi\not \in \BoundaryN\), which is a contradiction,
            establishing \ref{Item::Scaling::MainScalingThm::SSetTwoContainsBoundary}.

Let \(\xi_1\in (0,1]\) be as in 
\cite[Theorem \ref*{CC::Thm::Scaling::MainResult}\ref*{CC::Item::Scaling::MainResult::Containments}]{StreetCarnotCaratheodoryBallsOnManifoldsWithBoundary},
with \(\eta_1=1/32\).
Then, using 
\cite[Theorem \ref*{CC::Thm::Scaling::MainResult}\ref*{CC::Item::Scaling::MainResult::Containments}]{StreetCarnotCaratheodoryBallsOnManifoldsWithBoundary},
we have for \((\xi,\delta)\in \SSetOne\),
\begin{equation*}
    \begin{split}
        &\BV{\xi}{\xi_1 \delta/2}
        \subseteq B_{(\Vh,1)}(\xi,\xi_1\delta/2)\cap \ManifoldNOne
        \subseteq \psi_{\xi,\delta}\left( \Cubengeqc{1/32} \right)
        \\&=\Phi_{\xi,\delta}\left( \Cuben{1/4} \right)
        \subseteq \Phi_{\xi,\delta}\left( \Cuben{1} \right)
        =\psi_{\xi,\delta}\left( \Cubengeqc{1/8} \right)
        \subseteq \BV{\xi}{\delta/2},
    \end{split}
\end{equation*}
and for \((\xi,\delta)\in \SSetTwo\),
\begin{equation*}
\begin{split}
     &\BV{\xi}{\xi_1 \delta/2}
     \subseteq B_{(\Vh,1)}(\xi,\xi_1\delta/2)\cap \ManifoldNOne
     \subseteq \psi_{\xi,\delta}\left( \Cubengeqc{1/32} \right)
     =\Psi_{\xi,\delta}\left( \Cubengeqc{1/32} - (0,0,\ldots,0,c(\xi,\delta)) \right)
     \\&\subseteq \Psi_{\xi,\delta}\left( \Cubengeq{1/4} \right)
     \subseteq \Psi_{\xi,\delta}\left( \Cubengeq{1} \right)
     \subseteq \psi_{\xi,\delta}\left( \Cubengeqc{1} \right)
     \subseteq \BV{\xi}{\delta/2}.
\end{split}
\end{equation*}
Taking \(\gamma_0=\xi_1/2\) gives \ref{Item::Scaling::MainScalingThm::Containments}.

By construction, \(V_j(\xi)\in \TangentSpaceBoundaryN{\xi}\), \(\forall \xi\in \BoundaryNOne\), \(1\leq j\leq r\).
\ref{Item::Scaling::MainScalingThm::Diffeomorphism} shows
\(\Psi_{\xi,\delta}(\Cubenmo{1})\subseteq \BoundaryNOne\).
Thus, for \(1\leq j\leq r\), \(V_j^{\xi,\delta,2}(x',0)\) is tangent to \(\Cubenmo{1}\), \(\forall x'\in \Cubenmo{1}\);
in other words, \ref{Item::Scaling::MainScalingThm::PullBackVjDoesntContainDxn} holds.

\cite[Theorem \ref*{CC::Thm::Scaling::MainResult}\ref*{CC::Item::Scaling::MainResults::PulledBackIsPartialxn}]{StreetCarnotCaratheodoryBallsOnManifoldsWithBoundary}
shows \(\exists D\geq 1\), \(\forall (\xi,\delta)\in \SSetTwo\), \(\exists j_0(\xi,\delta)\in \left\{ 0,\ldots, r \right\}\),
\(V_{j_0(\xi,\delta)}^{\xi,\delta,2}=\pm D\partial_{x_n}\).
However, \ref{Item::Scaling::MainScalingThm::PullBackVjDoesntContainDxn} implies \(j_0(\xi,\delta)\) must equal \(0\).
\ref{Item::Scaling::MainScalingThm::PullBackV0IsDxn} follows.

\ref{Item::Scaling::MainScalingThm::UniformlyHormander} is an immediate consequence of
\cite[Theorem \ref*{CC::Thm::Scaling::MainResult}\ref*{CC::Item::Scaling::MainResults::UniformHormander}]{StreetCarnotCaratheodoryBallsOnManifoldsWithBoundary}.

Let \(h_{\xi,\delta}\in \CinftySpace[\Cuben{1}][(0,\infty)]\) be the function of the same name in
\cite[Theorem \ref*{CC::Thm::Scaling::MainResult}]{StreetCarnotCaratheodoryBallsOnManifoldsWithBoundary}.
In other words, \(\psi_{\xi,2\delta}^{*}d\Vol=h_{\xi,\delta} \Lambda_V(\xi,\delta) du\).
By the definitions, for \((\xi,\delta)\in \SSetOne\), we have
\begin{equation*}
    h_{\xi,\delta,1}(x)= 
    \frac{\Lambda_V(\xi,\delta/2)}{8^n\Vol[\BV{\xi}{\delta}]} 
    h_{\xi,\delta/2}(x/8)\big|_{\Cuben{1}},
\end{equation*}
and for \((\xi,\delta)\in \SSetTwo\),
\begin{equation*}
    h_{\xi,\delta,2}(x) = 
    \frac{\Lambda_V(\xi,\delta/2)}{\Vol[\BV{\xi}{\delta}]} 
    h_{\xi,\delta/2}(x+(0,0,\ldots, 0,c(\xi,\delta)))\big|_{\Cubengeq{1}}.
\end{equation*}
By the definition of \(\Lambda_V(\xi,\delta)\), we have \(\Lambda_V(\xi,\delta/2)\approx \Lambda_V(\xi,\delta)\).
Combining this with \eqref{Eqn::Scaling::Proof::VolIsLambda}, we see
\begin{equation*}
    \frac{\Lambda_V(\xi,\delta/2)}{\Vol[\BV{\xi}{\delta}]}\approx 1.
\end{equation*}
From here, \ref{Item::Scaling::MainScalingThm::DensityUniformlyBounded} and \ref{Item::Scaling::MainScalingThm::DensityUniformlyPos}
follow from 
\cite[Theorem \ref*{CC::Thm::Scaling::MainResult}\ref*{CC::Item::Scaling::MainResults::hxdeltaSmooth},\ref*{CC::Item::Scaling::MainResults::hxdeltaPositive}]{StreetCarnotCaratheodoryBallsOnManifoldsWithBoundary}.

%% file: result.tex
In this section, we present the main technical result of this paper.
The statement is somewhat involved because our goal is to formulate the result using only the assumptions needed for the proof. For more accessible corollaries, we refer the reader to Corollary \ref{Cor::Intro::MainIntroResult} and Section \ref{Section::Core}.

Let \(\ManifoldN\) be a smooth manifold with boundary with \(n\coloneqq \DimensionN\geq 2\), let \(\Vol\)
be a smooth, strictly positive density on \(\ManifoldN\), and fix \(M\in \Zg\).
Let \(\QDomainQ\) be a closed, densely defined, sesquilinear form
on \(\LtSpace[\ManifoldN,\Vol][\C^M]\) which is sectorial with vertex \(\gamma\in \R\); see 
\cite[Sections 1.1-1.3]{KatoPerturbationTheory}
for the relevant definitions. We recall that \(\QDomainQ\) is called sectorial with vertex \(\gamma\in \R\), which may be negative, if \(\exists C\geq 0\),
\begin{equation}\label{Eqn::Result::Sectorial}
    \gamma \LtNorm{f}^2 \leq \Real \FormQ[f][f], 
    \quad \left| \Imaginary \FormQ[f][f] \right|\leq C \left( \Real \FormQ[f][f]-\gamma \LtNorm{f}^2 \right),
    \quad \forall f\in \DomainQ.
\end{equation}

By \cite[Chapter 6, Theorem 2.1]{KatoPerturbationTheory}, there is a unique, closed, densely defined, m-sectorial operator (with vertex \(\gamma\)) associated with \(\QDomainQ\), which we denote by \(\LDomainL\). It is the maximal operator satisfying \eqref{Eqn::Intro::OpLInTermsOfQ}; see \cite[Chapter 6, Theorem 2.1]{KatoPerturbationTheory} for details.

\(\DomainQ\) can be given the structure of a Hilbert space by defining
\begin{equation}\label{Eqn::Result::DomainQNorm}
    \DomainQNorm{f}^2 = \Real \FormQ[f][f]+ \Gamma \LtNorm{f}^2,
\end{equation}
where \(\Gamma\gg 1\) is sufficiently large
(any two such choices of \(\Gamma\) yield equivalent norms).
 We henceforth equip \(\DomainQ\) with this topology. Since \(\opL\) is a closed operator, \(\DomainL\) is a Banach space with the graph norm \(\LtNorm{f}+\LtNorm{\opL f}\); we equip \(\DomainL\) with the corresponding topology and treat \(\DomainLStar\) similarly.

\begin{lemma}\label{Lemma::Result::ExistsSemigroup}
    \(\MinusLDomainL\) is the generator of a strongly continuous semigroup
    \(T(t)=e^{-t\opL}:\LtSpace[\ManifoldN, \Vol][\C^M]\rightarrow \LtSpace[\ManifoldN, \Vol][\C^M]\) which satisfies
    \(\LtOpNorm{T(t)}\leq e^{-\gamma t}\), \(\forall t\geq 0\).
\end{lemma}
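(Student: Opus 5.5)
The plan is to reduce to the contraction case by a shift and then apply the Lumer--Phillips theorem (or, equivalently, Kato's theory of m-sectorial operators). Set \(\opL_\gamma:=\opL-\gamma\), with domain \(\DomainL\). This is the operator associated with the shifted form \(\FormQ_\gamma[f][g]:=\FormQ[f][g]-\gamma\Ltip{f}{g}\) on \(\DomainQ\). By \eqref{Eqn::Result::Sectorial}, \(\FormQ_\gamma\) is sectorial with vertex \(0\): we have \(\Real\FormQ_\gamma[f][f]\geq 0\) and \(|\Imaginary\FormQ_\gamma[f][f]|\leq C\Real\FormQ_\gamma[f][f]\). Since \(\opL\) is the maximal operator satisfying \eqref{Eqn::Intro::OpLInTermsOfQ}, \(\opL_\gamma\) is precisely the m-sectorial operator associated with \(\FormQ_\gamma\) by \cite[Chapter 6, Theorem 2.1]{KatoPerturbationTheory}.

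The first step is accretivity. For \(f\in\DomainL\subseteq\DomainQ\), taking \(g=f\) in \eqref{Eqn::Intro::OpLInTermsOfQ} gives
\begin{equation*}
\Real\Ltip{f}{\opL_\gamma f}=\Real\FormQ[f][f]-\gamma\LtNorm{f}^2\geq 0 .
\end{equation*}
Hence \(-\opL_\gamma\) is dissipative on the Hilbert space \(\LtSpace[\ManifoldN,\Vol][\C^M]\). The second step is the range condition. Here we use that \(\opL_\gamma\) is m-sectorial, hence m-accretive: \((\lambda+\opL_\gamma)\) is surjective for every \(\lambda>0\), and \(\DomainL\) is dense, both by \cite[Chapter 6, Theorem 2.1]{KatoPerturbationTheory}. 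If one prefers to see the surjectivity directly, for \(\lambda>0\) the form \(\FormQ_\gamma+\lambda\) is bounded and coercive on \(\DomainQ\) with the norm \eqref{Eqn::Result::DomainQNorm}, so Lax--Milgram yields surjectivity of \(\lambda+\opL_\gamma\).

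By Lumer--Phillips, \(-\opL_\gamma\) then generates a strongly continuous contraction semigroup \(S(t)\). We set \(T(t):=e^{-\gamma t}S(t)\). This is strongly continuous, its generator is \(-\opL_\gamma-\gamma=-\opL\) with domain \(\DomainL\), and it satisfies \(\LtOpNorm{T(t)}\leq e^{-\gamma t}\). Uniqueness of the semigroup with a given generator justifies writing \(T(t)=e^{-t\opL}\).

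There is no real obstacle in this argument. The only points needing care are bookkeeping: checking that shifting the form by \(-\gamma\) shifts the associated operator by \(-\gamma\) without changing its domain, and that the sign convention in \eqref{Eqn::Intro::OpLInTermsOfQ}, which is linear in the second slot, gives \(\Real\Ltip{f}{\opL f}=\Real\FormQ[f][f]\).
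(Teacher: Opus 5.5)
Your argument is correct and matches the paper's proof: the paper also observes that m-sectoriality with vertex \(\gamma\) makes \((\opL-\gamma,\DomainL)\) m-accretive and then applies Lumer--Phillips, leaving implicit the rescaling \(T(t)=e^{-\gamma t}S(t)\) that you write out (your Lax--Milgram check of the range condition is an optional extra). One harmless slip: in this paper the inner product is conjugate-linear, not linear, in the second slot (cf.\ the explicit integral in the proof of Lemma \ref{Lemma::Proof::Completion::PullBackQF}), but your argument only uses \(\Real\Ltip{f}{\opL f}=\Real\FormQ[f][f]\), so nothing changes.
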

\begin{proof}
    This uses only the fact that \(\LDomainL\) is an m-sectorial operator with vertex \(\gamma\).
      Indeed, the definition of m-sectoriality with vertex \(\gamma\) (see \cite[Chapter 5, Section 3.10]{KatoPerturbationTheory}) implies that \(\left( \opL-\gamma,\DomainL \right)\) is m-accretive. The result then follows from the Lumer--Phillips theorem \cite[Chapter 2, Theorem 3.15]{EngelNagelAShortCourseOnOperatorSemigroups}.
    
    Moreover, \cite[Chapter 9, Section 1, Theorem 1.24]{KatoPerturbationTheory} shows that \(T(t)\) is a holomorphic semigroup, though we do not use this fact.
\end{proof}

Let \(\ManifoldNZero\subseteq \ManifoldN\) be an open set.
Let \(W_1,\ldots, W_r\) be smooth vector fields on \(\ManifoldNZero\) which satisfy H\"ormander's condition on \(\ManifoldNZero\).
Fix \(X\) in the \(\CinftySpace[\ManifoldNZero][\R]\)-module generated by \(W_1,\ldots, W_r\).
Let \(\ManifoldNOne\subseteq \ManifoldNZero\) be an open set such that
\(\forall \xi \in \BoundaryNOne=\ManifoldNOne\cap \BoundaryN\), \(X(\xi)\not \in \TangentSpaceBoundaryN{\xi}\).

Let \(V_0,V_1,\ldots, V_r\) be as in Section \ref{Section::Scaling::Result} with this choice of \(X\)
and \(\ManifoldNOne\). Note that the \(\CinftySpace[\ManifoldNOne][\R]\)-module generated by \(V_0,\ldots, V_r\)
is the same as the \(\CinftySpace[\ManifoldNOne][\R]\)-module generated by \(W_1,\ldots, W_r\).

Fix an open, relatively compact set \(U\Subset \ManifoldNOne\) and set \(\CompactOne:=\overline{U}\Subset \ManifoldNOne\).
Fix open, relatively compact sets \(\Omega\Subset \Omega_1\Subset \ManifoldNOne\), with \(\CompactOne\Subset \Omega\).
Let \(\SSetOne\), \(\SSetTwo\), \(\Phi_{\xi,\delta}\), \(\Psi_{\xi,\delta}\), and \(\delta_1\in (0,1]\)
be as in Theorem \ref{Thm::Scaling::MainScalingThm} with the above choices.

Fix \(\kappa\in \Zg\).
For \(\Ut\subseteq \ManifoldNZero\) open, define
\begin{equation}\label{Eqn::Result::DefineHsWSpace}
    \HsWSpace{\kappa}[\Ut][\C^M]
    :=\left\{ f\in \LtSpace[\Ut,\Vol][\C^M] : W^{\alpha}f\in \LtSpace[\Ut,\Vol][\C^M],\: \forall |\alpha|\leq \kappa \right\},
\end{equation}
\begin{equation*}
    \HsWNorm{f}{\kappa}[\Ut][\C^M] := \sum_{|\alpha|\leq \kappa} \LtNorm*{ W^{\alpha} f}[\Ut,\Vol][\C^M].
\end{equation*}
Set
\begin{equation*}
    \HsWlocSpace{\kappa}[\Ut][\C^M]:=\left\{ f:\Ut\rightarrow \C^M  : \forall O\Subset \Ut\text{ open},\: f\big|_{O}\in \HsWSpace{\kappa}[O][\C^M]\right\},
\end{equation*}
and give  \(\HsWlocSpace{\kappa}[\Ut][\C^M]\) the usual Fr\'echet topology.

For lists \(\alpha,\beta\) of elements of \(\left\{ 1,\ldots, r \right\}\), with \(|\alpha|,|\beta|\leq \kappa\),
let \(a_{\alpha,\beta}\in \CinftySpace[\Omega_1][\M^{M\times M}(\C)]\).
Define
\begin{equation}\label{Eqn::Result::FormQFFormula}
    \FormQF[f][g]:=\sum_{|\alpha|,|\beta|\leq \kappa} \Ltip*{W^{\alpha}f}{a_{\alpha,\beta}W^{\beta}g}[\Omega,\Vol][\C^{M}].
\end{equation}
Note that \(\FormQF\) is a sesquilinear form on \(\HsWSpace{\kappa}[\Omega][\C^M]\times \HsWSpace{\kappa}[\Omega][\C^M]\);
in fact, only one of \(f\) or \(g\) needs to be supported in \(\Omega\) for \(\FormQF[f][g]\) to make sense,
provided both functions are locally in \(\HsWSpace{\kappa}\).
Here, ``F'' stands for ``formula'' because \(\FormQF\) provides a formula for \(\FormQ\) (see Assumption \ref{Assumption::Result::FormQEqualsFormQF}).

The following assumptions are very similar to those in \cite[Section 3]{StreetAPrioriEstimatesMaximallySubellipticQuadraticForms}; indeed, the results of that paper play a key role in the proof of Theorem \ref{Thm::Results:MainResult}.


\begin{assumption}\label{Assumption::Result::RestrictDomainQ}
    The map \(f\mapsto f\big|_{\Omega}\) maps \(\DomainQ\rightarrow \HsWlocSpace{\kappa}[\Omega][\C^M]\) (we do not assume continuity).
\end{assumption}

\begin{remark}\label{Rmk::Result::RestrictDomainQ::IsContinuous}
    The map in Assumption \ref{Assumption::Result::RestrictDomainQ} is automatically continuous. 
    Indeed, it has closed graph: if \(f_j\rightarrow f\) in \(\DomainQ\) and \(f_j\big|_{\Omega}\rightarrow g\)
    in \(\HsWlocSpace{\kappa}[\Omega][\C^M]\), then these convergences imply \(f_j\big|_\Omega\rightarrow f\big|_\Omega\) and 
    \(f_j\big|_{\Omega}\rightarrow g\) in \(L^2_{\mathrm{loc}}\). It follows that \(f\big|_\Omega=g\) and therefore the graph is closed. 
    The closed graph theorem implies the map is continuous.
\end{remark}

\begin{remark}\label{Rmk::Result::MaxSubEstimatIsRestrictDomain}
    Assumption \ref{Assumption::Result::RestrictDomainQ} is the ``maximally subelliptic'' estimate.
    Indeed, given a compact set \(\Compact_2\Subset \Omega\), Remark \ref{Rmk::Result::RestrictDomainQ::IsContinuous}
    implies that there exists \(C_{\Compact_2}\geq 0\) such that \(\forall f\in \DomainQ\) with \(\supp(f)\subseteq \Compact_2\),
    we have
    \begin{equation}\label{Eqn::Result::MaxSubEstimateIsRestrictDomain}
        \HsWNorm{f}{\kappa}^2\leq
        C_{\Compact_2} \left( \left| \FormQ[f][f] \right|  + \LtNorm{f}^2\right).
    \end{equation}
\end{remark}

\begin{assumption}\label{Assumption::Result::FormQEqualsFormQF}
    \(\forall f,g\in \DomainQ\), with \(\supp(f)\subseteq \Omega\) or \(\supp(g)\subseteq \Omega\), \(\FormQ[f][g]=\FormQF[f][g]\).
\end{assumption}


\begin{assumption}\label{Assumption::Result::SmoothWithCompactSupportInDomain}
    \(\TestFunctionsZeroN[\C^M]\subseteq \DomainL\cap \DomainLStar\).
    Moreover, there is a partial differential operator \(\opL_0\) with smooth
    \(\M^{M\times M}(\C)\)-valued coefficients
    such that
    \begin{equation}\label{Eqn::Result::SmoothWithCompactSupportInDomain::Operator}
        \opL f=\opL_0 f,\quad \forall f\in \TestFunctionsZeroN[\C^M].
    \end{equation}
\end{assumption}

\begin{remark}
    Assumption \ref{Assumption::Result::SmoothWithCompactSupportInDomain} is symmetric in \(\opL\) and \(\opL^{*}\). 
    Indeed, the density of \(\TestFunctionsZeroN[\C^M]\) in \(\LtSpace*[\ManifoldN,\Vol][\C^M]\) combined with \eqref{Eqn::Result::SmoothWithCompactSupportInDomain::Operator}
    implies
    \begin{equation*}
        \opL^{*} f=\opL_0^{*} f,\quad \forall f\in \TestFunctionsZeroN[\C^M],
    \end{equation*}
    where \(\opL_0^{*}\) is the formal \(\LtSpace*[\ManifoldN,\Vol][\C^M]\) adjoint of \(\opL_0\).
\end{remark}

\begin{remark}\label{Rmk::Result::InteriorMaxSubEstimates}
    Assumption \ref{Assumption::Result::SmoothWithCompactSupportInDomain} combined with 
    Remark \ref{Rmk::Result::MaxSubEstimatIsRestrictDomain} gives the
    interior maximally subelliptic estimates. 
    Namely, we have \(\TestFunctionsZero[U][\C^M]\subseteq \DomainL\subseteq \DomainQ\),
    by Assumption \ref{Assumption::Result::SmoothWithCompactSupportInDomain}.
    Using that \(\overline{U}\Subset \Omega\), Remark \ref{Rmk::Result::MaxSubEstimatIsRestrictDomain}
    shows
    \(\exists C_0\geq 0\), \(\forall f\in \TestFunctionsZero[U][\C^M]\),
    \begin{equation*}
        \HsWNorm{f}{\kappa}^2\leq
        C_0 \left( \left| \FormQ[f][f] \right|  + \LtNorm{f}^2\right).
    \end{equation*}
\end{remark}

\begin{remark}\label{Rmk::Result::opLIsDifferentialOperator}
    If \(g\in \DomainL\), then by Assumption \ref{Assumption::Result::FormQEqualsFormQF},
    \begin{equation*} 
        \Ltip{f}{\opL g}=\FormQ[f][g]=\FormQF[f][g],\quad \forall f\in \DomainQ\text{ with }\supp(f)\subseteq \Omega.
    \end{equation*}
    Let \(f\in \TestFunctionsZero[\Omega][\C^M]\). 
    Then, by Assumption \ref{Assumption::Result::SmoothWithCompactSupportInDomain}, \(f\in \DomainL\subseteq \DomainQ\).
    Using \eqref{Eqn::Result::FormQFFormula}, we obtain
    \begin{equation}\label{Eqn::Result::opLIsDifferentialOperator::FormulaForOpL}
        \opL g\big|_{\Omega}=\sum_{|\alpha|,|\beta|\leq \kappa} \left( W^{\alpha} \right)^{*}a_{\alpha,\beta}W^{\beta}g\big|_{\Omega},
    \end{equation}
    where the right-hand side is defined in the sense of \(\DistributionsZero[\Omega;\C^M]\).
\end{remark}

\begin{remark}
    Assumption \ref{Assumption::Result::SmoothWithCompactSupportInDomain} is often immediate
    in applications. Indeed, if Assumption \ref{Assumption::Result::FormQEqualsFormQF}
    holds with \(\Omega\) replaced by \(\ManifoldN\)
    and \(\TestFunctionsZeroN[\C^M]\subseteq\DomainQ\), then Assumption \ref{Assumption::Result::SmoothWithCompactSupportInDomain}
    follows.
    To see this, let \(g\in \TestFunctionsZeroN[\C^M]\) and \(f\in \DomainQ\). Then
    \begin{equation*}
        \FormQ[f][g]=\FormQF[f][g]=\Ltip{f}{\opL_0 g},
    \end{equation*}
    where \(\opL_0\) is as in \eqref{Eqn::Result::opLIsDifferentialOperator::FormulaForOpL}.
    This shows \(g\in \DomainL\) and \(\opL g=\opL_0 g\) (see \cite[Chapter 6, Theorem 2.1]{KatoPerturbationTheory}).
    A similar result holds with \(\opL\) replaced by \(\opL^{*}\) (see Remark \ref{Rmk::Result::SummetricInLAndLStar}).
    However, imposing Assumption \ref{Assumption::Result::FormQEqualsFormQF} only on \(\Omega\) allows us to prove results that apply even when no globally defined H\"ormander vector fields define \(\FormQ\): we need such vector fields only in a neighborhood of the region where we wish to estimate the heat kernel.
\end{remark}


Our final assumption, inspired by the work of Kohn and Nirenberg \cite{KohnNirenbergNonCoerciveBoundaryValueProblems}, plays a key role in the a priori estimates developed in \cite{StreetAPrioriEstimatesMaximallySubellipticQuadraticForms}.
It says that smoothing in the tangential and time variables preserves the form boundary conditions; combined
with Assumption \ref{Assumption::Result::RestrictDomainQ} this gives tangentially regularized maximally subelliptic estimates (see Lemma \ref{Lemma::Result::BoundaryMaxSubNewLemma}).
We use the half-cube \(\Cubengeq{\delta'}\) and the cube \(\Cubenmo{\delta'}\) from Section \ref{Section::Scaling::Result}.
Let \(K(t,s,x,y')\in \CinftycptSpace[\R\times \R\times \Cubengeq{1/2}\times \Cubenmo{1/2}]\) be such that
\(\exists \delta'>0\) with \(\partial_{x_n}K(t,s,(x',x_n), y')=0\) for \(0\leq x_n<\delta'\), and let \(S\)
be any operator of the form
\begin{equation}\label{Eqn::Results::SOperator}
    S u(t,x)=\iint K(t,s,x,y') u(s,y',x_n)\: ds\: dy'.
\end{equation}
Here \(S\) acts diagonally on \(\C^M\)-valued functions because \(K\) is scalar-valued.

\begin{assumption}\label{Assumption::Results::BoundaryMaximalSubellip}
    \(\forall (\xi,\delta)\in \SSetTwo\)
    (where \(\SSetTwo\) is as in Theorem \ref{Thm::Scaling::MainScalingThm})
    the following holds.
    Let \(S\) be any operator of the form \eqref{Eqn::Results::SOperator} (for any \(\delta'>0\)) and set
    \(T_{\xi,\delta}:=\left( \Psi_{\xi,\delta} \right)_{*} S \Psi_{\xi,\delta}^{*}\); i.e.,
    \begin{equation*}
        T_{\xi,\delta} u\left( t, \Psi_{\xi,\delta}(x) \right)
        =\iint K(t,s,x,y') u(s, \Psi_{\xi,\delta}\left( y', x_n \right))\: dy'\: ds.
    \end{equation*}
    Then,
        \(T_{\xi,\delta}:\LtSpace*[\R][\DomainL]\rightarrow \LtSpace*[\R][\DomainQ]\) and
        \(T_{\xi,\delta}:\LtSpace*[\R][\DomainLStar]\rightarrow \LtSpace*[\R][\DomainQ]\)
        (we do not assume continuity).



\end{assumption}

\begin{remark}\label{Rmk::Results::ReplaceLWithQ}
    In Assumption \ref{Assumption::Results::BoundaryMaximalSubellip}, we impose conditions on both \(\DomainL\) and \(\DomainLStar\). One can simplify the formulation by replacing both with the larger space \(\DomainQ\). Because \(\DomainL\hookrightarrow \DomainQ\) and \(\DomainLStar\hookrightarrow \DomainQ\), this stronger assumption is automatically symmetric in \(\LDomainL\) and \(\LStarDomainLStar\).
    In fact, \(\LStarDomainLStar\) corresponds to the form \(\QStarDomainQ\), where \(\FormQStar[f][g]=\overline{\FormQ[g][f]}\);
    see \cite[Chapter 6, Section 2, Theorem 2.5]{KatoPerturbationTheory}.
\end{remark}

\begin{remark}
    It is often easier to verify assumptions on a core for \(\QDomainQ\) than on all of \(\DomainQ\). See Section \ref{Section::Core} for a result in which we use assumptions only on the core.
\end{remark}


\medskip

\noindent\textbf{Summary of the hypotheses.}
For convenience, the following table gives an informal summary of the assumptions above.

\begin{center}
\small
\setlength{\tabcolsep}{5pt}
\renewcommand{\arraystretch}{1.25}
\begin{tabularx}{\textwidth}{
    @{}
    >{\raggedright\arraybackslash}p{0.16\textwidth}
    >{\raggedright\arraybackslash}p{0.37\textwidth}
    >{\raggedright\arraybackslash}X
    @{}
}
\toprule
Hypothesis
&
Content
&
Role in the theorem
\\
\midrule


Assumption \ref{Assumption::Result::RestrictDomainQ}
&
The restriction map
\[
  \DomainQ
  \longrightarrow
  \HsWlocSpace{\kappa}[\Omega][\C^M]
\]
is continuous.
&
This is the maximally subelliptic estimate. See Remark \ref{Rmk::Result::MaxSubEstimatIsRestrictDomain}.
\\

Assumption \ref{Assumption::Result::FormQEqualsFormQF}
&
Whenever at least one argument is supported in $\Omega$, the form $\FormQ$ agrees with the differential expression $\FormQF$.
&
Identifies $\opL$ on
$\Omega$ as a differential operator, as in \eqref{Eqn::Result::opLIsDifferentialOperator::FormulaForOpL}, and allows the form estimates to be
localized.
\\


Assumption \ref{Assumption::Result::SmoothWithCompactSupportInDomain}
&
The space of test functions \(\TestFunctionsZero[\ManifoldN][\C^M]\)
lies in both
\(\DomainL\) and \(\DomainLStar\), and on this space
$\opL$ agrees with a differential operator
$\opL_0$.
&
This ensures that \(\opL\) and \(\opL^{*}\) are local operators, as required by \cite[Assumption \ref*{Heat::Assumption::Locality}]{StreetHypoellipticityAndHigherOrderGaussianBounds}; see also \cite[Remark \ref*{Heat::Rmk::Locality::LocalityUsuallyObvious}]{StreetHypoellipticityAndHigherOrderGaussianBounds}.
It also implies \(\TestFunctionsZeroN[\C^M]\subseteq \DomainQ\), which allows testing our form on test functions.
\\

Assumption \ref{Assumption::Results::BoundaryMaximalSubellip}
&
The boundary-adapted tangential regularizers $T_{\xi,\delta}$ preserve the domain.
&
The boundary conditions are compatible with tangential
regularization.
\\

\bottomrule
\end{tabularx}
\end{center}

\medskip


\begin{theorem}\label{Thm::Results:MainResult}
    Under the above assumptions (Assumptions  \ref{Assumption::Result::RestrictDomainQ}, 
    \ref{Assumption::Result::FormQEqualsFormQF}, 
    \ref{Assumption::Result::SmoothWithCompactSupportInDomain},
     and \ref{Assumption::Results::BoundaryMaximalSubellip}), the following holds.
    Let \(T(t)\) be the strongly continuous semigroup from Lemma \ref{Lemma::Result::ExistsSemigroup}.
    There exists a unique \(K_t(\xi,\eta)\in \CinftySpace*[(0,\infty)\times U\times U][\M^{M\times M}(\C)]\)
    such that \(\forall g\in \LtSpace[U,\Vol][\C^M]\),
    \begin{equation}\label{Eqn::Results::MainResult::FormulaForKt}
        T(t)g(\xi)=\int K_t(\xi,\eta) g(\eta)\: d\Vol[\eta], \quad \forall (t,\xi)\in (0,\infty)\times U,
    \end{equation}
    where the integral converges absolutely.\footnote{In formulas such as 
    \eqref{Eqn::Results::MainResult::FormulaForKt}, the function \(g\) is understood to be extended by zero outside \(U\).}
    Furthermore, for every compact \(\Compact\Subset U\), there exist \(\delta_\Compact\in (0,1]\) and \(c_{\Compact}>0\) such that, for all ordered multi-indices \(\alpha,\beta\) and every \(j\in \Zgeq\), there exists \(C_{\alpha,\beta,j,\Compact}\geq 0\) such that the following estimate holds for all \(\xi,\eta\in \Compact\) and all \(t>0\):
    \begin{equation}\label{Eqn::Results::MainResult::GaussianBoundsForKt}
    \begin{split}
         \left| \partial_t^j W_\xi^{\alpha} W_\eta^{\beta} K_t(\xi,\eta) \right|
         \leq& C_{\alpha,\beta,j,\Compact}
         e^{-\gamma t}
         \left( \left( \MetricW[\xi][\eta] +t^{1/2\kappa} \right)\wedge \delta_\Compact \right)^{-2\kappa j-|\alpha|-|\beta|}
         \\&\times \exp \left(  -c_\Compact \left( \frac{\left( \MetricW[\xi][\eta]\wedge \delta_\Compact \right)^{2\kappa}}{t}  \right)^{\frac{1}{2\kappa-1}}  \right)
         \\&\times \Vol*[ \BW*{\xi}{\left( \MetricW[\xi][\eta]+t^{1/2\kappa} \right)\wedge \delta_\Compact}]^{-1}.
    \end{split}
    \end{equation}
    On the left-hand side of \eqref{Eqn::Results::MainResult::GaussianBoundsForKt},
    \(|\cdot|\) denotes the matrix norm.
\end{theorem}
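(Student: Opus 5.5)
The plan is to verify the hypotheses of the abstract theorem \cite[Theorem \ref*{Heat::Thm::Results::MainThm}]{StreetHypoellipticityAndHigherOrderGaussianBounds}, which converts hypoelliptic estimates for \(\partial_t+\delta^{2\kappa}\opL\) and \(\partial_t+\delta^{2\kappa}\opL^{*}\), uniform over base points \(\xi\in\CompactOne\) and scales \(\delta\in(0,\delta_1]\), into the Gaussian bounds \eqref{Eqn::Results::MainResult::GaussianBoundsForKt}. We work with the metric \(\MetricV\) and the balls \(\BV{\xi}{\delta}\) throughout. At the end we pass back to \(\MetricW\) and \(\BW{\xi}{\delta}\) using Theorem \ref{Thm::Scaling::MainScalingThm}\ref{Item::Scaling::MainScalingThm::MetricsEquiv},\ref{Item::Scaling::MainScalingThm::VolsEquiv},\ref{Item::Scaling::MainScalingThm::Doubling}. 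Since every \(W_j=V_j+\bh_jV_0\), derivatives \(W^\alpha\) are finite sums of \(V^{\alpha'}\), \(|\alpha'|\le|\alpha|\), with smooth coefficients, which are bounded on \(\Compact\). Lower-order terms are absorbed because \(\rho\wedge\delta_\Compact\le1\). The locality hypothesis of that abstract theorem follows from Assumption \ref{Assumption::Result::SmoothWithCompactSupportInDomain}. The bound \(\LtOpNorm{T(t)}\le e^{-\gamma t}\) comes from Lemma \ref{Lemma::Result::ExistsSemigroup}.

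The first step is qualitative. Assumptions \ref{Assumption::Result::RestrictDomainQ}, \ref{Assumption::Result::FormQEqualsFormQF}, and \ref{Assumption::Results::BoundaryMaximalSubellip} put us in the setting of \cite{StreetAPrioriEstimatesMaximallySubellipticQuadraticForms}. There one gets subelliptic, hence hypoelliptic, estimates for \(\partial_t+\opL\) and \(\partial_t+\opL^{*}\) on \((0,\infty)\times\Omega\) up to the non-characteristic boundary. Combined with a Sobolev embedding in the \(V\)-scale and duality, these show that \(T(t)\), composed with restrictions to \(U\), maps \(L^2\) into \(C^\infty\). Its adjoint \(T(t)^*=e^{-t\opL^*}\) does the same. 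A Schwartz kernel theorem argument then gives a unique smooth \(K_t\) on \((0,\infty)\times U\times U\) satisfying \eqref{Eqn::Results::MainResult::FormulaForKt}.

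The second step is to rescale. For \((\xi,\delta)\in\SSetOne\), pull back by \(\Phi_{\xi,\delta}\); for \((\xi,\delta)\in\SSetTwo\), pull back by \(\Psi_{\xi,\delta}\). The operator \(\delta^{2\kappa}\opL\) then becomes \(\opL^{\xi,\delta,k}\) on \(\Cuben{1}\) or \(\Cubengeq{1}\), built from the rescaled form with vector fields \(V^{\xi,\delta,k}\) and density \(h_{\xi,\delta,k}\). The rescaled form satisfies the hypotheses of \cite{StreetAPrioriEstimatesMaximallySubellipticQuadraticForms} uniformly in \((\xi,\delta)\), for the following reasons.
\begin{itemize}
\item The vector fields are uniformly H\"ormander by Theorem \ref{Thm::Scaling::MainScalingThm}\ref{Item::Scaling::MainScalingThm::UniformlyHormander}.
\item The densities are uniformly bounded above and below by \ref{Item::Scaling::MainScalingThm::DensityUniformlyBounded} and \ref{Item::Scaling::MainScalingThm::DensityUniformlyPos}.
\item \(V_0^{\xi,\delta,2}=\pm D\partial_{x_n}\) and the other fields are tangent on \(\{x_n=0\}\), by \ref{Item::Scaling::MainScalingThm::PullBackV0IsDxn} and \ref{Item::Scaling::MainScalingThm::PullBackVjDoesntContainDxn}.
\item The coefficients \(\delta^{2\kappa-|\alpha|-|\beta|}a_{\alpha,\beta}\circ\Psi_{\xi,\delta}\) are uniformly bounded in \(\CbinftySpace\), since \(\delta\le1\).
\item The tangential regularizers preserve the domain by Assumption \ref{Assumption::Results::BoundaryMaximalSubellip}.
\end{itemize}
The maximal subelliptic estimate at unit scale is less direct. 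Since \(\delta^{2\kappa}\FormQ\) controls \(\sum_{|\alpha|\le\kappa}\delta^{2|\alpha|}\|V^\alpha f\|^2\) only after scaling, I would derive it directly from the sectorial form structure plus the \emph{a priori} estimates at unit scale. I would not use Remark \ref{Rmk::Result::MaxSubEstimatIsRestrictDomain} with constants, since those constants are non-uniform. This yields hypoelliptic estimates for \(\partial_t+\opL^{\xi,\delta,k}\) and its adjoint, uniform in \((\xi,\delta)\), of the form: solutions in \(L^2\) of a cube have all \(\partial_t^jV^\alpha\) derivatives bounded in \(L^2\) of a smaller cube. Pushing forward via \ref{Item::Scaling::MainScalingThm::Containments} turns these into the uniform estimates on \(\BV{\xi}{\gamma_0\delta}\subseteq\BV{\xi}{\delta/2}\) required by the abstract theorem.

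The main obstacle is this uniformity step. The constants in \cite{StreetAPrioriEstimatesMaximallySubellipticQuadraticForms} must depend only on the uniform quantities listed above. In particular, the form's maximal subellipticity must survive rescaling with constants independent of \(\xi,\delta\), and the domain \(\DomainQ\) is defined globally rather than at unit scale. I would handle this by localizing with cutoffs adapted to the scaled cubes. Commutator errors are lower order in the scaled \(V\)-Sobolev norms, so they can be absorbed by the standard Kohn–Nirenberg tangential-mollification argument, which runs uniformly once the quantifier order is tracked carefully.
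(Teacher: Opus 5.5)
Your overall architecture matches the paper's. You get smoothness of $K_t$ from the subelliptic estimates of \cite{StreetAPrioriEstimatesMaximallySubellipticQuadraticForms} for $\opL$ and $\opL^{*}$. You rescale by $\Phi_{\xi,\delta}$ and $\Psi_{\xi,\delta}$ to obtain hypoelliptic estimates uniform in $(\xi,\delta)$, feed these into the abstract theorem of \cite{StreetHypoellipticityAndHigherOrderGaussianBounds}, and compare $\MetricV$ with $\MetricW$ at the end.

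The gap is in the step you flag as the main obstacle, and it rests on a false premise. The unit-scale estimates of \cite{StreetAPrioriEstimatesMaximallySubellipticQuadraticForms} that you want to apply to $\partial_t+\opLOne$ and $\partial_t+\opLTwo$ take as \emph{input} a coercive estimate such as
\begin{equation*}
\sum_{|\alpha|=\kappa}\LtNorm*{\left(V^{\xi,\delta,2}\right)^{\alpha}G}^2\le D_1\Real\FormQTwo[G][G]+D_2\LtNorm*{G}^2,
\end{equation*}
with $D_1,D_2$ independent of $(\xi,\delta)$. They also need its analogue for $\opLOne$, and a tangentially regularized, $t$-integrated version for $SU$, $U\in\DSet$. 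You cannot extract this input from their output. Sectoriality alone gives no control of $V^{\alpha}f$ at all; the only source of such control is Assumption \ref{Assumption::Result::RestrictDomainQ}. Localizing with scaled cutoffs and running a Kohn--Nirenberg tangential mollification argument is what that paper does \emph{once} the coercive estimate is available. So, as written, your plan for the key step is circular.

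The claim that the constants in Remark \ref{Rmk::Result::MaxSubEstimatIsRestrictDomain} are non-uniform is wrong. That constant depends only on a compact set. By Theorem \ref{Thm::Scaling::MainScalingThm}\ref{Item::Scaling::MainScalingThm::BallInsideOmega::Union}, every $\Psi_{\xi,\delta}(\Cubengeq{1})$ lies in one fixed compact $\Compact_2\Subset\Omega$. This is how Lemma \ref{Lemma::Result::BoundaryMaxSubNewLemma} gets a single constant $A$, and Remark \ref{Rmk::Result::InteriorMaxSubEstimates} a single $C_0$ on $U$.

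From there the paper's argument is elementary:
\begin{enumerate}
\item Use sectoriality to trade $|\FormQ[f][f]|$ for $\Real\FormQ[f][f]$.
\item Pass from $W$ to $V$ (same module, fixed compact support).
\item Multiply by $\delta^{2\kappa}$ and \emph{drop} every term with $|\alpha|<\kappa$ on the left.
\item Bound $\delta^{2\kappa}\LtNorm{f}^2\le\LtNorm{f}^2$ using $\delta\le1$.
\end{enumerate}
Next, $\delta^{2\kappa}\FormQF$ pulls back to $\Vol[\BV{\xi}{\delta}]\FormQTwo$ (Lemma \ref{Lemma::Proof::Completion::PullBackQF}). Also, $L^2$ norms pull back with the factor $\Vol[\BV{\xi}{\delta}]^{1/2}$ up to uniform constants (Lemma \ref{Lemma::Proof::Completion::EquivNorms}). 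Dividing by the volume gives the displayed estimate uniformly.

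The same computation applies to $T_{\xi,\delta}\uh(t,\cdot)$, which lies in $\DomainQ$ by Assumption \ref{Assumption::Results::BoundaryMaximalSubellip}. This gives the regularized version (Lemmas \ref{Lemma::Proof::Completion::InternalMaxSubEst} and \ref{Lemma::Proof::Completion::FinalAssumptions}). Only this top-order, scale-homogeneous estimate is needed at unit scale, so the lower-order terms you worried about are simply discarded.

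A smaller omission: the abstract theorem needs a globally defined metric, while $\MetricV$ lives only on $\ManifoldNOne$. The paper builds one from $\chi V_j$ and $(1-\chi)Y_k$. It then checks that its small balls coincide with $\BV{\xi}{\delta}$ for $\xi\in\CompactZero$ and $\delta\le\delta_0$ (Lemma \ref{Lemma::Proof::Completion::ExistsMetricAndDefineDelta0}).
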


Assumptions \ref{Assumption::Result::RestrictDomainQ} and \ref{Assumption::Results::BoundaryMaximalSubellip} give the boundary
maximally subelliptic estimate we will use in the proof. Namely, we have
\begin{lemma}\label{Lemma::Result::BoundaryMaxSubNewLemma}
     \(\forall (\xi,\delta)\in \SSetTwo\), with \(T_{\xi,\delta}\) as in Assumption \ref{Assumption::Results::BoundaryMaximalSubellip},
     we have
         \(T_{\xi,\delta}:\LtSpace*[\R][\DomainL]\rightarrow \CinftycptSpace[\R][\DomainQ]\),
        \(T_{\xi,\delta}:\LtSpace*[\R][\DomainLStar]\rightarrow \CinftycptSpace[\R][\DomainQ]\),
     \(T_{\xi,\delta}:\LtSpace*[\R][\DomainL]\rightarrow \CinftycptSpace[\R][\HsWSpace{\kappa}[\Omega][\C^M]]\), and
        \(T_{\xi,\delta}:\LtSpace*[\R][\DomainLStar]\rightarrow \CinftycptSpace[\R][\HsWSpace{\kappa}[\Omega][\C^M]]\),
        and these four maps are continuous.
    Moreover, \(\exists A\geq 0\), \(\forall u\in \LtSpace*[\R][\DomainL]\bigcup \LtSpace*[\R][\DomainLStar]\), \(\forall (\xi,\delta)\in \SSetTwo\),
    for all \(T_{\xi,\delta}\) as in Assumption \ref{Assumption::Results::BoundaryMaximalSubellip},
            \begin{equation}\label{Eqn::Result::BoundaryMaxWSubNewEquation}
                \int \HsWNorm*{T_{\xi,\delta}u(t,\cdot)}{\kappa}^2\: dt
                \leq A \left( 
                    \int  \left| \FormQ*[T_{\xi,\delta}u(t,\cdot)][T_{\xi,\delta}u(t,\cdot)] \right| \: dt
                    +\LtNorm*{T_{\xi,\delta}u}[\R\times \ManifoldN][dt\times d\Vol]^2
                 \right).
            \end{equation}
\end{lemma}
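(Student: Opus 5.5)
The plan is to obtain the four mapping properties from Assumption \ref{Assumption::Results::BoundaryMaximalSubellip} via the closed graph theorem and the structure of \(S\) in \(t\). The estimate \eqref{Eqn::Result::BoundaryMaxWSubNewEquation} then comes from the maximally subelliptic estimate of Remark \ref{Rmk::Result::MaxSubEstimatIsRestrictDomain}, applied on one compact set that does not depend on \((\xi,\delta)\).

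\emph{Continuity into \(\LtSpace*[\R][\DomainQ]\).} Fix \((\xi,\delta)\in\SSetTwo\) and \(S\) as in \eqref{Eqn::Results::SOperator}. Since \(K\) is smooth and compactly supported and \(\Psi_{\xi,\delta}\) is a diffeomorphism with smooth density (Theorem \ref{Thm::Scaling::MainScalingThm}), \(T_{\xi,\delta}\) is bounded on \(\LtSpace[\R\times\ManifoldN]\). The spaces \(\LtSpace*[\R][\DomainL]\), \(\LtSpace*[\R][\DomainLStar]\) and \(\LtSpace*[\R][\DomainQ]\) all embed continuously in \(\LtSpace[\R\times \ManifoldN]\). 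Hence the graph of \(T_{\xi,\delta}:\LtSpace*[\R][\DomainL]\to \LtSpace*[\R][\DomainQ]\) is closed: limits in the two norms coincide with the \(L^2\) limit. The closed graph theorem then gives continuity, and the same argument works for \(\DomainLStar\).

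\emph{Upgrade to \(\CinftycptSpace[\R][\DomainQ]\).} For every \(k\), \(\partial_t^k T_{\xi,\delta}\) is an operator of the same type, with kernel \(\partial_t^k K\). The condition \(\partial_{x_n}\partial_t^kK=0\) for \(0\le x_n<\delta'\) persists, so \(\partial_t^kT_{\xi,\delta}\) is also covered by Assumption \ref{Assumption::Results::BoundaryMaximalSubellip}. Therefore \(T_{\xi,\delta}u\in H^k(\R;\DomainQ)\) for all \(k\), continuously in \(u\). Moreover \(T_{\xi,\delta}u\) is supported in \(t\) in the fixed compact \(t\)-projection of \(\supp K\). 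The vector-valued Sobolev embedding \(H^{k+1}(\R;\HilbertSpace)\hookrightarrow C^k(\R;\HilbertSpace)\) then gives continuity into \(\CinftycptSpace[\R][\DomainQ]\).

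\emph{Passage to \(\HsWSpace{\kappa}[\Omega][\C^M]\).} For each \(t\), \(T_{\xi,\delta}u(t,\cdot)\) is supported in \(\Psi_{\xi,\delta}(\Cubengeq{1/2})\subseteq \BV{\xi}{\delta/2}\). By Theorem \ref{Thm::Scaling::MainScalingThm} \ref{Item::Scaling::MainScalingThm::BallInsideOmega::Union}, this set lies in
\[
\Compact_2:=\overline{\textstyle\bigcup_{\xi\in\CompactOne,\delta\in(0,\delta_1]}\BV{\xi}{\delta}}\Subset\Omega,
\]
which is independent of \((\xi,\delta)\), \(S\) and \(u\). On the closed subspace of \(\DomainQ\) of elements supported in \(\Compact_2\), Remark \ref{Rmk::Result::RestrictDomainQ::IsContinuous} shows that \(f\mapsto f|_\Omega\in\HsWSpace{\kappa}[\Omega][\C^M]\) is continuous. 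Here one uses an open \(O\) with \(\Compact_2\subset O\Subset\Omega\), and the fact that the \(H^\kappa_W\) norm only sees \(O\). Composing with the previous step gives the two maps into \(\CinftycptSpace[\R][\HsWSpace{\kappa}[\Omega][\C^M]]\). Applying \eqref{Eqn::Result::MaxSubEstimateIsRestrictDomain} with this \(\Compact_2\) to \(f=T_{\xi,\delta}u(t,\cdot)\) for each \(t\), and integrating in \(t\), yields \eqref{Eqn::Result::BoundaryMaxWSubNewEquation} with \(A=C_{\Compact_2}\).

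The main point requiring care is uniformity: \(A\) must not depend on \((\xi,\delta)\), \(S\), or \(u\). This is why a single compact \(\Compact_2\) is used rather than supports depending on \((\xi,\delta)\). Part \ref{Item::Scaling::MainScalingThm::BallInsideOmega::Union} supplies exactly this, while all other constants enter only through the qualitative continuity statements.
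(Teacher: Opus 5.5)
Your proposal is correct and follows essentially the same route as the paper's proof. You use the closed graph theorem for the maps into \(L^2(\R;\DomainQ)\), note that \(\partial_t^j T_{\xi,\delta}\) is of the same form and apply vector-valued Sobolev embedding, and then use the single compact set \(\Compact_2\Subset\Omega\) from Theorem \ref{Thm::Scaling::MainScalingThm}\ref{Item::Scaling::MainScalingThm::BallInsideOmega::Union} with \eqref{Eqn::Result::MaxSubEstimateIsRestrictDomain}, integrated in \(t\). Your remark about an intermediate open set \(O\) with \(\Compact_2\subset O\Subset\Omega\) only spells out a detail the paper leaves implicit.
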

\begin{proof}
    Since the integral kernel of \(T_{\xi,\delta}\) is smooth with compact support,
    we have \(T_{\xi,\delta}:\LtSpace*[\R\times \ManifoldN][\C^M]\rightarrow \LtSpace*[\R\times \ManifoldN][\C^M]\) continuously,
    and therefore the maps \(T_{\xi,\delta}:\LtSpace*[\R][\DomainL]\rightarrow \LtSpace*[\R][\DomainQ]\)
    and  \(T_{\xi,\delta}:\LtSpace*[\R][\DomainLStar]\rightarrow \LtSpace*[\R][\DomainQ]\) are closed.
    The closed graph theorem then implies \(T_{\xi,\delta}:\LtSpace*[\R][\DomainL]\rightarrow \LtSpace*[\R][\DomainQ]\) and
        \(T_{\xi,\delta}:\LtSpace*[\R][\DomainLStar]\rightarrow \LtSpace*[\R][\DomainQ]\) are continuous (we did not hypothesize their continuity).
    Since \(\partial_t^j T_{\xi,\delta}\) is of the same form as \(T_{\xi,\delta}\), we conclude
    \(\partial_t^j T_{\xi,\delta}:\LtSpace*[\R][\DomainL]\rightarrow \LtSpace*[\R][\DomainQ]\)
    and \(\partial_t^j T_{\xi,\delta}:\LtSpace*[\R][\DomainLStar]\rightarrow \LtSpace*[\R][\DomainQ]\), continuously, \(\forall j\in \Zgeq\). 
    The vector valued Sobolev embedding theorem (along with the compact support in \(t\) of the Schwartz kernel of \(T_{\xi,\delta}\))
    shows \(T_{\xi,\delta}:\LtSpace*[\R][\DomainL]\rightarrow \CinftycptSpace[\R][\DomainQ]\) and
        \(T_{\xi,\delta}:\LtSpace*[\R][\DomainLStar]\rightarrow \CinftycptSpace[\R][\DomainQ]\)
        continuously.

    By Theorem \ref{Thm::Scaling::MainScalingThm}\ref{Item::Scaling::MainScalingThm::BallInsideOmega::Union},
    we have
    \begin{equation*}
        \Compact_2\coloneqq \overline{\bigcup_{(\xi,\delta)\in \SSetTwo} \Psi_{\xi,\delta}(\Cubengeq{1})}\subseteq \overline{\bigcup_{\xi\in\CompactOne,\delta\in (0,\delta_1]}\BV{\xi}{\delta}}\Subset \Omega.
    \end{equation*}
    We conclude \(T_{\xi,\delta}u(t,\cdot)\) is supported in \(\Compact_2\Subset\Omega\), for every \((\xi,\delta)\in \SSetTwo\).
    
    Assumption \ref{Assumption::Result::RestrictDomainQ} shows that
    \begin{equation*}
        \left\{ f\in \DomainQ : \supp(f)\subseteq \Compact_2 \right\} \hookrightarrow \HsWSpace{\kappa}[\Omega][\C^M],
    \end{equation*}
    continuously.
    We conclude 
    there exists \(A\geq 0\), independent of \(\xi\) and \(\delta\), such that
    \(T_{\xi,\delta}:\LtSpace*[\R][\DomainL]\rightarrow \CinftycptSpace[\R][\HsWSpace{\kappa}[\Omega][\C^M]]\) and
        \(T_{\xi,\delta}:\LtSpace*[\R][\DomainLStar]\rightarrow \CinftycptSpace[\R][\HsWSpace{\kappa}[\Omega][\C^M]]\)
        continuously, 
        and
    for each \(t\), using \eqref{Eqn::Result::MaxSubEstimateIsRestrictDomain},
    \begin{equation*}
        \HsWNorm*{T_{\xi,\delta}u(t,\cdot)}{\kappa}^2
                \leq A \left( 
                    \left| \FormQ*[T_{\xi,\delta}u(t,\cdot)][T_{\xi,\delta}u(t,\cdot)] \right| 
                    +\LtNorm*{T_{\xi,\delta}u(t,\cdot)}[\ManifoldN][d\Vol]^2
                 \right).
    \end{equation*}
    Integrating in \(t\) gives \eqref{Eqn::Result::BoundaryMaxWSubNewEquation}.
\end{proof}

\begin{remark}\label{Rmk::Result::SummetricInLAndLStar}
    The assumptions of this section are symmetric in \(\LDomainL\) and \(\LStarDomainLStar\) because \(\LStarDomainLStar\) corresponds to the form \(\QStarDomainQ\); see Remark \ref{Rmk::Results::ReplaceLWithQ}.
    Thus, Theorem \ref{Thm::Results:MainResult} also holds with \(\LDomainL\) replaced by \(\LStarDomainLStar\).
    Notably, \(\MinusLStarDomainLStar\) is the generator of the strongly continuous semigroup \(T(t)^{*}\); see \cite[Chapter 2, Section 2.5]{EngelNagelAShortCourseOnOperatorSemigroups}.
\end{remark}

%% file: core_intro.tex
Section \ref{Section::Result} imposes a number of assumptions on \(\QDomainQ\) that involve all of \(\DomainQ\). Often, however, \(\QDomainQ\) is initially defined on a core, or the relevant properties are easier to verify on a core (usually a space of smooth functions).
In this section, we present Corollary \ref{Cor::Core::MainCoreCorollary}, a corollary of Theorem \ref{Thm::Results:MainResult} in which the relevant assumptions are imposed on a core.
The results here closely follow similar results in \cite[Section \ref*{Form::Section::MainCor}]{StreetAPrioriEstimatesMaximallySubellipticQuadraticForms}.

As in Section \ref{Section::Result}, let \(\ManifoldN\) be a smooth manifold with boundary, with \(n\coloneqq \DimensionN\geq 2\), let \(\Vol\) be a smooth, strictly positive density on \(\ManifoldN\), and fix \(M\in \Zg\). Let \(\QDomainQ\) be a closed, densely defined, sectorial sesquilinear form on \(\LtSpace*[\ManifoldN,\Vol][\C^M]\).
Let \(\LDomainL\) be the unique, closed, densely defined, m-sectorial operator
on \(\LtSpace*[\ManifoldN,\Vol][\C^M]\) associated with \(\QDomainQ\) (see Section \ref{Section::Result} for details).

Let \(\ManifoldNZero\subseteq \ManifoldN\) be an open set and let \(W_1,\ldots, W_r\)
be H\"ormander vector fields on \(\ManifoldNZero\).
Fix \(X\) in the \(\CinftySpace[\ManifoldNZero][\R]\) module generated by \(W_1,\ldots, W_r\)
and let \(\ManifoldNOne\subseteq \ManifoldNZero\) be an open set such that
\(\forall \xi \in \BoundaryNOne=\ManifoldNOne\cap \BoundaryN\), \(X(\xi)\not \in \TangentSpaceBoundaryN{\xi}\).
Let \(V_0,V_1,\ldots, V_r\) be as in Section \ref{Section::Scaling::Result} with this choice of \(X\)
and \(\ManifoldNOne\).

Fix an open, relatively compact set \(U\Subset \ManifoldNOne\) and set \(\CompactOne:=\overline{U}\Subset \ManifoldNOne\).
Fix an open, relatively compact set \(\Omega\Subset \ManifoldNOne\), with \(\CompactOne\Subset \Omega\).
Let \(\SSetOne\), \(\SSetTwo\), \(\Phi_{\xi,\delta}\), \(\Psi_{\xi,\delta}\), and \(\delta_1\in (0,1]\)
be as in Theorem \ref{Thm::Scaling::MainScalingThm} with the above choices.

For lists \(\alpha,\beta\) of elements of \(\left\{ 1,\ldots, r \right\}\), with \(|\alpha|,|\beta|\leq \kappa\),
let \(a_{\alpha,\beta}\in \CinftySpace[\ManifoldNZero][\M^{M\times M}(\C)]\).
Define \(\FormQF\) as in \eqref{Eqn::Result::FormQFFormula}.

Let \(\CoreB\subseteq \DomainQ\) be a core for \(\QDomainQ\) (see \cite[Chapter 6, Section 1.4]{KatoPerturbationTheory}).

\begin{assumption}\label{Assumption::Core::MultPsiIsCont}
    There exist \(\psi\in \CinftycptSpace[\ManifoldNZero][\R]\) with \(\psi=1\)
    on a neighborhood of \(\overline{\Omega}\) and \(C_1\geq 0\) such that the map
    \(f\mapsto\psi f\) sends \(\CoreB\rightarrow \CoreB\)
    and satisfies
    \begin{equation*}
        \Real\FormQ[\psi f][\psi f] \leq C_1 \left( \left| \FormQ[f][f] \right| +\LtNorm{f}^2 \right),\quad \forall f\in \CoreB.
    \end{equation*}
\end{assumption}

For the remainder of this section, let \(\psi\) be as in Assumption \ref{Assumption::Core::MultPsiIsCont}.



\begin{assumption}\label{Assumption::Core::FormQEqualsFormQF}
    \(\forall f,g\in \CoreB\), \(\FormQ[\psi f][g]=\FormQF[\psi f][g]\) and \(\FormQ[f][\psi g]=\FormQF[f][\psi g]\).
\end{assumption}

\begin{assumption}\label{Assumption::Core::MaxSub}
    \(\forall f\in \CoreB\), \(\psi f\in \HsWSpace{\kappa}[\ManifoldNZero][\C^M]\).
    Moreover, 
    \(\exists C_2\geq 0\),
    \begin{equation*}
        \HsWNorm*{\psi f}{\kappa}^2
        \leq C_2\left( \left| \FormQ[\psi f][\psi f] \right| + \LtNorm*{\psi f}^2 \right),\quad \forall f\in \CoreB.
    \end{equation*}
\end{assumption}

\begin{assumption}\label{Assumption::Core::SmoothWithCompactSupportInDomain}
    \(\TestFunctionsZeroN[\C^M]\subseteq \DomainL\cap \DomainLStar\).
    Moreover, there is a partial differential operator \(\opL_0\) with smooth
    \(\M^{M\times M}(\C)\)-valued coefficients
    such that
    \begin{equation*}
        \opL f=\opL_0 f,\quad \forall f\in \TestFunctionsZeroN[\C^M].
    \end{equation*}
\end{assumption}

For functions \(K(x,y')\in \CinftycptSpace*[\Cubengeq{1/2}\times \Cubenmo{1/2}]\) such that \(\exists \delta'>0\)
with \(\partial_{x_n} K((x',x_n),y')=0\) for \(0\leq x_n<\delta'\), we consider operators of the form
\begin{equation}\label{Eqn::Core::SOperator}
    S f(x',x_n)= \int K(x,y')f(y',x_n)\: dy'.
\end{equation}

\begin{assumption}\label{Assumption::Core::PullBackSMapping}
    \(\forall (\xi,\delta)\in \SSetTwo\),
    for all operators \(S\) of the form \eqref{Eqn::Core::SOperator} (for any \(\delta'>0\)),
    set \(T_{\xi,\delta}:=\left( \Psi_{\xi,\delta} \right)_{*} S \Psi_{\xi,\delta}^{*}\)
    as in Assumption \ref{Assumption::Results::BoundaryMaximalSubellip}.
    Then, \(T_{\xi,\delta}:\CoreB\rightarrow \DomainQ\).
\end{assumption}

\begin{remark}\label{Rmk::Core::PullBackSMapping::CheckOnB}
    To verify Assumption \ref{Assumption::Core::PullBackSMapping}, it is often easier to show
    \(T_{\xi,\delta}:\CoreB\rightarrow \CoreB\).  This is stronger, since \(\CoreB\subseteq\DomainQ\).
\end{remark}


\begin{corollary}\label{Cor::Core::MainCoreCorollary}
    Under the above assumptions, the conclusions of Theorem \ref{Thm::Results:MainResult}
    hold.
\end{corollary}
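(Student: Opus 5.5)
The plan is to deduce Corollary \ref{Cor::Core::MainCoreCorollary} from Theorem \ref{Thm::Results:MainResult} by verifying Assumptions \ref{Assumption::Result::RestrictDomainQ}, \ref{Assumption::Result::FormQEqualsFormQF}, \ref{Assumption::Result::SmoothWithCompactSupportInDomain}, and \ref{Assumption::Results::BoundaryMaximalSubellip}. Throughout, \(\Omega\) plays the same role as before, and \(\Omega_1\) is any open set with \(\Omega\Subset\Omega_1\Subset\ManifoldNOne\) on which \(\psi=1\). Assumption \ref{Assumption::Result::SmoothWithCompactSupportInDomain} is identical to Assumption \ref{Assumption::Core::SmoothWithCompactSupportInDomain}, so only the other three need work. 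The basic tool is density: \(\CoreB\) is dense in \(\DomainQ\) for the norm \eqref{Eqn::Result::DomainQNorm}. I would also use the standard fact that a sectorial form is bounded on \(\DomainQ\), namely \(|\FormQ[f][g]|\lesssim \DomainQNorm{f}\DomainQNorm{g}\).

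\textbf{Step 1: extend multiplication by \(\psi\) to \(\DomainQ\).} Assumption \ref{Assumption::Core::MultPsiIsCont} and sectoriality give \(\DomainQNorm{\psi f}\lesssim \DomainQNorm{f}\) for \(f\in\CoreB\). Since \(\DomainQ\) is closed and multiplication by \(\psi\) is continuous on \(L^2\), this map extends to a bounded map \(\DomainQ\to\DomainQ\), which is still multiplication by \(\psi\). Assumption \ref{Assumption::Core::MaxSub} then gives \(\HsWNorm{\psi f}{\kappa}\lesssim\DomainQNorm{\psi f}\lesssim\DomainQNorm{f}\) on \(\CoreB\). Because \(W^\alpha\) is closed in the distributional sense, this extends to all \(f\in\DomainQ\), so \(\psi f\in\HsWSpace{\kappa}\). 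Since \(\psi=1\) near \(\overline\Omega\), this yields Assumption \ref{Assumption::Result::RestrictDomainQ}.

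\textbf{Step 2: the form identity.} Both sides of \(\FormQ[\psi f][g]=\FormQF[\psi f][g]\) are continuous on \(\DomainQ\times\DomainQ\). For the right-hand side I use Step 1, together with the fact that \(\FormQF[\psi f][g]\) only involves \(g\) on \(\supp\psi\). That is handled by writing \(\FormQF[\psi f][g]=\FormQF[\psi f][\psi' g]\) for a slightly larger cutoff \(\psi'\), or by reducing to \(\Omega_1\) directly. So the identity extends by density from \(\CoreB\) to \(\DomainQ\); the same holds for the other identity. If \(\supp f\subseteq\Omega\), then \(f=\psi f\), which gives Assumption \ref{Assumption::Result::FormQEqualsFormQF}.

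If \(\FormQF[\psi f][g]\) with \(g\in\DomainQ\) does not make sense globally, I would instead restrict to \(\Omega_1\) where \(\psi=1\) and prove Assumption \ref{Assumption::Result::RestrictDomainQ} on \(\Omega_1\) first. I expect this bookkeeping to be mildly delicate but routine.

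\textbf{Step 3: Assumption \ref{Assumption::Results::BoundaryMaximalSubellip} (main obstacle).} The spatial operator \(T_{\xi,\delta}\) from Assumption \ref{Assumption::Core::PullBackSMapping} maps \(\CoreB\to\DomainQ\). It is closed as a map \(\DomainQ\to\DomainQ\) on its natural domain, since it is \(L^2\)-bounded, but I still need it bounded on \(\CoreB\) in the \(\DomainQ\)-norm before I can extend it. The plan is to avoid the closed graph theorem on a non-complete space by proving an a priori bound directly. For \(f\in\CoreB\), the function \(T_{\xi,\delta}f\) is supported in \(\Compact_2\Subset\Omega\). So \(\FormQ[T_{\xi,\delta}f][T_{\xi,\delta}f]=\FormQF[T_{\xi,\delta}f][T_{\xi,\delta}f]\) by Step 2, and it suffices to bound \(\HsWNorm{T_{\xi,\delta}f}{\kappa}\lesssim\HsWNorm{\psi f}{\kappa}\).

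This bound holds because \(S\) commutes with \(\partial_{x_n}\) near \(x_n=0\) and is smoothing in the tangential variables. In the chart, \(V^{\xi,\delta,2}_j\) has the form \(\pm D\partial_{x_n}\) (for \(j=0\)) or is tangential plus \(x_n\partial_{x_n}\)-type terms, by Theorem \ref{Thm::Scaling::MainScalingThm}\ref{Item::Scaling::MainScalingThm::PullBackV0IsDxn},\ref{Item::Scaling::MainScalingThm::PullBackVjDoesntContainDxn}. Hence \(V^\alpha S\) can be rewritten as a sum of operators of the form \(S' \partial_{x_n}^k\) with \(k\le|\alpha|\), where each \(S'\) is smoothing tangentially. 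Each \(\partial_{x_n}^k\) is controlled by the \(V_0\)-derivatives, which are part of the \(H^\kappa_V\sim H^\kappa_W\) norm. This gives continuity of \(T_{\xi,\delta}\) from \((\CoreB,\DomainQNorm{\cdot})\) into \(\DomainQ\), so it extends to all of \(\DomainQ\). The extension agrees with \(T_{\xi,\delta}\) by the \(L^2\)-boundedness.

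Finally, for the time-dependent operator in \eqref{Eqn::Results::SOperator}, I would approximate \(K(t,s,x,y')\) by finite sums \(\sum a_i(t)b_i(s)K_i(x,y')\), or use a Bochner integral over \(t,s\) of spatial operators with uniformly bounded \(\DomainQ\)-norms. This gives \(L^2(\R;\DomainQ)\supseteq L^2(\R;\DomainL)\to L^2(\R;\DomainQ)\), and the same argument works for \(\DomainLStar\). Theorem \ref{Thm::Results:MainResult} then applies.
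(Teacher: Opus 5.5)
The gap is in Step 2, which is meant to establish Assumption \ref{Assumption::Result::FormQEqualsFormQF}. Your Step 1 and the core of Step 3 match the paper's argument: bounding $T_{\xi,\delta}$ on $\CoreB$ in the $\DomainQ$-norm through the $\HsWSpace{\kappa}$-norm, extending by density, then integrating in $(t,s)$.

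Your claim that $(f,g)\mapsto\FormQF[\psi f][g]$ is continuous on $\DomainQ\times\DomainQ$ is unjustified, and false in general. This pairing involves $W^\beta g$ on all of $\supp\psi$. The hypotheses control only $\psi g$ in $\HsWSpace{\kappa}$, i.e.\ $W^\beta g$ where $\psi=1$. Near the edge of $\supp\psi$ you control only weighted quantities such as $\psi^{|\beta|+1}W^\beta g$.

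Your fix $\FormQF[\psi f][g]=\FormQF[\psi f][\psi' g]$, with $\psi'=1$ on $\supp\psi$, needs $\psi' g\in\HsWSpace{\kappa}$ with norm $\lesssim\DomainQNorm{g}$. None of Assumptions \ref{Assumption::Core::MultPsiIsCont}--\ref{Assumption::Core::PullBackSMapping} provides this. They concern one fixed cutoff, and $\supp\psi$ may reach into $\ManifoldNZero\setminus\ManifoldNOne$ (e.g.\ characteristic boundary points), where elements of $\DomainQ$ need not be locally in $\HsWSpace{\kappa}$.

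Restricting to $\Omega_1$ does not help either. Even if you only want the identity for $f$ supported in $\Omega$, the only identity you have on the core has first argument $\psi h$ with $h\in\CoreB$, and this is supported beyond $\Omega_1$. So the limit $k\to\infty$ in $\FormQF[\psi f_j][g_k]$ still involves $W^\beta g_k$ on $\supp\psi\setminus\Omega_1$, where you have no convergence.

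The missing idea is to put extra powers of $\psi$ on the first argument and redistribute them with the Leibniz rule. This is legitimate because $\psi\CoreB\subseteq\CoreB$. Every $W$-derivative of order at most $\kappa$ of $\psi^{2\kappa+1}$ retains a factor $\psi^{\kappa+1}$. Also, for $|\beta|\le\kappa$, $\psi^{\kappa+1}W^\beta v$ is a smooth combination of the $W^{\beta'}(\psi v)$ with $|\beta'|\le|\beta|$. Hence there is a sesquilinear form $\FormRpsi$, continuous on $\HsWSpace{\kappa}[\ManifoldNZero][\C^M]\times\HsWSpace{\kappa}[\ManifoldNZero][\C^M]$, with $\FormQF[\psi^{2\kappa+2}u][v]=\FormRpsi[\psi u][\psi v]$.

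Now take $f,g\in\DomainQ$ with $\supp f\subseteq\Omega$, and write $f=\psi^{2\kappa+2}f$. Choose $f_j,g_k\in\CoreB$ converging to $f,g$ in $\DomainQ$. Applying Assumption \ref{Assumption::Core::FormQEqualsFormQF} to $\psi^{2\kappa+2}f_j=\psi(\psi^{2\kappa+1}f_j)$ gives $\FormQ[\psi^{2\kappa+2}f_j][g_k]=\FormRpsi[\psi f_j][\psi g_k]$. You can pass to the limit using only what your Step 1 gives: boundedness of multiplication by $\psi$ on $\DomainQ$, and $\psi f_j\to\psi f$, $\psi g_k\to\psi g$ in $\HsWSpace{\kappa}$. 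The case $\supp g\subseteq\Omega$ is symmetric.

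Your Step 3, as written, invokes Step 2, but it need not. Taking $g=\psi h$ in Assumption \ref{Assumption::Core::FormQEqualsFormQF} gives $\DomainQNorm{\psi h}\approx\HsWNorm{\psi h}{\kappa}$ for $h\in\CoreB$. This extends to all $h\in\DomainQ$ by density, and $T_{\xi,\delta}f=\psi T_{\xi,\delta}f$. This is how the paper argues, via Lemma \ref{Lemma::Core::BigLemma}.
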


Corollary \ref{Cor::Core::MainCoreCorollary} follows immediately from the next proposition.

\begin{proposition}\label{Prop::Core::MainCoreReduction}
    Under the above assumptions, the assumptions of Section \ref{Section::Result} hold.
\end{proposition}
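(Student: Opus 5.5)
We must verify Assumptions \ref{Assumption::Result::RestrictDomainQ}, \ref{Assumption::Result::FormQEqualsFormQF}, \ref{Assumption::Result::SmoothWithCompactSupportInDomain} and \ref{Assumption::Results::BoundaryMaximalSubellip} for some choice of \(\Omega_1\) with \(\Omega\Subset\Omega_1\Subset\ManifoldNOne\). Take \(\Omega_1\) to be any such set contained in the interior of \(\{\psi=1\}\); this is possible after shrinking, since \(\psi=1\) near \(\overline{\Omega}\). Assumption \ref{Assumption::Result::SmoothWithCompactSupportInDomain} is then literally Assumption \ref{Assumption::Core::SmoothWithCompactSupportInDomain}. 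The remaining assumptions are proved in two stages. First, the core estimates are upgraded to all of \(\DomainQ\) by a density argument. Second, the conditions on \(\DomainL\) and \(\DomainLStar\) are deduced from conditions on \(\DomainQ\), using \(\DomainL,\DomainLStar\hookrightarrow\DomainQ\) continuously.

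\textbf{Step 1 (extending \(\psi\)).} By Assumption \ref{Assumption::Core::MultPsiIsCont} and the sectoriality \eqref{Eqn::Result::Sectorial}, \(\DomainQNorm{\psi f}\lesssim\DomainQNorm{f}\) for \(f\in\CoreB\). Note that \(|\FormQ[f][f]|\lesssim\DomainQNorm{f}^2\) by sectoriality, and \(\Real\FormQ\) controls \(|\FormQ|\) modulo \(\LtNorm{\cdot}^2\). Since \(\CoreB\) is a core, multiplication by \(\psi\) extends to a bounded map \(\DomainQ\to\DomainQ\). This extension agrees with pointwise multiplication, because \(\DomainQ\hookrightarrow L^2\) and multiplication by \(\psi\) is \(L^2\)-continuous.

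\textbf{Step 2 (Assumption \ref{Assumption::Result::RestrictDomainQ}).} By Assumption \ref{Assumption::Core::MaxSub}, \(\HsWNorm{\psi f}{\kappa}\lesssim\DomainQNorm{\psi f}\lesssim\DomainQNorm{f}\) for \(f\in\CoreB\). Hence \(f\mapsto\psi f\) extends continuously \(\DomainQ\to\HsWSpace{\kappa}[\ManifoldNZero][\C^M]\). Because \(H^\kappa_W\) convergence implies \(L^2\) convergence, the limit is again \(\psi f\). As \(\psi=1\) on \(\Omega\), we get \(f|_\Omega\in\HsWSpace{\kappa}[\Omega]\subseteq\HsWlocSpace{\kappa}[\Omega]\).

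\textbf{Step 3 (Assumption \ref{Assumption::Result::FormQEqualsFormQF}).} The identities in Assumption \ref{Assumption::Core::FormQEqualsFormQF} extend by continuity to all \(f,g\in\DomainQ\). Both sides are continuous in \(\DomainQ\times\DomainQ\): the left side because \(\FormQ\) is bounded on \(\DomainQ\), and the right side by Steps 1 and 2, since \(\FormQF[\psi f][g]\) involves only \(W^\alpha(\psi f)\) and \(W^\beta g\) on \(\supp\psi\). For the \(g\) factor we need \(\psi' g\in H^\kappa_W\), where \(\psi'\) is a bump adapted to \(\supp\psi\). This is the main technical obstacle. It is handled by noting that the coefficients \(a_{\alpha,\beta}\) need only be used on \(\Omega_1\): we only need \(\FormQ[f][g]=\FormQF[f][g]\) when, say, \(\supp f\subseteq\Omega\). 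In that case \(f=\psi f\), and \(\FormQF[\psi f][g]\) only sees \(g\) on \(\supp f\subseteq\Omega\), where \(g=\psi g\in H^\kappa_W\). Concretely, approximate \(f\) by \(f_k\in\CoreB\) and \(g\) by \(g_k\in\CoreB\). Then \(\FormQ[\psi f_k][g_k]\to\FormQ[\psi f][g]=\FormQ[f][g]\). Also \(\FormQF[\psi f_k][g_k]\to\FormQF[\psi f][g]\), using \(\psi f_k\to\psi f\) in \(H^\kappa_W(\ManifoldNZero)\) and \(g_k\to g\) in \(H^\kappa_W\) on \(\Omega\), with the integral taken over \(\Omega\) as in \eqref{Eqn::Result::FormQFFormula}. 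The cutoff mismatch between \(\psi f\) and \(f\) outside \(\Omega\) is harmless, because \(\FormQF\) integrates only over \(\Omega\), where \(\psi f=f\). The case \(\supp g\subseteq\Omega\) is symmetric.

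\textbf{Step 4 (Assumption \ref{Assumption::Results::BoundaryMaximalSubellip}).} It suffices to show that \(T_{\xi,\delta}\) maps \(\LtSpace[\R][\DomainQ]\to\LtSpace[\R][\DomainQ]\), since \(\DomainL\) and \(\DomainLStar\) embed into \(\DomainQ\). Expand the kernel \(K(t,s,x,y')\) of \eqref{Eqn::Results::SOperator}. Because \(K\) is smooth and compactly supported, it can be written as a rapidly convergent series \(\sum_k a_k(t)b_k(s)K_k(x,y')\), with each \(K_k\) of the form in \eqref{Eqn::Core::SOperator} (using a tensor/Fourier expansion in \((t,s)\)). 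Each spatial operator \((\Psi_{\xi,\delta})_*S_k\Psi_{\xi,\delta}^*\) maps \(\CoreB\to\DomainQ\) by Assumption \ref{Assumption::Core::PullBackSMapping}. It is also bounded on \(L^2\), so it has closed graph as a map \(\DomainQ\to\DomainQ\) once we know it maps \(\DomainQ\) into \(\DomainQ\). Obtaining that mapping property from the core alone is the delicate point. The first thing to try is to insert the cutoff: \(T=T\psi\) since the kernel is supported in \(\Omega\). Then estimate \(\DomainQNorm{T f}\) for \(f\in\CoreB\) through Assumption \ref{Assumption::Core::MaxSub} and Step 3, together with the identity \(\FormQ[Tf][Tf]=\FormQF[Tf][Tf]\), which is bounded by \(\HsWNorm{Tf}{\kappa}^2\). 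Tangential smoothing together with the uniform structure \ref{Item::Scaling::MainScalingThm::PullBackV0IsDxn}, \ref{Item::Scaling::MainScalingThm::PullBackVjDoesntContainDxn} of Theorem \ref{Thm::Scaling::MainScalingThm} then gives \(\HsWNorm{Tf}{\kappa}\lesssim\HsWNorm{\psi f}{\kappa}\lesssim\DomainQNorm{f}\). This yields a bound on \(\CoreB\), which extends to \(\DomainQ\) by closedness of \(\DomainQ\). Summing over \(k\) with the rapid decay of the coefficients gives the required map on \(\LtSpace[\R][\DomainQ]\).
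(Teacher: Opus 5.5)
Your outline follows the paper's proof. Steps 1 and 2 are Lemma \ref{Lemma::Core::BigLemma} (you only need one direction of the norm comparison), and Step 4 corresponds to Lemmas \ref{Lemma::Cor::SmoothingOpIsCont}--\ref{Lemma::Cor::TtMapsRightSpaces}. The gap is in Step 3. You pass to the limit in $\FormQF[\psi f_k][g_k]$ by declaring that $\FormQF$ integrates only over $\Omega$. That matches the literal text of \eqref{Eqn::Result::FormQFFormula}, but it cannot be the meaning in Section \ref{Section::Core}. There, $\FormQ[\psi f][g]$ involves all of $\supp\psi\supsetneq\overline{\Omega}$. In Section \ref{Section::Core::PfOfIntroCor}, where $\psi\equiv 1$, your reading would make Assumption \ref{Assumption::Core::FormQEqualsFormQF} assert that the global form equals its restriction to $\Omega'$, which is false. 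With $a_{\alpha,\beta}\in\CinftySpace[\ManifoldNZero]$, $\FormQF[\psi f][g]$ is an integral over $\supp\psi$. Its convergence then requires $W^\beta g_k$ to converge in $L^2$ on all of $\supp\psi$. Step 2 only controls $\psi g_k$, and that says nothing about $W^\beta g_k$ near the edge of $\supp\psi$, where $\psi\to 0$. Switching to the second identity in Assumption \ref{Assumption::Core::FormQEqualsFormQF} only yields $\FormQ[f][\psi g]$. That is not known to equal $\FormQ[f][g]$, because $\FormQ$ is not assumed to be local.

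The missing idea is the one in Lemma \ref{Lemma::Core::FirstAssumptionsHold}: move the cutoffs from the first slot into the second.
\begin{itemize}
\item Since $\supp f\subseteq\Omega$, you have $f=\psi^{2\kappa+2}f$. Assumption \ref{Assumption::Core::FormQEqualsFormQF}, applied to $\psi^{2\kappa+1}f_j\in\CoreB$, gives $\FormQ[\psi^{2\kappa+2}f_j][g_k]=\FormQF[\psi^{2\kappa+2}f_j][g_k]$.
\item By the Leibniz rule, $W^\alpha(\psi^{2\kappa+2}u)$ equals $\psi^{\kappa+1}$ times a smooth combination of $W^{\alpha'}(\psi u)$. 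Also, $\psi^{\kappa+1}W^\beta v$ is a smooth combination of $W^{\beta'}(\psi v)$ for $|\beta|\le\kappa$.
\item Hence $\FormQF[\psi^{2\kappa+2}u][v]=\FormRpsi[\psi u][\psi v]$, with $\FormRpsi$ continuous on $H^\kappa_W(\ManifoldNZero)\times H^\kappa_W(\ManifoldNZero)$. The limit now uses only $\psi f_j\to\psi f$ and $\psi g_k\to\psi g$ in $H^\kappa_W(\ManifoldNZero)$.
\end{itemize}

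With Step 3 repaired this way, the rest of your argument goes through. Deriving $\DomainQNorm{T_{\xi,\delta} f}\lesssim\HsWNorm{T_{\xi,\delta} f}{\kappa}$ from Assumption \ref{Assumption::Result::FormQEqualsFormQF} is a legitimate substitute for the paper's two-sided comparison in Lemma \ref{Lemma::Core::BigLemma}. Your Fourier expansion in $(t,s)$ also works, because the spatial bound depends on only finitely many derivatives of $K_k$: the $x'$-derivatives fall on the kernel, and $\partial_{x_n}$ is a constant multiple of $\Psi_{\xi,\delta}^{*}X$. The paper instead keeps $(t,s)$ as parameters and integrates in $s$.
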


The proof of Proposition \ref{Prop::Core::MainCoreReduction}
closely follows the proof of \cite[Proposition \ref*{Form::Prop::Core::AssumptionsHold}]{StreetAPrioriEstimatesMaximallySubellipticQuadraticForms}.
Thus, we outline the proof here and describe only the changes needed to adapt it to the setting of this paper.

\begin{lemma}\label{Lemma::Core::BigLemma}
    \begin{enumerate}[(a)]
        \item\label{Item::Core::MultPsiIsCont} \(f\mapsto \psi f\) is a continuous map \(\DomainQ\rightarrow \DomainQ\).

        \item\label{Item::Core::QNormIsHsWNormOnCoreB}
                \(\DomainQNorm{\psi f}\approx \HsWNorm{\psi f}{\kappa}\), \(\forall f\in \CoreB\).

        \item\label{Item::Core::QNormIsHkappaWNormOnDomainQ}
    \(\forall f\in \DomainQ\), \(\psi f\in \HsWSpace{\kappa}[\ManifoldNZero][\C^M]\)
    and
    \begin{equation*}
        \DomainQNorm{\psi f}\approx \HsWNorm{\psi f}{\kappa}.
     \end{equation*}

     \item\label{Item::Core::MultPsiIsContDomQToHW}
    \(f\mapsto \psi f\) is a continuous map \(\DomainQ\rightarrow \HsWSpace{\kappa}[\ManifoldNZero][\C^M]\).

    \end{enumerate}
\end{lemma}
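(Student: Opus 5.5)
The plan is to work first on the core \(\CoreB\) and then extend to \(\DomainQ\) by density, using that \(\CoreB\) is dense in \(\DomainQ\) for the norm \eqref{Eqn::Result::DomainQNorm}.

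For \ref{Item::Core::MultPsiIsCont}: for \(f\in\CoreB\), Assumption \ref{Assumption::Core::MultPsiIsCont} gives \(\psi f\in\CoreB\) and
\begin{equation*}
    \DomainQNorm{\psi f}^2=\Real\FormQ[\psi f][\psi f]+\Gamma\LtNorm{\psi f}^2\lesssim \left|\FormQ[f][f]\right|+\LtNorm{f}^2\lesssim \DomainQNorm{f}^2 .
\end{equation*}
The last inequality uses sectoriality \eqref{Eqn::Result::Sectorial}: \(|\Imaginary\FormQ[f][f]|\leq C(\Real\FormQ[f][f]-\gamma\LtNorm{f}^2)\), so \(|\FormQ[f][f]|\lesssim\DomainQNorm{f}^2\) once \(\Gamma\) is large. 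Thus \(f\mapsto\psi f\) is bounded from \((\CoreB,\DomainQNorm{\cdot})\) into \(\DomainQ\). It extends uniquely to a bounded map \(\DomainQ\to\DomainQ\). This extension agrees with multiplication by \(\psi\): if \(f_j\to f\) in \(\DomainQ\), then \(\psi f_j\to\psi f\) in \(L^2\), and \(\DomainQ\hookrightarrow L^2\) continuously, so limits coincide.

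For \ref{Item::Core::QNormIsHsWNormOnCoreB}, fix \(f\in\CoreB\). The upper bound \(\HsWNorm{\psi f}{\kappa}\lesssim\DomainQNorm{\psi f}\) follows from Assumption \ref{Assumption::Core::MaxSub} together with \(|\FormQ[g][g]|\lesssim\DomainQNorm{g}^2\) applied to \(g=\psi f\) (squared norms versus sums of norms are equivalent up to constants).

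For the reverse bound, I use Assumption \ref{Assumption::Core::FormQEqualsFormQF}: \(\FormQ[\psi f][\psi f]=\FormQF[\psi f][\psi f]\). This is legitimate since \(\psi f\in\CoreB\), so \(\psi f\) can play the role of \(g\), and \(\psi f=\psi\cdot f\) can play the role of \(\psi f\). Since \(\supp\psi\) is compact in \(\ManifoldNZero\), the coefficients \(a_{\alpha,\beta}\) are bounded on it. Hence \(|\FormQF[\psi f][\psi f]|\lesssim\HsWNorm{\psi f}{\kappa}^2\), and so \(\DomainQNorm{\psi f}^2\lesssim\HsWNorm{\psi f}{\kappa}^2\).

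One bookkeeping point needs checking. \(\FormQF\) is written as an integral over \(\Omega\), but \(\psi f\) may be supported beyond \(\Omega\). I interpret \(\FormQF\) in Assumption \ref{Assumption::Core::FormQEqualsFormQF} as the integral over the support of \(\psi\) inside \(\ManifoldNZero\), where the coefficients are defined. This is the setting of Section \ref{Section::Core}, in which \(a_{\alpha,\beta}\in\CinftySpace[\ManifoldNZero]\).

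For \ref{Item::Core::QNormIsHkappaWNormOnDomainQ} and \ref{Item::Core::MultPsiIsContDomQToHW}, take \(f\in\DomainQ\) and choose \(f_j\in\CoreB\) with \(f_j\to f\) in \(\DomainQ\). By \ref{Item::Core::MultPsiIsCont}, \(\psi f_j\to\psi f\) in \(\DomainQ\). By \ref{Item::Core::QNormIsHsWNormOnCoreB} applied to \(f_j-f_k\), the sequence \(\psi f_j\) is Cauchy in \(\HsWSpace{\kappa}[\ManifoldNZero][\C^M]\). That space is complete: if \(g_j\to g\) and \(W^\alpha g_j\to h_\alpha\) in \(L^2\), then \(W^\alpha g=h_\alpha\) distributionally, by pairing with \(\TestFunctionsZero\) test functions. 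So \(\psi f_j\) converges in \(\HsWSpace{\kappa}\), and its limit is \(\psi f\), because both convergences imply \(L^2\) convergence. The norm equivalence passes to the limit, giving \ref{Item::Core::QNormIsHkappaWNormOnDomainQ}. Combining it with \ref{Item::Core::MultPsiIsCont} yields \(\HsWNorm{\psi f}{\kappa}\lesssim\DomainQNorm{\psi f}\lesssim\DomainQNorm{f}\), which is \ref{Item::Core::MultPsiIsContDomQToHW}.

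The main obstacle I expect is this closure step: identifying the \(\HsWSpace{\kappa}\)-limit with \(\psi f\) in the distributional sense on a manifold with boundary, where \(W^\alpha\) is taken in \(\DistributionsZero\). I would handle it by the density-and-\(L^2\)-uniqueness argument above.
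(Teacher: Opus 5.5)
Your proposal is correct and follows the paper's proof essentially step for step: (a) uses the core bound from Assumption \ref{Assumption::Core::MultPsiIsCont} plus density, (b) combines sectoriality with Assumptions \ref{Assumption::Core::FormQEqualsFormQF} and \ref{Assumption::Core::MaxSub}, and (c)--(d) use the same Cauchy-sequence argument in \(\HsWSpace{\kappa}\). The additional checks you make are details the paper leaves implicit: that the extension in (a) really is multiplication by \(\psi\), the completeness of \(\HsWSpace{\kappa}\), and the domain of integration of \(\FormQF\). The last point is harmless under either reading, because \(\overline{\Omega}\) and \(\supp\psi\) are both compact in \(\ManifoldNZero\).
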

\begin{proof}
    \ref{Item::Core::MultPsiIsCont}: Assumption \ref{Assumption::Core::MultPsiIsCont}
    shows 
    \begin{equation*}
        \DomainQNorm{\psi f}\lesssim \DomainQNorm{f},\quad \forall f\in \CoreB.
    \end{equation*}
    Since \(\CoreB\) is dense in \(\DomainQ\), \ref{Item::Core::MultPsiIsCont} follows. 

    \ref{Item::Core::QNormIsHsWNormOnCoreB}: 
    Using \eqref{Eqn::Result::Sectorial}, \eqref{Eqn::Result::DomainQNorm}, Assumptions \ref{Assumption::Core::MultPsiIsCont} and \ref{Assumption::Core::FormQEqualsFormQF},
    and \eqref{Eqn::Result::FormQFFormula},
    we have
    \begin{equation*}
        \DomainQNorm{\psi f}^2 \approx \left\lvert \FormQ[\psi f][\psi f] \right\rvert +\LtNorm{\psi f}^2 
        =\left\lvert \FormQF[\psi f][\psi f] \right\rvert +\LtNorm{\psi f}^2 
        \lesssim \HsNorm{\psi f}{\kappa}^2
    \end{equation*}
    By Assumption \ref{Assumption::Core::MaxSub}, \eqref{Eqn::Result::Sectorial} and \eqref{Eqn::Result::DomainQNorm}, we have
    \begin{equation*}
        \HsWNorm{\psi f}{\kappa}^2 \lesssim \left\lvert \FormQ[\psi f][\psi f] \right\rvert +\LtNorm{\psi f}^2 \approx \DomainQNorm{\psi f}^2,
    \end{equation*}
    establishing \ref{Item::Core::QNormIsHsWNormOnCoreB}.

    \ref{Item::Core::QNormIsHkappaWNormOnDomainQ}: Let \(f\in \DomainQ\). Take \(f_j\in \CoreB\) such that \(f_j\rightarrow f\) in \(\DomainQ\). 
    By Assumption \ref{Assumption::Core::MultPsiIsCont}, \(\psi f_j \in \CoreB\) and by \ref{Item::Core::MultPsiIsCont}, \(\psi f_j \rightarrow \psi f\)
    in \(\DomainQ\). Therefore \(\psi f_j\) is Cauchy in \(\DomainQ\), and \ref{Item::Core::QNormIsHsWNormOnCoreB}
    implies \(\psi f_j\) is Cauchy in \(\HsWSpace{\kappa}[\ManifoldNZero][\C^M]\).  We conclude
    \(\psi f_j\rightarrow \psi f\) in \(\HsWSpace{\kappa}[\ManifoldNZero][\C^M]\) and therefore \(\psi f\in \HsWSpace{\kappa}[\ManifoldNZero][\C^M]\)
    with
    \begin{equation*}
        \HsNorm{\psi f}{\kappa} = \lim_{j\rightarrow \infty} \HsNorm{\psi f_j}{\kappa} \approx \lim_{j\rightarrow \infty} \DomainQNorm{\psi f_j}
        =\DomainQNorm{\psi f},
    \end{equation*}
    where the \(\approx\) uses \ref{Item::Core::QNormIsHsWNormOnCoreB}. This establishes \ref{Item::Core::QNormIsHkappaWNormOnDomainQ}.

    \ref{Item::Core::MultPsiIsContDomQToHW}: For \(f\in \DomainQ\), \ref{Item::Core::QNormIsHkappaWNormOnDomainQ}
    shows \(\psi f\in \HsWSpace{\kappa}[\ManifoldNZero][\C^M]\), and by \ref{Item::Core::QNormIsHkappaWNormOnDomainQ} and \ref{Item::Core::MultPsiIsCont},
    we have
    \begin{equation*}
        \HsWNorm{\psi f}{\kappa}\approx \DomainQNorm{\psi f}\lesssim \DomainQNorm{f},
    \end{equation*}
    completing the proof.

\end{proof}





\begin{lemma}\label{Lemma::Core::FirstAssumptionsHold}
    Assumptions 
    \ref{Assumption::Result::RestrictDomainQ},
    \ref{Assumption::Result::FormQEqualsFormQF},
    and \ref{Assumption::Result::SmoothWithCompactSupportInDomain} hold.
\end{lemma}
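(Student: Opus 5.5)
Assumption \ref{Assumption::Result::SmoothWithCompactSupportInDomain} is literally Assumption \ref{Assumption::Core::SmoothWithCompactSupportInDomain}, so there is nothing to prove there. For the other two I would use Lemma \ref{Lemma::Core::BigLemma} to extend the core assumptions from \(\CoreB\) to all of \(\DomainQ\), working with the cutoff \(\psi\), which equals \(1\) near \(\overline{\Omega}\).

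\textbf{Assumption \ref{Assumption::Result::RestrictDomainQ}.} Let \(f\in\DomainQ\). By Lemma \ref{Lemma::Core::BigLemma}\ref{Item::Core::QNormIsHkappaWNormOnDomainQ}, \(\psi f\in\HsWSpace{\kappa}[\ManifoldNZero][\C^M]\). Since \(\psi=1\) on a neighborhood of \(\overline{\Omega}\), we have \(f\big|_\Omega=(\psi f)\big|_\Omega\). Restricting to \(\Omega\) therefore gives \(f\big|_\Omega\in \HsWSpace{\kappa}[\Omega][\C^M]\subseteq\HsWlocSpace{\kappa}[\Omega][\C^M]\), together with the continuous bound of Lemma \ref{Lemma::Core::BigLemma}\ref{Item::Core::MultPsiIsContDomQToHW}.

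\textbf{Assumption \ref{Assumption::Result::FormQEqualsFormQF}.} Let \(f,g\in\DomainQ\) with, say, \(\supp(f)\subseteq\Omega\); the case \(\supp(g)\subseteq\Omega\) is symmetric and uses the second identity in Assumption \ref{Assumption::Core::FormQEqualsFormQF}. Since \(\psi=1\) near \(\overline\Omega\), we have \(f=\psi f\), and \(\FormQF[f][g]=\FormQF[f][\psi g]\) because the integrand of \eqref{Eqn::Result::FormQFFormula} lives on \(\Omega\). The plan has three steps.
\begin{enumerate}[(i)]
\item Choose \(f_j,g_j\in\CoreB\) with \(f_j\to f\) and \(g_j\to g\) in \(\DomainQ\).
\item Apply Assumption \ref{Assumption::Core::FormQEqualsFormQF} to get \(\FormQ[\psi f_j][g_j]=\FormQF[\psi f_j][g_j]\) for each \(j\).
\item Let \(j\to\infty\) on both sides.
\end{enumerate}

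On the left side of (iii), Lemma \ref{Lemma::Core::BigLemma}\ref{Item::Core::MultPsiIsCont} gives \(\psi f_j\to\psi f=f\) in \(\DomainQ\). Since \(\FormQ\) is bounded on \(\DomainQ\), the left side converges to \(\FormQ[f][g]\).

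On the right side, the form \(\FormQF[\psi f_j][g_j]\) only sees \(\Omega\), so it equals \(\FormQF[\psi f_j][\psi g_j]\). By Lemma \ref{Lemma::Core::BigLemma}\ref{Item::Core::MultPsiIsContDomQToHW}, \(\psi f_j\to \psi f\) and \(\psi g_j\to\psi g\) in \(\HsWSpace{\kappa}[\ManifoldNZero][\C^M]\). Since \(\FormQF\) is continuous on \(\HsWSpace{\kappa}[\Omega]\times\HsWSpace{\kappa}[\Omega]\) (the coefficients are bounded on \(\overline\Omega\)), the right side converges to \(\FormQF[\psi f][\psi g]=\FormQF[f][g]\). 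This proves the identity.

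\textbf{Main obstacle.} The only delicate point is the limiting argument for Assumption \ref{Assumption::Result::FormQEqualsFormQF}. Approximants of \(f\) need not keep support in \(\Omega\), so we cannot use \(f_j\) directly. Inserting \(\psi\) repairs this: Assumption \ref{Assumption::Core::FormQEqualsFormQF} is stated exactly for \(\psi f_j\), and Lemma \ref{Lemma::Core::BigLemma} supplies the needed continuity in both the \(\DomainQ\) and \(\HsWSpace{\kappa}\) topologies. It remains to check two things: that \(\FormQF[\cdot][\cdot]\) with one argument replaced by its \(\psi\)-multiple is unchanged, which uses \(\psi\equiv1\) on a neighborhood of \(\overline\Omega\) and the fact that \(\FormQF\) integrates only over \(\Omega\); and the routine bilinear continuity estimates.
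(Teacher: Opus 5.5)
\textbf{The gap is in your limiting argument for Assumption \ref{Assumption::Result::FormQEqualsFormQF}.} Your treatment of Assumptions \ref{Assumption::Result::RestrictDomainQ} and \ref{Assumption::Result::SmoothWithCompactSupportInDomain} matches the paper's. The failing step is the sentence ``the form \(\FormQF[\psi f_j][g_j]\) only sees \(\Omega\), so it equals \(\FormQF[\psi f_j][\psi g_j]\).'' The function \(\psi f_j\) is supported in \(\supp(\psi)\subseteq\ManifoldNZero\), not in \(\Omega\).

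It is true that \eqref{Eqn::Result::FormQFFormula} is written with \(\LtSpace[\Omega,\Vol]\). In Section \ref{Section::Core}, however, \(\FormQF[\psi f][g]\) has to be read as an integral over \(\ManifoldNZero\), where \(\psi f\) lives and the \(a_{\alpha,\beta}\) are defined. Otherwise Assumption \ref{Assumption::Core::FormQEqualsFormQF} would already fail in the paper's own application (Section \ref{Section::Core::PfOfIntroCor}). There \(\psi\equiv 1\) and \(\ManifoldNZero=\ManifoldN\), and \(\FormQ[f][g]\) is an integral over all of \(\ManifoldN\).

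Under the correct reading, consider the collar of \(\supp(\psi)\) where \(\psi\neq 1\). There \(W^\beta g_j\neq W^\beta(\psi g_j)\), and \(W^\beta g_j\) is not controlled by what you know, namely \(\psi g_j\to\psi g\) in \(\HsWSpace{\kappa}[\ManifoldNZero][\C^M]\) (Lemma \ref{Lemma::Core::BigLemma}\ref{Item::Core::MultPsiIsContDomQToHW}). So you cannot pass to the limit in \(\FormQF[\psi f_j][g_j]\).

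\textbf{How the paper repairs this.} It approximates with \(\psi^{2\kappa+2}f_j\) instead of \(\psi f_j\).
\begin{enumerate}[(i)]
\item Since multiplication by \(\psi\) preserves \(\CoreB\), \(\psi^{2\kappa+1}f_j\in\CoreB\). So Assumption \ref{Assumption::Core::FormQEqualsFormQF} gives \(\FormQ[\psi^{2\kappa+2}f_j][g_k]=\FormQF[\psi^{2\kappa+2}f_j][g_k]\).
\item By Lemma \ref{Lemma::Core::BigLemma}\ref{Item::Core::MultPsiIsCont}, \(\psi^{2\kappa+2}f_j\to\psi^{2\kappa+2}f=f\) in \(\DomainQ\).
\item The extra powers make the right side tractable. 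By the Leibniz rule, every term of \(W^\alpha(\psi^{2\kappa+1}\cdot\psi u)\) carries at least \(\psi^{\kappa+1}\). Also, \(\psi^{|\beta|+1}W^\beta v\) is a smooth combination of \(W^\gamma(\psi v)\) with \(|\gamma|\leq|\beta|\). Since \(\psi\) is real, you can move \(\psi^{\kappa+1}\) onto the second argument. Hence \(\FormQF[\psi^{2\kappa+2}u][v]=\FormRpsi[\psi u][\psi v]\) for a form \(\FormRpsi\) that is continuous on \(\HsWSpace{\kappa}[\ManifoldNZero][\C^M]\times\HsWSpace{\kappa}[\ManifoldNZero][\C^M]\).
\item The limit now involves only \(\psi f_j\) and \(\psi g_k\), which do converge in \(\HsWSpace{\kappa}\). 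This yields \(\FormQ[f][g]=\FormRpsi[\psi f][\psi g]=\FormQF[\psi^{2\kappa+2}f][g]=\FormQF[f][g]\).
\end{enumerate}
Replace \(\psi f_j\) by \(\psi^{2\kappa+2}f_j\), and replace your identity \(\FormQF[\psi f_j][g_j]=\FormQF[\psi f_j][\psi g_j]\) by this \(\FormRpsi\) identity. With those two changes, the rest of your outline goes through as written.
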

\begin{proof}
    
    Assumption \ref{Assumption::Result::RestrictDomainQ}: 
    The map \(g\mapsto g\big|_{\Omega}\) is continuous \(\HsWSpace{\kappa}[\ManifoldNZero][\C^M]\rightarrow \HsWSpace{\kappa}[\Omega][\C^M]\hookrightarrow\HsWlocSpace{\kappa}[\Omega][\C^M]\). By Lemma \ref{Lemma::Core::BigLemma}\ref{Item::Core::MultPsiIsContDomQToHW}, \(f\mapsto \psi f\) is continuous \(\DomainQ\rightarrow \HsWSpace{\kappa}[\ManifoldNZero][\C^M]\).  Combining these, we see \(f\mapsto f\big|_{\Omega}=\psi f\big|_{\Omega}\) is continuous
    \(\DomainQ\rightarrow \HsWlocSpace{\kappa}[\Omega][\C^M]\).  Assumption \ref{Assumption::Result::RestrictDomainQ} follows.

    Assumption \ref{Assumption::Result::FormQEqualsFormQF}: 
    By the Leibniz rule, there is a continuous sesquilinear form
    \(\FormRpsi:\HsWSpace{\kappa}[\ManifoldNZero][\C^M]\times \HsWSpace{\kappa}[\ManifoldNZero][\C^M]\rightarrow \C\)
    such that
    \begin{equation*}
        \FormQF[\psi^{2\kappa+2}u][v]=\FormRpsi[\psi u][\psi v].
    \end{equation*} 
    Indeed, we have
    \begin{equation*}
        \FormQF[\psi^{2\kappa+2}u ][v]=\sum_{|\alpha|,|\beta|\leq \kappa} \Ltip*{W^{\alpha} (\psi u)}{c_{\alpha,\beta} W^{\beta}(\psi v)}\eqqcolon \FormRpsi[\psi u][\psi v],
    \end{equation*}
    for some smooth, compactly supported \(c_{\alpha,\beta}\).

    Let \(f,g\in \DomainQ\) such that \(\supp(f)\subseteq\Omega\). 
    Note that \(\psi f =f\). Take \(f_j,g_k\in \CoreB\) such that \(f_j\rightarrow f\) and \(g_k\rightarrow g\) in \(\DomainQ\).
    By Lemma \ref{Lemma::Core::BigLemma}\ref{Item::Core::MultPsiIsCont}, \(\psi f_j\rightarrow \psi f=f\), \(\psi^{2\kappa+2}f_j\rightarrow \psi^{2\kappa+2}f=f\), and \(\psi g_k\rightarrow \psi g\) in \(\DomainQ\).
    By Lemma \ref{Lemma::Core::BigLemma}\ref{Item::Core::QNormIsHkappaWNormOnDomainQ}, \(\psi g, f=\psi f\in \HsWSpace{\kappa}[\ManifoldNZero][\C^M]\),
    and by Lemma \ref{Lemma::Core::BigLemma}\ref{Item::Core::MultPsiIsContDomQToHW}, \(\psi f_j\rightarrow \psi f=f\) and \(\psi g_k\rightarrow \psi g\)
    in \(\HsWSpace{\kappa}[\ManifoldNZero][\C^M]\).  We have, using Assumption \ref{Assumption::Core::FormQEqualsFormQF},
    \begin{align*}
        &\FormQ[f][g] = \FormQ[\psi^{2\kappa+2} f][g] && \text{Using }\supp(f) 
        \\&= \lim_{j,k\rightarrow \infty} \FormQ[\psi^{2\kappa+2} f_j][g_k] && \text{Convergence in }\DomainQ
        \\&=\lim_{j,k\rightarrow \infty} \FormQF[\psi^{2\kappa+2} f_j][g_k] &&\text{Assumption \ref{Assumption::Core::FormQEqualsFormQF}}
        \\&=\lim_{j,k\rightarrow \infty} \FormRpsi[\psi f_j][\psi g_k] && \text{Definition of }\FormRpsi
        \\&=
        \FormRpsi[\psi f][\psi g] && \text{Convergence in }\HsSpace{\kappa}
        \\&=\FormQF[\psi^{2\kappa+2}f][g] && \text{Definition of }\FormRpsi
        \\&=\FormQF[f][g] && \text{Using }\supp(f).
    \end{align*}
    A similar proof works when \(\supp(g)\subseteq \Omega\) instead of \(f\), by reversing the roles of \(f\) and \(g\). 
    This establishes Assumption \ref{Assumption::Result::FormQEqualsFormQF}.

    Assumption \ref{Assumption::Result::SmoothWithCompactSupportInDomain} follows immediately from Assumption \ref{Assumption::Core::SmoothWithCompactSupportInDomain}.
\end{proof}

All that remains is to establish Assumption \ref{Assumption::Results::BoundaryMaximalSubellip}.

\begin{lemma}\label{Lemma::Cor::SmoothingOpIsCont}
    Let \(K(x,y')\in \CinftycptSpace[\Cubengeq{1/2}\times \Cubenmo{1/2}]\) and for
    \((\xi,\delta)\in \SSetTwo\), set
    \begin{equation}\label{Eqn::Cor::SmoothingOpIsCont::DefineS}
        S f(x',x_n):=\int K(x,y') f(y',x_n)\: dy', \quad T_{\xi,\delta}:= \left( \Psi_{\xi,\delta} \right)_* S \Psi_{\xi,\delta}^{*}.
    \end{equation}
    Then \(T_{\xi,\delta}:\HsWSpace{\kappa}[\ManifoldNZero][\C^M]\rightarrow \HsWSpace{\kappa}[\Omega][\C^M]\) and is continuous.
\end{lemma}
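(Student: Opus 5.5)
We pull everything back to the half-cube by \(\Psi_{\xi,\delta}\), prove a rescaled statement there, and push forward. Fix \((\xi,\delta)\in \SSetTwo\). By Theorem \ref{Thm::Scaling::MainScalingThm}\ref{Item::Scaling::MainScalingThm::Diffeomorphism}, \(\Psi_{\xi,\delta}\) is a diffeomorphism onto an open subset of \(\Omega\). Since \(K\) has compact support in \(\Cubengeq{1/2}\times\Cubenmo{1/2}\), \(T_{\xi,\delta}f\) is supported in the compact set \(\Psi_{\xi,\delta}(\overline{\Cubengeq{1/2}})\subseteq\Omega\), and extending by zero is harmless. Moreover \(T_{\xi,\delta}f\) depends only on \(f\) restricted to \(\Psi_{\xi,\delta}(\Cubengeq{1/2})\). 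Write \(\tilde f=\Psi_{\xi,\delta}^{*}f\), and set \(V_j^{\xi,\delta,2}=\Psi_{\xi,\delta}^{*}\delta V_j\). By Theorem \ref{Thm::Scaling::MainScalingThm}\ref{Item::Scaling::MainScalingThm::UniformlyHormander}, these are smooth vector fields in \(\CbinftySpace[\Cubengeq{1}]\), with \(V_0^{\xi,\delta,2}=\pm D\partial_{x_n}\). By Theorem \ref{Thm::Scaling::MainScalingThm}\ref{Item::Scaling::MainScalingThm::DensityUniformlyBounded},\ref{Item::Scaling::MainScalingThm::DensityUniformlyPos}, the density \(h_{\xi,\delta,2}\) is bounded above and below. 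Hence \(L^2\) norms on the two sides are comparable, up to the constant factor \(\Vol[\BV{\xi}{\delta}]\).

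The first step is to reduce the \(W\)-Sobolev norm to a \(V\)-Sobolev norm on \(\Omega\) (or on a compact subset of \(\ManifoldNOne\) containing the relevant images). The \(V_j\) are \(C^\infty(\ManifoldNOne)\)-combinations of the \(W_j\), and conversely \(W_j=V_j+\bh_j V_0\). So \(\|\cdot\|_{H^\kappa_W(O)}\approx\|\cdot\|_{H^\kappa_V(O)}\) for any \(O\Subset\ManifoldNOne\), by the Leibniz rule and the boundedness of the coefficient functions on \(\overline{O}\). Since \(\delta\) is fixed here (no uniformity in \((\xi,\delta)\) is claimed), the pullback turns \(V^\alpha\) into \(\delta^{-|\alpha|}(V^{\xi,\delta,2})^\alpha\). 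It therefore suffices to show that \(S\) maps
\[
\Bigl\{\tilde f : (V^{\xi,\delta,2})^\alpha\tilde f\in L^2(\Cubengeq{1/2}),\ |\alpha|\le\kappa\Bigr\}
\]
continuously into the same space on \(\Cubengeq{1}\).

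The key step, and the main obstacle, is commuting \((V^{\xi,\delta,2})^\alpha\) past \(S\). Write each \(V_j^{\xi,\delta,2}=a_j(x)\partial_{x_n}+\sum_{k<n}b_{jk}(x)\partial_{x_k}\), where \(a_j(x',0)=0\) by Theorem \ref{Thm::Scaling::MainScalingThm}\ref{Item::Scaling::MainScalingThm::PullBackVjDoesntContainDxn}.

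The tangential derivatives \(\partial_{x_k}\), \(k<n\), fall on the \(x'\)-variable of \(K\), and \(\partial_{x_n}\) applied to \(K\) produces another smooth kernel. Thus each \(V_j^{\xi,\delta,2}\) applied to \(S\tilde f\) is a sum of operators of the same form applied to \(\tilde f\), plus a term \(\int K(x,y')\,a_j(x)\,(\partial_{x_n}\tilde f)(y',x_n)\,dy'\).

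That last term is controlled by \(V_0^{\xi,\delta,2}\tilde f=\pm D\partial_{x_n}\tilde f\), which is one of the allowed vector fields. Thus \(\partial_{x_n}\tilde f\) is itself bounded in the space of order one less. Iterating, \((V^{\xi,\delta,2})^\alpha S\tilde f\) is a finite sum of terms \(S_\beta\bigl(\partial_{x_n}^{k}\tilde f\bigr)\) with smooth kernels \(S_\beta\) and \(k\le|\alpha|\).

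Each \(\partial_{x_n}^k\tilde f\) is a product of \(k\) copies of \((\pm D)^{-1}V_0^{\xi,\delta,2}\), hence lies in \(L^2\) with norm bounded by the \(V\)-norm. Tangential \(x'\)-derivatives never land on \(\tilde f\), since they are absorbed into the kernel. Bounded integral operators with smooth compactly supported kernels acting in \(y'\) are \(L^2\)-bounded, uniformly in \(x_n\). Hence
\[
\|S\tilde f\|_{H^\kappa_{V^{\xi,\delta,2}}}\lesssim\|\tilde f\|_{H^\kappa_{V^{\xi,\delta,2}}}.
\]
The only care needed is bookkeeping the mixed terms in which \(\partial_{x_n}\) hits the coefficients \(a_j,b_{jk}\) or the kernel; these contribute only smooth-kernel operators applied to \(\partial_{x_n}^{k'}\tilde f\) with \(k'\) smaller. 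Pushing forward by \(\Psi_{\xi,\delta}\) and using the first step gives continuity \(\HsWSpace{\kappa}[\ManifoldNZero][\C^M]\to\HsWSpace{\kappa}[\Omega][\C^M]\).
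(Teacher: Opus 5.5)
Your proposal is correct and follows essentially the same route as the paper: pull back by $\Psi_{\xi,\delta}$, let tangential derivatives fall on the kernel of $S$, and control the remaining normal derivatives $\partial_{x_n}^k\tilde f$ through $V_0^{\xi,\delta,2}=\pm D\partial_{x_n}$ (that is, by $X^k f$), with constants allowed to depend on the fixed $(\xi,\delta)$. The paper is slightly more direct: it bounds the pulled-back $W^\alpha$ by all Euclidean derivatives $\partial_x^\beta$ and the output by $\sum_{j\le\kappa}\|X^j f\|$, so your detour through $H^\kappa_V$ norms and your use of the boundary vanishing of $a_j$ are not needed.
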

\begin{proof}
    Fix \((\xi,\delta)\in \SSetTwo\). In what follows, the constants in the estimates are allowed to depend on \((\xi,\delta)\).
    Let \(X\) be as in the beginning of this section.
    We will show
    \begin{equation}\label{Eqn::Cor::SmothingOpIsCont::ToShow}
        \sum_{|\alpha|\leq \kappa} \LtNorm*{W^{\alpha}T_{\xi,\delta}f} \lesssim \sum_{j=0}^\kappa \LtNorm*{X^j f}[\Psi_{\xi,\delta}(\Cubengeq{1/2})].
    \end{equation}
    To see why \eqref{Eqn::Cor::SmothingOpIsCont::ToShow} completes the proof, note
    \(\supp(T_{\xi,\delta}f)\subseteq \Psi_{\xi,\delta}(\Cubengeq{1/2})\subseteq \BV{\xi}{\delta}\subseteq \Omega\),
    where we have used Theorem \ref{Thm::Scaling::MainScalingThm}\ref{Item::Scaling::MainScalingThm::BallInsideOmega::Union}. 
    \eqref{Eqn::Cor::SmothingOpIsCont::ToShow}
    gives
    \begin{equation*}
        \HsWNorm*{T_{\xi,\delta}f}{\kappa}= 
        \sum_{|\alpha|\leq \kappa} \LtNorm*{W^{\alpha}T_{\xi,\delta}f}
        \lesssim \sum_{j=0}^\kappa \LtNorm*{X^j f}[\Psi_{\xi,\delta}(\Cubengeq{1/2})]
        \lesssim \sum_{|\alpha|\leq \kappa} \LtNorm*{W^{\alpha} f}[\ManifoldNZero]
        = \HsWNorm{f}{\kappa}[\ManifoldNZero],
    \end{equation*}
    establishing the result. 

    We turn to proving \eqref{Eqn::Cor::SmothingOpIsCont::ToShow}.  Since \(\Vol\)
    is a smooth, strictly positive density on \(\ManifoldN\) and \(\Psi_{\xi,\delta}(\Cubengeq{1/2})\Subset \ManifoldN\)
    (see Theorem \ref{Thm::Scaling::MainScalingThm}\ref{Item::Scaling::MainScalingThm::BallInsideOmega::Union}),
    we have
    \begin{equation*}
        \LtNorm*{f}[\Psi_{\xi,\delta}(\Cubengeq{1/2})][\Vol]\approx \LtNorm*{f}[\Psi_{\xi,\delta}(\Cubengeq{1/2})],
    \end{equation*}
    where the right-hand side uses Lebesgue measure.  We conclude, using the form of \(S\) (see \eqref{Eqn::Cor::SmoothingOpIsCont::DefineS})
    and Theorem \ref{Thm::Scaling::MainScalingThm}\ref{Item::Scaling::MainScalingThm::PullBackV0IsDxn} (recall, \(V_0=X\) in that theorem),
    \begin{equation*}
    \begin{split}
         &\sum_{|\alpha|\leq \kappa} \LtNorm*{W^{\alpha} T_{\xi,\delta}f}[\ManifoldN][\Vol]
         =\sum_{|\alpha|\leq \kappa} \LtNorm*{W^{\alpha}T_{\xi,\delta} f}[\Psi_{\xi,\delta}(\Cubengeq{1/2})][\Vol]
         \\&\approx \sum_{|\alpha|\leq \kappa} \LtNorm*{\left( \Psi_{\xi,\delta}^{*}W \right)^{\alpha} S \Psi_{\xi,\delta}^{*}f}[\Cubengeq{1/2}]
         \lesssim \sum_{|\beta|\leq \kappa} \LtNorm*{\partial_x^\beta S \Psi_{\xi,\delta}^{*}f}[\Cubengeq{1/2}]
         \\&\lesssim \sum_{j=0}^{\kappa}\LtNorm*{\partial_{x_n}^j \Psi_{\xi,\delta}^{*}f}[\Cubengeq{1/2}]
         \approx \sum_{j=0}^\kappa \LtNorm*{X^j f}[\Psi_{\xi,\delta}(\Cubengeq{1/2})][\Vol],
    \end{split}
    \end{equation*}
    where we recall that the implicit constants may depend on \((\xi,\delta)\).
    This establishes \eqref{Eqn::Cor::SmothingOpIsCont::ToShow} and completes the proof.
\end{proof}

\begin{lemma}\label{Lemma::Cor::SmoothingOpOnDomQ}
    Let \(T_{\xi,\delta}\) be any operator of the form described in Assumption \ref{Assumption::Core::PullBackSMapping}.
    Then, \(T_{\xi,\delta}\) defines bounded maps
    \(T_{\xi,\delta}:\DomainQ\rightarrow \HsWSpace{\kappa}[\Omega][\C^M]\)
    and \(T_{\xi,\delta}:\DomainQ\rightarrow \DomainQ\).
\end{lemma}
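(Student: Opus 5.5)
The first claim follows by composing maps already shown to be continuous. The second claim uses density of the core. Fix \((\xi,\delta)\in\SSetTwo\) and \(T_{\xi,\delta}=(\Psi_{\xi,\delta})_*S\Psi_{\xi,\delta}^{*}\) with \(S\) as in \eqref{Eqn::Core::SOperator}.

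\emph{Step 1: reduce to \(\psi f\).} The Schwartz kernel of \(T_{\xi,\delta}\) is supported in \(\Psi_{\xi,\delta}(\Cubengeq{1/2})\times\Psi_{\xi,\delta}(\Cubengeq{1/2})\). By Theorem \ref{Thm::Scaling::MainScalingThm}\ref{Item::Scaling::MainScalingThm::BallInsideOmega::Union} this set lies in \(\Omega\times\Omega\), where \(\psi=1\). Hence
\[
T_{\xi,\delta}f=T_{\xi,\delta}(\psi f)\quad\text{for every } f\in \LtSpace[\ManifoldN,\Vol][\C^M].
\]

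\emph{Step 2: the map \(\DomainQ\rightarrow\HsWSpace{\kappa}[\Omega][\C^M]\).} By Lemma \ref{Lemma::Core::BigLemma}\ref{Item::Core::MultPsiIsContDomQToHW}, \(f\mapsto\psi f\) is continuous \(\DomainQ\rightarrow \HsWSpace{\kappa}[\ManifoldNZero][\C^M]\). By Lemma \ref{Lemma::Cor::SmoothingOpIsCont}, \(T_{\xi,\delta}\) is continuous \(\HsWSpace{\kappa}[\ManifoldNZero][\C^M]\rightarrow\HsWSpace{\kappa}[\Omega][\C^M]\). Composing these with Step 1 gives the first bounded map.

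\emph{Step 3: the map \(\DomainQ\rightarrow\DomainQ\).} Let \(f\in\DomainQ\) and choose \(f_j\in\CoreB\) with \(f_j\rightarrow f\) in \(\DomainQ\).
\begin{itemize}
\item By Assumption \ref{Assumption::Core::PullBackSMapping}, \(T_{\xi,\delta}f_j\in\DomainQ\).
\item Each \(T_{\xi,\delta}f_j\) is supported in the fixed compact set \(\Compact_2\Subset\Omega\). For \(g\in\DomainQ\) supported there, Assumption \ref{Assumption::Result::FormQEqualsFormQF} (proved in Lemma \ref{Lemma::Core::FirstAssumptionsHold}) gives \(\FormQ[g][g]=\FormQF[g][g]\). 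Therefore
\[
\DomainQNorm{g}^2\lesssim \left|\FormQF[g][g]\right|+\LtNorm{g}^2\lesssim \HsWNorm{g}{\kappa}[\Omega]^2 ,
\]
using boundedness of the coefficients \(a_{\alpha,\beta}\) on \(\overline\Omega\).
\item Apply this to \(g=T_{\xi,\delta}(f_j-f_k)\) together with Step 2. This shows \(T_{\xi,\delta}f_j\) is Cauchy in \(\DomainQ\), with \(\DomainQNorm{T_{\xi,\delta}f_j}\lesssim\DomainQNorm{f_j}\).
\item By closedness of \(\DomainQ\), \(T_{\xi,\delta}f_j\) converges in \(\DomainQ\) to some limit. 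It also converges in \(\LtSpace\) to \(T_{\xi,\delta}f\), because \(T_{\xi,\delta}\) is \(\LtSpace\)-bounded. So the limit is \(T_{\xi,\delta}f\), and \(T_{\xi,\delta}f\in\DomainQ\) with \(\DomainQNorm{T_{\xi,\delta}f}\lesssim\DomainQNorm{f}\).
\end{itemize}

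\emph{Main obstacle.} The delicate point is the estimate \(\DomainQNorm{g}\lesssim\HsWNorm{g}{\kappa}\) for \(g\in\DomainQ\) supported in \(\Compact_2\). The equality \(\FormQ[g][g]=\FormQF[g][g]\) is only available for elements already known to lie in \(\DomainQ\). That is why the argument applies it to the approximants \(T_{\xi,\delta}f_j\), which are in \(\DomainQ\) by Assumption \ref{Assumption::Core::PullBackSMapping}, and never directly to \(T_{\xi,\delta}f\).
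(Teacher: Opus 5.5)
Your proposal is correct and follows the paper's proof. Both arguments reduce to \(T_{\xi,\delta}f=T_{\xi,\delta}(\psi f)\), compose Lemma \ref{Lemma::Core::BigLemma}\ref{Item::Core::MultPsiIsContDomQToHW} with Lemma \ref{Lemma::Cor::SmoothingOpIsCont}, and then prove \(\DomainQNorm{T_{\xi,\delta}f}\lesssim \HsWNorm{T_{\xi,\delta}f}{\kappa}\lesssim \DomainQNorm{f}\) on the core \(\CoreB\) and extend by density. The differences are minor: the paper gets \(\DomainQNorm{T_{\xi,\delta}f}\lesssim \HsWNorm{T_{\xi,\delta}f}{\kappa}\) from Lemma \ref{Lemma::Core::BigLemma}\ref{Item::Core::QNormIsHkappaWNormOnDomainQ} (using \(\psi T_{\xi,\delta}f=T_{\xi,\delta}f\)) rather than from Assumption \ref{Assumption::Result::FormQEqualsFormQF}, and your \(\LtSpace\)-boundedness argument, which identifies the \(\DomainQ\)-limit with \(T_{\xi,\delta}f\), spells out a step the paper leaves implicit in ``by density.''
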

\begin{proof}
    Because \(\Psi_{\xi,\delta}(\Cubengeq{1/2})\subseteq \Omega\), and \(\psi=1\) on \(\Omega\),
    we have \(\psi T_{\xi,\delta}f = T_{\xi,\delta}\psi f= T_{\xi,\delta}f\) for any \(f\). 
    Using this, Lemma \ref{Lemma::Cor::SmoothingOpIsCont}, and Lemma \ref{Lemma::Core::BigLemma}\ref{Item::Core::QNormIsHkappaWNormOnDomainQ},\ref{Item::Core::MultPsiIsCont},
    we have for \(f\in \DomainQ\),
    \begin{equation}\label{Eqn::Cor::SmoothingOpOnDomQContinuousToDomQ}
        \HsWNorm{T_{\xi,\delta}f}{\kappa} = \HsWNorm{T_{\xi,\delta}\psi f}{\kappa} 
        \lesssim \HsWNorm{\psi f}{\kappa}
        \approx \DomainQNorm{\psi f}
        \lesssim \DomainQNorm{f}.
    \end{equation}
    \eqref{Eqn::Cor::SmoothingOpOnDomQContinuousToDomQ} shows \(T_{\xi,\delta}:\DomainQ\rightarrow \HsWSpace{\kappa}[\Omega][\C^M]\)
    and is continuous. 

    For \(f\in \CoreB\), Assumption \ref{Assumption::Core::PullBackSMapping} shows \(T_{\xi,\delta}f\in \DomainQ\).
    We have, using Lemma \ref{Lemma::Core::BigLemma}\ref{Item::Core::QNormIsHkappaWNormOnDomainQ},
    \begin{equation}\label{Eqn::Cor::SmoothingOpOnDomQ::Esimtate2}
        \DomainQNorm{T_{\xi,\delta}f}=\DomainQNorm{\psi T_{\xi,\delta}f} \approx \HsWNorm{\psi T_{\xi,\delta}f}{\kappa}=\HsWNorm{T_{\xi,\delta}f}{\kappa}.
    \end{equation}
    Combining \eqref{Eqn::Cor::SmoothingOpOnDomQContinuousToDomQ} and \eqref{Eqn::Cor::SmoothingOpOnDomQ::Esimtate2} shows
    \begin{equation*}
        \DomainQNorm{T_{\xi,\delta}f} \lesssim \DomainQNorm{f},\quad \forall f\in \CoreB.
    \end{equation*}
    Since \(\CoreB\) is dense in \(\DomainQ\), it follows that \(T_{\xi,\delta}:\DomainQ\rightarrow \DomainQ\) and is continuous,
    completing the proof.
\end{proof}

\begin{lemma}\label{Lemma::Cor::TtMapsRightSpaces}
    Let \(\Tt_{\xi,\delta}\) be an operator of the same form as operator 
    \(T_{\xi,\delta}\) in Assumption \ref{Assumption::Results::BoundaryMaximalSubellip}.
    Then, \(\Tt_{\xi,\delta}:\LtSpace[\R][\DomainQ]\rightarrow \CinftycptSpace[\R][\DomainQ]\)
    and \(\Tt_{\xi,\delta}:\LtSpace[\R][\DomainQ]\rightarrow \CinftycptSpace*[\R][\HsWSpace{\kappa}[\Omega][\C^M]]\),
    and these two maps are continuous.
\end{lemma}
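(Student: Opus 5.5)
The plan is to reduce the time-dependent operator \(\Tt_{\xi,\delta}\) to the time-independent operators of Lemma \ref{Lemma::Cor::SmoothingOpOnDomQ}, and then handle the \(t\)-dependence with elementary vector-valued calculus. Write the kernel of \(\Tt_{\xi,\delta}\) as \(K(t,s,x,y')\in \CinftycptSpace[\R\times\R\times \Cubengeq{1/2}\times \Cubenmo{1/2}]\), with \(\partial_{x_n}K=0\) for \(0\leq x_n<\delta'\). For each fixed \((t,s)\), the function \(K(t,s,\cdot,\cdot)\) is a kernel of the form \eqref{Eqn::Core::SOperator}. Let \(T^{t,s}_{\xi,\delta}\) denote the associated operator \((\Psi_{\xi,\delta})_*S^{t,s}\Psi_{\xi,\delta}^*\). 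Then
\begin{equation*}
    \Tt_{\xi,\delta}u(t)=\int T^{t,s}_{\xi,\delta}\, u(s)\: ds .
\end{equation*}
By Lemma \ref{Lemma::Cor::SmoothingOpOnDomQ}, each \(T^{t,s}_{\xi,\delta}\) is bounded \(\DomainQ\to\DomainQ\) and \(\DomainQ\to\HsWSpace{\kappa}[\Omega][\C^M]\).

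The key quantitative point is that these operator norms are controlled by finitely many \(C^N\) seminorms of the kernel \(K(t,s,\cdot,\cdot)\). Tracing the proof of Lemma \ref{Lemma::Cor::SmoothingOpIsCont}, the constant in \eqref{Eqn::Cor::SmothingOpIsCont::ToShow} depends on \(K\) only through \(\sup|\partial_x^\beta K|\) for \(|\beta|\leq\kappa\), with \((\xi,\delta)\) fixed. Lemma \ref{Lemma::Cor::SmoothingOpOnDomQ} then loses only fixed constants. Hence
\begin{equation*}
    \|T^{t,s}_{\xi,\delta}\|_{\DomainQ\to\DomainQ}+\|T^{t,s}_{\xi,\delta}\|_{\DomainQ\to \HsWSpace{\kappa}}\lesssim \CbjNorm{K(t,s,\cdot,\cdot)}{\kappa},
\end{equation*}
and the same bound holds with \(K\) replaced by \(\partial_t^jK\), which is again a kernel of the same type. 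More directly: the map \(K\mapsto T_K\) is linear, and the estimate shows it is continuous from the space of such kernels, with the \(C^\kappa\) topology, into \(\mathcal{B}(\DomainQ)\). Consequently \((t,s)\mapsto T^{t,s}_{\xi,\delta}\) is smooth in operator norm. Moreover, it is compactly supported in \((t,s)\).

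The conclusion then follows by the usual argument for integral operators with smooth, compactly supported operator-valued kernels. For \(u\in\LtSpace[\R][\DomainQ]\), the integral \(\int T^{t,s}u(s)\,ds\) is a Bochner integral in \(\DomainQ\). Its \(j\)-th \(t\)-derivative is \(\int \partial_t^jT^{t,s}u(s)\,ds\); this is justified by differentiating under the integral sign using operator-norm smoothness. Cauchy–Schwarz in \(s\) gives
\begin{equation*}
    \sup_t\|\partial_t^j\Tt u(t)\|_{\DomainQ}\lesssim \|u\|_{\LtSpace[\R][\DomainQ]},
\end{equation*}
and \(\Tt u\) is supported in \(t\) in the fixed compact projection of \(\supp K\). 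This gives continuity into \(\CinftycptSpace[\R][\DomainQ]\). The same argument, using the \(\HsWSpace{\kappa}\) bound, gives the second map.

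The main obstacle is the uniformity step. Lemma \ref{Lemma::Cor::SmoothingOpOnDomQ} is stated only for a single fixed kernel, so I must check that its proof yields bounds linear in a \(C^\kappa\) seminorm of \(K\). This requires the \(\delta'\)-condition to hold uniformly in \((t,s)\), which it does by hypothesis. The density argument via the core \(\CoreB\) preserves these constants, so I expect this to go through.
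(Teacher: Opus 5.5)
Your proposal is correct and follows essentially the same route as the paper. The paper also freezes \((t,s)\), applies Lemma \ref{Lemma::Cor::SmoothingOpOnDomQ} to the resulting operators acting only in \(x\) (packaged as an auxiliary operator \(U\) mapping \(\LtSpace[\R][\DomainQ]\) into \(C^\infty_{\mathrm{cpt},t}(\R;L^2_{\mathrm{cpt},s}(\R;\DomainQ))\)), and then integrates in \(s\). Your observation that the bounds in Lemmas \ref{Lemma::Cor::SmoothingOpIsCont} and \ref{Lemma::Cor::SmoothingOpOnDomQ} are linear in a \(C^\kappa\)-seminorm of the kernel, so that \((t,s)\mapsto T^{t,s}_{\xi,\delta}\) is smooth in operator norm, is exactly the justification behind the paper's ``follows immediately.''
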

\begin{proof}
    Let \(\Kt(t,s,x,y')\in \CinftycptSpace[\R\times \R\times \Cubengeq{1/2}\times \Cubenmo{1/2}]\)
    be such that \(\exists \delta>0\) with \(\partial_{x_n}K(t,s,(x',x_n),y')=0\) for \(0\leq x_n<\delta\),
    and
    \begin{equation*}
        \Tt_{\xi,\delta}u(t,\Psi_{\xi,\delta}(x))=\iint \Kt(t,s,x,y')u(s,\Psi_{\xi,\delta}(y',x_n))\: ds\: dy'.
    \end{equation*}
    Consider the operator \(U\) defined by
    \begin{equation*}
        Uu(t,s,\Psi_{\xi,\delta}(x))\coloneqq \int \Kt(t,s,x,y') u(s, \Psi_{\xi,\delta}(y',x_n))\: dy'.
    \end{equation*}
    It follows immediately from Lemma \ref{Lemma::Cor::SmoothingOpOnDomQ}
    that
    \begin{equation*}
        U:\LtSpace[\R][\DomainQ]\rightarrow C^\infty_{\mathrm{cpt},t}\left( \R; L^2_{\mathrm{cpt},s}(\R;\DomainQ) \right), 
    \end{equation*}
    \begin{equation*}
        U:\LtSpace[\R][\DomainQ]\rightarrow C^\infty_{\mathrm{cpt},t}\left( \R; L^2_{\mathrm{cpt},s}(\R;\HsWSpace{\kappa}[\Omega]) \right), 
    \end{equation*}
    continuously. Integrating in \(s\) gives the result.
\end{proof}


\begin{proof}[Proof of Proposition \ref{Prop::Core::MainCoreReduction}]
    In light of Lemma \ref{Lemma::Core::FirstAssumptionsHold},
    all that remains is to establish Assumption \ref{Assumption::Results::BoundaryMaximalSubellip}.
    Assumption \ref{Assumption::Results::BoundaryMaximalSubellip}
    follows immediately from Lemma \ref{Lemma::Cor::TtMapsRightSpaces} (see Remark \ref{Rmk::Results::ReplaceLWithQ}).
\end{proof}

%% file: cor_intropf.tex
Corollary \ref{Cor::Intro::MainIntroResult} follows readily from Corollary \ref{Cor::Core::MainCoreCorollary}.
Indeed, let \(\Omega\) be as in the introduction.  We
apply Corollary \ref{Cor::Core::MainCoreCorollary} as follows. We use
\(\ManifoldNZero=\ManifoldN\), \(\ManifoldNOne=\Omega\), \(M=1\),
and \(\psi=1\in \CinftycptSpace[\ManifoldN]\) (here we are using that \(\ManifoldN\) is compact).
Take arbitrary relatively compact open sets \(U\Subset \Omega'\Subset\ManifoldNOne\).
We will apply Corollary \ref{Cor::Core::MainCoreCorollary} to this choice of \(U\)
(and with \(\Omega\) replaced by \(\Omega'\)).
The other objects
(\(\FormQF\), \(\CoreB\), \(X\), and \(\QDomainQ\)) have the same name in Section \ref{Section::Intro}
as they do in the application of Corollary \ref{Cor::Core::MainCoreCorollary}.

Assumptions \ref{Assumption::Core::MultPsiIsCont},
\ref{Assumption::Core::FormQEqualsFormQF},
and \ref{Assumption::Core::MaxSub} follow immediately from the definitions
and assumptions of Section \ref{Section::Intro}.
Assumption \ref{Assumption::Core::SmoothWithCompactSupportInDomain} follows from
Remark \ref{Rmk::Intro::OpLEqualsOpL0} (and Remark \ref{Rmk::Intro::AssumptionsSymmetric}).
All that remains is to verify Assumption \ref{Assumption::Core::PullBackSMapping}.
Fix \((\xi,\delta)\in \SSetTwo\) for which we wish to verify 
Assumption \ref{Assumption::Core::PullBackSMapping}
and let \(S\) be as in \eqref{Eqn::Core::SOperator}.

Set \(T_{\xi,\delta}:=\left( \Psi_{\xi,\delta} \right)_{*} S \Psi_{\xi,\delta}^{*}\)
as in Assumption \ref{Assumption::Core::PullBackSMapping}.
Note that
\begin{equation*}
    S:\CinftySpace[\Cubengeq{1}]\rightarrow \CinftycptSpace[\Cubengeq{1}],
\end{equation*}
\begin{equation*}
    g\big|_{x_n=0}=0\implies Sg\big|_{x_n=0}=0,\quad \forall g\in \CinftySpace[\Cubengeq{1}],
\end{equation*}
\begin{equation*}
    \partial_{x_n}^j \left[ \partial_{x_n}, S \right]g\big|_{x_n=0}=0,\quad \forall g\in \CinftySpace[\Cubengeq{1}],\forall j\in \Zgeq,
\end{equation*}
where we have used \(\partial_{x_n} K(x',x_n,y')=0\) for \(0\leq x_n<\delta'\) (for some \(\delta'>0\));
see the description immediately preceding \eqref{Eqn::Core::SOperator}.
Therefore,
\begin{equation}\label{Eqn::Cor::IntroPf::Tmp1}
    T_{\xi,\delta}:\CinftySpace[\ManifoldN]\rightarrow \CinftySpace[\ManifoldN],
\end{equation}
\begin{equation}\label{Eqn::Cor::IntroPf::Tmp2}
    f\big|_{\BoundaryN}=0\implies T_{\xi,\delta}f\big|_{\BoundaryN}=0,\quad \forall f\in \CinftySpace[\ManifoldN],
\end{equation}
\begin{equation}\label{Eqn::Cor::IntroPf::Tmp3}
    X^j \left[ X, T_{\xi,\delta} \right]f\big|_{\BoundaryN}=0,\quad \forall f\in \CinftySpace[\ManifoldN], \forall j\in \Zgeq.
\end{equation}

For
\(j\in \JSet\), using \eqref{Eqn::Cor::IntroPf::Tmp2} and \eqref{Eqn::Cor::IntroPf::Tmp3}, we have
\begin{equation}\label{Eqn::Cor::IntroPf::Tmp4}
    \begin{split}
        &X^j T_{\xi,\delta} f\big|_{\BoundaryN}
        =  T_{\xi,\delta} X^j f\big|_{\BoundaryN}+\left\lbrack X^j, T_{\xi,\delta} \right\rbrack f\big|_{\BoundaryN}
        \\&=   T_{\xi,\delta}X^j f\big|_{\BoundaryN}+\sum_{l=0}^{j-1} X^l \left\lbrack X, T_{\xi,\delta} \right\rbrack X^{j-1-l}f\big|_{\BoundaryN}  
        =0,\quad \forall f\in \CoreB,
    \end{split}
\end{equation}
where the first term on the right-hand side of \eqref{Eqn::Cor::IntroPf::Tmp4}
vanishes by \eqref{Eqn::Cor::IntroPf::Tmp2} and the fact that \(f\in \BSet\),
while the second term vanishes by \eqref{Eqn::Cor::IntroPf::Tmp3}; the second term is
\(0\) when \(j=0\).
Combining this with \eqref{Eqn::Cor::IntroPf::Tmp1} shows that \(T_{\xi,\delta}:\CoreB\rightarrow \CoreB\), and Assumption \ref{Assumption::Core::PullBackSMapping} follows (see Remark \ref{Rmk::Core::PullBackSMapping::CheckOnB}).

Applying Corollary \ref{Cor::Core::MainCoreCorollary} shows the conclusions of
Corollary \ref{Cor::Intro::MainIntroResult} hold with \(\Omega\) replaced by \(U\). By
taking \(U\uparrow \Omega\), 
Corollary \ref{Cor::Intro::MainIntroResult} follows.

%% file: proof_intro.tex
In this section, we present the proof of Theorem \ref{Thm::Results:MainResult}.
The proof uses the estimates of \cite{StreetAPrioriEstimatesMaximallySubellipticQuadraticForms} to verify the hypotheses of \cite[Theorem \ref*{Heat::Thm::Results::MainThm}]{StreetHypoellipticityAndHigherOrderGaussianBounds}, which in turn yields the result.
Both \cite{StreetAPrioriEstimatesMaximallySubellipticQuadraticForms}
and \cite{StreetHypoellipticityAndHigherOrderGaussianBounds} were written with this paper in mind,
and we can use those papers with only minor modifications to establish Theorem \ref{Thm::Results:MainResult}.
The reader who wishes to follow the proof should have both references at hand.

The heart of the proof is to use the subelliptic estimates of \cite{StreetAPrioriEstimatesMaximallySubellipticQuadraticForms}
combined with the scaling from Theorem \ref{Thm::Scaling::MainScalingThm} to establish an appropriate subelliptic estimate
at every scale for heat operators like \(\partial_t+\opL\), ``uniformly over every scale.'' As a consequence,
we obtain hypoelliptic estimates uniformly over every scale, which is the main hypothesis of
\cite[Theorem \ref*{Heat::Thm::Results::MainThm}]{StreetHypoellipticityAndHigherOrderGaussianBounds}.
The conclusion of \cite[Theorem \ref*{Heat::Thm::Results::MainThm}]{StreetHypoellipticityAndHigherOrderGaussianBounds}
then gives the desired higher-order Gaussian bounds.

Throughout this section, we take the same setting and assume all the assumptions from Section \ref{Section::Result}.

%% file: proof_subellip.tex
Before we move to the key quantitative parts of the proof, we discuss some qualitative consequences
of the subelliptic estimates in \cite{StreetAPrioriEstimatesMaximallySubellipticQuadraticForms}.

We first remark that the assumptions of 
\cite[Section \ref*{Form::Section::MainResult}]{StreetAPrioriEstimatesMaximallySubellipticQuadraticForms}
hold with \(\Omega\) in that reference replaced by \(U\).  Indeed,
\cite[Assumption \ref*{Form::Asssumption::Result::TestFunctionsInDomain}]{StreetAPrioriEstimatesMaximallySubellipticQuadraticForms}
follows from Assumption \ref{Assumption::Result::SmoothWithCompactSupportInDomain} and \(\DomainL\subseteq \DomainQ\)
and
since \(U\subseteq \Omega\) (where \(\Omega\) is as in Section \ref{Section::Result}),
\cite[Assumption \ref*{Form::Asssumption::Result::DomainIsHkappaW}]{StreetAPrioriEstimatesMaximallySubellipticQuadraticForms}
follows from Assumption \ref{Assumption::Result::RestrictDomainQ},
and
\cite[Assumption \ref*{Form::Asssumption::Result::QIsQF}]{StreetAPrioriEstimatesMaximallySubellipticQuadraticForms}
follows immediately from 
Assumption 
\ref{Assumption::Result::FormQEqualsFormQF}. Indeed, they are almost verbatim copies.
All that remains is
\cite[Assumption \ref*{Form::Asssumption::Result::BoundaryAssumption::New}]{StreetAPrioriEstimatesMaximallySubellipticQuadraticForms}.
Take \(\xi_0\in U\cap\BoundaryN\subseteq \CompactOne\). 
Since \(\MetricV\) induces the manifold topology on \(\ManifoldNOne\),
choose \(\delta(\xi_0)\in (0,\delta_1]\) sufficiently small that
\(\BV{\xi_0}{\delta(\xi_0)/2}\subseteq U\).
By Theorem \ref{Thm::Scaling::MainScalingThm}\ref{Item::Scaling::MainScalingThm::SSetTwoContainsBoundary},
\((\xi_0,\delta(\xi_0))\in \SSetTwo\)
and
\begin{equation*}
    \Psi_{\xi_0,\delta(\xi_0)}(\Cubengeq{1})\subseteq \BV{\xi_0}{\delta(\xi_0)/2}\subseteq U.
\end{equation*}
By Theorem \ref{Thm::Scaling::MainScalingThm}\ref{Item::Scaling::MainScalingThm::PullBackV0IsDxn},
\(\Psi_{\xi_0,\delta(\xi_0)}^{*}X=\pm\delta(\xi_0)^{-1}D \partial_{x_n}\), and therefore
\(\Psi_{\xi_0,\delta(\xi_0)}^{*}X=c_1 \partial_{x_n}\), where \(c_1\) is a constant.
Since \(\xi_0\in \BoundaryN\), Theorem \ref{Thm::Scaling::MainScalingThm}\ref{Item::Scaling::MainScalingThm::xiInImage}
shows \(\Psi_{\xi_0,\delta(\xi_0)}(0)=\xi_0\).
With these comments, \cite[Assumption \ref*{Form::Asssumption::Result::BoundaryAssumption::New}]{StreetAPrioriEstimatesMaximallySubellipticQuadraticForms}
(with \(x_0\) replaced by \(\xi_0\)) follows directly from Assumption \ref{Assumption::Results::BoundaryMaximalSubellip}
with \(\Psi=\Psi_{\xi_0,\delta(\xi_0)}\). Indeed,
\cite[Assumption \ref*{Form::Asssumption::Result::BoundaryAssumption::New}\ref*{Form::Item::Result::BoundaryAssumption::PullBackX::New}]{StreetAPrioriEstimatesMaximallySubellipticQuadraticForms}
follows from \(\Psi_{\xi_0,\delta(\xi_0)}^{*}X=c_1 \partial_{x_n}\), while
\cite[Assumption \ref*{Form::Asssumption::Result::BoundaryAssumption::New}\ref*{Form::Item::Result::BoundaryAssumption::SOperator::New}]{StreetAPrioriEstimatesMaximallySubellipticQuadraticForms}
is a copy of Assumption \ref{Assumption::Results::BoundaryMaximalSubellip}. 

Thus, the results from \cite[Section \ref*{Form::Section::MainResult}]{StreetAPrioriEstimatesMaximallySubellipticQuadraticForms}
hold for \(\LDomainL\).  Moreover, since the assumptions in Section \ref{Section::Result}
are symmetric in \(\LDomainL\) and \(\LStarDomainLStar\) (see Remark \ref{Rmk::Result::SummetricInLAndLStar}),
the results from \cite[Section \ref*{Form::Section::MainResult}]{StreetAPrioriEstimatesMaximallySubellipticQuadraticForms}
also hold for \(\LStarDomainLStar\).
\cite[Theorem \ref*{Form::Thm::Result::MainThm::New}]{StreetAPrioriEstimatesMaximallySubellipticQuadraticForms}
shows that \(\partial_t+\opL\) and \(\partial_t+\opL^{*}\) satisfy subelliptic estimates.
Below, we state some corollaries of these estimates.

For an open interval \(I\subseteq \R\), we consider the Banach space \(\LtSpace[I][\DomainL]\).
We set 
\begin{equation*}
    \begin{split}
        &\LtlocSpace[I][\DomainL]
        :=
        \\&\left\{ u:I\times \ManifoldN\rightarrow \C^M : \forall t_0\in I, \exists \delta>0, (t_0-\delta,t_0+\delta)\subseteq I, u\big|_{(t_0-\delta, t_0+\delta)\times \ManifoldN}\in \LtSpace[(t_0-\delta,t_0+\delta)][\DomainL]  \right\}.
    \end{split}
\end{equation*}
For \(u(t,x)\in \LtlocSpace[I][\DomainL]\), both \(\partial_t u(t,x)\), understood in the sense of distributions, and \(\opL u(t,x)\) are well-defined. Hence, \((\partial_t+\opL)u\) is well-defined.

On \(\Ropn\), we consider the non-isotropic Sobolev spaces \(\HsSpace{s}[\Ropn]\) defined by the norm
\begin{equation}\label{Eqn::Proof::Subellip::HsNormRopn}
    \HsNorm{u}{s}[\Ropn]^2:= \iint \left| \left( 1+|t^{*}|^{2}+|x^{*}|^{4\kappa} \right)^{s/4\kappa}  \uh(t^{*},x^{*}) \right|^2\: dt^{*} \: dx^{*},
\end{equation}
where \(\uh\) denotes the Fourier transform of \(u\), and \(t^{*}\) is dual to \(t\in \R\) and \(x^{*}\)
is dual to \(x\in \Rn\); in short, we treat \(\partial_t\) as an operator of order \(2\kappa\),
but otherwise these are the standard \(\LpSpace{2}\)-Sobolev spaces.
On the manifold with boundary \(\Ropngeq=\left\{ (t,x',x_n)\in \R\times \Rnmo\times \R:x_n\geq 0 \right\}\),
we set
\begin{equation}\label{Eqn::Proof::Subellip::HsNormRopngeq}
    \HsSpace{s}[\Ropngeq]:=\left\{ u\big|_{\Ropngeq} : u\in \HsSpace{s}[\Ropn] \right\},
    \quad \HsNorm{v}{s}[\Ropngeq]:=\inf \left\{ \HsNorm{u}{s}[\Ropn] : u\in \HsSpace{s}[\Ropn], u\big|_{\Ropngeq}=v \right\}.
\end{equation}
Finally, for \(u\in \DistributionsZero[\R\times \ManifoldN]\) with compact support, we define
\begin{equation*}
    \HsNorm{u}{s}[\R\times \ManifoldN]
\end{equation*}
in local coordinates on \(\ManifoldN\) and using a partition of unity to sum up these local choices.
For \(u\) with fixed compact support, the equivalence class of this norm is independent of the choices of local coordinates and partition of unity.

For \(\phi_1,\phi_2\in \CinftycptSpace[\R\times \ManifoldN]\), we write
\(\phi_1\prec \phi_2\) to mean \(\phi_2\) equals \(1\) on a neighborhood of the support of \(\phi_1\).

\begin{lemma}\label{lemma::Proof::Subellip::CopySubellipEst}
    \(\forall \phi_1,\phi_2\in \CinftycptSpace[\R\times U]\) with \(\phi_1\prec \phi_2\),
    \(\exists \epsilon_0>0\),
    \(\forall l\in \Zg\) with \(l\geq 2\kappa-1\), \(\exists C\geq 0\),
    \(\forall u\in \LtSpace[\R][\DomainL]\),
    \begin{equation*}
        \HsNorm*{\phi_1 u}{l+1}[\R\times \ManifoldN]
        \leq C
        \left( \HsNorm*{\phi_2 \left( \partial_t+\opL \right)u}{l+1-\epsilon_0}[\R\times \ManifoldN] 
        +\LtNorm*{\phi_2 u }[\R\times \ManifoldN,dt\times d\Vol]\right),
    \end{equation*}
    where if the right-hand side is finite, so is the left-hand side.
     The same holds throughout with \(\opL\) replaced by \(\opL^{*}\).
\end{lemma}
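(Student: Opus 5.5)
This estimate should be a direct specialization of the subelliptic estimate for heat operators in \cite[Theorem \ref*{Form::Thm::Result::MainThm::New}]{StreetAPrioriEstimatesMaximallySubellipticQuadraticForms}. We verified above that the assumptions of \cite[Section \ref*{Form::Section::MainResult}]{StreetAPrioriEstimatesMaximallySubellipticQuadraticForms} hold with \(\Omega\) there replaced by \(U\), for both \(\LDomainL\) and \(\LStarDomainLStar\). The plan is therefore to quote that theorem and translate its conclusion into the present notation, rather than to reprove anything.

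\textbf{Step 1: recall the referenced theorem.} As I recall it, \cite[Theorem \ref*{Form::Thm::Result::MainThm::New}]{StreetAPrioriEstimatesMaximallySubellipticQuadraticForms} states the following. For cutoffs \(\phi_1\prec\phi_2\) in \(\CinftycptSpace[\R\times U]\), there is \(\epsilon_0>0\), depending only on the H\"ormander order of \(W\) on \(\operatorname{supp}\phi_2\) and on \(\kappa\), such that for every sufficiently large regularity index there is a constant \(C\) with \(\HsNorm{\phi_1 u}{s+\epsilon_0}\lesssim \HsNorm{\phi_2(\partial_t+\opL)u}{s}+\LtNorm{\phi_2 u}\). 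Here \(s\) and \(s+\epsilon_0\) are measured in the non-isotropic spaces \eqref{Eqn::Proof::Subellip::HsNormRopn}, \eqref{Eqn::Proof::Subellip::HsNormRopngeq}. The theorem also contains the qualitative statement that finiteness of the right-hand side implies finiteness of the left-hand side. Setting \(s=l+1-\epsilon_0\) gives the displayed inequality. The restriction \(l\geq 2\kappa-1\) should match the threshold in that theorem. It ensures the regularity level exceeds \(2\kappa\), the order at which \(\partial_t\) counts, so the trace/boundary terms and the domain-of-\(\opL\) conditions are meaningful.

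\textbf{Step 2: match the function classes.} The class \(u\in\LtSpace[\R][\DomainL]\) must agree with the class allowed in the referenced theorem, or at least embed into it. Any difference in how \((\partial_t+\opL)u\) is interpreted (distributionally in \(t\), with \(\opL\) applied pointwise in \(t\)) needs only the remark preceding the lemma.

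\textbf{Step 3: the adjoint case.} For \(\opL^{*}\), apply the same theorem to the form \(\QStarDomainQ\) instead of \(\QDomainQ\). Remark \ref{Rmk::Result::SummetricInLAndLStar} ensures all hypotheses are symmetric.

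\textbf{Main obstacle.} The only real difficulty is bookkeeping, namely checking that the quantifier order matches the statement: \(\epsilon_0\) must be independent of \(l\), and only \(C\) may depend on \(l\). If the referenced theorem produces \(\epsilon_0\) depending on the level, I would first fix the gain \(\epsilon_0\) from the lowest level. I would then bootstrap with a finite chain of intermediate cutoffs \(\phi_1\prec\psi_1\prec\cdots\prec\phi_2\), raising regularity by \(\epsilon_0\) at each step. Each step absorbs lower-order norms into \(\LtNorm{\phi_2 u}\) by interpolation. This is the standard argument, and it yields a uniform \(\epsilon_0\).
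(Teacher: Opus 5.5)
Your plan matches the paper's proof. The paper simply quotes the corresponding corollary of \cite{StreetAPrioriEstimatesMaximallySubellipticQuadraticForms}, which is stated there in exactly this form with $\epsilon_0$ independent of $l$, applies it because its hypotheses were verified with $\Omega$ replaced by $U$, and gets the $\opL^{*}$ case from the symmetry in Remark \ref{Rmk::Result::SummetricInLAndLStar}. Your fallback bootstrap is therefore unnecessary---and as sketched it would not work anyway: nested cutoffs only reduce the lower-order term to $\LtNorm{\phi_2 u}$, and cannot turn a level-dependent gain into a uniform $\epsilon_0$ on the $(\partial_t+\opL)u$ term.
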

\begin{proof}
    For \(\opL\), this is \cite[Corollary \ref*{Form::Cor::Result::PartialtPluOpLSubEllip}]{StreetAPrioriEstimatesMaximallySubellipticQuadraticForms}.
    Since our assumptions are symmetric in \(\opL\) and \(\opL^{*}\) (see Remark \ref{Rmk::Result::SummetricInLAndLStar}),
    the result follows for \(\opL^{*}\) as well.
\end{proof}

\begin{lemma}\label{Lemma::Proof::Subellip::DerivOfHeatIsSmooth}
    Fix \(c_0>0\).
    Let \(I\subseteq \R\) be an open interval and \(V\subseteq U\)
    be open.
    \(\forall \phi_1,\phi_2\in \CinftycptSpace[I\times V]\)
    with \(\phi_1\prec \phi_2\), \(\forall L\in \Zgeq\), \(\exists C\geq 0\),
    \(\forall u\in \LtlocSpace[I][\DomainL]\) with 
    \(\partial_t \left( u\big|_{I\times V} \right) = -c_0 \opL u\big|_{I\times V} \),
    \begin{equation}\label{Eqn::Proof::Subellip::DerivOfHeatIsSmooth::Est}
        \CbjNorm{\phi_1 u}{L}
        \leq C \LtNorm*{\phi_2 u}[\R\times \ManifoldN],
    \end{equation}
    and, moreover,  \(u\in \CinftySpace[I\times V][\C^M]\).
    The same holds with \(\opL\) replaced by \(\opL^*\), throughout.
    Here the equivalence class of the norm \(\CbjNorm{\cdot}{L}\) is well-defined
    since \(\phi_1 u\) is supported in a fixed compact set (and one can use any choice of the norm
    from the equivalence class).
\end{lemma}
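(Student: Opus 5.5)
The plan is to bootstrap Lemma \ref{lemma::Proof::Subellip::CopySubellipEst} for the heat operator \(\partial_t+c_0\opL\) and then use Sobolev embedding in the non-isotropic spaces. First I would reduce to \(c_0=1\) by rescaling time: \(v(t,x)=u(t/c_0,x)\) satisfies \(\partial_t v=-\opL v\) on \(c_0 I\times V\). The cutoffs transform in the same way, and norms change only by constants depending on \(c_0\). I also need a version of Lemma \ref{lemma::Proof::Subellip::CopySubellipEst} for \(u\in\LtlocSpace[I][\DomainL]\) rather than \(\LtSpace[\R][\DomainL]\). For this, fix a cutoff \(\chi(t)\in\CinftycptSpace[I]\) equal to \(1\) near the \(t\)-projection of \(\supp\phi_2\). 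Then \(\chi u\in \LtSpace[\R][\DomainL]\), \(\phi_j\chi u=\phi_j u\), and \(\phi_2(\partial_t+\opL)(\chi u)=\phi_2\chi(\partial_t+\opL)u\), because \(\chi'=0\) on \(\supp \phi_2\). Hence the lemma applies to \(\chi u\).

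Next comes the iteration. Choose a chain \(\phi_1=\psi_0\prec\psi_1\prec\cdots\prec\psi_N\prec\phi_2\) in \(\CinftycptSpace[I\times V]\). Since \((\partial_t+\opL)u=0\) on \(I\times V\), we have \(\psi_{k+1}(\partial_t+\opL)u=0\). So Lemma \ref{lemma::Proof::Subellip::CopySubellipEst} with the pair \(\psi_k\prec\psi_{k+1}\) gives \(\HsNorm{\psi_k u}{l+1}\le C\LtNorm{\psi_{k+1}u}\) directly, for every \(l\ge 2\kappa-1\).

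The main obstacle is that the right-hand side of Lemma \ref{lemma::Proof::Subellip::CopySubellipEst} involves \(\phi_2(\partial_t+\opL)u\) in \(H^{l+1-\epsilon_0}\). One must be sure the lemma is applied exactly to the product \(\phi_2\cdot(\partial_t+\opL)u\), which vanishes identically here, and not to some commutator expression. Because the equation holds on all of \(I\times V\supseteq\supp\phi_2\), that product is zero as a distribution. The lemma's clause ``if the right-hand side is finite, so is the left-hand side'' then gives membership \(\phi_1 u\in \HsSpace{l+1}\) for every \(l\) with no a priori regularity. If the lemma turned out to require care near \(\partial\ManifoldN\) for such qualitative statements, I would fall back on an \(\epsilon_0\)-step bootstrap through the chain \(\psi_k\) of length about \(L/\epsilon_0\). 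In that bootstrap, each step trades regularity of \(\psi_{k+1}u\) for that of \(\psi_k u\), with the zero forcing term.

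Finally, I would take \(l\) large relative to \(L\) and \(n\). The non-isotropic space \(\HsSpace{l+1}\) on \(\R\times\ManifoldN\), defined in local half-space coordinates via \eqref{Eqn::Proof::Subellip::HsNormRopngeq}, embeds into \(\CbjSpace{L}\). This is because \(\HsSpace{s}[\Ropn]\) contains the isotropic \(H^{s/2\kappa}\) and restriction to \(\Ropngeq\) preserves the embedding. This yields \eqref{Eqn::Proof::Subellip::DerivOfHeatIsSmooth::Est} and, since \(\phi_1\) is arbitrary, \(u\in\CinftySpace[I\times V][\C^M]\). The statement for \(\opL^*\) follows identically from the \(\opL^*\) part of Lemma \ref{lemma::Proof::Subellip::CopySubellipEst}.
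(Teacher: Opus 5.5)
Your proposal is correct and is essentially the paper's proof. It rescales to \(c_0=1\), cuts off in time so that \(\chi u\in \LtSpace[\R][\DomainL]\), and applies Lemma \ref{lemma::Proof::Subellip::CopySubellipEst} directly to the pair \(\phi_1\prec\phi_2\) with vanishing forcing term, then concludes by Sobolev embedding; your chain of intermediate cutoffs and the fallback \(\epsilon_0\)-bootstrap are unnecessary. One wording slip: the embedding step needs the non-isotropic \(\HsSpace{s}[\Ropn]\) to be \emph{contained in} (not to contain) the isotropic \(H^{s/2\kappa}\), which holds since \((1+|t^*|^2+|x^*|^{4\kappa})^{1/4\kappa}\gtrsim(1+|t^*|^2+|x^*|^2)^{1/4\kappa}\).
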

\begin{proof}
    By replacing \(u(t,\xi)\) with \(u(t/c_0,\xi)\), we may assume \(c_0=1\).
    Let \(\psi\in \CinftycptSpace[I]\) be such that \(\psi=1\) on a neighborhood of
    \(\left\{ t : \exists \xi, (t,\xi)\in \supp(\phi_2) \right\}\).
    Note that \(\psi u\in \LtSpace[\R][\DomainL]\).  Applying 
    Lemma \ref{lemma::Proof::Subellip::CopySubellipEst} with \(u\)
    replaced by \(\psi u\), and using \(\phi_2 \left( \partial_t+\opL \right)\psi u=0\),
    we see
    \begin{equation*}
        \HsNorm*{\phi_1 u}{l+1}[\R\times \ManifoldN]
        \lesssim \LtNorm{\phi_2 u}[\R\times \ManifoldN],\quad \forall l\geq 2\kappa-1.
    \end{equation*}
    From here, \eqref{Eqn::Proof::Subellip::DerivOfHeatIsSmooth::Est} for \(\opL\) follows from the Sobolev Embedding Theorem, by taking \(l=l(L)\) large,
    and moreover \(\phi_1 u\in \CinftySpace[I\times V][\C^M]\).
    As \(\phi_1 \in \CinftycptSpace[I\times V]\) was arbitrary, we conclude \(u\in \CinftySpace[I\times V][\C^M]\).
    The same proof applies for \(\opL^{*}\).
\end{proof}

For the next lemma, we consider the space
\(\Domain*{\opL^{\infty}}=\bigcap_{j\in \Zg}\Domain*{\opL^{j}}\).
This is a Fr\'echet space because \(-\opL\) generates a strongly continuous semigroup by Lemma \ref{Lemma::Result::ExistsSemigroup} and therefore has a nonempty resolvent set. Indeed, for each \(m\), \(\Domain*{\opL^m}=\bigcap_{j=1}^{m}\Domain*{\opL^{j}}\), equipped with the norm \(\sum_{j=0}^m \LtNorm*{\opL^j f}\), is a Banach space by \cite[Chapter 2, Proposition 2.15]{EngelNagelAShortCourseOnOperatorSemigroups}; the claim that \(\Domain*{\opL^{\infty}}\)
is a Fr\'echet space
follows immediately.

\begin{lemma}\label{Lemma::Proof::Subellip::DomainLinftyIsSmooth}
    The map \(f\mapsto f\big|_{U}\) is continuous \(\Domain*{\opL^{\infty}}\rightarrow \CinftySpace*[U][\C^M]\).
    The same is true for \(\opL\) replaced by \(\opL^{*}\).
\end{lemma}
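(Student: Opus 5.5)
The plan is to reduce to Lemma \ref{Lemma::Proof::Subellip::DerivOfHeatIsSmooth} by manufacturing a solution of the heat-type equation out of \(f\) that does not depend on \(t\). The natural choice is \(u(t,x):=f(x)\), but this solves \(\partial_t u=0\) rather than \(\partial_t u=-c_0\opL u\). The fix is to introduce an exponential weight in an auxiliary time variable: for \(f\in \Domain*{\opL^{\infty}}\), set
\begin{equation*}
u(t,x):=\sum_{k=0}^{N}\frac{(-t)^k}{k!}\opL^k f(x),
\end{equation*}
so that \(\partial_t u+\opL u=\frac{(-t)^N}{N!}\opL^{N+1}f\). This is an error term, not zero. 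A cleaner alternative is to apply Lemma \ref{lemma::Proof::Subellip::CopySubellipEst} directly, since that lemma handles a nonzero right-hand side. I will commit to that route.

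Fix \(\phi_1\prec\phi_2\) in \(\CinftycptSpace[\R\times U]\) of product form \(\phi_i(t,x)=\chi_i(t)\theta_i(x)\), with \(\chi_1=1\) on \([0,1]\). Take \(u(t,x)=\chi_2(t)f(x)\in \LtSpace[\R][\DomainL]\). Then
\begin{equation*}
\phi_2(\partial_t+\opL)u=\phi_2\bigl(\chi_2'(t)f+\chi_2(t)\opL f\bigr).
\end{equation*}
Lemma \ref{lemma::Proof::Subellip::CopySubellipEst} bounds \(\HsNorm{\phi_1 u}{l+1}\) by \(\HsNorm{\phi_2(\chi_2' f+\chi_2\opL f)}{l+1-\epsilon_0}+\LtNorm{f}\).

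The \(t\)-dependence here is smooth and explicit, so the \(\HsSpace{s}\) norm of \(\phi_2 g(x)\) for a product cutoff is controlled, up to constants, by a spatial norm of \(\theta_2 g\). Since the non-isotropic Sobolev norm of a function of the form \(\chi(t)\theta(x)g(x)\) reduces to an isotropic Sobolev norm of \(\theta g\) in \(x\), the estimate reads \(\|\theta_1 f\|_{s}\lesssim \|\theta_2 f\|_{s-\epsilon_0}+\|\theta_2\opL f\|_{s-\epsilon_0}+\LtNorm{f}\). I will iterate this over a nested chain of cutoffs \(\theta_1\prec\theta_2\prec\cdots\prec\theta_m\), applying it to both \(f\) and \(\opL^k f\), since \(\opL^k f\in\Domain*{\opL^{\infty}}\) as well. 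After \(m\approx s/\epsilon_0\) steps this gives
\begin{equation*}
\|\theta_1 f\|_{s}\lesssim \sum_{k\le m}\LtNorm{\opL^k f}.
\end{equation*}
Lemma \ref{lemma::Proof::Subellip::CopySubellipEst} requires \(l\ge 2\kappa-1\), so the descent must stop once the order drops to about \(2\kappa\). At that bottom level I will control \(\HsNorm{\phi_2\opL^k f\,\chi}{2\kappa-\epsilon_0}\) by the \(L^2\) norm of \(\opL^k f\) together with one more application of the lemma, which works because the lemma allows any finite right side. Sobolev embedding then gives \(\CbjNorm{\theta_1 f}{L}\lesssim\sum_{k\le m(L)}\LtNorm{\opL^k f}\). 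This is continuity into \(\CinftySpace[U][\C^M]\), since \(\theta_1\) ranges over cutoffs exhausting \(U\). The \(\opL^*\) case follows by the symmetry noted in Remark \ref{Rmk::Result::SummetricInLAndLStar}.

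The main obstacle is the bookkeeping in the descent. Each application loses only \(\epsilon_0\), and \(\epsilon_0\) depends on the pair of cutoffs, so I must fix the finite chain of cutoffs before choosing \(m\). I also need to check that the product structure in \(t\) makes the non-isotropic norm behave as claimed, and that the qualitative clause of the lemma ("if the right side is finite so is the left") is what certifies membership in the Sobolev spaces at each stage, rather than merely an a priori bound.
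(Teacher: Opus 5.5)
Your reduction fails at the step you call the ``bottom level,'' and the problem is substantive, not bookkeeping. Take \(u=\chi(t)f\). Lemma \ref{lemma::Proof::Subellip::CopySubellipEst} then bounds \(\|\theta_1 f\|_{H^{l+1}}\) by \(\|\theta_2\,\opL f\|_{H^{l+1-\epsilon_0}}+\|f\|_{L^2}\). In your version there is also a \(\|\theta_2 f\|_{H^{l+1-\epsilon_0}}\) term; you can remove it by letting the time cutoff in \(u\) equal \(1\) near the \(t\)-support of \(\phi_2\). Since \(l\geq 2\kappa-1\), the right side is a Euclidean Sobolev norm in \(x\) of order \(l+1-\epsilon_0\geq 2\kappa-\epsilon_0\) of \(\opL f\). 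The lemma supplies only some unspecified \(\epsilon_0>0\), so this is a positive order you have no control over. The clause ``if the right-hand side is finite, so is the left-hand side'' only helps once that norm is known to be finite. For \(f\in\mathrm{Dom}(\opL^{\infty})\) you know only \(\opL^k f\in L^2\) and \(\opL^k f|_{\Omega}\in H^{\kappa}_{W,\mathrm{loc}}\), so the estimate has no base case.

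``One more application of the lemma'' does not reach \(L^2\). It bounds \(\|\theta\,\opL^k f\|_{H^{2\kappa}}\) by \(\|\theta'\,\opL^{k+1}f\|_{H^{2\kappa-\epsilon_0}}\) plus \(L^2\) terms, which is a term of the same order. A second problem is that the lemma is stated only at integer levels \(l+1\). So even above the floor you cannot descend in steps of \(\epsilon_0\): a right-hand side at level \(l+1-\epsilon_0\) can only be fed back in at level \(l+1\) again.

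Changing the time profile does not help. Taylor polynomials in \(t\) leave \(\frac{(-t)^N}{N!}\opL^{N+1}f\), and exponential weights leave \((\lambda+\opL)f\). In general there is no solution of \((\partial_t+\opL)u=0\) on a time interval around \(t_0\) with \(u(t_0)=f\), since that would be backward heat flow. So some \(\opL^{N}f\) measured in the same high norm always survives.

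The paper never uses Lemma \ref{lemma::Proof::Subellip::CopySubellipEst} this way. It applies that lemma only to exact heat solutions, as in Lemma \ref{Lemma::Proof::Subellip::DerivOfHeatIsSmooth}, where \(\phi_2(\partial_t+\opL)u=0\) and the right side reduces to \(\|\phi_2 u\|_{L^2}\). For the present statement it quotes a separate corollary of \cite{StreetAPrioriEstimatesMaximallySubellipticQuadraticForms} tailored to \(\mathrm{Dom}(\opL^{\infty})\), and gets \(\opL^{*}\) by the symmetry of the assumptions.

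To rescue your route you would need an estimate that starts from information you actually have. One example is \(\|\theta_1 g\|_{H^{s+\epsilon}}\lesssim\|\theta_2\,\opL g\|_{H^{s}}+\|\theta_2 g\|_{L^2}\) for all real \(s\geq 0\), with the accompanying qualitative membership statement. With such an estimate, your scheme goes through: fix a chain of cutoffs first, apply the estimate to each \(\opL^k f\), and finish with Sobolev embedding.
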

\begin{proof}
    For \(\opL\),
    this follows immediately from \cite[Corollary \ref*{Form::Cor::Result::OpLInfinityGivesSmoothWithoutPartialT}]{StreetAPrioriEstimatesMaximallySubellipticQuadraticForms}.
    Since our assumptions are symmetric in \(\opL\) and \(\opL^{*}\) (see Remark \ref{Rmk::Result::SummetricInLAndLStar}),
    the result follows for \(\opL^{*}\) as well.
\end{proof}

Finally, we show the existence of the smooth kernel as in \eqref{Eqn::Results::MainResult::FormulaForKt}.
We begin with a lemma.
For it, we write \(\LtwSpace\) for \(\LtSpace\) equipped with the weak topology.

\begin{lemma}\label{Lemma::Proof::Subellip::SemigroupHasKernelWhichIsSmoothInxi}
    Let \(T(t)\) be the strongly continuous semigroup from Lemma \ref{Lemma::Result::ExistsSemigroup}.
    Then, \(g\mapsto T(t)g\big|_{(0,\infty)\times U}\) is continuous
    \(\LtSpace*[\ManifoldN][\C^M]\rightarrow \CinftySpace*[(0,\infty)\times U][\C^M]\)
    and \(\exists\,! K_t(\xi,\cdot)\in \CinftySpace*[(0,\infty)\times U][\LtwSpace*[\ManifoldN][\M^{M\times M}(\C)]]\)
    such that
    \begin{equation*}
        T(t)g(\xi)=\int K_t(\xi,\eta) g(\eta)\: d\Vol[\eta], \quad \forall(t,\xi)\in (0,\infty)\times U.
    \end{equation*}
    Moreover, if \(\phi\in \CinftycptSpace[(0,\infty)\times U]\) and
    \(\opP\) is any partial differential operator with smooth coefficients in the \((t,\xi)\)
    variables, then
    \begin{equation}\label{Eqn::Proof::Subellip::SemigroupHasKernelWhichIsSmoothInxi::DerivAppliedToKt}
        \sup_{t,\xi} \LtNorm*{\opP \left( \phi(t,\xi) K_t(\xi,\cdot) \right)}[\ManifoldN,\Vol]<\infty.
    \end{equation}
\end{lemma}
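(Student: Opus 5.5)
The plan is to show that, for each fixed \(g\), the function \((t,\xi)\mapsto T(t)g(\xi)\) solves the heat equation in the sense of Lemma \ref{Lemma::Proof::Subellip::DerivOfHeatIsSmooth}, and to use the estimate \eqref{Eqn::Proof::Subellip::DerivOfHeatIsSmooth::Est} to get continuity into \(\CinftySpace\). The kernel then comes from the Riesz representation theorem applied to the functionals \(g\mapsto T(t)g(\xi)\) and their derivatives.

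\emph{Step 1: the heat equation.} Fix an open interval \(I\Subset(0,\infty)\). First take \(g\in\DomainL\). Then \(u(t)=T(t)g\) lies in \(C^1(I;\LtSpace)\cap C(I;\DomainL)\) and satisfies \(\partial_t u=-\opL u\), so \(u\in\LtlocSpace[I][\DomainL]\). For general \(g\in\LtSpace\) we use a shift. Pick \(0<s<\inf I\) and write \(T(t)g=T(t-s)T(s)g\). The problem is that \(T(s)g\) need not lie in \(\DomainL\) for a merely strongly continuous semigroup. Two ways around this:
\begin{itemize}
\item Use holomorphy, which is noted in the proof of Lemma \ref{Lemma::Result::ExistsSemigroup}: then \(T(s)g\in\Domain*{\opL^\infty}\) for \(s>0\).
\item Avoid holomorphy by approximation. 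Take \(g_k\in\DomainL\) with \(g_k\to g\) in \(\LtSpace\). Estimate \eqref{Eqn::Proof::Subellip::DerivOfHeatIsSmooth::Est}, applied to \(T(\cdot)(g_k-g_l)\), bounds the \(\CbjNorm{\cdot}{L}\) norms of \(\phi_1T(\cdot)(g_k-g_l)\) by
\[
\LtNorm{\phi_2 T(\cdot)(g_k-g_l)}[\R\times\ManifoldN]\lesssim \LtNorm{g_k-g_l}
\]
(uniformly for \(t\in\supp\phi_2\), using \(\LtOpNorm{T(t)}\le e^{-\gamma t}\)). Hence \(\phi_1T(\cdot)g_k\) is Cauchy in \(\CbjSpace{L}\) and converges to \(\phi_1T(\cdot)g\).
\end{itemize}
Either way, \(g\mapsto T(\cdot)g|_{(0,\infty)\times U}\) is a continuous linear map \(\LtSpace\to\CinftySpace*[(0,\infty)\times U][\C^M]\), with seminorm bounds of the form \(\sup|\opP(\phi T(\cdot)g)|\le C_{\opP,\phi}\LtNorm{g}\).

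\emph{Step 2: the kernel.} For fixed \((t,\xi)\) and fixed \(\opP,\phi\), the map \(g\mapsto\opP(\phi T(\cdot)g)(t,\xi)\) is a bounded linear functional on \(\LtSpace[\ManifoldN][\C^M]\), with norm at most \(C_{\opP,\phi}\) uniformly in \((t,\xi)\). Applying Riesz representation componentwise gives a matrix-valued \(K_t(\xi,\cdot)\in\LtSpace\) with \(T(t)g(\xi)=\int K_t(\xi,\eta)g(\eta)\,d\Vol[\eta]\); uniqueness is automatic from Riesz. The integral converges absolutely by Cauchy--Schwarz.

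\emph{Step 3: weak smoothness and the bound.} We need \((t,\xi)\mapsto K_t(\xi,\cdot)\) to be \(\CinftySpace\) into \(\LtwSpace\), and \eqref{Eqn::Proof::Subellip::SemigroupHasKernelWhichIsSmoothInxi::DerivAppliedToKt} to hold. Pairing with \(g\) gives \(\langle \opP(\phi K)(t,\xi),\bar g\rangle=\opP(\phi T(\cdot)g)(t,\xi)\), which is smooth for every \(g\). Weak difference quotients converge because the \(\CbjSpace{L+1}\) bounds are uniform over \(\LtNorm{g}\le1\). So the weak derivatives exist and are represented by the functionals above, and their \(\LtSpace\) norms are bounded by \(C_{\opP,\phi}\), which is \eqref{Eqn::Proof::Subellip::SemigroupHasKernelWhichIsSmoothInxi::DerivAppliedToKt}.

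\emph{Main obstacle.} The delicate step is Step 1: justifying that \(T(\cdot)g\) falls within the hypotheses of Lemma \ref{Lemma::Proof::Subellip::DerivOfHeatIsSmooth} for arbitrary \(g\in\LtSpace\). I would first try the approximation by \(g_k\in\DomainL\) (the second option), with the uniform-in-\(t\) \(\LtSpace\) bound controlling the right-hand side of \eqref{Eqn::Proof::Subellip::DerivOfHeatIsSmooth::Est}.
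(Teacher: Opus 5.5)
Your proposal is correct and follows the paper's proof essentially step for step. The paper likewise proves \(\|\phi_1 T(\cdot)f\|_{C^L}\lesssim \|f\|_{L^2}\) for \(f\) in the domain of the generator via Lemma \ref{Lemma::Proof::Subellip::DerivOfHeatIsSmooth}, extends to all of \(L^2\) by density (your second option; holomorphy is not used), and obtains \(K_t(\xi,\cdot)\) by Riesz representation, with weak smoothness and \eqref{Eqn::Proof::Subellip::SemigroupHasKernelWhichIsSmoothInxi::DerivAppliedToKt} read off from these \(C^L\) bounds holding uniformly over the unit ball of \(L^2\).
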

\begin{proof}
    For \(f\in \Domain*{\opL}\),
    we have 
    \begin{equation}\label{Eqn::Proof::Subellip::SemigroupHasKernelWhichIsSmoothInxi::DerivOfSemigroup}
        \left( \partial_t +\opL \right) T(t)f=0
    \end{equation}
    (see \cite[Chapter 2, Lemma 1.3(ii)]{EngelNagelAShortCourseOnOperatorSemigroups}).

    Fix \(\phi_1,\phi_2\in \CinftycptSpace[(0,\infty)\times U]\) with \(\phi_1\prec \phi_2\).
    Take \(a>0\) such that \(\phi_2\in \CinftycptSpace[(0,a)\times U]\).
    For \(f\in \Domain*{\opL}\), using 
    \eqref{Eqn::Proof::Subellip::SemigroupHasKernelWhichIsSmoothInxi::DerivOfSemigroup},
    Lemma \ref{Lemma::Proof::Subellip::DerivOfHeatIsSmooth} shows
    \(\forall L\in \Zgeq\),
    \begin{equation}\label{Eqn::Proof::Subellip::SemigroupHasKernelWhichIsSmoothInxi::BoundCjNorm}
        \CbjNorm*{\phi_1 T(t)f}{L}[(0,\infty)\times \ManifoldN]
        \lesssim \LtNorm*{\phi_2 T(t)f}[\R\times \ManifoldN]
        \lesssim \left( \int_0^a \LtNorm*{T(t)f}[\ManifoldN,\Vol]^2\: dt \right)^{1/2}
        \lesssim a^{1/2}\LtNorm{f}[\ManifoldN,\Vol].
    \end{equation}
    Since \(\DomainL\subseteq \LtSpace[\ManifoldN,\Vol][\C^M]\) is dense by \cite[Chapter 2, Theorem 1.4]{EngelNagelAShortCourseOnOperatorSemigroups}, \eqref{Eqn::Proof::Subellip::SemigroupHasKernelWhichIsSmoothInxi::BoundCjNorm} shows that \(\phi_1 T(\cdot):\LtSpace[\ManifoldN,\Vol][\C^M]\rightarrow \CinftySpace[\R\times \ManifoldN]\) is continuous.
    Since \(\phi_1\in \CinftycptSpace[\R\times \ManifoldN]\) was arbitrary,
    this implies 
    \(g\mapsto T(t)g\big|_{(0,\infty)\times U}\) is continuous
    \(\LtSpace*[\ManifoldN][\C^M]\rightarrow \CinftySpace*[(0,\infty)\times U][\C^M]\).

    By the Riesz representation theorem, for each fixed \((t,\xi)\in (0,\infty)\times U\), the map \(g\mapsto T(t)g(\xi)\) is represented by integration against a unique \(K_t(\xi,\cdot)\in \LtSpace[\ManifoldN,\Vol][\M^{M\times M}(\C)]\).
    For fixed \(g\), the map \((t,\xi)\mapsto T(t)g(\xi)\) is in \(\CinftySpace*[(0,\infty)\times U][\C^M]\),
    which shows \(K_t(\xi,\cdot)\in \CinftySpace*[(0,\infty)\times U][\LtwSpace*[\ManifoldN][\M^{M\times M}(\C)]]\).
    Finally, using \eqref{Eqn::Proof::Subellip::SemigroupHasKernelWhichIsSmoothInxi::BoundCjNorm}
    with \(\phi\prec \phi_1\), we have for \(L=L(\opP)\in \Zgeq\) large,
    \begin{equation*}
         \sup_{t,\xi} \LtNorm*{\opP \left( \phi(t,\xi) K_t(\xi,\cdot) \right)}[\ManifoldN,\Vol]
         \lesssim \sup_{\substack{f\in \DomainL \\ \LtNorm{f}=1}} \CbjNorm{\phi_1 T(t)f}{L} \lesssim 1,
    \end{equation*}
    establishing \eqref{Eqn::Proof::Subellip::SemigroupHasKernelWhichIsSmoothInxi::DerivAppliedToKt}.
\end{proof}

The next lemma is standard, though we include the proof for completeness.
\begin{lemma}\label{Lemma::Proof::Subellip::DerivsGiveSmooth}
    Suppose \(M(t,x,y)\in \LtSpace*[\R\times \Rngeq\times \Rngeq]\) is such that
    \(\forall j\), \(\forall \alpha\),
    \begin{equation}\label{Eqn::Proof::Subellip::DerivsGiveSmooth::Assump}
        \partial_t^{j}M(t,x,y), \partial_x^{\alpha}M(t,x,y),\partial_y^{\alpha}M(t,x,y)\in \LtSpace[\R\times \Rngeq\times\Rngeq].
    \end{equation}
    Then \(M(t,x,y)\in \CinftySpace[\R\times \Rngeq\times \Rngeq]\).
\end{lemma}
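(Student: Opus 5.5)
The plan is to reduce to the standard fact that a function on Euclidean space with all derivatives in \(L^2\) is smooth. The only twist is that the hypothesis \eqref{Eqn::Proof::Subellip::DerivsGiveSmooth::Assump} gives only \emph{pure} derivatives in each group of variables separately, not mixed ones, and the domain is a product of half-spaces.

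\textbf{Step 1 (extension).} First remove the boundary. Apply a Seeley-type (or Lions reflection) extension operator \(E\) in the \(x_n\) variable and in the \(y_n\) variable. Such an operator is built from linear combinations \(\sum_k c_k f(\ldots,-\lambda_k x_n)\) with the moment conditions chosen so that derivatives up to a given order \(N\) are preserved. Applying it in \(x_n\) and in \(y_n\) gives a function \(\widetilde M\in L^2(\R\times\Rn\times\Rn)\). It satisfies
\begin{equation*}
    \partial_t^j\widetilde M,\quad \partial_x^\alpha\widetilde M,\quad \partial_y^\alpha\widetilde M\in L^2
\end{equation*}
for all \(j\) and all \(|\alpha|\le N\). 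This works because the reflection in \(x_n\) commutes with \(\partial_t\), \(\partial_{x'}\) and \(\partial_y\), and it is bounded on \(L^2\) of the \(N\)-th order pure \(x\)-derivatives (the standard argument, applied with the other variables frozen and then integrated). Since smoothness is local and \(N\) is arbitrary, it suffices to prove that \(\widetilde M\in C^{K}\) for each \(K\), using \(N=N(K)\) large.

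\textbf{Step 2 (Fourier transform).} On the Fourier side, the hypotheses say that
\begin{equation*}
    (1+|\tau|^{2j}+|\xi|^{2N}+|\zeta|^{2N})^{1/2}\,\widehat{\widetilde M}(\tau,\xi,\zeta)\in L^2 .
\end{equation*}
Since \(|\tau|^{j}+|\xi|^N+|\zeta|^N\gtrsim (1+|(\tau,\xi,\zeta)|)^{\min(j,N)}\) up to constants, the function \(\widetilde M\) lies in the ordinary Sobolev space \(H^{s}(\R^{1+2n})\) with \(s=\min(j,N)\). The mixed derivatives therefore come for free via Plancherel.

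\textbf{Step 3 (conclusion).} Sobolev embedding then gives \(\widetilde M\in C^{K}\) once \(s>K+(1+2n)/2\). Restricting to \(\R\times\Rngeq\times\Rngeq\) shows that \(M\) is \(C^K\) up to the boundary, and letting \(K\to\infty\) gives smoothness.

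The main obstacle is Step 1: the extension must preserve the separate pure-derivative bounds without having mixed derivatives available. Reflection in one variable group only involves that group's normal variable, so each hypothesis is preserved separately, but this needs to be checked carefully. If it causes difficulty, I would instead localize in \(x_n,y_n\) with cutoffs, which only produce lower-order pure terms, and argue with the one-dimensional Fourier transform in the normal variables.
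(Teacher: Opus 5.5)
Your proposal is correct and follows the same route as the paper. The paper also extends in the normal variables $x_n$ and $y_n$, uses the Fourier transform to upgrade the separate pure-derivative bounds to full $H^m(\mathbb{R}^{1+2n})$ regularity for every $m$, and concludes by Sobolev embedding; it justifies the extension step exactly as you anticipate, via boundedness of Seeley's operator on $H^m(\mathbb{R}^n_{\geq,x};L^2)$ together with the fact that each extension commutes with the other variable group's derivatives and is bounded on $L^2$, and your use of a finite-order Lions--Hestenes reflection with $N=N(K)$ in place of Seeley's all-orders extension is an inessential variation.
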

\begin{proof}
    Let \(\Extend\) be the extension operator constructed by Seeley in \cite{SeeleyExtensionOfCInfinityFunctionsDefinedInAHalfSpace} that maps functions on \([0,\infty)\) to functions on \(\R\).  
    We treat \(\Extend\) as an operator acting only in the last variable of \(\Rngeq\).
    Thus, 
    \(\Extend f\big|_{\Rngeq}=f\)
    and for every Hilbert
    space \(\HilbertSpace\) and every \(m\in \Zgeq\), \(\Extend\) extends to a bounded map 
    \begin{equation}\label{Eqn::Proof::Subellip::DerivsGiveSmooth::ExtendBoundedness}
        \Extend:H^m(\Rngeq;\HilbertSpace)\rightarrow H^m(\Rn;\HilbertSpace).
    \end{equation}

    Let \(\Extendx\) and \(\Extendy\) denote this extension operator acting in the \(x\)
    and \(y\) variables, respectively, and define \(\Mt(t,\cdot_x,\cdot_y)\coloneqq \Extendy\Extendx M(t,\cdot_x,\cdot_y)\).

    Fix \(m\in \Zgeq\).
    We claim that, for every \(j,|\alpha|,|\beta|\leq m\),
    \begin{equation}\label{Eqn::Proof::Subellip::DerivsGiveSmooth::Assump2}
        \partial_t^j\Mt,
    \partial_x^\alpha\Mt,
    \partial_y^\beta\Mt
    \in L^2(\R\times \Rn\times \Rn).
    \end{equation} 

Indeed, the assumptions on the \(x\)-derivatives in \eqref{Eqn::Proof::Subellip::DerivsGiveSmooth::Assump} imply that
\begin{equation*}
    M \in
H^m\left(
\mathbb \R_{\geq,x}^{n};
L^2(\R_t\times \R_{\ge,y}^{n})
\right).   
\end{equation*}
The boundedness of \(\Extendx\) described in \eqref{Eqn::Proof::Subellip::DerivsGiveSmooth::ExtendBoundedness} therefore gives 
\begin{equation*}
    \partial_x^{\alpha} \Extendx M \in L^2(\R_t\times \Rn_x \times \Rn_{\geq,y}),\quad |\alpha|\leq m.
\end{equation*}
The operator \(\Extendy\) acts only in the \(y_n\)-variable, is bounded on \(L^2\), and commutes with the \(x\)-derivatives, 
and so
\begin{equation*}
    \partial_x^{\alpha}\Mt = \partial_x^{\alpha}\Extendy\Extendx M =\Extendy\partial_x^{\alpha}\Extendx M\in L^2(\R_t\times \Rn_x \times \Rn_y), \quad|\alpha|\leq m. 
\end{equation*}

Similarly, \(\Extendx\) commutes with the \(y\)-derivatives and is bounded on \(L^2\). Hence the assumptions on the \(y\)-derivatives in \eqref{Eqn::Proof::Subellip::DerivsGiveSmooth::Assump} imply 
\begin{equation*}
    \partial_y^{\beta} \Extendx M = \Extendx \partial_y^{\beta} M\in L^2(\R_t\times \Rn_x\times \Rn_{\geq,y}),\quad |\beta|\leq m.
\end{equation*}
Therefore,
\begin{equation*}
    \Extendx M\in H^m(\Rn_{\geq,y}; L^2(\R_t\times \Rn_x)).
\end{equation*}
Applying \(\Extendy\) and using the boundedness described in \eqref{Eqn::Proof::Subellip::DerivsGiveSmooth::ExtendBoundedness}
shows
\begin{equation*}
    \partial_y^{\beta} \Mt=\partial_y^{\beta}\Extendy\Extendx M \in L^2(\R_t\times \Rn_x\times \Rn_{y}),\quad |\beta|\leq m.
\end{equation*}
Finally, both extension operators commute with differentiation in \(t\) and are bounded on \(L^2\), 
so using the \(t\)-regularity in \eqref{Eqn::Proof::Subellip::DerivsGiveSmooth::Assump},
we have
\begin{equation*}
    \partial_t^{j} \Mt =\partial_t^{j} \Extendy\Extendx M= \Extendy\Extendx \partial_t^{j}M\in L^2(\R_t\times \Rn_x\times \Rn_{y}),\quad 0\leq j\leq m.
\end{equation*}
This completes the proof of \eqref{Eqn::Proof::Subellip::DerivsGiveSmooth::Assump2}.

    Letting \(t^{*}\), \(x^{*}\), and \(y^{*}\) denote the dual variables to \(t\), \(x\), and \(y\),
    respectively, taking the Fourier transform of \eqref{Eqn::Proof::Subellip::DerivsGiveSmooth::Assump2}
    shows
    \begin{equation}\label{Eqn::Proof::Subellip::DerivsGiveSmooth::Tmp1}
        \int  \left( \left\lvert t^{*} \right\rvert^{2m}+\left\lvert x^{*} \right\rvert^{2m}+\left\lvert y^{*} \right\rvert^{2m}  \right) \left\lvert  \widehat{\Mt}(t^{*},x^{*},y^{*})  \right\rvert^2\: dt^{*}\: dx^{*}\: dy^{*}<\infty. 
    \end{equation}
    Because \(m\) was arbitrary, \eqref{Eqn::Proof::Subellip::DerivsGiveSmooth::Tmp1} implies that \(\Mt\in H^m(\R\times \Rn\times \Rn)\) for every \(m\in \Zgeq\). The Sobolev embedding theorem then implies that \(\Mt(t,x,y)\in \CinftySpace[\R\times \Rn\times \Rn]\). 
    Since \(M=\Mt\big|_{\R\times \Rngeq\times \Rngeq  }\), the result follows.
\end{proof}

\begin{proposition}\label{Prop::Proof::Subellip::ExistsSmoothKernel}
    Let \(T(t)\) be the strongly continuous semigroup from Lemma \ref{Lemma::Result::ExistsSemigroup}.
    Then,  \(\exists\, ! K_t(\xi,\eta)\in \CinftySpace*[(0,\infty)\times U\times U][\M^{M\times M}(\C)]\)
    such that \(K_t(\xi,\cdot)\in \LtSpace*[U,\Vol][\M^{M\times M}(\C)]\), \(\forall (t,\xi)\in (0,\infty)\times U\),
    and such that
    \begin{equation*}
        T(t)f(\xi) =\int K_t(\xi,\eta) f(\eta)\: d\Vol[\eta],\quad \forall f\in \LtSpace[U,\Vol][\C^M].
    \end{equation*}
\end{proposition}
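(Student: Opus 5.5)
We prove smoothness of the kernel from Lemma \ref{Lemma::Proof::Subellip::SemigroupHasKernelWhichIsSmoothInxi} in the \(\eta\)-variable by duality, and then combine the two one-sided regularities using Lemma \ref{Lemma::Proof::Subellip::DerivsGiveSmooth}.

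First, Lemma \ref{Lemma::Proof::Subellip::SemigroupHasKernelWhichIsSmoothInxi} gives a unique \(K_t(\xi,\cdot)\in \LtSpace[\ManifoldN]\) representing \(g\mapsto T(t)g(\xi)\). Restricting to \(g\) supported in \(U\) yields a representing kernel in \(\LtSpace[U]\); uniqueness follows from Riesz representation on \(\LtSpace[U]\). Moreover, the map is smooth in \((t,\xi)\) with values in weak \(L^2\). This settles uniqueness and the representation formula, so only joint smoothness remains.

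Second, we apply the same lemma to the adjoint semigroup \(T(t)^{*}=e^{-t\opL^{*}}\), which is legitimate by Remark \ref{Rmk::Result::SummetricInLAndLStar}. This gives a kernel \(K^{*}_t(\eta,\cdot)\), smooth in \((t,\eta)\in(0,\infty)\times U\). Pairing \(\Ltip{T(t)g}{h}=\Ltip{g}{T(t)^{*}h}\) for \(g,h\in\LtSpace[U]\) identifies \(K_t(\xi,\eta)=K^{*}_t(\eta,\xi)^{*}\) almost everywhere on \(U\times U\) for each \(t\). Therefore derivatives in \(\eta\) of \(K_t(\xi,\eta)\), after multiplication by a cutoff, are controlled in \(L^2\) by \eqref{Eqn::Proof::Subellip::SemigroupHasKernelWhichIsSmoothInxi::DerivAppliedToKt} for \(\opL^{*}\). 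Derivatives in \((t,\xi)\) are controlled by the same estimate for \(\opL\).

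Third, we localize. Fix cutoffs \(\phi(t)\in\CinftycptSpace[(0,\infty)]\) and \(\chi_1,\chi_2\in\CinftycptSpace[U]\), and set \(M(t,\xi,\eta)=\phi(t)\chi_1(\xi)\chi_2(\eta)K_t(\xi,\eta)\). Integrating the sup-in-\((t,\xi)\) bound \eqref{Eqn::Proof::Subellip::SemigroupHasKernelWhichIsSmoothInxi::DerivAppliedToKt} over the compact support shows that \(\partial_t^jM\) and \(\partial_\xi^\alpha M\) lie in \(L^2\). The adjoint version shows that \(\partial_\eta^\alpha M\lies in \(L^2\). To make these statements precise:
\begin{itemize}
\item we pass to coordinate charts, half-space charts near \(\BoundaryN\), using a partition of unity, so the setting becomes \(\R\times\Rngeq\times\Rngeq\);
\item we check that the weak-\(L^2\)-valued derivatives agree with distributional derivatives of \(M\), which holds because \(K\) is smooth into weak \(L^2\) and can be tested against product test functions.
\end{itemize}
Lemma \ref{Lemma::Proof::Subellip::DerivsGiveSmooth} then gives \(M\in\CinftySpace\). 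Since the cutoffs are arbitrary, \(K\) agrees almost everywhere with a smooth function on \((0,\infty)\times U\times U\).

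The main obstacle is the last point. Lemma \ref{Lemma::Proof::Subellip::DerivsGiveSmooth} requires genuinely mixed-variable \(L^2\) membership of pure derivatives. So we must check that the a.e. identification \(K_t(\xi,\eta)=K^{*}_t(\eta,\xi)^{*}\), which holds for each fixed \(t\), is compatible with joint measurability and with distributional derivatives in all three variables. We also need to confirm that the smooth representative still represents \(T(t)\) pointwise for every \((t,\xi)\) and not merely almost everywhere. For that, we use continuity in \((t,\xi)\) of both sides, together with the weak continuity from Lemma \ref{Lemma::Proof::Subellip::SemigroupHasKernelWhichIsSmoothInxi}.
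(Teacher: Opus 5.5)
Your proposal is correct and follows essentially the same route as the paper. You apply Lemma \ref{Lemma::Proof::Subellip::SemigroupHasKernelWhichIsSmoothInxi} to both \(T(t)\) and \(T(t)^{*}\), use that the kernel of \(T(t)^{*}\) on \(U\times U\) is \(K_t(\eta,\xi)^{*}\) to get \(L^2\) bounds on the \((t,\xi)\)- and \(\eta\)-derivatives of a cut-off kernel, and conclude with Lemma \ref{Lemma::Proof::Subellip::DerivsGiveSmooth} in half-space charts. The measurability, distributional-derivative, and everywhere-versus-almost-everywhere points you flag are genuine, your sketch handles them adequately, and the paper's proof passes over them without comment.
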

\begin{proof}
    Let \(K_t(\xi,\eta)\) be the function from Lemma \ref{Lemma::Proof::Subellip::SemigroupHasKernelWhichIsSmoothInxi}.
    We wish to show \(K_t(\xi,\eta)\in \CinftySpace*[(0,\infty)\times U\times U][\M^{M\times M}(\C)]\).
    Because our assumptions are symmetric in \(\opL\) and \(\opL^{*}\) (see Remark \ref{Rmk::Result::SummetricInLAndLStar}),
    Lemma \ref{Lemma::Proof::Subellip::SemigroupHasKernelWhichIsSmoothInxi} also holds for the semigroup
    \(T(t)^{*}\). On \(U\times U\), the kernel of \(T(t)^{*}\) is \(K_t(\eta,\xi)^{*}\), the matrix adjoint of \(K_t\) with the roles of \(\xi\) and \(\eta\) reversed.
    Thus, if \(\opP_{t,\xi}\) is any partial differential operator with smooth coefficients
    in the \(t,\xi\) variables, and \(\psi\in \CinftycptSpace*[(0,\infty)\times U\times U]\),
    then \eqref{Eqn::Proof::Subellip::SemigroupHasKernelWhichIsSmoothInxi::DerivAppliedToKt}
    applied to both \(T(t)\) and \(T(t)^{*}\)
    implies
    \begin{equation}\label{Eqn::Proof::Subellip::ExistsSmoothKernel::Tmp1}
        \LtNorm*{ \opP_{t,\xi} \left( \psi(t,\xi,\eta)K_t(\xi,\eta) \right)}[(0,\infty)\times \ManifoldN\times \ManifoldN]
        +\LtNorm*{ \opP_{t,\eta} \left( \psi(t,\xi,\eta)K_t(\xi,\eta) \right)}[(0,\infty)\times \ManifoldN\times \ManifoldN]<\infty.
    \end{equation}

    Fix \((t_0,\xi_0,\eta_0)\in (0,\infty)\times U\times U\). Let
    \(V_1,V_2\subseteq U\) be small open neighborhoods of \(\xi_0\) and \(\eta_0\), respectively,
    which are diffeomorphic to connected open subsets of \(\Rngeq\).
    Let \(\psi\in \CinftycptSpace*[(0,\infty)\times V_1\times V_2]\) equal \(1\) on a neighborhood of \((t_0,\xi_0,\eta_0)\). Equation \eqref{Eqn::Proof::Subellip::ExistsSmoothKernel::Tmp1} shows that Lemma \ref{Lemma::Proof::Subellip::DerivsGiveSmooth} applies, yielding \(\psi(t,\xi,\eta)K_t(\xi,\eta)\in \CinftySpace[\R\times \ManifoldN\times \ManifoldN][\M^{M\times M}(\C)]\).
    As \((t_0,\xi_0,\eta_0)\in (0,\infty)\times U\times U\) was arbitrary, it follows that
    \(K_t(\xi,\eta)\in \CinftySpace*[(0,\infty)\times U\times U][\M^{M\times M}(\C)]\), as desired.
\end{proof}

%% file: proof_completion.tex
Proposition \ref{Prop::Proof::Subellip::ExistsSmoothKernel}
shows the existence of a smooth kernel \(K(t,\xi,\eta)\in \CinftySpace*[(0,\infty)\times U\times U][\M^{M\times M}(\C)]\)
satisfying \eqref{Eqn::Results::MainResult::FormulaForKt}.
It remains to establish \eqref{Eqn::Results::MainResult::GaussianBoundsForKt}.
We establish \eqref{Eqn::Results::MainResult::GaussianBoundsForKt}
by using the estimates of \cite{StreetAPrioriEstimatesMaximallySubellipticQuadraticForms}
to establish the assumptions of 
\cite[Theorem \ref*{Heat::Thm::Results::MainThm}]{StreetHypoellipticityAndHigherOrderGaussianBounds},
which yields \eqref{Eqn::Results::MainResult::GaussianBoundsForKt}.\footnote{
    In \cite{StreetHypoellipticityAndHigherOrderGaussianBounds}, the kernel \(K_t(\xi,\eta)\)
    was scalar valued instead of matrix-valued. However, the same proof applies in the matrix-valued setting.
    See \cite[Remark \ref*{Heat::Rmk::Results::WorksForMatrixKernels}]{StreetHypoellipticityAndHigherOrderGaussianBounds}.
}
We apply the results of \cite{StreetAPrioriEstimatesMaximallySubellipticQuadraticForms,StreetHypoellipticityAndHigherOrderGaussianBounds} without change. The reader will need these
two papers at hand to completely follow the proof.

By replacing \(\opL\) with \(\opL-\gamma\) (i.e., replacing \(T(t)\) with \(e^{\gamma t}T(t)\) and \(\FormQ[f][g]\) with \(\FormQ[f][g]-\gamma\Ltip{f}{g}\)),
it suffices to prove \eqref{Eqn::Results::MainResult::GaussianBoundsForKt} in the case \(\gamma=0\).
Note that all the hypotheses of Theorem \ref{Thm::Results:MainResult} still hold after this replacement
(the new estimates depend on \(\gamma\), though that does not change the statement of Theorem \ref{Thm::Results:MainResult}).

We turn to proving \eqref{Eqn::Results::MainResult::GaussianBoundsForKt} with \(\gamma=0\).
We begin by stating a slightly different result, which implies \eqref{Eqn::Results::MainResult::GaussianBoundsForKt}.
In it, we use the vector fields \(V_0,V_1,\ldots, V_r\) described in Section \ref{Section::Result}.

Let \(\delta_1=\delta_1(\CompactOne)>0\) be as in Theorem \ref{Thm::Scaling::MainScalingThm}
(as applied in Section \ref{Section::Result}).

\begin{proposition}\label{Prop::Proof::Completion::ReduceToV}    
    \(\forall \Compact\Subset U\) compact, \(\exists \delta_\Compact\in (0,\delta_1]\), \(c_\Compact>0\), 
    for all ordered multi-indices \(\alpha,\beta\),
    \(\forall j\in \Zgeq\),
    \(\exists C_{\alpha,\beta,j,\Compact}\geq 0\), \(\forall \xi,\eta\in \Compact\), \(\forall t>0\),
        \begin{equation}\label{Eqn::Proof::Completion::ReduceToV::GaussianBoundsForKt}
    \begin{split}
         \left| \partial_t^j V_\xi^{\alpha} V_\eta^{\beta} K_t(\xi,\eta) \right|
         \leq& C_{\alpha,\beta,j,\Compact}
         \left( \left( \MetricV[\xi][\eta] +t^{1/2\kappa} \right)\wedge \delta_\Compact \right)^{-2\kappa j-|\alpha|-|\beta|}
         \\&\times \exp \left(  -c_\Compact \left( \frac{\left( \MetricV[\xi][\eta]\wedge \delta_\Compact \right)^{2\kappa}}{t}  \right)^{\frac{1}{2\kappa-1}}  \right)
         \\&\times \Vol*[ \BV*{\xi}{\left( \MetricV[\xi][\eta]+t^{1/2\kappa} \right)\wedge \delta_\Compact}]^{-1/2}
         \\&\times \Vol*[ \BV*{\eta}{\left( \MetricV[\xi][\eta]+t^{1/2\kappa} \right)\wedge \delta_\Compact}]^{-1/2}.
    \end{split}
    \end{equation}
\end{proposition}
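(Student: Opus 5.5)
The plan is to verify the hypotheses of \cite[Theorem \ref*{Heat::Thm::Results::MainThm}]{StreetHypoellipticityAndHigherOrderGaussianBounds}, applied with the vector fields \(V_0,\ldots,V_r\), the metric \(\MetricV\), the semigroup \(T(t)\) (with \(\gamma=0\)), and the scaling maps \(\Phi_{\xi,\delta}\), \(\Psi_{\xi,\delta}\) from Theorem \ref{Thm::Scaling::MainScalingThm}. That theorem's conclusion is exactly the symmetric-volume Gaussian bound \eqref{Eqn::Proof::Completion::ReduceToV::GaussianBoundsForKt}. Its hypotheses fall into three groups:
\begin{itemize}
\item geometric ones: doubling, volume comparability, and containment of balls in a relatively compact set;
\item qualitative ones: locality of \(\opL\) and \(\opL^{*}\), and existence of a smooth kernel;
\item a quantitative one: a hypoelliptic estimate for \(\partial_t+\delta^{2\kappa}\opL\) and \(\partial_t+\delta^{2\kappa}\opL^{*}\) on \(\BV{\xi}{\delta}\), uniform in \((\xi,\delta)\in \CompactOne\times(0,\delta_1]\).
\end{itemize}

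The geometric hypotheses are Theorem \ref{Thm::Scaling::MainScalingThm}\ref{Item::Scaling::MainScalingThm::AreMetrics}--\ref{Item::Scaling::MainScalingThm::BallInsideOmega::Union}. Locality follows from Assumption \ref{Assumption::Result::SmoothWithCompactSupportInDomain}. The smooth kernel is given by Proposition \ref{Prop::Proof::Subellip::ExistsSmoothKernel}, and Lemmas \ref{Lemma::Proof::Subellip::DerivOfHeatIsSmooth} and \ref{Lemma::Proof::Subellip::DomainLinftyIsSmooth} supply the qualitative smoothness statements. I will then choose \(\delta_\Compact\) small enough that all relevant balls lie in the region where the scaling theorem applies.

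The core step is the uniform hypoelliptic estimate: for \(u\) solving \((\partial_t+\delta^{2\kappa}\opL)u=0\) on \((-1,1)\times\BV{\xi}{\delta}\), the quantity \(\sup|\partial_t^j V^{\alpha}u|\) on the smaller cylinder \((-1/2,1/2)\times\BV{\xi}{\gamma_0\delta}\) is bounded by \(\delta^{-|\alpha|}\) times a constant multiple of \(\Vol[\BV{\xi}{\delta}]^{-1/2}\|u\|_{L^2}\), with the constant independent of \((\xi,\delta)\). I would prove it by pulling back.
\begin{itemize}
\item For \((\xi,\delta)\in\SSetOne\), pull back via \(\Phi_{\xi,\delta}\). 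The form \(\FormQF\), written with \(V\)'s in place of \(W\)'s (possible because the modules coincide), becomes \(\delta^{-2\kappa}\Vol[\BV{\xi}{\delta}]\) times a form of the same type on \(\Cuben{1}\). Its coefficients are built from the \(V_j^{\xi,\delta,1}\) and the densities \(h_{\xi,\delta,1}\), which are uniformly H\"ormander and uniformly bounded and positive by Theorem \ref{Thm::Scaling::MainScalingThm}\ref{Item::Scaling::MainScalingThm::UniformlyHormander}--\ref{Item::Scaling::MainScalingThm::DensityUniformlyPos}. The interior subelliptic theory of \cite{StreetAPrioriEstimatesMaximallySubellipticQuadraticForms} then gives estimates uniform in \((\xi,\delta)\).
\item For \(\SSetTwo\), pull back via \(\Psi_{\xi,\delta}\) instead. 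Now \(V_0^{\xi,\delta,2}=\pm D\partial_{x_n}\) and the \(V_j^{\xi,\delta,2}\) are tangential at \(x_n=0\), which is exactly the normal form needed by the boundary estimates of \cite{StreetAPrioriEstimatesMaximallySubellipticQuadraticForms}. The tangential regularizers there are the operators \(T_{\xi,\delta}\) of Assumption \ref{Assumption::Results::BoundaryMaximalSubellip}. Lemma \ref{Lemma::Result::BoundaryMaxSubNewLemma} gives their boundedness with a constant \(A\) uniform in \((\xi,\delta)\), and this is what makes the boundary a priori estimates uniform.
\end{itemize}
In both cases the argument is: the uniform maximally subelliptic estimate gives uniform subelliptic estimates for the rescaled heat operator (as in Lemma \ref{lemma::Proof::Subellip::CopySubellipEst}); iterating these gives uniform \(H^s\) bounds; Sobolev embedding gives sup bounds; and rescaling back turns \(\partial_x\) into \(\delta V\) and \(L^2(du)\) into \(\Vol[\BV{\xi}{\delta}]^{-1/2}L^2(d\Vol)\). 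The same argument applies to \(\opL^{*}\) by Remark \ref{Rmk::Result::SummetricInLAndLStar}.

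The main obstacle is the uniformity check in the \(\SSetTwo\) case. I have to confirm that every constant in the boundary a priori estimates of \cite{StreetAPrioriEstimatesMaximallySubellipticQuadraticForms} depends only on quantities controlled uniformly by Theorem \ref{Thm::Scaling::MainScalingThm}, namely the \(\CbinftySpace\) norms of the pulled-back vector fields and coefficients, the H\"ormander lower bound, and \(A\). I also have to confirm that the rescaled form still satisfies the sectoriality and domain hypotheses, i.e., that Assumptions \ref{Assumption::Result::RestrictDomainQ}--\ref{Assumption::Results::BoundaryMaximalSubellip} transfer under \(\Psi_{\xi,\delta}^{*}\) with uniform constants. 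I would handle this by tracking the quantifier order in the proof of the a priori estimates in \cite{StreetAPrioriEstimatesMaximallySubellipticQuadraticForms}, which is stated for a family of forms with uniform data. Once the uniform hypoelliptic estimate is in hand, \cite[Theorem \ref*{Heat::Thm::Results::MainThm}]{StreetHypoellipticityAndHigherOrderGaussianBounds} yields \eqref{Eqn::Proof::Completion::ReduceToV::GaussianBoundsForKt} directly.
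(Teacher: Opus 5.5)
Your proposal follows the paper's proof. You verify the hypotheses of the abstract Gaussian-bound theorem of \cite{StreetHypoellipticityAndHigherOrderGaussianBounds} for both $\opL$ and $\opL^{*}$. You obtain the key uniform hypoelliptic estimate by pulling back via $\Phi_{\xi,\delta}$ and $\Psi_{\xi,\delta}$, invoking the uniform interior and boundary a priori estimates of \cite{StreetAPrioriEstimatesMaximallySubellipticQuadraticForms} (with boundary uniformity coming from Lemma \ref{Lemma::Result::BoundaryMaxSubNewLemma}), and then using Sobolev embedding and rescaling back.

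The paper also supplies three details your sketch leaves implicit:
\begin{itemize}
\item It builds a global metric that agrees with $\MetricV$ on the relevant small balls (Lemma \ref{Lemma::Proof::Completion::ExistsMetricAndDefineDelta0}).
\item It adds $u\in \LtlocSpace[I][\DomainL]$ to the definition of heat function, so that the a priori estimates apply (Proposition \ref{Prop::Proof::Completion::ChangeHeatDefn}).
\item It normalizes $S_X(\delta)=\epsilon_\alpha \delta^{|\alpha|}$, so that the admissible constants, and hence $c_\Compact$, depend only on $\kappa$, $a_1$, the $\partial_t$-estimate constant, and the doubling constant. This is what makes $c_\Compact$ independent of $\alpha$, $\beta$, $j$, as the quantifier order of the statement requires.
\end{itemize}
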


Before we prove Proposition \ref{Prop::Proof::Completion::ReduceToV}, 
we see how it implies \eqref{Eqn::Results::MainResult::GaussianBoundsForKt}.

\begin{proof}[Proof of \eqref{Eqn::Results::MainResult::GaussianBoundsForKt} given Proposition \ref{Prop::Proof::Completion::ReduceToV}]
    Because the \(\CinftySpace[\ManifoldNOne][\R]\)-module generated by \(V_0,\ldots, V_r\)
    is the same as the \(\CinftySpace[\ManifoldNOne][\R]\)-module generated by \(W_1,\ldots, W_r\),
    and since \(\Compact\Subset U\Subset \ManifoldNOne\), for every \(L_1,L_2\in \Zgeq\) we have
    \begin{equation*}
        \sum_{\substack{|\alpha|\leq L_1\\ |\beta|\leq L_2}} \left| \partial_t^j V_\xi^{\alpha} V_\eta^{\beta} K_t(\xi,\eta) \right|
        \approx \sum_{\substack{|\alpha|\leq L_1\\ |\beta|\leq L_2}}\left| \partial_t^j W_\xi^{\alpha} W_\eta^{\beta} K_t(\xi,\eta) \right|,\quad \forall \xi,\eta\in \Compact.
    \end{equation*}
    Moreover, the right-hand sides of \eqref{Eqn::Results::MainResult::GaussianBoundsForKt}
    and \eqref{Eqn::Proof::Completion::ReduceToV::GaussianBoundsForKt} are increasing in \(|\alpha|\) and \(|\beta|\)
    (since \(\delta_\Compact \leq 1\)).

    Thus, 
    to establish \eqref{Eqn::Results::MainResult::GaussianBoundsForKt}, it suffices to show
    that the right-hand side of 
    \eqref{Eqn::Proof::Completion::ReduceToV::GaussianBoundsForKt}
    is \(\lesssim\) the right-hand side of 
    \eqref{Eqn::Results::MainResult::GaussianBoundsForKt}
    (with perhaps a different choice of \(c_\Compact\)).
    We do this by separating the three terms on the right-hand side of \eqref{Eqn::Results::MainResult::GaussianBoundsForKt}.

    Theorem \ref{Thm::Scaling::MainScalingThm}\ref{Item::Scaling::MainScalingThm::MetricsEquiv}
    shows \(\MetricV[\xi][\eta]\wedge 1\approx \MetricW[\xi][\eta]\wedge 1\), \(\forall \xi,\eta\in \Compact\subseteq \CompactOne\),
    and therefore 
    \begin{equation}\label{Eqn::Proof::Completion::ReduceToV::CompMetrics}
        \left( \MetricV[\xi][\eta] +t^{1/2\kappa} \right)\wedge \delta_\Compact 
        \approx \left( \MetricW[\xi][\eta] +t^{1/2\kappa} \right)\wedge \delta_\Compact, 
    \end{equation}
    \begin{equation*}
        \exp \left(  -c_\Compact \left( \frac{\left( \MetricV[\xi][\eta]\wedge \delta_\Compact \right)^{2\kappa}}{t}  \right)^{\frac{1}{2\kappa-1}}  \right)
        \leq
        \exp \left(  -c_\Compact' \left( \frac{\left( \MetricW[\xi][\eta]\wedge \delta_\Compact \right)^{2\kappa}}{t}  \right)^{\frac{1}{2\kappa-1}}  \right)
    \end{equation*}
    for some \(c_\Compact'>0\) and all \(\xi,\eta\in \Compact\), where \(c_\Compact\) is as in Proposition \ref{Prop::Proof::Completion::ReduceToV}.

    Thus, to complete the proof, it suffices to show
    \begin{equation}\label{Eqn::Proof::Completion::ReduceToV::VolsAreEquiv}
         \begin{split}
            &\Vol*[ \BV*{\xi}{\left( \MetricV[\xi][\eta]+t^{1/2\kappa} \right)\wedge \delta_\Compact}]^{1/2}
            \Vol*[ \BV*{\eta}{\left( \MetricV[\xi][\eta]+t^{1/2\kappa} \right)\wedge \delta_\Compact}]^{1/2}
            \\&\approx \Vol*[ \BW*{\xi}{\left( \MetricW[\xi][\eta]+t^{1/2\kappa} \right)\wedge \delta_\Compact}],
            \quad \forall t>0,\: \xi,\eta\in \Compact.
         \end{split}
    \end{equation}
    Since \(\delta_\Compact\leq \delta_1\), 
    Theorem \ref{Thm::Scaling::MainScalingThm}\ref{Item::Scaling::MainScalingThm::VolsEquiv}
    shows
    \begin{equation}\label{Eqn::Proof::Completion::ReduceToV::VolsAreEquiv::Tmp1}
        \Vol*[ \BW*{\xi}{\left( \MetricW[\xi][\eta]+t^{1/2\kappa} \right)\wedge \delta_\Compact}]
        \approx
        \Vol*[ \BV*{\xi}{\left( \MetricW[\xi][\eta]+t^{1/2\kappa} \right)\wedge \delta_\Compact}].
    \end{equation}
    Theorem \ref{Thm::Scaling::MainScalingThm}\ref{Item::Scaling::MainScalingThm::Doubling}
    combined with \eqref{Eqn::Proof::Completion::ReduceToV::CompMetrics}
    shows
    \begin{equation}\label{Eqn::Proof::Completion::ReduceToV::VolsAreEquiv::Tmp2}
        \Vol*[ \BV*{\xi}{\left( \MetricW[\xi][\eta]+t^{1/2\kappa} \right)\wedge \delta_\Compact}]
        \approx \Vol*[ \BV*{\xi}{\left( \MetricV[\xi][\eta]+t^{1/2\kappa} \right)\wedge \delta_\Compact}].
    \end{equation}

    We claim that, for all \(\xi,\eta\in \Compact\) and \(t>0\),
    \begin{equation}\label{Eqn::Proof::Completion::ReduceToV::VolsAreEquiv::Tmp3}
        \Vol*[ \BV*{\xi}{\left( \MetricV[\xi][\eta]+t^{1/2\kappa} \right)\wedge \delta_\Compact}]
        \approx \Vol*[ \BV*{\eta}{\left( \MetricV[\xi][\eta]+t^{1/2\kappa} \right)\wedge \delta_\Compact}].
    \end{equation}
    Indeed, if \(\MetricV[\xi][\eta]+t^{1/2\kappa}\geq \delta_\Compact/2\),
    then Theorem \ref{Thm::Scaling::MainScalingThm}\ref{Item::Scaling::MainScalingThm::VolBoundedBelowOnCpt}
    shows that both sides of \eqref{Eqn::Proof::Completion::ReduceToV::VolsAreEquiv::Tmp3}
    are \(\approx 1\). Otherwise, we have \(\MetricV[\xi][\eta]+t^{1/2\kappa}< \delta_\Compact/2\leq \delta_1/2\),
    \(\BV*{\xi}{\left( \MetricV[\xi][\eta]+t^{1/2\kappa} \right)\wedge \delta_\Compact}\subseteq \BV*{\eta}{ 2\left( \MetricV[\xi][\eta]+t^{1/2\kappa} \right)}\),
    and therefore
    \begin{equation*}
    \begin{split}
         &\Vol*[ \BV*{\xi}{\left( \MetricV[\xi][\eta]+t^{1/2\kappa} \right)\wedge \delta_\Compact}]
         \leq \Vol*[ \BV*{\eta}{2 \left( \MetricV[\xi][\eta]+t^{1/2\kappa} \right)}]
         \\&\lesssim \Vol*[ \BV*{\eta}{ \left( \MetricV[\xi][\eta]+t^{1/2\kappa} \right)}]
         =\Vol*[ \BV*{\eta}{\left( \MetricV[\xi][\eta]+t^{1/2\kappa} \right)\wedge \delta_\Compact}],
    \end{split}
    \end{equation*}
    where the \(\lesssim\) inequality follows from Theorem \ref{Thm::Scaling::MainScalingThm}\ref{Item::Scaling::MainScalingThm::Doubling}.
    This establishes the \(\lesssim\) half of \eqref{Eqn::Proof::Completion::ReduceToV::VolsAreEquiv::Tmp3};
    the \(\gtrsim\) follows by symmetry in \(\xi\) and \(\eta\).
    
    From \eqref{Eqn::Proof::Completion::ReduceToV::VolsAreEquiv::Tmp3}, we obtain
    \begin{equation}\label{Eqn::Proof::Completion::ReduceToV::VolsAreEquiv::Tmp5}
    \begin{split}
            &\Vol*[ \BV*{\xi}{\left( \MetricV[\xi][\eta]+t^{1/2\kappa} \right)\wedge \delta_\Compact}]
            \approx 
            \\&\Vol*[ \BV*{\xi}{\left( \MetricV[\xi][\eta]+t^{1/2\kappa} \right)\wedge \delta_\Compact}]^{1/2}
            \Vol*[ \BV*{\eta}{\left( \MetricV[\xi][\eta]+t^{1/2\kappa} \right)\wedge \delta_\Compact}]^{1/2}.
    \end{split}    
\end{equation}
    Combining \eqref{Eqn::Proof::Completion::ReduceToV::VolsAreEquiv::Tmp1},
    \eqref{Eqn::Proof::Completion::ReduceToV::VolsAreEquiv::Tmp2},
    and \eqref{Eqn::Proof::Completion::ReduceToV::VolsAreEquiv::Tmp5}
    establishes \eqref{Eqn::Proof::Completion::ReduceToV::VolsAreEquiv} and completes the proof.
\end{proof}

The remainder of this section is devoted to the proof of Proposition \ref{Prop::Proof::Completion::ReduceToV}.
Fix a compact set \(\Compact\Subset U\) for which we want to prove Proposition \ref{Prop::Proof::Completion::ReduceToV}, and fix \(\psi\in \CinftycptSpace[U]\) with \(0\leq \psi\leq 1\) and \(\psi=1\) on a neighborhood of \(\Compact\).
In our application of \cite[Theorem \ref*{Heat::Thm::Results::MainThm}]{StreetHypoellipticityAndHigherOrderGaussianBounds},
we make the following choices.
For each \(\alpha\) and \(\beta\), we apply 
\cite[Theorem \ref*{Heat::Thm::Results::MainThm}]{StreetHypoellipticityAndHigherOrderGaussianBounds}
with \(p_0=2\), \(\NSubsetx=\NSubsety=\Compact\), 
\(\mu=\Vol\),
\(\omega_0=0\), \(X=\psi V^\alpha\), \(Y=\psi V^\beta\),
\(S_X(\delta)=\epsilon_\alpha \delta^{|\alpha|}\), and \(S_Y(\delta)=\epsilon_\beta \delta^{|\beta|}\),
where \(\epsilon_\alpha,\epsilon_\beta>0\) are small numbers to be chosen later, and
we take \(\delta_0\in (0,1]\) in that theorem to be another small number to be chosen later.
In \cite[Theorem \ref*{Heat::Thm::Results::MainThm}]{StreetHypoellipticityAndHigherOrderGaussianBounds}
the constant \(c>0\) is an ``admissible constant''
(see \cite[Definition \ref*{Heat::Defn::Results::AdmissibleConsts}]{StreetHypoellipticityAndHigherOrderGaussianBounds}).
We will show that such admissible constants do not depend on \(\alpha\), \(\beta\),
which is how we show \(c_\Compact\) in Proposition \ref{Prop::Proof::Completion::ReduceToV} does not depend on
\(\alpha\) or \(\beta\); see \cite[Remark \ref*{Heat::Rmk::Results::cIsAdmissible}]{StreetHypoellipticityAndHigherOrderGaussianBounds}
for a further discussion of this point.

Our first issue is that \cite{StreetHypoellipticityAndHigherOrderGaussianBounds}
used a globally defined metric, whereas \(\MetricV\) is only defined on \(\ManifoldNOne\).
Also, we need to choose \(\delta_\Compact\), which is called \(\delta_0\)
in \cite{StreetHypoellipticityAndHigherOrderGaussianBounds}. We continue to use the notation \(\delta_0\) and choose \(\delta_\Compact\coloneqq \delta_0\) at the end of the proof.
The next lemma both exhibits an appropriate global metric and chooses \(\delta_0\).

\begin{lemma}\label{Lemma::Proof::Completion::ExistsMetricAndDefineDelta0}
    There is a metric \(\Metric\) on \(\ManifoldN\) and \(\delta_0\in (0,\delta_1]\) such that the following hold:
    \begin{enumerate}[(i)]
        \item\label{Item::Proof::Completion::ExistsMetricAndDefineDelta0::SameTopology} The topology 
            generated by \(\Metric\) agrees with the topology on \(\ManifoldN\) as a manifold.
        \item\label{Item::Proof::Completion::ExistsMetricAndDefineDelta0::CompactZeroCompact} 
            Set \begin{equation*}
                \CompactZero:=\overline{\bigcup_{\xi\in \Compact} \BV{\xi}{\delta_0}}.
                \end{equation*}
            Then, \(\CompactZero\Subset U\) is a compact subset of \(U\) and
            \(\forall \xi\in \CompactZero\), \(\BV{\xi}{\delta_0}\subseteq U\).
        \item\label{Item::Proof::Completion::ExistsMetricAndDefineDelta0::EqualMetrics} 
            \(\forall \xi,\eta\in \Compact\), \(\MetricV[\xi][\eta]\wedge \delta_0=\Metric[\xi][\eta]\wedge \delta_0\).
        \item\label{Item::Proof::Completion::ExistsMetricAndDefineDelta0::EqualBalls} 
                Let \(\BMetric{\xi}{\delta}:=\left\{ \eta\in \ManifoldN :\Metric[\xi][\eta]<\delta \right\}\).
                Then,
            \(\forall \xi\in \CompactZero\), \(\forall \delta\in (0,\delta_0]\),
            \begin{equation*}
                \BV{\xi}{\delta}=\BMetric{\xi}{\delta}.
            \end{equation*}
    \end{enumerate}
\end{lemma}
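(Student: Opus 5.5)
The plan is to build \(\Metric\) by gluing \(\MetricV\wedge\epsilon\), which only makes sense on \(\ManifoldNOne\), to an arbitrary background metric on \(\ManifoldN\) that induces the manifold topology. Call this background metric \(d_0\); it exists because \(\ManifoldN\) is metrizable. First I fix \(\delta_0\). By Theorem \ref{Thm::Scaling::MainScalingThm}\ref{Item::Scaling::MainScalingThm::AreMetrics}, \(\MetricV\) induces the manifold topology on \(\ManifoldNOne\). Since \(\Compact\) is compact, the quantity \(\MetricV\)-\(\mathrm{dist}(\Compact,\ManifoldNOne\setminus U)\) is positive, or \(+\infty\) across components. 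Call it \(4\epsilon_0\). I then choose \(\delta_0\le\min(\delta_1,\epsilon_0)\).

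With this choice, \(\CompactZero\) lies in the closed \(\MetricV\)-\(\delta_0\)-neighborhood of \(\Compact\), and \(\BV{\xi}{\delta_0}\subseteq U\) for \(\xi\in\CompactZero\) by the triangle inequality. Compactness of \(\CompactZero\) follows from Theorem \ref{Thm::Scaling::MainScalingThm}\ref{Item::Scaling::MainScalingThm::BallInsideOmega::Union}, which applies because \(\Compact\subseteq\CompactOne\). To make \(\CompactZero\Subset U\) strict, I shrink \(\delta_0\) so that \(3\delta_0<\epsilon_0\). This gives \ref{Item::Proof::Completion::ExistsMetricAndDefineDelta0::CompactZeroCompact}.

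Next, define \(U_1:=\{\eta: \MetricV\text{-dist}(\eta,\Compact)<2\delta_0\}\), which satisfies \(\overline{U_1}\subseteq U\). Define
\[
\Metric[\xi][\eta]:=\min\Bigl\{ \MetricV[\xi][\eta]\wedge 2\delta_0,\ \inf_{\zeta,\zeta'}\bigl(g(\xi,\zeta)+2\delta_0+g(\zeta',\eta)\bigr)\Bigr\}
\]
This is essentially a quotient or chain metric: I take the infimum over finite chains whose steps are either \(\MetricV\wedge 2\delta_0\)-steps between points of \(U\) (with value \(+\infty\) otherwise) or \(d_0\)-steps, the latter capped below by \(2\delta_0\) whenever they touch the complement of \(U_1\).

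A cleaner alternative, which I would actually use, is
\[
\Metric[\xi][\eta]:=\inf\Bigl\{\sum_i \ell(x_i,x_{i+1})\Bigr\},
\]
where \(\ell(x,y)=\MetricV[x][y]\wedge 2\delta_0\) if \(x,y\in U\), and \(\ell(x,y)=2\delta_0+d_0(x,y)\) otherwise. The infimum runs over chains from \(\xi\) to \(\eta\). This defines a pseudometric bounded above by \(2\delta_0+d_0\). It is a genuine metric inducing the manifold topology because locally it is comparable to \(\MetricV\wedge 2\delta_0\) inside \(U\), and to \(d_0\) near points outside \(U\). Near \(\partial U\) both descriptions give the manifold topology. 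This is \ref{Item::Proof::Completion::ExistsMetricAndDefineDelta0::SameTopology}.

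For \ref{Item::Proof::Completion::ExistsMetricAndDefineDelta0::EqualMetrics} and \ref{Item::Proof::Completion::ExistsMetricAndDefineDelta0::EqualBalls}, note first that \(\Metric\le \MetricV\wedge 2\delta_0\) on \(U\). For the reverse inequality, take any chain from \(\xi\in\CompactZero\) whose total length is \(<\delta_0\le\delta\). Such a chain uses no \(d_0\)-steps, since each of those costs at least \(2\delta_0\). Moreover, each \(\MetricV\)-step is \(<\delta_0<2\delta_0\), so the caps are inactive and the triangle inequality for \(\MetricV\) gives \(\MetricV[\xi][\eta]\le\) the chain length. Hence \(\Metric[\xi][\eta]<\delta\le\delta_0\) if and only if \(\MetricV[\xi][\eta]<\delta\), which gives the ball equality. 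The same argument with \(\delta=\delta_0\) gives \(\MetricV\wedge\delta_0=\Metric\wedge\delta_0\). All chain points remain in \(U\) because they are within \(\delta_0\) of \(\xi\), using \ref{Item::Proof::Completion::ExistsMetricAndDefineDelta0::CompactZeroCompact}.

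The main obstacle I expect is \ref{Item::Proof::Completion::ExistsMetricAndDefineDelta0::SameTopology} at points of \(\partial U\), and separating points outside \(U\). Separation follows because \(\Metric\ge d_0\wedge\) (a positive constant) away from \(U\), as every chain leaving \(U\) pays at least \(2\delta_0\). Continuity near \(\partial U\) follows since \(\MetricV\) is continuous for the manifold topology on \(\ManifoldNOne\supseteq\overline U\).
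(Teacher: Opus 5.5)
Your choice of \(\delta_0\), your proof of (ii), and your chain argument for (iii)--(iv) are fine. The gap is (i), which fails for the metric you say you would actually use. Since \(\ell(x,y)=2\delta_0+d_0(x,y)\) whenever \(x\notin U\) or \(y\notin U\), any chain from a point \(p\in\mathfrak{N}\setminus U\) to some \(\eta\neq p\) begins with a step \((p,x_1)\) of cost at least \(2\delta_0\). Hence \(\rho(p,\eta)\geq 2\delta_0\) for all \(\eta\neq p\). Every point of \(\mathfrak{N}\setminus U\), including every point of \(\partial U\), is therefore \(\rho\)-isolated, whereas no point of \(\mathfrak{N}\) is isolated in the manifold topology. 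So your assertions that \(\rho\) is ``comparable to \(d_0\) near points outside \(U\)'' and that ``near \(\partial U\) both descriptions give the manifold topology'' are false. Your separation remark (every chain leaving \(U\) pays \(2\delta_0\)) is exactly the failure of (i). Your first formula has the same defect, since it penalizes steps touching the complement of \(U_1\). Moving the penalty to mixed steps only would still put \(U\) at \(\rho\)-distance \(\geq 2\delta_0\) from \(\partial U\).

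A penalty is the wrong mechanism. What works is to penalize nothing and instead restrict where each kind of step is allowed, with an overlap: allow \(\rho_V\wedge 2\delta_0\)-steps between points of \(U\) and \(d_0\)-steps between points of \(\mathfrak{N}\setminus\overline{U_1}\), allow no other steps, and set \(\rho:=(\text{chain infimum})\wedge 1\). Protection of \(\CompactZero\) then comes from geometry: a chain starting in \(\CompactZero\) must travel \(\rho_V\)-distance at least \(\delta_0\) before a \(d_0\)-step becomes available, so your argument for (iii)--(iv) still goes through. However, (i) on the overlap \(U\setminus\overline{U_1}\) then needs a quantitative local lower bound, not merely that both metrics are compatible with the topology. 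The chain infimum of the minimum of two compatible metrics can even be degenerate: on \([0,1]\), alternate \(|x-y|\) and \(|f(x)-f(y)|\) with \(f\) a strictly increasing singular function. Taking \(d_0\) to be a Riemannian distance fixes this, since \(d_0\lesssim\rho_V\) locally near \(\overline{U}\), because the \(V_j\) are bounded there.

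The paper avoids the gluing altogether. It extends \(V\) to global H\"ormander fields \(Z=(\chi V_0,\ldots,\chi V_r,(1-\chi)Y_1,\ldots,(1-\chi)Y_m)\) for which every boundary point is \(Z\)-non-characteristic. Then (i) for \(\rho=\rho_Z\wedge 1\) is exactly the cited Carnot--Carath\'eodory topology theorem. The equality \(B_Z(\xi,\delta)=B_V(\xi,\delta)\) for \(\xi\in\CompactZero\) and \(\delta\leq\delta_0\) follows from a continuity argument along admissible paths, which stay in the region where \(\chi=1\).
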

\begin{proof}
    Fix \(\Omegah\) open with \(\Omega\Subset \Omegah\Subset \Omega_1\).
    Let \(Y_1,\ldots, Y_m\)
    be globally defined smooth vector fields on \(\ManifoldN\) which span the tangent space
    at every point.  Let \(\chi\in \CinftycptSpace[\Omega_1]\) equal \(1\)
    on a neighborhood of \(\overline{\Omegah}\).
    Let \(Z_0,\ldots, Z_q\) be the globally defined smooth vector fields on \(\ManifoldN\)
    given by \(\chi V_0,\ldots, \chi V_r,(1-\chi) Y_1,\ldots,(1-\chi)Y_m\).
    Clearly, 
    \begin{equation}\label{Eqn::Proof::Completion::ExistsMetricAndDefineDelta0::DefineZ}
        Z_j\big|_{\Omegah}=
        \begin{cases}
            V_j\big|_{\Omegah},&0\leq j\leq r,\\
            0,&r+1\leq j\leq q.
        \end{cases}
    \end{equation}
    We claim \(Z_0,\ldots, Z_q\) are H\"ormander vector fields on \(\ManifoldN\),
    and every point of \(\BoundaryN\) is \(Z\)-non-characteristic. 
    Indeed, near a point where \(\chi\ne 0\), the \(C^\infty\)-module generated by \(Z_0,\ldots, Z_q\) contains the \(C^\infty\)-module generated by \(V_0,\ldots, V_r\), so \(Z_0,\ldots, Z_q\) are H\"ormander vector fields near that point; near a point where \(\chi\ne 1\), \(Z_0,\ldots, Z_q\) span the tangent space.  To see that every point of \(\BoundaryN\) is \(Z\)-non-characteristic, suppose first that \(\xi_0\) is a boundary point with \(\chi(\xi_0)\ne 0\). Then \(\xi_0\in \Omega_1\cap \BoundaryN\subseteq \ManifoldNOne\cap \BoundaryN\), and therefore \(V_0(\xi_0)\not \in \TangentSpaceBoundaryN{\xi_0}\).
    On the other hand, if \(\chi(\xi_0)=0\), then we can use that \(Y_j(\xi_0)\not \in \TangentSpaceBoundaryN{\xi_0}\) for some \(j\) (since \(Y_1(\xi_0),\ldots, Y_m(\xi_0)\) span the tangent space).

    Let \(\MetricZ\) be the Carnot--Carath\'eodory extended metric associated with \(Z_0,\ldots, Z_q\)
    and set \(\Metric[\xi][\eta]:=\MetricZ[\xi][\eta]\wedge 1\).
    Since every point of \(\BoundaryN\) is \(Z\)-non-characteristic,
    \cite[Theorem \ref*{CC::Thm::Metrics::Results::GivesUsualTopology}]{StreetCarnotCaratheodoryBallsOnManifoldsWithBoundary}
    shows \ref{Item::Proof::Completion::ExistsMetricAndDefineDelta0::SameTopology} holds.

    By \ref{Item::Proof::Completion::ExistsMetricAndDefineDelta0::SameTopology}, \(\Compact\)
    is compact with respect to \(\Metric\). Let \(\delta_0\in (0,\delta_1]\)
    be less than \(1/2\) of the \(\Metric\) distance between \(\Compact\) and the closed set \(\ManifoldN\setminus U\).
    This shows
    \begin{equation}\label{Eqn::Proof::Completion::ExistsMetricAndDefineDelta0::InitialDefineCompactZero}
        \CompactZero:=\overline{\bigcup_{\xi\in \Compact} \BMetric{\xi}{\delta_0}}\subseteq U,
    \end{equation}
    and \(\forall \xi\in \CompactZero\), \(\BMetric{\xi}{\delta_0}\subseteq U\).
    Since \(\delta_0\leq 1\), for \(\delta\in (0,\delta_0]\), we have
    \(\BMetric{\xi}{\delta}=\BZ{\xi}{\delta}\).

    For \(\xi\in \CompactZero\) and \(\delta\in (0,\delta_0]\), \(\BZ{\xi}{\delta}=\BMetric{\xi}{\delta}\subseteq U\subseteq \Omegah\). 
    
    We claim \(\BZ{\xi}{\delta}=\BV{\xi}{\delta}\) (for \(\xi\in \CompactZero\) and \(\delta\in (0,\delta_0]\)). 
    Indeed, if \(\eta\in \BZ{\xi}{\delta}\), then there exists a path
    \(\gamma:[0,1]\rightarrow \BZ{\xi}{\delta}\), absolutely continuous, with
    \(\gamma(0)=\xi\), \(\gamma(1)=\eta\), and \(\gamma'(t)=\sum e_j(t) \delta Z_j(\gamma(t))\),
    with \(\sum |e_j|^2<1\), almost everywhere.  However, since \(\gamma(t)\in \BZ{\xi}{\delta}\subseteq U\subseteq \Omega\), we have \(\chi(\gamma(t))=1\), \(\forall t\), and therefore
    \(\gamma'(t)=\sum e_j(t) \delta V_j(\gamma(t))\). It follows that \(\eta\in \BV{\xi}{\delta}\).
    Conversely, suppose for contradiction \(\eta\in \BV{\xi}{\delta}\setminus \BZ{\xi}{\delta}\).
    Then there exists a path
    \(\gamma:[0,1]\rightarrow \BV{\xi}{\delta}\), absolutely continuous, with
    \(\gamma(0)=\xi\), \(\gamma(1)=\eta\), and \(\gamma'(t)=\sum e_j(t) \delta V_j(\gamma(t))\),
    with \(\sum |e_j|^2<1\), almost everywhere.  Let \(t_0\coloneqq \inf\left\{ t\in [0,1] : \gamma(t)\not\in \BZ{\xi}{\delta} \right\} \); by assumption, \(t_0\leq  1\).  For \(t\in [0,t_0)\),
    \(\gamma(t)\in \BZ{\xi}{\delta}\subseteq U\subseteq \Omega\), and therefore, \(\chi(\gamma(t))=1\)
    and \(\gamma'(t)=\sum_{j=0}^r e_j(t) \delta Z_j(\gamma(t))\).  It follows that \(\gamma(t_0)\in \BZ{\xi}{\delta}\).  If \(t_0=1\), this gives \(\eta=\gamma(1)\in \BZ{\xi}{\delta}\), a contradiction. 
    If \(t_0<1\), then since \(\BZ{\xi}{\delta}\) is open and \(\gamma\) is continuous,
    we have \(\gamma(t)\in \BZ{\xi}{\delta}\) for \(t\in (t_0-\epsilon,t_0+\epsilon)\)
    for some \(\epsilon>0\).  This contradicts the definition of \(t_0\) as \(\inf\left\{ t\in [0,1] : \gamma(t)\not\in \BZ{\xi}{\delta} \right\}\).

    Combining the above shows that 
    \ref{Item::Proof::Completion::ExistsMetricAndDefineDelta0::EqualBalls} holds.
    \ref{Item::Proof::Completion::ExistsMetricAndDefineDelta0::CompactZeroCompact}
    follows from \ref{Item::Proof::Completion::ExistsMetricAndDefineDelta0::EqualBalls}
    and \eqref{Eqn::Proof::Completion::ExistsMetricAndDefineDelta0::InitialDefineCompactZero}.

    Finally, \ref{Item::Proof::Completion::ExistsMetricAndDefineDelta0::EqualMetrics}
    follows from \ref{Item::Proof::Completion::ExistsMetricAndDefineDelta0::EqualBalls}.
\end{proof}

In light of Lemma \ref{Lemma::Proof::Completion::ExistsMetricAndDefineDelta0}, the statement of Proposition \ref{Prop::Proof::Completion::ReduceToV} is unchanged if \(\MetricV\) and \(\BV{\cdot}{\cdot}\) are replaced throughout by \(\Metric\) and \(\BMetric{\cdot}{\cdot}\).
Moreover, in the proof which follows, we only consider \(\BV{\xi}{\delta}\)
for \(\xi\in \CompactZero\) and \(\delta\in (0,\delta_0]\) and we only consider
\(\MetricV[\xi][\eta]\) for \(\xi,\eta\in \Compact\).
Thus, we may verify the assumptions of \cite{StreetHypoellipticityAndHigherOrderGaussianBounds}
using \(\MetricV\) even though it is not a globally defined metric, because it agrees
with the globally defined metric \(\Metric\) from Lemma \ref{Lemma::Proof::Completion::ExistsMetricAndDefineDelta0} wherever we use it.
Moreover, by Lemma \ref{Lemma::Proof::Completion::ExistsMetricAndDefineDelta0}\ref{Item::Proof::Completion::ExistsMetricAndDefineDelta0::SameTopology},
the topology generated by \(\Metric\) agrees with the usual topology on \(\ManifoldN\),
so we do not need to make the distinction about which topology we are using in the sequel.

We turn to verifying the assumptions of \cite[Theorem \ref*{Heat::Thm::Results::MainThm}]{StreetHypoellipticityAndHigherOrderGaussianBounds}
with the above choices.

\begin{lemma}
    For every \(\alpha\), \(\psi V^{\alpha}:\DomainLinfty\rightarrow \LtSpace[\ManifoldN,\Vol]\) and
    \(\psi V^{\alpha}:\DomainLStarinfty\rightarrow \LtSpace[\ManifoldN,\Vol]\) are continuous.
    In particular, \(X:\DomainLinfty\rightarrow \LtSpace[\ManifoldN,\Vol]\) and \(Y:\DomainLStarinfty\rightarrow \LtSpace[\ManifoldN,\Vol]\)
    are continuous.
\end{lemma}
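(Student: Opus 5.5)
The plan is to factor \(\psi V^{\alpha}\) through the smoothness result already proved: Lemma \ref{Lemma::Proof::Subellip::DomainLinftyIsSmooth} states that the restriction \(f\mapsto f\big|_{U}\) is continuous \(\DomainLinfty\rightarrow \CinftySpace*[U][\C^M]\). Since \(\psi\in \CinftycptSpace[U]\) and \(V^{\alpha}\) is a differential operator with smooth coefficients on \(\ManifoldNOne\supseteq U\), the map \(g\mapsto \psi V^{\alpha} g\) is continuous \(\CinftySpace*[U][\C^M]\rightarrow \CinftycptSpace[U][\C^M]\). The latter space has its support in the fixed compact set \(\supp(\psi)\subseteq U\), so it embeds continuously into \(\LtSpace[\ManifoldN,\Vol][\C^M]\) once we extend by zero.

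The key steps, in order, are as follows.

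\textbf{Step 1.} For \(f\in \DomainLinfty\), define \(\psi V^{\alpha} f\) as \(\psi V^{\alpha}(f\big|_{U})\), extended by zero outside \(\supp(\psi)\). This is consistent with the distributional meaning because \(\psi\) vanishes near \(\ManifoldN\setminus U\).

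\textbf{Step 2.} Estimate
\[
\LtNorm{\psi V^{\alpha} f}[\ManifoldN,\Vol]\lesssim \Vol[\supp(\psi)]^{1/2}\sup_{\xi\in \supp(\psi)}\left|\psi V^{\alpha} f(\xi)\right|\lesssim \sum_{|\beta|\leq |\alpha|}\sup_{\xi\in\supp(\psi)}\left|\partial^{\beta} f(\xi)\right|,
\]
where the derivatives are taken in finitely many coordinate charts covering \(\supp(\psi)\). This right-hand side is a continuous seminorm on \(\CinftySpace*[U][\C^M]\).

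\textbf{Step 3.} Compose with Lemma \ref{Lemma::Proof::Subellip::DomainLinftyIsSmooth}. Since a composition of continuous linear maps between these Fréchet spaces is continuous, the claim for \(\DomainLinfty\) follows.

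\textbf{Step 4.} Repeat Steps 1–3 verbatim for \(\opL^{*}\), using the \(\opL^{*}\) half of Lemma \ref{Lemma::Proof::Subellip::DomainLinftyIsSmooth}, which is available by the symmetry in Remark \ref{Rmk::Result::SummetricInLAndLStar}.

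The "in particular" statement is then immediate, because \(X=\psi V^{\alpha}\) and \(Y=\psi V^{\beta}\) by the choices made above.

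I do not expect a substantive obstacle, since the analytic content is entirely in Lemma \ref{Lemma::Proof::Subellip::DomainLinftyIsSmooth}. The only point needing care is near the boundary. There \(\CinftySpace*[U][\C^M]\) means smooth up to \(\BoundaryN\cap U\), with seminorms given by sup norms of derivatives on compact subsets of \(U\), which may meet the boundary. The compact set \(\supp(\psi)\) may intersect \(\BoundaryN\), and we must check that these seminorms control \(V^{\alpha}f\) there. This holds because \(V_j\) are smooth vector fields up to the boundary and the topology of \(\CinftySpace*[U][\C^M]\) on a manifold with boundary is exactly the one given by such seminorms.
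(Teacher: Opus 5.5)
Your proposal is correct and follows the paper's proof: compose the continuous restriction \(\DomainLinfty\rightarrow \CinftySpace*[U][\C^M]\) (and its \(\opL^{*}\) analog) from Lemma \ref{Lemma::Proof::Subellip::DomainLinftyIsSmooth} with the continuous map \(g\mapsto \psi V^{\alpha}g\) from \(\CinftySpace*[U][\C^M]\) into \(\CinftycptSpace[U][\C^M]\hookrightarrow \LtSpace[\ManifoldN,\Vol][\C^M]\). Your Step 2 seminorm bound and the remark about the boundary just make explicit what the paper leaves implicit.
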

\begin{proof}
    By Lemma \ref{Lemma::Proof::Subellip::DomainLinftyIsSmooth}, \(f\mapsto f\big|_{U}\) is continuous \(\DomainLinfty\rightarrow \CinftySpace*[U][\C^M]\)
    and \(\DomainLStarinfty\rightarrow \CinftySpace*[U][\C^M]\).
    Since \(\psi\in \CinftycptSpace[U]\),
     \(g\mapsto \psi V^{\alpha}g\) is continuous \(\CinftySpace*[U][\C^M]\rightarrow \CinftycptSpace[U][\C^M]\hookrightarrow \LtSpace[\ManifoldN,\Vol][\C^M]\).
    The result follows by composing these two maps.
\end{proof}





\begin{lemma}\label{Lemma::Proof::Completion::TestFunctionsAreInVanishingSet}
    \(\TestFunctionsZeroN[\C^M]\subseteq \DomainLinfty\cap\DomainLStarinfty\).
    Moreover, let \(O\subseteq \ManifoldN\) be an open set, and suppose \(f\in \TestFunctionsZeroN[\C^M]\)
    satisfies \(f\big|_O=0\). Then \(\opL^j f\big|_O=0=\left( \opL^{*} \right)^j f\big|_O\), \(\forall j\in \Zgeq\).
\end{lemma}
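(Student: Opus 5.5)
The plan is to use Assumption \ref{Assumption::Result::SmoothWithCompactSupportInDomain} together with the fact that \(\opL_0\) is a differential operator with smooth coefficients. Two properties of \(\opL_0\) drive everything: it maps \(\TestFunctionsZeroN[\C^M]\) into itself, and it does not enlarge supports.

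The key step is the following claim: \(\opL_0 f\in \TestFunctionsZeroN[\C^M]\) for every \(f\in \TestFunctionsZeroN[\C^M]\). To see this, note three things.
\begin{enumerate}[(i)]
    \item Since \(\opL_0\) has smooth coefficients, \(\opL_0 f\) is smooth on \(\ManifoldN\).
    \item Since \(\opL_0\) is a differential operator, \(\supp(\opL_0 f)\subseteq \supp(f)\), so \(\opL_0 f\) has compact support.
    \item In local coordinates near a boundary point, every derivative of \(f\) vanishes to infinite order at \(\BoundaryN\). Each derivative of \(\opL_0 f\) is a finite sum of smooth coefficients times derivatives of \(f\), so it also vanishes at \(\BoundaryN\). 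Hence \(\opL_0 f\) vanishes to infinite order there.
\end{enumerate}
The same argument applies to the formal adjoint \(\opL_0^{*}\), which is also a differential operator with smooth coefficients.

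With the claim in hand, I proceed by induction on \(j\). The inductive statement is: for every \(f\in\TestFunctionsZeroN[\C^M]\), we have \(f\in \Domain*{\opL^j}\) and \(\opL^j f=\opL_0^j f\in \TestFunctionsZeroN[\C^M]\).
\begin{enumerate}[(i)]
    \item The case \(j=0\) is trivial.
    \item Suppose the statement holds for \(j\). Then \(\opL^j f=\opL_0^j f\in\TestFunctionsZeroN[\C^M]\subseteq\DomainL\), by Assumption \ref{Assumption::Result::SmoothWithCompactSupportInDomain}.
    \item Therefore \(f\in\Domain*{\opL^{j+1}}\), and \(\opL^{j+1}f=\opL\opL_0^j f=\opL_0^{j+1}f\).
    \item By the claim, \(\opL_0^{j+1}f\in\TestFunctionsZeroN[\C^M]\), which closes the induction.
\end{enumerate}
This gives \(\TestFunctionsZeroN[\C^M]\subseteq \DomainLinfty\). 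For \(\opL^{*}\), I use the remark following Assumption \ref{Assumption::Result::SmoothWithCompactSupportInDomain}: \(\TestFunctionsZeroN[\C^M]\subseteq\DomainLStar\) and \(\opL^{*}f=\opL_0^{*}f\) on this space. Running the same induction with \(\opL_0^{*}\) gives \(\TestFunctionsZeroN[\C^M]\subseteq \DomainLStarinfty\).

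The locality statement then follows directly. If \(f|_O=0\), then \(\supp(f)\cap O=\EmptySet\). Since \(\opL^j f=\opL_0^j f\) and differential operators do not enlarge supports, \(\supp(\opL_0^j f)\subseteq\supp(f)\), so \(\opL^j f|_O=0\). The same reasoning with \(\opL_0^{*}\) gives \(\left( \opL^{*} \right)^j f\big|_O=0\).

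The only point requiring any care is the claim that \(\opL_0\) preserves infinite-order vanishing at the boundary. This is routine once one computes in local boundary coordinates, so I expect no real obstacle.
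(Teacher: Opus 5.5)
Your proposal is correct and follows essentially the same induction as the paper. Both arguments use Assumption \ref{Assumption::Result::SmoothWithCompactSupportInDomain}, together with the fact that \(\opL_0\) maps \(\TestFunctionsZeroN[\C^M]\) into itself, to get \(\opL^j f=\opL_0^j f\), and both treat \(\opL^{*}\) symmetrically (you via \(\opL_0^{*}\), the paper via Remark \ref{Rmk::Result::SummetricInLAndLStar}). The only difference is cosmetic: in the inductive step you peel off the last factor of \(\opL\) rather than the first. You also make explicit two points the paper leaves implicit, namely that \(\opL_0\) preserves infinite-order vanishing at \(\BoundaryN\), and that the vanishing on \(O\) follows from the locality of the differential operator \(\opL_0^j\).
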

\begin{proof}
    Let \(\opL_0\) be the partial differential operator with smooth coefficients from   Assumption \ref{Assumption::Result::SmoothWithCompactSupportInDomain}.
    We prove the following claim by induction on \(j\).
    For each \(j\in \Zg\), \(\TestFunctionsZero[\ManifoldN][\C^M]\subseteq \Domain*{\opL^j}\)
    and \(\opL^j f=\opL_0^j f\), \(\forall f\in \TestFunctionsZero[\ManifoldN][\C^M]\).

    The base case, \(j=1\), is Assumption \ref{Assumption::Result::SmoothWithCompactSupportInDomain}.
    Suppose we have the claim for \(j\), and let \(f\in \TestFunctionsZero[\ManifoldN][\C^M]\).
    Then, by the base case, \(f\in \DomainL\) with \(\opL f=\opL_0 f\), and so
    \(\opL f=\opL_0 f \in \TestFunctionsZero[\ManifoldN][\C^M]\subseteq \Domain*{\opL^j}\),
    and so \(f\in \Domain*{\opL^{j+1}}\).
    Moreover, we have
    \(\opL^{j+1}f = \opL \opL^j f =\opL \opL_0^j f = \opL_0^{j+1}f\), where the second equality used the inductive step,
    while the third used the base case. This completes the induction and the proof for \(\opL\).
    Since our assumptions are symmetric in \(\opL\) and \(\opL^{*}\)
    (see Remark \ref{Rmk::Result::SummetricInLAndLStar}), 
    the result for \(\opL^{*}\) follows as well.



\end{proof}

\begin{lemma}
    \cite[Assumption \ref*{Heat::Assumption::Locality}]{StreetHypoellipticityAndHigherOrderGaussianBounds} holds.
    That is, \(\forall \xi_0\in \Compact\), \(\delta\in(0,\delta_0)\), the \(\LtSpace*[\ManifoldN,\Vol][\C^M]\)
    closure of
    \begin{equation*}
        C_{\xi_0,\delta}:=\left\{ \phi\in \DomainLinfty : \opL^j \phi\big|_{\BV*{\xi_0}{\delta}}=0,\forall j\geq 0 \right\}
    \end{equation*}
    contains \(\LtSpace*[\ManifoldN\setminus \overline{\BV*{\xi_0}{\delta}},\Vol][\C^M]\).
    The same holds with \(\opL\) replaced by \(\opL^{*}\), throughout.
\end{lemma}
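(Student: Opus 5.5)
We will show that \(\TestFunctionsZero[\ManifoldN\setminus \overline{\BV*{\xi_0}{\delta}}][\C^M]\) is contained in \(C_{\xi_0,\delta}\) and is dense in \(\LtSpace*[\ManifoldN\setminus \overline{\BV*{\xi_0}{\delta}},\Vol][\C^M]\). Here \(\TestFunctionsZero[\ManifoldN\setminus \overline{\BV*{\xi_0}{\delta}}][\C^M]\) denotes the elements of \(\TestFunctionsZeroN[\C^M]\) whose support is disjoint from \(\overline{\BV*{\xi_0}{\delta}}\). Since \(C_{\xi_0,\delta}\) is then a set containing a dense subset of the target space, its \(L^2\) closure contains that space.

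\textbf{Containment.} Let \(f\) be a test function vanishing to infinite order at \(\BoundaryN\), with support disjoint from \(\overline{\BV*{\xi_0}{\delta}}\).
\begin{itemize}
\item Lemma \ref{Lemma::Proof::Completion::TestFunctionsAreInVanishingSet} gives \(f\in\DomainLinfty\cap\DomainLStarinfty\).
\item Take \(O:=\ManifoldN\setminus\supp(f)\). This set is open, contains \(\BV*{\xi_0}{\delta}\), and \(f\big|_O=0\).
\item The second part of Lemma \ref{Lemma::Proof::Completion::TestFunctionsAreInVanishingSet} then yields \(\opL^j f\big|_O=0\) for all \(j\geq 0\), and in particular on \(\BV*{\xi_0}{\delta}\). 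The case \(j=0\) is trivial.
\end{itemize}
Hence \(f\in C_{\xi_0,\delta}\). The same argument applies verbatim to \(\opL^{*}\), again using Lemma \ref{Lemma::Proof::Completion::TestFunctionsAreInVanishingSet}.

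\textbf{Density.} Set \(A:=\ManifoldN\setminus\overline{\BV*{\xi_0}{\delta}}\), which is an open subset of \(\ManifoldN\). The target space consists of \(L^2\) functions supported in \(A\), that is, essentially \(\LtSpace*[A,\Vol][\C^M]\) extended by zero. Functions in \(\CinftycptSpace\) of the open set \(A\cap\InteriorN\) are dense in \(\LtSpace*[A\cap\InteriorN,\Vol][\C^M]\). Each such function has compact support inside the interior, so it vanishes near \(\BoundaryN\) and lies in \(\TestFunctionsZero[A][\C^M]\). Moreover, \(\BoundaryN\) has measure zero because \(\Vol\) is a smooth density. Therefore \(\LtSpace*[A\cap\InteriorN]=\LtSpace*[A]\), and density follows.

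\textbf{Main subtlety.} The one point needing care is that a function with compact support in the open set \(A\) has support disjoint from \(\overline{\BV*{\xi_0}{\delta}}\). This holds because the support is a compact subset of \(A\), and \(A\) is the complement of that closed ball, so the set \(O\) above genuinely contains \(\BV*{\xi_0}{\delta}\). No quantitative estimates are needed; the lemma is purely a locality statement inherited from Assumption \ref{Assumption::Result::SmoothWithCompactSupportInDomain}, namely that \(\opL\) acts on test functions as the differential operator \(\opL_0\).
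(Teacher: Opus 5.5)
Your proof is correct and is essentially the paper's argument: test functions in \(\TestFunctionsZero[\ManifoldN][\C^M]\) supported away from \(\overline{\BV{\xi_0}{\delta}}\) lie in \(C_{\xi_0,\delta}\) by Lemma \ref{Lemma::Proof::Completion::TestFunctionsAreInVanishingSet}, and they are dense in the target \(L^2\) space. The differences are cosmetic: you take \(O=\ManifoldN\setminus\supp(f)\), which is automatically open, whereas the paper takes \(O=\BV{\xi_0}{\delta}\) and cites Lemma \ref{Lemma::Proof::Completion::ExistsMetricAndDefineDelta0} to know this ball is open; you also spell out the density step, which the paper simply asserts.
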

\begin{proof}
    By Lemma \ref{Lemma::Proof::Completion::ExistsMetricAndDefineDelta0}\ref{Item::Proof::Completion::ExistsMetricAndDefineDelta0::SameTopology},\ref{Item::Proof::Completion::ExistsMetricAndDefineDelta0::EqualBalls},
    \(\BV*{\xi_0}{\delta}\) is an open set.
    Thus, Lemma \ref{Lemma::Proof::Completion::TestFunctionsAreInVanishingSet} applies
    with \(O=\BV*{\xi_0}{\delta}\) to show
    \begin{equation*}
        \TestFunctionsZero*[\ManifoldN\setminus \overline{\BV*{\xi_0}{\delta}}][\C^M]\subseteq C_{\xi_0,\delta}.
    \end{equation*}
    Since \(\TestFunctionsZero*[\ManifoldN\setminus \overline{\BV*{\xi_0}{\delta}}][\C^M]\)
    is dense in \(\LtSpace*[\ManifoldN\setminus \overline{\BV*{\xi_0}{\delta}},\Vol][\C^M]\),
    the result follows for \(\opL\). The same proof applies for \(\opL^{*}\).
\end{proof}

\begin{lemma}
    \cite[Assumption \ref*{Heat::Assumption::LocalPositivity}]{StreetHypoellipticityAndHigherOrderGaussianBounds} holds. In fact, the stronger statement \(\Vol[O]>0\) holds for every nonempty open set \(O\subseteq \ManifoldN\).
\end{lemma}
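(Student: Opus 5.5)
The plan is to prove the stronger statement directly, from the fact that \(\Vol\) is a smooth, strictly positive density on \(\ManifoldN\). The assumption in \cite{StreetHypoellipticityAndHigherOrderGaussianBounds} asks only that certain balls, such as \(\BV{\xi}{\delta}=\BMetric{\xi}{\delta}\) with \(\xi\in\CompactZero\) and \(\delta\in(0,\delta_0]\), have positive measure. By Lemma \ref{Lemma::Proof::Completion::ExistsMetricAndDefineDelta0}\ref{Item::Proof::Completion::ExistsMetricAndDefineDelta0::SameTopology},\ref{Item::Proof::Completion::ExistsMetricAndDefineDelta0::EqualBalls}, these balls are nonempty open subsets of \(\ManifoldN\), since they contain their centers. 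So it suffices to show that every nonempty open set has positive \(\Vol\)-measure.

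Let \(O\subseteq\ManifoldN\) be nonempty and open, and pick \(\xi_0\in O\). Choose a chart \(\phi:O'\to\phi(O')\) with \(\xi_0\in O'\subseteq O\), where \(\phi(O')\) is an open subset of either \(\Rn\) or \(\Rngeq\). In this chart, \(\Vol\) is given by integration against a smooth, strictly positive function \(h\), so
\[
\Vol[O]\geq \Vol[O']=\int_{\phi(O')} h(x)\,dx.
\]

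It then remains to check that \(\phi(O')\) has positive Lebesgue measure. If \(\phi(O')\) is open in \(\Rn\), this is immediate. If it is relatively open in \(\Rngeq\), it contains a set of the form \(\{x: |x-x_0|<r,\ x_n\geq 0\}\) with \(r>0\), which is half of a ball and so has positive measure. Since \(h>0\) on this set, the integral is strictly positive, and hence \(\Vol[O]>0\).

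There is no real obstacle here. The only point that needs attention is the boundary case, where an open set in \(\Rngeq\) is not open in \(\Rn\); the half-ball observation above handles it.
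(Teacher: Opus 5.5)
Your proposal is correct and uses the same argument as the paper, whose proof simply observes that the volume is a smooth, strictly positive density; your chart computation spells this out. One small imprecision: when \(x_0\) lies in the interior of the half-space, the set \(\{|x-x_0|<r,\ x_n\geq 0\}\) contains a half-ball rather than being one, but that is all your argument needs.
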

\begin{proof}
    This follows from the fact that \(\Vol\) is a strictly positive density.
\end{proof}

\begin{lemma}
    \cite[Assumption \ref*{Heat::Assumption::LocalDoubling}]{StreetHypoellipticityAndHigherOrderGaussianBounds}
    holds. In fact, the following stronger result holds.
    \(\exists D_1\geq 1\), \(\forall \xi\in \CompactZero\), \(\forall \delta\in (0,\delta_0)\),
    \(\Vol*[\BV{\xi}{2\delta}]\leq D_1 \Vol*[\BV{\xi}{\delta}]\).
\end{lemma}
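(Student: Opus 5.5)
The doubling bound should follow from Theorem \ref{Thm::Scaling::MainScalingThm}\ref{Item::Scaling::MainScalingThm::Doubling}. That result is stated only for centers in \(\CompactOne\) and radii in \((0,\delta_1]\), with \(\Vol[\BV{\xi}{2\delta}]\lesssim \Vol[\BV{\xi}{\delta}]\). So the plan is to check that our centers and radii fall into this range, and to handle the case where \(2\delta\) exceeds \(\delta_1\).

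First, Lemma \ref{Lemma::Proof::Completion::ExistsMetricAndDefineDelta0}\ref{Item::Proof::Completion::ExistsMetricAndDefineDelta0::CompactZeroCompact} gives \(\CompactZero\subseteq U\subseteq \overline{U}=\CompactOne\), so every \(\xi\in \CompactZero\) is an admissible center. Also \(\delta_0\leq \delta_1\), so every \(\delta\in(0,\delta_0)\) lies in \((0,\delta_1]\). When \(2\delta\leq \delta_1\) we apply Theorem \ref{Thm::Scaling::MainScalingThm}\ref{Item::Scaling::MainScalingThm::Doubling} at radius \(\delta\) directly. Its implicit constant is uniform in \(\xi\in\CompactOne\) and \(\delta\in(0,\delta_1]\), so we get the bound with a single \(D_1\).

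Now suppose \(\delta_1/2<\delta<\delta_0\leq\delta_1\), so \(2\delta\) may exceed \(\delta_1\). The upper bound follows from monotonicity in the radius, which is clear from \eqref{Eqn::Intro::DefineCCBall}. By Theorem \ref{Thm::Scaling::MainScalingThm}\ref{Item::Scaling::MainScalingThm::BallInsideOmega::Union} and \ref{Item::Scaling::MainScalingThm::WedgeInsideVsOutsideVol}, \(\Vol[\BV{\xi}{2\delta}]\lesssim 1\). One way to see this is \(\BV{\xi}{2\delta}\subseteq \BV{\xi}{2\delta_1}\), followed by one application of doubling at scale \(\delta_1\) to bound this by \(\lesssim\Vol[\BV{\xi}{\delta_1}]\approx 1\). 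Another is to note that all balls lie in the relatively compact set \(\Omega\), where \(\Vol(\Omega)<\infty\). For the lower bound, Theorem \ref{Thm::Scaling::MainScalingThm}\ref{Item::Scaling::MainScalingThm::VolBoundedBelowOnCpt} with \(\delta_1/2\) in place of \(\delta_0\) gives \(\Vol[\BV{\xi}{\delta}]\geq \Vol[\BV{\xi}{\delta_1/2}]\gtrsim 1\), uniformly in \(\xi\in\CompactOne\). Combining the two cases yields the constant \(D_1\).

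Finally, Lemma \ref{Lemma::Proof::Completion::ExistsMetricAndDefineDelta0}\ref{Item::Proof::Completion::ExistsMetricAndDefineDelta0::EqualBalls} identifies \(\BV{\xi}{\delta}\) with \(\BMetric{\xi}{\delta}\) for these \(\xi,\delta\). This translates the estimate into the local doubling assumption of \cite{StreetHypoellipticityAndHigherOrderGaussianBounds}, which concerns the global metric \(\Metric\), centers in a neighborhood of \(\Compact\), and radii below \(\delta_0\).

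The main point needing care is this last translation when \(2\delta\) exceeds the range where the balls agree. There I would use \(\BMetric{\xi}{2\delta}\subseteq\ManifoldN\) together with \(\Metric\leq 1\). If that assumption only requires \(\delta\leq\delta_0/2\), the case is vacuous. Otherwise I would bound \(\Vol[\BMetric{\xi}{2\delta}]\) by \(\Vol[\BMetric{\xi}{\delta_0}]\cdot C\) using compactness. I expect the assumption as stated requires only radii where \(2\delta\leq\delta_0\) or uses the \(V\)-balls directly, so the argument above suffices.
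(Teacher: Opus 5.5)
Your argument is correct and is essentially the paper's: since \(\CompactZero\subseteq U\subseteq \CompactOne\) and \(\delta_0\leq \delta_1\), Theorem \ref{Thm::Scaling::MainScalingThm}\ref{Item::Scaling::MainScalingThm::Doubling} gives the bound directly. The cited assumption is weaker, because it only concerns centers \(\xi\in \BV{\xi_0}{\delta_0}\subseteq \CompactZero\) with \(\xi_0\in\Compact\) and radii \(\delta<(\delta_0-\Metric[\xi_0][\xi])/2\); hence \(2\delta<\delta_0\), and the problematic case you anticipated is indeed vacuous.

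Part \ref{Item::Scaling::MainScalingThm::Doubling} already holds for every \(\delta\in(0,\delta_1]\), even when \(2\delta>\delta_1\), and you use exactly this at \(\delta=\delta_1\). So your second case is redundant. Its alternative justification via \(\Vol[\Omega]<\infty\) is also unsupported, since part \ref{Item::Scaling::MainScalingThm::BallInsideOmega::Union} only places balls of radius at most \(\delta_1\) inside \(\Omega\).
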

\begin{proof}
    Since \(\CompactZero\subseteq U\subseteq \CompactOne\) and \(\delta\leq \delta_1\),
    the above stronger statement follows from Theorem \ref{Thm::Scaling::MainScalingThm}\ref{Item::Scaling::MainScalingThm::Doubling}.

    \cite[Assumption \ref*{Heat::Assumption::LocalDoubling}]{StreetHypoellipticityAndHigherOrderGaussianBounds}
    is the following claim: \(\forall \xi_0\in \Compact\), \(\forall \xi\in \BV{\xi_0}{\delta_0}\),
    \(\forall \delta\in (0,(\delta_0-\Metric[\xi_0][\xi])/2)\), we have \(\Vol*[\BV{\xi}{2\delta}]\leq D_1 \Vol*[\BV{\xi}{\delta}]\);
    here \(\Metric\) is as in Lemma \ref{Lemma::Proof::Completion::ExistsMetricAndDefineDelta0}.
    Note that for such \(\xi\) and \(\delta\) we have \(\xi\in \CompactZero\) and \(\delta\in (0,\delta_0)\)
    (see Lemma \ref{Lemma::Proof::Completion::ExistsMetricAndDefineDelta0}\ref{Item::Proof::Completion::ExistsMetricAndDefineDelta0::CompactZeroCompact}
    for the formula for \(\CompactZero\)).
    Thus, \cite[Assumption \ref*{Heat::Assumption::LocalDoubling}]{StreetHypoellipticityAndHigherOrderGaussianBounds}
    is a weaker claim than what we have proved in this lemma.
\end{proof}

We require the following definition,  which is a slight modification of \cite[Definition \ref*{Heat::Defn::Assump::Hypo::HeatFunction}]{StreetHypoellipticityAndHigherOrderGaussianBounds},
see Proposition \ref{Prop::Proof::Completion::ChangeHeatDefn}.

\begin{definition}\label{Defn::Proof::Completion::HeatFunction}
    For \(\xi\in \CompactZero\), \(\delta \in (0,\delta_0]\), and an open interval \(I\subseteq \R\), we say that \(u(t):I\rightarrow \DomainLinfty\) is a \(\left(-\opL, \xi,\delta \right)\)-heat function if
    \begin{enumerate}[(i)]
        \item\label{Item::Proof::Completion::HeatFunction::L2loc} \(u\in \LtlocSpace[I][\DomainL]\).
        
        \item\label{Item::Proof::Completion::HeatFunction::SmootInt}
            \(\exists t_0\in I\) with \(u\big|_{[t_0,\infty)\cap I}\in \CinftySpace*[\lbrack t_0,\infty)\cap I][\DomainLinfty]\)
             and \(u\big|_{(-\infty,t_0)\cap I}\in \CinftySpace*[(-\infty,t_0)\cap I][\DomainLinfty]\).
        
        \item\label{Item::Proof::Completion::HeatFunction::SmoothWhenRestricted} \(u\big|_{I\times \BV{\xi}{\delta}} \in \CinftySpace*[I][\LtSpace[\BV{\xi}{\delta}][\C^M]]\).
        \item\label{Item::Proof::Completion::HeatFunction::tDerivIsOpLDeriv} For all \(j\geq 1\), \(\partial_t^j \left( u(t)\big|_{I\times \BV{\xi}{\delta}} \right) = \left( \left( -\delta^{2\kappa}\opL \right)^j u(t)  \right)\big|_{I\times \BV{\xi}{\delta}}\).
    \end{enumerate}
    We similarly define a  \(\left(-\opL^{*}, \xi,\delta   \right)\)-heat function.
\end{definition}

\begin{proposition}\label{Prop::Proof::Completion::ChangeHeatDefn}
    The conclusions of \cite[Theorem \ref*{Heat::Thm::Results::MainThm}]{StreetHypoellipticityAndHigherOrderGaussianBounds}
    hold with \cite[Definition \ref*{Heat::Defn::Assump::Hypo::HeatFunction}]{StreetHypoellipticityAndHigherOrderGaussianBounds}
    replaced by Definition \ref{Defn::Proof::Completion::HeatFunction}.
\end{proposition}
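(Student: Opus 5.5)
The plan is to prove Proposition \ref{Prop::Proof::Completion::ChangeHeatDefn} by auditing the proof of \cite[Theorem \ref*{Heat::Thm::Results::MainThm}]{StreetHypoellipticityAndHigherOrderGaussianBounds}. That proof uses the notion of a heat function in two places. First, in its main hypothesis it requires uniform hypoelliptic estimates for all heat functions at scale \((\xi,\delta)\). Second, inside the proof it constructs specific heat functions to which that hypothesis is applied. These are built from the semigroup: typically \(u(t)=T(\delta^{2\kappa}t)f\) or \(u(t)=\opL^k T(\delta^{2\kappa}t)f\), possibly cut off or glued at some time \(t_0\), with \(f\) either in \(\DomainLinfty\) or in the set \(C_{\xi_0,\delta}\) from the locality assumption. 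The logical structure is simple. If the hypotheses are verified for a class of functions that contains every function the proof actually uses, then the conclusions still hold. So we must show two things: every function fed into the hypoelliptic hypothesis in the proof of \cite{StreetHypoellipticityAndHigherOrderGaussianBounds} satisfies Definition \ref{Defn::Proof::Completion::HeatFunction}, and no other property of the original definition is used.

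\textbf{Step 1.} List each place in \cite{StreetHypoellipticityAndHigherOrderGaussianBounds} where a heat function is introduced, and record the form of each one (semigroup orbits, time translates, and piecewise-defined functions with a jump at \(t_0\)). Also record which properties of heat functions the later arguments use beyond the hypothesis itself. Normally these are only the identity \(\partial_t^j u=(-\delta^{2\kappa}\opL)^j u\) on the ball, \(L^2\) control in time, and smoothness in \(t\) with values in \(\DomainLinfty\) away from a single time.

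\textbf{Step 2.} Check items \ref{Item::Proof::Completion::HeatFunction::L2loc}--\ref{Item::Proof::Completion::HeatFunction::tDerivIsOpLDeriv} for these functions.
\begin{itemize}
\item Item \ref{Item::Proof::Completion::HeatFunction::SmootInt}: for \(f\in\DomainLinfty\), the orbit \(t\mapsto T(t)f\) is \(C^\infty\) into \(\DomainLinfty\) on \([0,\infty)\), since \(\opL^j\) commutes with \(T(t)\) and \(\frac{d}{dt}T(t)f=-\opL T(t)f\) in each graph norm \cite{EngelNagelAShortCourseOnOperatorSemigroups}. Pieces glued at \(t_0\) satisfy the one-sided smoothness on \([t_0,\infty)\cap I\) and \((-\infty,t_0)\cap I\).
\item Item \ref{Item::Proof::Completion::HeatFunction::L2loc} is immediate from the \(e^{-\gamma t}\) bound in Lemma \ref{Lemma::Result::ExistsSemigroup}.
\item Items \ref{Item::Proof::Completion::HeatFunction::SmoothWhenRestricted} and \ref{Item::Proof::Completion::HeatFunction::tDerivIsOpLDeriv} carry the real content. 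For a piecewise function whose pieces differ by an element of \(C_{\xi_0,\delta}\), that difference has \(\opL^j\) vanishing on the ball. Hence the restriction to the ball is smooth across \(t_0\) and the derivative identity holds there.
\end{itemize}

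\textbf{Step 3.} Confirm that the original definition is never used in an essential way that Definition \ref{Defn::Proof::Completion::HeatFunction} fails to provide. The modification is presumably that joint smoothness across \(t_0\) is only required after restricting to \(\BV{\xi}{\delta}\). Any global-in-\(\ManifoldN\) step in the original proof, such as an energy estimate or a Duhamel identity on all of \(\ManifoldN\), should therefore be checked to use only the one-sided regularity and the \(L^2_{\mathrm{loc}}\) bound. The main obstacle is exactly here. If some step differentiates \(u\) in \(t\) globally across \(t_0\), I would first try to rewrite that step as two integrals, over \((-\infty,t_0)\) and \([t_0,\infty)\). The boundary terms at \(t_0\) should either cancel or be supported outside the ball, where the locality assumption makes them harmless.
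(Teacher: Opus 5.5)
Your overall strategy matches the paper's: the proposition holds once every heat function that the proof in \cite{StreetHypoellipticityAndHigherOrderGaussianBounds} actually feeds into its hypoellipticity hypotheses satisfies Definition \ref{Defn::Proof::Completion::HeatFunction}. However, you have misidentified the modification, and this distorts Steps 1 and 3.

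Definition \ref{Defn::Proof::Completion::HeatFunction} consists of the original conditions, items \ref{Item::Proof::Completion::HeatFunction::SmootInt}--\ref{Item::Proof::Completion::HeatFunction::tDerivIsOpLDeriv}, with the new item \ref{Item::Proof::Completion::HeatFunction::L2loc}, $u\in \LtlocSpace[I][\DomainL]$, \emph{added}. Nothing is relaxed; in particular, smoothness in $t$ after restriction to $\BV{\xi}{\delta}$ was already part of the original notion. So the class of heat functions shrinks and the hypotheses become weaker. This is why the paper needs the change: item \ref{Item::Proof::Completion::HeatFunction::L2loc} is what allows Lemma \ref{Lemma::Proof::Subellip::DerivOfHeatIsSmooth} to be applied in Lemma \ref{Lemma::Proof::Completion::HeatFuncsAreSmooth}. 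Consequently, the only thing to check is item \ref{Item::Proof::Completion::HeatFunction::L2loc} for the specific functions used in the original proof. Items \ref{Item::Proof::Completion::HeatFunction::SmootInt}--\ref{Item::Proof::Completion::HeatFunction::tDerivIsOpLDeriv} need no re-verification, because those functions were already heat functions in the original sense. Your second requirement, that ``no other property of the original definition is used,'' is automatic. Your Step 3, with its split integrals and boundary terms at $t_0$, addresses a non-issue. Had the definition genuinely been relaxed, there would be nothing to prove at all, since hypotheses quantified over a larger class imply the original ones.

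With the modification correctly identified, the paper's proof is two lines. Every heat function used in \cite{StreetHypoellipticityAndHigherOrderGaussianBounds} comes from its characterization lemma for heat functions, and each is of one of two kinds:
\begin{enumerate}
\item it lies in $\CinftySpace*[I][\DomainLinfty]\subseteq \LtlocSpace[I][\DomainL]$; or
\item it vanishes on $(-\infty,t_0)\cap I$, in which case item \ref{Item::Proof::Completion::HeatFunction::L2loc} follows directly from item \ref{Item::Proof::Completion::HeatFunction::SmootInt}.
\end{enumerate}
Your $e^{-\gamma t}$ argument for semigroup orbits, together with their zero extensions before $t_0$, reaches the same conclusion, so the one check that matters is already in your Step 2. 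What your proposal lacks is the recognition that this is the only check needed, and a concrete identification of which functions occur, namely those produced by that characterization lemma.
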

\begin{proof}
    The difference between \cite[Definition \ref*{Heat::Defn::Assump::Hypo::HeatFunction}]{StreetHypoellipticityAndHigherOrderGaussianBounds}
    and Definition \ref{Defn::Proof::Completion::HeatFunction}
    is that Definition \ref{Defn::Proof::Completion::HeatFunction}\ref{Item::Proof::Completion::HeatFunction::L2loc}
    does not appear in \cite[Definition \ref*{Heat::Defn::Assump::Hypo::HeatFunction}]{StreetHypoellipticityAndHigherOrderGaussianBounds}. 
    However, every heat function used in  \cite{StreetHypoellipticityAndHigherOrderGaussianBounds}
    was in \(\LtlocSpace[I][\DomainL]\).
    Indeed, the heat functions used in \cite{StreetHypoellipticityAndHigherOrderGaussianBounds}
    were either equal to \(0\) on \((-\infty,t_0)\cap I\), where \(t_0\)
    is as in Definition \ref{Defn::Proof::Completion::HeatFunction}\ref{Item::Proof::Completion::HeatFunction::SmootInt}
    (and therefore in \(\LtlocSpace[I][\DomainL]\) by Definition \ref{Defn::Proof::Completion::HeatFunction}\ref{Item::Proof::Completion::HeatFunction::SmootInt})
    or were in \(\CinftySpace*[I][\DomainLinfty]\subseteq \LtlocSpace[I][\DomainL]\).
    In fact, the only heat functions used in that paper
    arise from those described in \cite[Lemma \ref*{Heat::Lemma::HeatFuncs::Characterize}]{StreetHypoellipticityAndHigherOrderGaussianBounds}.
    The proof of \cite[Lemma \ref*{Heat::Lemma::HeatFuncs::Characterize}]{StreetHypoellipticityAndHigherOrderGaussianBounds} shows that those heat functions
    arising in \cite[Lemma \ref*{Heat::Lemma::HeatFuncs::Characterize}\ref*{Heat::Item::HeatFuncs::Characterize::PositiveReals}]{StreetHypoellipticityAndHigherOrderGaussianBounds}
    are in  \(\CinftySpace*[I][\DomainLinfty]\subseteq \LtlocSpace[I][\DomainL]\), while
    those arising from 
    \cite[Lemma \ref*{Heat::Lemma::HeatFuncs::Characterize}\ref*{Heat::Item::HeatFuncs::Characterize::AllReals}]{StreetHypoellipticityAndHigherOrderGaussianBounds}
    equal to \(0\) on \((-\infty,t_0)\cap I\).
    These appear in the proofs of \cite[Lemmas \ref*{Heat::Lemma::Proof::OperatorBounds::OnDiagonalOperators} and \ref*{Heat::Lemma::Proof::OperatorBounds::OffDiagonalOperators}]{StreetHypoellipticityAndHigherOrderGaussianBounds}.
\end{proof}

\begin{proposition}\label{Prop::Proof::Completion::Hypoellip}
    Let \(\gamma_0>0\) be as in Theorem \ref{Thm::Scaling::MainScalingThm}\ref{Item::Scaling::MainScalingThm::Containments}
    and set \(a_1:=\min\{\gamma_0,1/4\}\in (0,1/4]\).
    \(\exists C_1\geq 0\), \(\forall \alpha\), \(\exists \epsilon_\alpha'>0\),
    \(\forall(\xi,\delta)\in \CompactZero\times (0,\delta_0]\), and for every \(\left( -\opL, \xi,\delta \right)\)-heat function \(u:(-1/2,1/2)\rightarrow \DomainLinfty\), we have
    \begin{enumerate}[(i)]
        \item\label{Item::Proof::Completion::Hypoellip::HeatFuncSmooth} \(\psi V^{\alpha}u\big|_{(-1/2,1/2)\times \BV{\xi}{\delta}}\in \CinftySpace*[(-1/2,1/2)\times \BV{\xi}{\delta}][\C^M]\).
        \item\label{Item::Proof::Completion::Hypoellip::EstVDeriv} \begin{equation*}
            \sup_{\substack{ t\in (-a_1,a_1)\\ \eta\in \BV{\xi}{a_1\delta} }} \left| \epsilon_{\alpha}' \psi \delta^{|\alpha|} V^{\alpha} u(t,\eta) \right|
            \leq \Vol*[\BV{\xi}{\delta}]^{-1/2} \LtNorm{u}[(-1/2,1/2)\times \BV{\xi}{\delta/2}].
        \end{equation*}
        \item\label{Item::Proof::Completion::Hypoellip::CommuteDerivs} \(\partial_t \psi V^{\alpha}u(t,\xi)\big|_{(-1/2,1/2)\times \BV{\xi}{\delta}}=-\psi V^{\alpha} \delta^{2\kappa}\opL u(t,\xi)\big|_{(-1/2,1/2)\times \BV{\xi}{\delta}}\).
        \item\label{Item::Proof::Completion::Hypoellip::EstTDeriv} \(\LtNorm*{\partial_t u}[(-a_1,a_1)\times \BV{\xi}{a_1\delta}] \leq C_1 \LtNorm*{u}[(-1/2,1/2)\times \BV{\xi}{\delta/2}]\).
    \end{enumerate}
\end{proposition}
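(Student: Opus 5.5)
The plan is to rescale to the unit scale using Theorem \ref{Thm::Scaling::MainScalingThm}, apply the subelliptic estimates of \cite{StreetAPrioriEstimatesMaximallySubellipticQuadraticForms} there with constants uniform in \((\xi,\delta)\), and then undo the scaling.

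Fix \((\xi,\delta)\in\CompactZero\times(0,\delta_0]\) and a \((-\opL,\xi,\delta)\)-heat function \(u\) on \((-1/2,1/2)\).

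\textbf{Step 1 (smoothness and \ref{Item::Proof::Completion::Hypoellip::CommuteDerivs}).} Set \(v(t)=u(t)\). By Definition \ref{Defn::Proof::Completion::HeatFunction}, \(v\in\LtlocSpace[(-1/2,1/2)][\DomainL]\) and \(\partial_t v=-\delta^{2\kappa}\opL v\) on \((-1/2,1/2)\times\BV{\xi}{\delta}\). Since \(\BV{\xi}{\delta}\subseteq U\) by Lemma \ref{Lemma::Proof::Completion::ExistsMetricAndDefineDelta0}, Lemma \ref{Lemma::Proof::Subellip::DerivOfHeatIsSmooth} (with \(c_0=\delta^{2\kappa}\), \(V=\BV{\xi}{\delta}\)) gives \(u\in C^\infty\) there. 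This proves \ref{Item::Proof::Completion::Hypoellip::HeatFuncSmooth}. Item \ref{Item::Proof::Completion::Hypoellip::CommuteDerivs} follows by applying the smooth-coefficient operator \(\psi V^\alpha\) to the identity \(\partial_t u=-\delta^{2\kappa}\opL u\); this identity holds classically on that set, because by \eqref{Eqn::Result::opLIsDifferentialOperator::FormulaForOpL} \(\opL\) acts there as a differential operator on a smooth function.

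\textbf{Step 2 (reduction to the unit scale).} By Theorem \ref{Thm::Scaling::MainScalingThm}, \((\xi,\delta)\) lies in \(\SSetOne\) or \(\SSetTwo\). Pull back by \(\Phi_{\xi,\delta}\), respectively \(\Psi_{\xi,\delta}\), and set \(w(t,x)=u(t,\Phi_{\xi,\delta}(x))\). Then \(w\) solves \(\partial_t w=-\opL^{\xi,\delta}w\) on \((-1/2,1/2)\times\Cuben{1}\) (or on the half cube). Here \(\opL^{\xi,\delta}\) is the operator coming from the pulled-back form
\[
\sum\langle (V^{\xi,\delta})^{\alpha}\cdot,\; a^{\xi,\delta}_{\alpha,\beta}(V^{\xi,\delta})^{\beta}\cdot\rangle_{L^2(h\,du)}.
\]
The form must first be rewritten in terms of \(V\) rather than \(W\), which is possible since the modules agree. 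The factors of \(\delta\) are absorbed so that the coefficients \(a^{\xi,\delta}\) are \(\delta^{2\kappa-|\alpha|-|\beta|}\) times smooth functions, hence uniformly bounded in \(\CbinftySpace\).

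The needed uniformity comes from three items of Theorem \ref{Thm::Scaling::MainScalingThm}:
\begin{itemize}
\item item \ref{Item::Scaling::MainScalingThm::UniformlyHormander} (uniformly Hörmander vector fields);
\item items \ref{Item::Scaling::MainScalingThm::DensityUniformlyBounded} and \ref{Item::Scaling::MainScalingThm::DensityUniformlyPos} (densities uniformly bounded and bounded below);
\item items \ref{Item::Scaling::MainScalingThm::PullBackV0IsDxn} and \ref{Item::Scaling::MainScalingThm::PullBackVjDoesntContainDxn} (the normal structure at the boundary).
\end{itemize}
In the \(\SSetTwo\) case, the pulled-back form domain is stable under the tangential regularizers \(S\) by Assumption \ref{Assumption::Results::BoundaryMaximalSubellip}. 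The maximal subelliptic estimate is uniform by Lemma \ref{Lemma::Result::BoundaryMaxSubNewLemma}, after rescaling \eqref{Eqn::Result::BoundaryMaxWSubNewEquation} with the uniform density bounds. Taken together, these are exactly the uniform hypotheses of the a priori estimates of \cite{StreetAPrioriEstimatesMaximallySubellipticQuadraticForms}.

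\textbf{Step 3 (main obstacle).} The hardest step is verifying that the constants in the unit-scale hypoelliptic estimate are uniform over \((\xi,\delta)\), that is, that the a priori estimate of \cite{StreetAPrioriEstimatesMaximallySubellipticQuadraticForms} depends only on these uniform quantities. The subtle case is the boundary case in \(\SSetTwo\): there one needs the uniform version of Lemma \ref{lemma::Proof::Subellip::CopySubellipEst} on \(\Cubengeq{1}\), which has to be taken from the quantitative form of that paper's theorem. Granting this, the uniform estimate
\[
\|w\|_{C^L(\text{unit cube of size }1/4,\;|t|<1/4)}\lesssim_L \|w\|_{L^2((-1/2,1/2)\times Q(1))}
\]
plus Sobolev embedding gives the following. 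Since \(\delta V\) pulls back to a \(\CbinftySpace\)-bounded vector field, \(\delta^{|\alpha|}V^\alpha u\) is bounded by \(C_\alpha\|w\|_{L^2}\).

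\textbf{Step 4 (undoing the scaling).} Change variables back using \(\Phi^*d\Vol=h\,\Vol[\BV{\xi}{\delta}]\,du\) with \(h\approx1\). This gives \(\|w\|_{L^2}\approx\Vol[\BV{\xi}{\delta}]^{-1/2}\|u\|_{L^2}\) over the image, which lies in \(\BV{\xi}{\delta/2}\). The containment in Theorem \ref{Thm::Scaling::MainScalingThm}\ref{Item::Scaling::MainScalingThm::Containments} gives \(\BV{\xi}{a_1\delta}\subseteq\Phi(\Cuben{1/4})\). Taking \(\epsilon_\alpha'=C_\alpha^{-1}\), and using \(|\psi|\le1\), yields \ref{Item::Proof::Completion::Hypoellip::EstVDeriv}. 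Item \ref{Item::Proof::Completion::Hypoellip::EstTDeriv} follows in the same way from the \(t\)-derivative bound on \(w\); note that \(\partial_t\) is unscaled because time was already rescaled by \(\delta^{2\kappa}\). Both constants are independent of \(\alpha\) and of \((\xi,\delta)\), as required.
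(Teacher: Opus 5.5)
Your proposal is correct in outline and follows the same route as the paper: smoothness and (iii) come from Lemma \ref{Lemma::Proof::Subellip::DerivOfHeatIsSmooth}, and the estimates come from rescaling by \(\Phi_{\xi,\delta}\) or \(\Psi_{\xi,\delta}\), checking uniformly the hypotheses of the quantitative corollaries of \cite{StreetAPrioriEstimatesMaximallySubellipticQuadraticForms}, applying Sobolev embedding, and undoing the scaling with the density bounds and the containments of Theorem \ref{Thm::Scaling::MainScalingThm}. What you compress into ``taken together'' is done in Lemmas \ref{Lemma::Proof::Completion::InternalMaxSubEst} and \ref{Lemma::Proof::Completion::FinalAssumptions}, where one cuts off in time by \(\theta\) so that \(\theta u\in L^2(\R;\DomainL)\), keeps only the \(|\alpha|=\kappa\) terms when multiplying by \(\delta^{2\kappa}\) (lower-order terms do not rescale uniformly), also needs the rescaled interior estimate, and uses \(u(t)\in\DomainL\) together with \(T_{\xi,\delta}(\theta u)(t)\in\DomainQ\) to get the identity of Lemma \ref{Lemma::Proof::Completion::FinalAssumptions}\ref{Item::Proof::Completion::FinalAssumptions::FormToInnerProd}; note also that your closing remark is slightly off, since \(\epsilon_\alpha'\) may and does depend on \(\alpha\), and only \(C_1\) must not.
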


The proof of Proposition \ref{Prop::Proof::Completion::Hypoellip} requires several lemmas.

\begin{lemma}\label{Lemma::Proof::Completion::HeatFuncsAreSmooth}
    \(\forall (\xi,\delta)\in \CompactZero\times (0,\delta_0]\) and \(I\subseteq \R\) an open interval,
    if \(u:I\rightarrow \DomainLinfty\) is an \((-\opL,\xi,\delta)\)-heat function, then
    \(u\big|_{I\times \BV{\xi}{\delta}}\in \CinftySpace[I\times \BV{\xi}{\delta}][\C^M]\).
\end{lemma}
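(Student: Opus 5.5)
The plan is to reduce the statement to Lemma \ref{Lemma::Proof::Subellip::DerivOfHeatIsSmooth}, which already gives smoothness for functions \(u\in \LtlocSpace[I][\DomainL]\) satisfying \(\partial_t u = -c_0\opL u\) on \(I\times V\) with \(V\subseteq U\) open. We take \(V:=\BV{\xi}{\delta}\) and \(c_0:=\delta^{2\kappa}>0\). By Lemma \ref{Lemma::Proof::Completion::ExistsMetricAndDefineDelta0}\ref{Item::Proof::Completion::ExistsMetricAndDefineDelta0::SameTopology},\ref{Item::Proof::Completion::ExistsMetricAndDefineDelta0::EqualBalls}, \(V\) is open. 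Since \(\xi\in\CompactZero\) and \(\delta\leq \delta_0\), Lemma \ref{Lemma::Proof::Completion::ExistsMetricAndDefineDelta0}\ref{Item::Proof::Completion::ExistsMetricAndDefineDelta0::CompactZeroCompact} gives \(V\subseteq U\). Definition \ref{Defn::Proof::Completion::HeatFunction}\ref{Item::Proof::Completion::HeatFunction::L2loc} provides \(u\in\LtlocSpace[I][\DomainL]\).

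It remains to check the equation \(\partial_t(u|_{I\times V})=-\delta^{2\kappa}\opL u|_{I\times V}\) in the distributional sense required by Lemma \ref{Lemma::Proof::Subellip::DerivOfHeatIsSmooth}. We use Definition \ref{Defn::Proof::Completion::HeatFunction}\ref{Item::Proof::Completion::HeatFunction::SmoothWhenRestricted}: \(u|_{I\times V}\in \CinftySpace[I][\LtSpace[V][\C^M]]\). Hence its classical \(t\)-derivative, taken as an \(L^2(V)\)-valued function, coincides with the distributional derivative. Item \ref{Item::Proof::Completion::HeatFunction::tDerivIsOpLDeriv} with \(j=1\) identifies this derivative with \((-\delta^{2\kappa}\opL u(t))|_V\). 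Note that \(\opL u(t)\) makes sense for a.e.\ \(t\), since \(u(t)\in\DomainLinfty\).

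The main point needing care is this identification. In Lemma \ref{Lemma::Proof::Subellip::DerivOfHeatIsSmooth}, the quantity \(\opL u\) is understood via \(u\in \LtlocSpace[I][\DomainL]\), that is, as the function \(t\mapsto \opL u(t)\) in \(L^2_{\mathrm{loc}}\). So we must check that the pointwise-in-\(t\) identity from item \ref{Item::Proof::Completion::HeatFunction::tDerivIsOpLDeriv} gives equality as distributions on \(I\times V\). This follows by pairing both sides with a test function \(\phi(t,x)\in\CinftycptSpace[I\times V]\) and integrating in \(t\). Both sides are locally \(L^2\) in \(t\) with values in \(L^2(V)\), so Fubini applies. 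Lemma \ref{Lemma::Proof::Subellip::DerivOfHeatIsSmooth} then yields \(u|_{I\times V}\in\CinftySpace[I\times V][\C^M]\), which is the claim.
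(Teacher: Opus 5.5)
Your proposal is correct and follows the paper's proof: it applies Lemma \ref{Lemma::Proof::Subellip::DerivOfHeatIsSmooth} with \(V=\BV{\xi}{\delta}\) and \(c_0=\delta^{2\kappa}\), using Lemma \ref{Lemma::Proof::Completion::ExistsMetricAndDefineDelta0} for openness and \(V\subseteq U\), and Definition \ref{Defn::Proof::Completion::HeatFunction}\ref{Item::Proof::Completion::HeatFunction::L2loc},\ref{Item::Proof::Completion::HeatFunction::tDerivIsOpLDeriv} for the hypotheses. Your extra check that the \(j=1\) identity holds in the distributional sense on \(I\times V\) is a harmless elaboration that the paper leaves implicit.
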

\begin{proof}
    By Lemma \ref{Lemma::Proof::Completion::ExistsMetricAndDefineDelta0},
    \(\BV{\xi}{\delta}\subseteq U\) and is open.
    By Definition \ref{Defn::Proof::Completion::HeatFunction}\ref{Item::Proof::Completion::HeatFunction::L2loc},
    \(u\in \LtlocSpace[I][\DomainL]\).
    By Definition \ref{Defn::Proof::Completion::HeatFunction}\ref{Item::Proof::Completion::HeatFunction::tDerivIsOpLDeriv},
    \(\partial_t \left( u(t)\big|_{I\times \BV{\xi}{\delta}} \right) = \left( -\delta^{2\kappa}\opL  u(t)  \right)\big|_{I\times \BV{\xi}{\delta}}\).
    Thus, Lemma \ref{Lemma::Proof::Subellip::DerivOfHeatIsSmooth} applies with \(V=\BV{\xi}{\delta}\)
    and \(c_0=\delta^{2\kappa}\) to give the result.
\end{proof}

\begin{proof}[Proof of Proposition \ref{Prop::Proof::Completion::Hypoellip}\ref{Item::Proof::Completion::Hypoellip::HeatFuncSmooth},\ref{Item::Proof::Completion::Hypoellip::CommuteDerivs}]
    \ref{Item::Proof::Completion::Hypoellip::HeatFuncSmooth} follows from 
    Lemma \ref{Lemma::Proof::Completion::HeatFuncsAreSmooth}.

    For \ref{Item::Proof::Completion::Hypoellip::CommuteDerivs}, since \(\psi V^\alpha\) is a partial differential
    operator which does not depend on \(t\), we have
    \(\partial_t \psi V^{\alpha}u(t,\xi)\big|_{(-1/2,1/2)\times \BV{\xi}{\delta}}= \psi V^{\alpha}\partial_t u(t,\xi)\big|_{(-1/2,1/2)\times \BV{\xi}{\delta}}\).
    Definition \ref{Defn::Proof::Completion::HeatFunction}\ref{Item::Proof::Completion::HeatFunction::tDerivIsOpLDeriv}
    shows
    \(\partial_t u(t,\xi)\big|_{(-1/2,1/2)\times \BV{\xi}{\delta}}=-\delta^{2\kappa}\opL u(t,\xi)\big|_{(-1/2,1/2)\times \BV{\xi}{\delta}}\).
    Combining these two equations establishes \ref{Item::Proof::Completion::Hypoellip::CommuteDerivs}.
\end{proof}

The proof of Proposition \ref{Prop::Proof::Completion::Hypoellip}\ref{Item::Proof::Completion::Hypoellip::EstVDeriv},\ref{Item::Proof::Completion::Hypoellip::EstTDeriv} requires some additional setup before we can apply the estimates of \cite{StreetAPrioriEstimatesMaximallySubellipticQuadraticForms} to prove the result.
We separate the proof into two cases: \((\xi,\delta)\in \SSetOne\cap \left( \CompactZero\times (0,\delta_0] \right)\) and \((\xi,\delta)\in \SSetTwo\cap \left( \CompactZero\times (0,\delta_0] \right)\),
where \(\SSetOne\) and \(\SSetTwo\) are as in Theorem \ref{Thm::Scaling::MainScalingThm}.
Note that \(\CompactZero\times (0,\delta_0]\subseteq \CompactOne\times (0,\delta_1]=\SSetOne\cup \SSetTwo\),
so these two cases cover the setting of Proposition \ref{Prop::Proof::Completion::Hypoellip}.
Of particular importance is that we need all our estimates to be uniform in \((\xi,\delta)\).
Fortunately, this uniformity is explicitly contained in Theorem \ref{Thm::Scaling::MainScalingThm}
and 
\cite[Corollaries \ref*{Form::Cor::EstBdry::MainThm::CorNEquals1} and \ref*{Form::Cor::EstBdryInt::MainCor}]{StreetAPrioriEstimatesMaximallySubellipticQuadraticForms}.

Let \(\Phi_{\xi,\delta}:\Cuben{1}\xrightarrow{\sim}\Phi_{\xi,\delta}\left( \Cuben{1} \right)\) for \((\xi,\delta)\in \SSetOne\)
and \(\Psi_{\xi,\delta}:\Cubengeq{1}\xrightarrow{\sim}\Psi_{\xi,\delta}\left( \Cubengeq{1} \right)\) for \((\xi,\delta)\in \SSetTwo\)
be as in Theorem \ref{Thm::Scaling::MainScalingThm}.
Set \(V_j^{\xi,\delta,1}:=\Phi_{\xi,\delta}^{*}\delta V_j\) and \(V_j^{\xi,\delta,2}:=\Psi_{\xi,\delta}^{*} \delta V_j\)
as in Theorem \ref{Thm::Scaling::MainScalingThm}.

Let \(\at_{\alpha,\beta}\in \CinftySpace[\Omega_1][\M^{M\times M}(\C)]\) be such that
\begin{equation*}
    \FormQF[f][g]=\sum_{|\alpha|,|\beta|\leq \kappa} \Ltip*{V^{\alpha} f}{\at_{\alpha,\beta} V^\beta g}[\ManifoldN,\Vol][\C^M],\quad \forall f,g\in \CinftycptSpace[\Omega][\C^M];
\end{equation*}
this is always possible since the \(\CinftySpace[\ManifoldNOne][\R]\)-module generated by \(V_0,\ldots, V_r\)
is the same as the \(\CinftySpace[\ManifoldNOne][\R]\)-module generated by \(W_1,\ldots, W_r\).\footnote{When
we write \(V^{\alpha}\), \(\alpha\) is a list of elements from \(\left\{ 0,\ldots,r \right\} \); 
when we write \(W^\alpha\), \(\alpha\) is instead a list of elements from \(\left\{ 1,\ldots, r \right\} \).}
Let \(h_{\xi,\delta,1}\) and \(h_{\xi,\delta,2}\) be as in Theorem \ref{Thm::Scaling::MainScalingThm}
and set
\begin{equation*}
    a_{\alpha,\beta}^{\xi,\delta,1}:=\delta^{2\kappa-|\alpha|-|\beta|} \Phi_{\xi,\delta}^{*} \at_{\alpha,\beta} = \delta^{2\kappa-|\alpha|-|\beta|} \Phi_{\xi,\delta} \at_{\alpha,\beta}\circ \Phi_{\xi,\delta}
    \in \CinftySpace[\Cuben{1}][\M^{M\times M}(\C)],\quad (\xi,\delta)\in \SSetOne,
\end{equation*}
\begin{equation*}
    a_{\alpha,\beta}^{\xi,\delta,2}:=\delta^{2\kappa-|\alpha|-|\beta|} \Psi_{\xi,\delta}^{*} \at_{\alpha,\beta} = \delta^{2\kappa-|\alpha|-|\beta|} \Psi_{\xi,\delta} \at_{\alpha,\beta}\circ \Psi_{\xi,\delta}
    \in \CinftySpace[\Cubengeq{1}][\M^{M\times M}(\C)],\quad (\xi,\delta)\in \SSetTwo.
\end{equation*}
Define
\begin{equation}\label{Eqn::Proof::Completion::DefineFormQOne}
    \FormQOne[G_1][G_2]:=
    \sum_{|\alpha|,|\beta|\leq\kappa}\Ltip*{ \left( V^{\xi,\delta,1} \right)^{\alpha} G_1  }{ h_{\xi,\delta,1} a_{\alpha,\beta}^{\xi,\delta,1} \left( V^{\xi,\delta,1} \right)^{\beta}G_2}[\Cuben{1}][\C^M],\quad (\xi,\delta)\in \SSetOne,
\end{equation}
\begin{equation}\label{Eqn::Proof::Completion::DefineFormQTwo}
    \FormQTwo[G_1][G_2]:=
    \sum_{|\alpha|,|\beta|\leq\kappa}\Ltip*{ \left( V^{\xi,\delta,2} \right)^{\alpha} G_1  }{ h_{\xi,\delta,2} a_{\alpha,\beta}^{\xi,\delta,2} \left( V^{\xi,\delta,2} \right)^{\beta}G_2}[\Cubengeq{1}][\C^M],\quad (\xi,\delta)\in \SSetTwo.
\end{equation}
The relevance of \(\FormQOne\) and \(\FormQTwo\) comes from the next lemma.

\begin{lemma}\label{Lemma::Proof::Completion::PullBackQF}
    For \(G_1,G_2\) defined on \(\Cuben{1}\), where at least one has compact support, whenever both sides make sense, we have
    \begin{equation}\label{Eqn::Proof::Completion::PullBackQF::One}
        \delta^{2\kappa}\FormQF*[G_1\circ\Phi_{\xi,\delta}^{-1}][G_2\circ \Phi_{\xi,\delta}^{-1}]=
        \Vol*[\BV{\xi}{\delta}]
        \FormQOne[G_1][G_2],\quad (\xi,\delta)\in \SSetOne.
    \end{equation}
    Similarly, for \(G_1,G_2\) defined on \(\Cubengeq{1}\), where at least one has compact support, whenever both sides make sense, we have
    \begin{equation}\label{Eqn::Proof::Completion::PullBackQF::Two}
        \delta^{2\kappa}\FormQF*[G_1\circ\Psi_{\xi,\delta}^{-1}][G_2\circ \Psi_{\xi,\delta}^{-1}]=
        \Vol*[\BV{\xi}{\delta}]
        \FormQTwo[G_1][G_2],\quad (\xi,\delta)\in \SSetTwo.
    \end{equation}
\end{lemma}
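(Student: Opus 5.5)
This is a change-of-variables computation, and I will carry it out on the term-by-term expansion of \(\FormQF\) using the \(\at_{\alpha,\beta}\) representation. Here both \(G_1,G_2\) are understood to be supported in or defined on the relevant cube. Their pushforwards \(G_i\circ\Phi_{\xi,\delta}^{-1}\) live on \(\Phi_{\xi,\delta}(\Cuben{1})\subseteq \BV{\xi}{\delta/2}\subseteq\Omega\), by Theorem \ref{Thm::Scaling::MainScalingThm}\ref{Item::Scaling::MainScalingThm::Containments} and \ref{Item::Scaling::MainScalingThm::BallInsideOmega::Union}. Hence we may write
\[
\FormQF[f_1][f_2]=\sum_{|\alpha|,|\beta|\leq\kappa}\Ltip*{V^\alpha f_1}{\at_{\alpha,\beta}V^\beta f_2}[\ManifoldN,\Vol][\C^M]
\]
with \(f_i=G_i\circ\Phi_{\xi,\delta}^{-1}\). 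The identity for \(\at_{\alpha,\beta}\) was stated for \(\CinftycptSpace[\Omega]\). I first note that it persists for pairs where only one function has compact support in \(\Omega\) (and both are locally \(\HsWSpace{\kappa}\)), by density and the locality of the integrand. Alternatively, I interpret ``whenever both sides make sense'' as restricting to such pairs.

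The key step is the naturality of pullbacks. For any vector field \(Y\) and diffeomorphism \(\Phi\), we have \((Y f)\circ\Phi=(\Phi^*Y)(f\circ\Phi)\). Since \(V_j^{\xi,\delta,1}=\Phi_{\xi,\delta}^*\delta V_j\), iterating gives
\[
(V^\alpha f)\circ\Phi_{\xi,\delta}=\delta^{-|\alpha|}\bigl(V^{\xi,\delta,1}\bigr)^\alpha G.
\]
I then change variables \(\eta=\Phi_{\xi,\delta}(u)\) in each \(L^2\) inner product, using \(\Phi_{\xi,\delta}^*d\Vol=h_{\xi,\delta,1}(u)\Vol[\BV{\xi}{\delta}]\,du\). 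The \((\alpha,\beta)\) term becomes
\[
\delta^{-|\alpha|-|\beta|}\Vol*[\BV{\xi}{\delta}]\int_{\Cuben{1}}\bigl\langle (V^{\xi,\delta,1})^\alpha G_1(u),\,h_{\xi,\delta,1}(u)\,(\at_{\alpha,\beta}\circ\Phi_{\xi,\delta})(u)\,(V^{\xi,\delta,1})^\beta G_2(u)\bigr\rangle\,du .
\]
Here the scalar \(\delta\) and the real density pass through the sesquilinear pairing harmlessly, since both are real.

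Multiplying by \(\delta^{2\kappa}\) turns \(\delta^{-|\alpha|-|\beta|}\at_{\alpha,\beta}\circ\Phi_{\xi,\delta}\) into exactly \(a^{\xi,\delta,1}_{\alpha,\beta}\). Summing over \(\alpha,\beta\) then yields \eqref{Eqn::Proof::Completion::PullBackQF::One}. The case \((\xi,\delta)\in\SSetTwo\) is verbatim the same with \(\Psi_{\xi,\delta}\), \(\Cubengeq{1}\) and \(h_{\xi,\delta,2}\). The diffeomorphism property on the half-cube from Theorem \ref{Thm::Scaling::MainScalingThm}\ref{Item::Scaling::MainScalingThm::Diffeomorphism} justifies the change of variables there as well.

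The only point requiring care, rather than computation, is the functional-analytic setting. I need that the pulled-back functions lie where the formula representation of \(\FormQF\) via \(\at_{\alpha,\beta}\) is valid, and that the chain rule for \(V^\alpha\) holds for Sobolev-type (not smooth) functions. I would handle this by first proving the identity for smooth \(G_i\) and extending by density in the natural \(H^\kappa\)-type norms on both sides. The two sides are continuous in those norms because \(\Phi_{\xi,\delta}\) is a diffeomorphism for each fixed \((\xi,\delta)\); uniformity is not needed for this lemma.
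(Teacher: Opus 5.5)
Your proposal is correct and follows the paper's proof essentially verbatim. Both arguments expand \(\FormQF\) in the \(\at_{\alpha,\beta}\)-representation, use \(\bigl((\delta V)^{\alpha} g\bigr)\circ\Phi_{\xi,\delta}=(V^{\xi,\delta,1})^{\alpha}(g\circ\Phi_{\xi,\delta})\), change variables via \(\Phi_{\xi,\delta}^{*}d\Vol=h_{\xi,\delta,1}\Vol[\BV{\xi}{\delta}]\,du\), and absorb the powers of \(\delta\) into \(a^{\xi,\delta,1}_{\alpha,\beta}\). Your extra functional-analytic discussion goes beyond the paper, which simply says ``whenever both sides make sense''; the density step is not needed, because for a fixed diffeomorphism the pullback chain rule already holds in the distributional sense.
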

\begin{proof}
    The proofs of \eqref{Eqn::Proof::Completion::PullBackQF::One} and \eqref{Eqn::Proof::Completion::PullBackQF::Two}
    are similar, so we prove only \eqref{Eqn::Proof::Completion::PullBackQF::One}.
    We have, with \(g_j=G_j\circ \Phi_{\xi,\delta}^{-1}\),
    \begin{equation*}
    \begin{split}
         &\delta^{2\kappa} \FormQF[g_1][g_2]
         =\sum_{|\alpha|,|\beta|\leq \kappa} \Ltip*{ \left( \delta V\right)^{\alpha}  g_1}{\delta^{2\kappa-|\alpha|-|\beta|}\at_{\alpha,\beta} \left( \delta V\right)^{\beta}}[\ManifoldN,\Vol]
         \\&=
         \sum_{|\alpha|,|\beta|\leq \kappa}
         \int \left( \left( \delta V\right)^{\alpha}  g_1(\eta) \right)\cdot \overline{\left( \delta^{2\kappa-|\alpha|-|\beta|}\at_{\alpha,\beta} \left( \delta V\right)^{\beta} g_2(\eta) \right)}\: d\Vol[\eta]
         \\&=\sum_{|\alpha|,|\beta|\leq \kappa}
         \int \left( \left( \left( V^{\xi,\delta,1}\right)^{\alpha}  G_1 \right) (\Phi_{\xi,\delta}^{-1}(\eta)) \right)\cdot \overline{\left( \delta^{2\kappa-|\alpha|-|\beta|}\at_{\alpha,\beta}(\eta) \left( \left( V^{\xi,\delta,1}\right)^{\beta} G_2 \right)(\Phi_{\xi,\delta}^{-1}(\eta)) \right)}\: d\Vol[\eta]
         \\&=\Vol*[\BV{\xi}{\delta}] \sum_{|\alpha|,|\beta|\leq \kappa} \int_{\Cuben{1}}  \left( V^{\xi,\delta,1}\right)^{\alpha}  G_1 (y) \cdot \overline{\left( h_{\xi,\delta,1}(y)\delta^{2\kappa-|\alpha|-|\beta|}\at_{\alpha,\beta}\left(\Phi_{\xi,\delta}(y)\right) \left( V^{\xi,\delta,1}\right)^{\beta}  G_2 (y) \right)}\: dy
         \\&=\Vol*[\BV{\xi}{\delta}]\FormQOne[G_1][G_2].
    \end{split}
    \end{equation*}
\end{proof}

\begin{lemma}\label{Lemma::Proof::Completion::aalphabetaBounded}
    \begin{equation*}
        \left\{ a_{\alpha,\beta}^{\xi,\delta,1} : |\alpha|,|\beta|\leq \kappa, (\xi,\delta)\in \SSetOne \right\}\subset \CbinftySpace*[\Cuben{1}][\M^{M\times M}(\C)]
    \end{equation*}
    is a bounded set, and 
    \begin{equation*}
        \left\{ a_{\alpha,\beta}^{\xi,\delta,2} : |\alpha|,|\beta|\leq \kappa, (\xi,\delta)\in \SSetTwo \right\}\subset \CbinftySpace*[\Cubengeq{1}][\M^{M\times M}(\C)]
    \end{equation*}
    is a bounded set.
\end{lemma}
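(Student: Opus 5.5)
The key point is that the factor \(\delta^{2\kappa-|\alpha|-|\beta|}\) has nonnegative exponent and \(\delta\le\delta_1\le 1\). Hence \(\delta^{2\kappa-|\alpha|-|\beta|}\le 1\). It therefore suffices to show that the pulled-back coefficients \(\Phi_{\xi,\delta}^{*}\at_{\alpha,\beta}=\at_{\alpha,\beta}\circ\Phi_{\xi,\delta}\) and \(\at_{\alpha,\beta}\circ\Psi_{\xi,\delta}\) form bounded subsets of \(\CbinftySpace\), uniformly in \((\xi,\delta)\). The two cases are identical, so I describe \(\SSetOne\) only.

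\(\CbinftySpace\) boundedness means that \(\sup_u |\partial_u^\gamma(\at_{\alpha,\beta}\circ\Phi_{\xi,\delta})(u)|\) is bounded uniformly for each fixed \(\gamma\). The map \(\Phi_{\xi,\delta}\) is not uniformly bounded in \(C^\infty\) as a map into coordinates, since it shrinks to scale \(\delta\). So I will not use the chain rule in the Euclidean variables directly. Instead I will express coordinate derivatives through the rescaled vector fields.

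\textbf{Step 1.} By Theorem \ref{Thm::Scaling::MainScalingThm}\ref{Item::Scaling::MainScalingThm::UniformlyHormander}, the fields \(V_j^{\xi,\delta,1}\) are uniformly Hörmander on \(\Cuben{1}\). Fix an order \(m\) and let \(X_1^{\xi,\delta},\dots,X_q^{\xi,\delta}\) enumerate their commutators up to order \(m\).

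\textbf{Step 2.} The commutators span uniformly, with a lower bound on determinants and \(\CbinftySpace\)-bounded coefficients. By Cramer's rule, locally on \(\Cuben{1}\) each \(\partial_{u_k}\) can be written as \(\sum_l c_{k,l}^{\xi,\delta}X_l^{\xi,\delta}\), where the coefficients \(c_{k,l}\) are bounded in \(C^L_b\), uniformly in \((\xi,\delta)\), for each \(L\). The choice of the \(n\)-tuple of indices varies from point to point. I handle this with a finite covering argument over choices of index tuples: on the open set where a given determinant exceeds half the infimum, the coefficients are smooth with uniform bounds. Alternatively, I use the pseudo-inverse formula with \(\sum\)-Gram matrices, which gives globally smooth coefficients directly; I expect to use that.

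\textbf{Step 3.} Consequently, \(\partial_u^\gamma(\at\circ\Phi_{\xi,\delta})\) is a finite sum of products of bounded coefficients times \((V^{\xi,\delta,1})^{\beta'}(\at\circ\Phi_{\xi,\delta})\) with \(|\beta'|\le m|\gamma|\). By definition of the pullback,
\[
(V^{\xi,\delta,1})^{\beta'}(\at\circ\Phi_{\xi,\delta})=\big((\delta V)^{\beta'}\at\big)\circ\Phi_{\xi,\delta}=\delta^{|\beta'|}\big(V^{\beta'}\at\big)\circ\Phi_{\xi,\delta}.
\]
The image \(\Phi_{\xi,\delta}(\Cuben{1})\) lies in the fixed compact set \(\overline{\bigcup\BV{\xi}{\delta}}\Subset\Omega\) by Theorem \ref{Thm::Scaling::MainScalingThm}\ref{Item::Scaling::MainScalingThm::BallInsideOmega::Union}, on which \(V^{\beta'}\at_{\alpha,\beta}\) is bounded. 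Since \(\delta\le1\), this gives the uniform bound.

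The main obstacle is Step 2, converting uniform spanning into uniformly smooth coefficients expressing \(\partial_u\) via the \(X_l\). I will use the Gram-matrix formula \(\partial_{u_k}=\sum_l X_l^{k'}\,(G^{-1})_{k'k}X_l\), where \(G=\sum_l X_lX_l^{T}\). Then \(\det G\) is bounded below uniformly by Cauchy–Binet and Definition \ref{Defn::Scaling::UnitScale::SpanTangentSpaceUniformly}, so \(G^{-1}\) is uniformly \(\CbinftySpace\)-bounded.
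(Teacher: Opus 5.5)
Your proposal is correct and follows the same route as the paper's proof. You use \(\delta\le 1\) to drop the factor \(\delta^{2\kappa-|\alpha|-|\beta|}\), invoke the uniform H\"ormander property to reduce \(C^\infty_b\)-boundedness to uniform bounds on \((V^{\xi,\delta,k})^{\gamma}(\tilde{a}_{\alpha,\beta}\circ\Psi_{\xi,\delta})=((\delta V)^{\gamma}\tilde{a}_{\alpha,\beta})\circ\Psi_{\xi,\delta}\), and control these by the supremum of \(|V^{\gamma}\tilde{a}_{\alpha,\beta}|\) over a relatively compact subset of \(\Omega_1\). The only difference is that you make explicit, via the Gram matrix and Cauchy--Binet, how uniform spanning yields uniformly smooth coefficients expressing \(\partial_u\) through the commutators, a step the paper uses without proof.
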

\begin{proof}
    We prove the result for \(a_{\alpha,\beta}^{\xi,\delta,2} \); a nearly identical proof works for
    \(a_{\alpha,\beta}^{\xi,\delta,1} \) by replacing \(\Psi_{\xi,\delta}\) with \(\Phi_{\xi,\delta}\), throughout.
    Since \(\delta\in (0,\delta_1]\subseteq (0,1]\), it suffices to show
    \begin{equation}\label{Eqn::Proof::Completion::aalphabetaBounded::Tmp1}
        \left\{ \at_{\alpha,\beta}\circ \Psi_{\xi,\delta} : |\alpha|,|\beta|\leq \kappa, (\xi,\delta)\in \SSetTwo \right\}\subset \CbinftySpace*[\Cubengeq{1}][\M^{M\times M}(\C)]
    \end{equation}
    is a bounded set.
    By Theorem \ref{Thm::Scaling::MainScalingThm}\ref{Item::Scaling::MainScalingThm::UniformlyHormander}
    to prove that \eqref{Eqn::Proof::Completion::aalphabetaBounded::Tmp1} is a bounded set, it suffices to show
    \(\forall L\in \Zgeq\),
    \begin{equation}\label{Eqn::Proof::Completion::aalphabetaBounded::Tmp2}
        \max_{|\alpha|,|\beta|\leq \kappa }\sup_{(\xi,\delta)\in \SSetTwo}
        \sum_{|\gamma|\leq L} \sup_{u\in \Cubengeq{1}} \left| \left( V^{\xi,\delta,2} \right)^{\gamma} \at_{\alpha,\beta} \circ \Psi_{\xi,\delta}(u) \right|<\infty.
    \end{equation}
    But,
    \begin{equation*}
        \left( V^{\xi,\delta,2} \right)^{\gamma} \at_{\alpha,\beta}\circ \Psi_{\xi,\delta}
        =\left( \left( \delta V \right)^{\gamma} \at_{\alpha,\beta} \right)\circ \Psi_{\xi,\delta},
    \end{equation*}
    and since \(\delta\leq 1\), \eqref{Eqn::Proof::Completion::aalphabetaBounded::Tmp2}
    is implied by
    \begin{equation}\label{Eqn::Proof::Completion::aalphabetaBounded::Tmp3}
        \max_{|\alpha|,|\beta|\leq \kappa }\sup_{(\xi,\delta)\in \SSetTwo}
        \sum_{|\gamma|\leq L} \sup_{\eta\in \Psi_{\xi,\delta}\left( \Cubengeq{1} \right)} \left| V^{\gamma} \at_{\alpha,\beta} (\eta) \right|<\infty.
    \end{equation}
    Theorem \ref{Thm::Scaling::MainScalingThm}\ref{Item::Scaling::MainScalingThm::Diffeomorphism}
    shows \(\Psi_{\xi,\delta}\left( \Cubengeq{1} \right)\subseteq \Omega\), and therefore
    \eqref{Eqn::Proof::Completion::aalphabetaBounded::Tmp3}
    is implied by
    \begin{equation}\label{Eqn::Proof::Completion::aalphabetaBounded::Tmp4}
        \max_{|\alpha|,|\beta|\leq \kappa }
        \sum_{|\gamma|\leq L} \sup_{\eta\in \Omega} \left| V^{\gamma} \at_{\alpha,\beta} (\eta) \right|<\infty.
    \end{equation}
    Since \(\Omega\Subset \Omega_1\), \(\at_{\alpha,\beta}\in \CinftySpace[\Omega_1][\M^{M\times M}(\C)]\),
    and \(V_0,\ldots, V_r\) are smooth vector fields on \(\ManifoldNOne\supseteq\Omega_1\),
    \eqref{Eqn::Proof::Completion::aalphabetaBounded::Tmp4} holds, completing the proof.
\end{proof}

Define for \(k=1,2\),
\begin{equation}\label{Eqn::Proof::Completion::DefineopLk}
    \opLk{k}
    :=\sum_{|\alpha|,|\beta|\leq \kappa}
    \left(1/ h_{\xi,\delta,k} \right) \left( \left( V^{\xi,\delta,k} \right)^{\alpha} \right)^{*} h_{\xi,\delta,k} a_{\alpha,\beta}^{\xi,\delta,k} \left( V^{\xi,\delta,k} \right)^{\beta},
\end{equation}
where when \(k=1\), \(*\) denotes the formal \(\LtSpace*[\Cuben{1}]\) adjoint, while when \(k=2\),
\(*\) denotes the formal \(\LtSpace*[\Cubengeq{1}]\) adjoint.

To prove 
Proposition \ref{Prop::Proof::Completion::Hypoellip}\ref{Item::Proof::Completion::Hypoellip::EstVDeriv},\ref{Item::Proof::Completion::Hypoellip::EstTDeriv}
for \((\xi,\delta)\in \SSetOne\) we will apply
\cite[Corollary \ref*{Form::Cor::EstBdryInt::MainCor}]{StreetAPrioriEstimatesMaximallySubellipticQuadraticForms}
with \(\opL\), \(W\), \(\sigma\), \(a_{\alpha,\beta}\), and \(N\)
replaced with \(\opLOne\), \(V^{\xi,\delta,1}\), \(h_{\xi,\delta,1}\), \(a_{\alpha,\beta}^{\xi,\delta,1}\), and \(1\), respectively.
To prove 
Proposition \ref{Prop::Proof::Completion::Hypoellip}\ref{Item::Proof::Completion::Hypoellip::EstVDeriv},\ref{Item::Proof::Completion::Hypoellip::EstTDeriv}
for \((\xi,\delta)\in \SSetTwo\) we will apply
\cite[Corollary \ref*{Form::Cor::EstBdry::MainThm::CorNEquals1}]{StreetAPrioriEstimatesMaximallySubellipticQuadraticForms}
with \(\opL\), \(W\), \(\sigma\), and \(a_{\alpha,\beta}\) replaced with
\(\opLTwo\), \(V^{\xi,\delta,2}\), \(h_{\xi,\delta,2}\), and \(a_{\alpha,\beta}^{\xi,\delta,2}\), respectively.
Importantly, all the hypotheses of 
\cite[Corollaries \ref*{Form::Cor::EstBdry::MainThm::CorNEquals1} and \ref*{Form::Cor::EstBdryInt::MainCor}]{StreetAPrioriEstimatesMaximallySubellipticQuadraticForms}
hold uniformly in \((\xi,\delta)\), and therefore the conclusions also hold uniformly in \((\xi,\delta)\)
(as is explicitly stated in those results).
We turn to filling in the details.  We use the notation
\(\Cubeng{\delta}:=\left\{ (x',x_n)\in \Cuben{\delta} : x_n>0 \right\}\).

The relevance of \(\opLk{k}\), \(k=1,2\), comes from the next lemma.
\begin{lemma}\label{Lemma::Proof::Completion::PullBackOfopLisOpLk}
    For \(f\in \DomainL\),
    \begin{equation}\label{Eqn::Proof::Completion::PullBackOfopLisOpLk::One}
        \opLOne \left( f\circ \Phi_{\xi,\delta} \right) = \left(\delta^{2\kappa} \opL f \right)\circ \Phi_{\xi,\delta},\quad (\xi,\delta)\in \SSetOne,
    \end{equation}
    \begin{equation}\label{Eqn::Proof::Completion::PullBackOfopLisOpLk::Two}
        \opLTwo \left( f\circ \Psi_{\xi,\delta} \right) = \left(\delta^{2\kappa} \opL f \right)\circ \Psi_{\xi,\delta},\quad (\xi,\delta)\in \SSetTwo,
    \end{equation}
    where \eqref{Eqn::Proof::Completion::PullBackOfopLisOpLk::One} is interpreted in the sense
    of distributions and \eqref{Eqn::Proof::Completion::PullBackOfopLisOpLk::Two} is interpreted
    in the sense of \(\DistributionsZero\).
\end{lemma}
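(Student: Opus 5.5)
We prove \eqref{Eqn::Proof::Completion::PullBackOfopLisOpLk::One} and \eqref{Eqn::Proof::Completion::PullBackOfopLisOpLk::Two} by duality: test both sides against a test function, then use Lemma \ref{Lemma::Proof::Completion::PullBackQF} together with the characterization of \(\opL\) on \(\Omega\) from Remark \ref{Rmk::Result::opLIsDifferentialOperator}. We treat \eqref{Eqn::Proof::Completion::PullBackOfopLisOpLk::Two}, the boundary case; the interior case is the same argument with \(\Phi_{\xi,\delta}\), \(\Cuben{1}\), \(\FormQOne\), \(h_{\xi,\delta,1}\) in place of \(\Psi_{\xi,\delta}\), \(\Cubengeq{1}\), \(\FormQTwo\), \(h_{\xi,\delta,2}\), and with ordinary distributions in place of \(\DistributionsZero\).

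Fix \(f\in\DomainL\) and a test function \(G\in \TestFunctionsZero[\Cubengeq{1}][\C^M]\), meaning \(G\) is compactly supported in \(\Cubengeq{1}\) and vanishes to infinite order on \(x_n=0\). Set \(g:=G\circ\Psi_{\xi,\delta}^{-1}\), extended by zero.

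\textbf{Step 1: \(g\) is an admissible test function.} Theorem \ref{Thm::Scaling::MainScalingThm}\ref{Item::Scaling::MainScalingThm::Diffeomorphism} shows \(\Psi_{\xi,\delta}\) is a diffeomorphism onto an open subset of \(\Omega\) that maps \(\Cubenmo{1}\) into \(\BoundaryN\). Hence \(g\in\TestFunctionsZero[\Omega][\C^M]\). By Assumption \ref{Assumption::Result::SmoothWithCompactSupportInDomain}, \(g\in\DomainL\subseteq\DomainQ\), and \(\supp(g)\subseteq\Omega\).

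\textbf{Step 2: the duality identity.} Since \(\supp(g)\subseteq\Omega\), Assumption \ref{Assumption::Result::FormQEqualsFormQF} and \eqref{Eqn::Intro::OpLInTermsOfQ} give
\[
\Ltip{g}{\delta^{2\kappa}\opL f}=\delta^{2\kappa}\FormQ[g][f]=\delta^{2\kappa}\FormQF[g][f].
\]
Here \(f\big|_\Omega\in\HsWlocSpace{\kappa}\) by Assumption \ref{Assumption::Result::RestrictDomainQ}, so \(\FormQF[g][f]\) is well defined.

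\textbf{Step 3: pull both sides back to the cube.} For the right-hand side, apply Lemma \ref{Lemma::Proof::Completion::PullBackQF} with \(G_1=G\) (compactly supported) and \(G_2=f\circ\Psi_{\xi,\delta}\). This gives
\[
\delta^{2\kappa}\FormQF[g][f]=\Vol*[\BV{\xi}{\delta}]\,\FormQTwo[G][f\circ\Psi_{\xi,\delta}].
\]
For the left-hand side, change variables using \(\Psi_{\xi,\delta}^{*}d\Vol=h_{\xi,\delta,2}\Vol*[\BV{\xi}{\delta}]\,du\):
\[
\Ltip{g}{\delta^{2\kappa}\opL f}=\Vol*[\BV{\xi}{\delta}]\int_{\Cubengeq{1}} G\cdot\overline{\bigl((\delta^{2\kappa}\opL f)\circ\Psi_{\xi,\delta}\bigr)}\,h_{\xi,\delta,2}\,du.
\]

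\textbf{Step 4: integrate by parts in the form.} This is the only step needing care, and it is where the class of test functions matters. Because \(G\) vanishes to infinite order at \(x_n=0\), moving \((V^{\xi,\delta,2})^{\alpha}\) off \(G\) produces no boundary terms. This is the meaning of the formal \(\LtSpace*[\Cubengeq{1}]\) adjoint in \eqref{Eqn::Proof::Completion::DefineopLk}, and it is exactly why the identity is asserted in \(\DistributionsZero\) rather than as distributions on an open set. Doing so gives
\[
\FormQTwo[G][F]=\int G\cdot\overline{h_{\xi,\delta,2}\,\opLTwo F}\,du,\qquad F:=f\circ\Psi_{\xi,\delta}.
\]
The factor \(h_{\xi,\delta,2}\) appears because the weight \(1/h_{\xi,\delta,2}\) in \eqref{Eqn::Proof::Completion::DefineopLk} cancels against the \(h_{\xi,\delta,2}\) inside \(\FormQTwo\). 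Interpreted in \(\DistributionsZero\), this reads \(\Ltip{h_{\xi,\delta,2}G}{\opLTwo F}\).

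\textbf{Step 5: conclude.} Combining Steps 2–4 and cancelling \(\Vol*[\BV{\xi}{\delta}]\) yields
\[
\Ltip{h_{\xi,\delta,2}G}{\opLTwo F}=\Ltip{h_{\xi,\delta,2}G}{(\delta^{2\kappa}\opL f)\circ\Psi_{\xi,\delta}}.
\]
By Theorem \ref{Thm::Scaling::MainScalingThm}\ref{Item::Scaling::MainScalingThm::DensityUniformlyBounded},\ref{Item::Scaling::MainScalingThm::DensityUniformlyPos}, \(h_{\xi,\delta,2}\) is smooth and strictly positive. So \(G\mapsto h_{\xi,\delta,2}G\) is a bijection of \(\TestFunctionsZero[\Cubengeq{1}][\C^M]\) onto itself, and the identity above therefore holds against every test function. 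This is \eqref{Eqn::Proof::Completion::PullBackOfopLisOpLk::Two} in \(\DistributionsZero\).
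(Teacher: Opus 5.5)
Your proposal is correct and follows essentially the same route as the paper. You test against \(G\in\TestFunctionsZero[\Cubengeq{1}][\C^M]\), use Assumption \ref{Assumption::Result::SmoothWithCompactSupportInDomain} to place \(G\circ\Psi_{\xi,\delta}^{-1}\) in \(\DomainQ\), pass from \(\opL\) to \(\FormQ\) to \(\FormQF\) via Assumption \ref{Assumption::Result::FormQEqualsFormQF}, pull back with Lemma \ref{Lemma::Proof::Completion::PullBackQF}, and read off \(\opLTwo\) from its definition as a formal adjoint. The only difference is cosmetic: you move the weight \(h_{\xi,\delta,2}\) onto the test function and use that \(G\mapsto h_{\xi,\delta,2}G\) is a bijection of the test space, whereas the paper keeps the weight on the other side of the pairing.
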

\begin{proof}
    We prove \eqref{Eqn::Proof::Completion::PullBackOfopLisOpLk::Two}; the proof of \eqref{Eqn::Proof::Completion::PullBackOfopLisOpLk::One}
    is nearly identical.
    Let \(f\in \DomainL\) and 
    \(G\in \TestFunctionsZero[\Cubengeq{1}][\C^M]\).
    By Assumption \ref{Assumption::Result::SmoothWithCompactSupportInDomain}, we have
    \(G\circ \Psi_{\xi,\delta}^{-1}\in \DomainL\subseteq \DomainQ\).  
    Also, using Theorem \ref{Thm::Scaling::MainScalingThm}\ref{Item::Scaling::MainScalingThm::Diffeomorphism},
    we have \(\supp (G\circ \Psi_{\xi,\delta}^{-1})\subseteq \Omega\).
    Therefore, using \eqref{Eqn::Proof::Completion::PullBackQF::Two} and Assumption \ref{Assumption::Result::FormQEqualsFormQF},
    we have
    \begin{equation}\label{Eqn::Proof::Completion::PullBackOfopLisOpLk::Tmp1}
    \begin{split}
         &\Vol[\BV{\xi}{\delta}] \Ltip*{G}{ h_{\xi,\delta,2} \left( \delta^{2\kappa}\opL f \right)\circ \Psi_{\xi,\delta} }[\Rngeq]
         =\delta^{2\kappa}\Ltip*{G\circ \Psi_{\xi,\delta}^{-1}}{ \opL f}[\ManifoldN,\Vol]
         \\&=\delta^{2\kappa}\FormQ*[G\circ \Psi_{\xi,\delta}^{-1}][ f]
         =\delta^{2\kappa}\FormQF*[G\circ \Psi_{\xi,\delta}^{-1}][f]
         =\Vol[\BV{\xi}{\delta}] \FormQTwo[G][f \circ \Psi_{\xi,\delta}]
         \\&=\Vol[\BV{\xi}{\delta}] \Ltip*{G}{h_{\xi,\delta,2}\opLTwo \left( f \circ \Psi_{\xi,\delta} \right)}[\Rngeq],
    \end{split}
    \end{equation}
    where the last equality used the definitions of \(\FormQTwo\) and \(\opLTwo\)
    (see \eqref{Eqn::Proof::Completion::DefineFormQTwo} and \eqref{Eqn::Proof::Completion::DefineopLk}).
    Dividing both sides of \eqref{Eqn::Proof::Completion::PullBackOfopLisOpLk::Tmp1}
    by \(\Vol[\BV{\xi}{\delta}]\) gives
    \begin{equation*}
        \Ltip*{G}{ h_{\xi,\delta,2} \left( \delta^{2\kappa}\opL f \right)\circ \Psi_{\xi,\delta} }[\Rngeq]
        =\Ltip*{G}{h_{\xi,\delta,2}\opLTwo \left( f \circ \Psi_{\xi,\delta} \right)}[\Rngeq ],\quad
        \forall G\in \TestFunctionsZero[\Cubengeq{1}][\C^M],
    \end{equation*}
    completing the proof.
\end{proof}

\begin{lemma}\label{Lemma::Proof::Completion::EquivNorms}
    \(\forall G\in \LtSpace*[\Cuben{1}][\C^M]\), \(\forall (\xi,\delta)\in \SSetOne\),
    \begin{equation}\label{Eqn::Proof::Completion::EquivNorms::One}
    \begin{split}
            \Vol[\BV{\xi}{\delta}]^{1/2} \LtNorm*{G}[\Cuben{1}] 
            &\approx \Vol[\BV{\xi}{\delta}]^{1/2} \LtNorm*{G(y)}[\Cuben{1},h_{\xi,\delta,1}(y)dy]
            \\&= \LtNorm*{G\circ \Phi_{\xi,\delta}^{-1}}[\Phi_{\xi,\delta}\left( \Cuben{1} \right), d\Vol].
    \end{split}    
    \end{equation}
    Similarly, 
    \(\forall G\in \LtSpace*[\Cubengeq{1}][\C^M]\), \(\forall (\xi,\delta)\in \SSetTwo\),
    \begin{equation}\label{Eqn::Proof::Completion::EquivNorms::Two}
    \begin{split}
            \Vol[\BV{\xi}{\delta}]^{1/2} \LtNorm*{G}[\Cubengeq{1}] 
            &\approx \Vol[\BV{\xi}{\delta}]^{1/2} \LtNorm*{G(y)}[\Cubengeq{1},h_{\xi,\delta,2}(y)dy]
            \\&= \LtNorm*{G\circ \Psi_{\xi,\delta}^{-1}}[\Psi_{\xi,\delta}\left( \Cubengeq{1} \right), d\Vol].
    \end{split}    
    \end{equation}
    Here, the implicit constants do not depend on \(\xi\), \(\delta\), or \(G\).
\end{lemma}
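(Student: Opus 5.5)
The plan is to verify the equality by a change of variables, and then obtain the two-sided comparison from the uniform bounds on the densities \(h_{\xi,\delta,k}\) in Theorem \ref{Thm::Scaling::MainScalingThm}. I treat \eqref{Eqn::Proof::Completion::EquivNorms::Two} in detail. The proof of \eqref{Eqn::Proof::Completion::EquivNorms::One} is identical after replacing \(\Psi_{\xi,\delta}\), \(\Cubengeq{1}\) and \(h_{\xi,\delta,2}\) by \(\Phi_{\xi,\delta}\), \(\Cuben{1}\) and \(h_{\xi,\delta,1}\).

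For the equality, I use Theorem \ref{Thm::Scaling::MainScalingThm}\ref{Item::Scaling::MainScalingThm::Diffeomorphism}: \(\Psi_{\xi,\delta}\) is a diffeomorphism of \(\Cubengeq{1}\) onto its image. By definition, \(\Psi_{\xi,\delta}^{*}d\Vol = h_{\xi,\delta,2}(y)\Vol[\BV{\xi}{\delta}]\,dy\). The change of variables \(\eta=\Psi_{\xi,\delta}(y)\) therefore gives
\begin{equation*}
    \int_{\Psi_{\xi,\delta}(\Cubengeq{1})} \left| G\circ\Psi_{\xi,\delta}^{-1}(\eta)\right|^2\, d\Vol[\eta]
    = \Vol[\BV{\xi}{\delta}] \int_{\Cubengeq{1}} |G(y)|^2 h_{\xi,\delta,2}(y)\, dy .
\end{equation*}
Taking square roots yields the stated equality, with no constants involved. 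This is valid for all \(G\in\LtSpace*[\Cubengeq{1}][\C^M]\): apply it first to nonnegative measurable \(|G|^2\) via the change-of-variables formula for densities, and the identity then holds in \([0,\infty]\).

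For the \(\approx\), I use Theorem \ref{Thm::Scaling::MainScalingThm}\ref{Item::Scaling::MainScalingThm::DensityUniformlyBounded}. The set \(\{h_{\xi,\delta,2}\}\) is bounded in \(\CbinftySpace[\Cubengeq{1}]\), so in particular \(\sup_{(\xi,\delta)\in\SSetTwo}\sup_y h_{\xi,\delta,2}(y)=:C<\infty\). Theorem \ref{Thm::Scaling::MainScalingThm}\ref{Item::Scaling::MainScalingThm::DensityUniformlyPos} gives \(\inf_{(\xi,\delta)\in\SSetTwo}\inf_y h_{\xi,\delta,2}(y)=:c>0\). Hence
\begin{equation*}
    c\LtNorm{G}[\Cubengeq{1}]^2\leq \LtNorm{G}[\Cubengeq{1},h_{\xi,\delta,2}dy]^2\leq C\LtNorm{G}[\Cubengeq{1}]^2 .
\end{equation*}
Multiplying by \(\Vol[\BV{\xi}{\delta}]\) and taking square roots gives the comparison. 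The constants \(c^{1/2}\) and \(C^{1/2}\) are independent of \(\xi\), \(\delta\) and \(G\).

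There is no real obstacle here. The only point requiring care is the uniformity of the constants, and this is supplied exactly by the two uniform density statements of Theorem \ref{Thm::Scaling::MainScalingThm}.
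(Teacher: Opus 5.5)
Your proposal is correct and follows the same route as the paper: the equality is a direct change of variables using the defining relation \(\Psi_{\xi,\delta}^{*}d\Vol = h_{\xi,\delta,2}\Vol[\BV{\xi}{\delta}]\,dy\) (respectively with \(\Phi_{\xi,\delta}\) and \(h_{\xi,\delta,1}\)). The uniform comparison comes from \(h_{\xi,\delta,k}\approx 1\), which is supplied by Theorem \ref{Thm::Scaling::MainScalingThm}\ref{Item::Scaling::MainScalingThm::DensityUniformlyBounded} and \ref{Item::Scaling::MainScalingThm::DensityUniformlyPos}.
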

\begin{proof}
    The \(\approx\) in \eqref{Eqn::Proof::Completion::EquivNorms::One}
    and \eqref{Eqn::Proof::Completion::EquivNorms::Two} follows
    from the fact that \(h_{\xi,\delta,k}\approx 1\), \(k=1,2\), by
    Theorem \ref{Thm::Scaling::MainScalingThm}\ref{Item::Scaling::MainScalingThm::DensityUniformlyBounded},\ref{Item::Scaling::MainScalingThm::DensityUniformlyPos}.
    The \(=\) in \eqref{Eqn::Proof::Completion::EquivNorms::One}
    and \eqref{Eqn::Proof::Completion::EquivNorms::Two} follows from a straightforward change of variables.
\end{proof}

\begin{lemma}\label{Lemma::Proof::Completion::InternalMaxSubEst}
    \cite[Assumptions \ref*{Form::Assumption::EstBdryInt::MaxSubAssump} and \ref*{Form::Assumption::EstBdry::MaxSubOnInterior}]{StreetAPrioriEstimatesMaximallySubellipticQuadraticForms}
    hold.  Namely, \(\exists D_1,D_2\geq 0\) such that the following statements hold.
    \begin{enumerate}[(i)]
        \item\label{Item::Proof::Completion::InternalMaxSubEst::One} \(\forall (\xi,\delta)\in \SSetOne\cap \left( \CompactZero\times (0,\delta_0] \right)\), \(\forall G\in \CinftycptSpace*[\Cuben{1/2}][\C^M]\),
            \begin{equation}\label{Eqn::Proof::Completion::InternalMaxSubEst::One}
                \sum_{|\alpha|=\kappa}
                \LtNorm*{ \left( V^{\xi,\delta,1} \right)^{\alpha} G }[\Rn]^2
                \leq D_1 \Real \Ltip*{G}{h_{\xi,\delta,1} \opLOne G}[\Rn]
                +D_2 \LtNorm*{G}[\Rn]^2.
            \end{equation}

        \item\label{Item::Proof::Completion::InternalMaxSubEst::Two} \(\forall (\xi,\delta)\in \SSetTwo\cap \left( \CompactZero\times (0,\delta_0] \right)\), \(\forall G\in \CinftycptSpace*[\Cubeng{1/2}][\C^M]\),
            \begin{equation}\label{Eqn::Proof::Completion::InternalMaxSubEst::Two}
                \sum_{|\alpha|=\kappa}
                \LtNorm*{ \left( V^{\xi,\delta,2} \right)^{\alpha} G }[\Rngeq]^2
                \leq D_1\Real  \FormQTwo[G][G]
                +D_2 \LtNorm*{G}[\Rngeq]^2.
            \end{equation}
    \end{enumerate}
\end{lemma}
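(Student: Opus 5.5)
We will derive both estimates by pulling back the interior maximally subelliptic estimate from Remark \ref{Rmk::Result::InteriorMaxSubEstimates} (for \ref{Item::Proof::Completion::InternalMaxSubEst::One}) and its boundary analogue from Remark \ref{Rmk::Result::MaxSubEstimatIsRestrictDomain} (for \ref{Item::Proof::Completion::InternalMaxSubEst::Two}) through the scaling maps $\Phi_{\xi,\delta}$ and $\Psi_{\xi,\delta}$. The only real work is tracking powers of $\delta$ and factors of $\Vol[\BV{\xi}{\delta}]$.

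For \ref{Item::Proof::Completion::InternalMaxSubEst::One}, take $G\in \CinftycptSpace[\Cuben{1/2}][\C^M]$ and set $g=G\circ\Phi_{\xi,\delta}^{-1}$. Then $g$ is supported in $\Phi_{\xi,\delta}(\Cuben{1})\subseteq \BV{\xi}{\delta/2}$. Since $\xi\in\CompactZero$ and $\delta\le\delta_0$, Lemma \ref{Lemma::Proof::Completion::ExistsMetricAndDefineDelta0} gives $\supp g \subseteq U$, and $g$ vanishes near $\BoundaryN$ because $\Phi_{\xi,\delta}(\Cuben{1})$ is open. Hence $g\in \TestFunctionsZero[U][\C^M]\subseteq\DomainL$.

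\begin{itemize}
\item By Lemma \ref{Lemma::Proof::Completion::PullBackOfopLisOpLk} and Lemma \ref{Lemma::Proof::Completion::PullBackQF}, the right-hand side becomes $\Real\Ltip{G}{h_{\xi,\delta,1}\opLOne G}=\Vol[\BV{\xi}{\delta}]^{-1}\delta^{2\kappa}\Real\FormQ[g][g]$.
\item Apply the maximally subelliptic estimate to $g$ and absorb $\Imaginary\FormQ$ using sectoriality \eqref{Eqn::Result::Sectorial}, recalling $\gamma=0$. This gives $\sum_{|\alpha|=\kappa}\LtNorm{V^\alpha g}^2\le C(\Real\FormQ[g][g]+\LtNorm{g}^2)$. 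The estimate is stated for $W$; it transfers to $V$ because the two generate the same module on $\ManifoldNOne$.
\item Multiply by $\delta^{2\kappa}\le1$ and use $(\delta V)^\alpha g=((V^{\xi,\delta,1})^\alpha G)\circ\Phi^{-1}_{\xi,\delta}$ together with Lemma \ref{Lemma::Proof::Completion::EquivNorms}. Dividing by $\Vol[\BV{\xi}{\delta}]$ yields \eqref{Eqn::Proof::Completion::InternalMaxSubEst::One}, with the lower-order term $\delta^{2\kappa}\LtNorm{g}^2\lesssim \Vol[\BV{\xi}{\delta}]\LtNorm{G}^2$.
\end{itemize}

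The point to check is that the constant $C$ is uniform. It is, because it is the single constant $C_0$ from Remark \ref{Rmk::Result::InteriorMaxSubEstimates}, valid for all test functions supported in $U$, and it does not depend on $(\xi,\delta)$.

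Part \ref{Item::Proof::Completion::InternalMaxSubEst::Two} is identical with $\Psi_{\xi,\delta}$ in place of $\Phi_{\xi,\delta}$. For $G\in\CinftycptSpace[\Cubeng{1/2}]$, the pushforward $g$ is compactly supported in the interior of $\ManifoldN$, since $\Psi_{\xi,\delta}$ maps $\{x_n=0\}$ into $\BoundaryN$. So again $g\in\TestFunctionsZero[U][\C^M]$. We then use \eqref{Eqn::Proof::Completion::PullBackQF::Two} directly, which gives $\FormQTwo[G][G]=\delta^{2\kappa}\Vol[\BV{\xi}{\delta}]^{-1}\FormQ[g][g]$ via Assumption \ref{Assumption::Result::FormQEqualsFormQF}.

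The main obstacle is the lower-order term. The unscaled estimate controls only $\LtNorm{g}^2$ with a fixed constant, while after scaling we need $D_2$ independent of $\delta$. This works out because $\delta^{2\kappa}\le 1$ multiplies the $L^2$ term, which only helps.
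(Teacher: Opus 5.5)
Your proposal is correct and follows essentially the same route as the paper. You transfer the interior estimate of Remark \ref{Rmk::Result::InteriorMaxSubEstimates} from $W$ to $V$, use sectoriality and Assumption \ref{Assumption::Result::FormQEqualsFormQF} to replace $|\FormQ[g][g]|$ by $\Real\FormQF[g][g]$, multiply by $\delta^{2\kappa}\leq 1$, and pull back with Lemmas \ref{Lemma::Proof::Completion::PullBackQF} and \ref{Lemma::Proof::Completion::EquivNorms}, with uniformity coming from the single constant in that remark. The only slips are in two justifications: $g$ avoids $\BoundaryN$ because $\Phi_{\xi,\delta}$ (and $\Psi_{\xi,\delta}$ restricted to $\Cubeng{1}$) is a diffeomorphism from a set with no boundary points, not merely because its image is open; and in (ii) you only need the interior estimate, not the boundary version your first sentence mentions.
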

\begin{proof}
    Using \eqref{Eqn::Proof::Completion::DefineFormQOne} and \eqref{Eqn::Proof::Completion::DefineopLk}, we see
    \begin{equation*}
        \Ltip*{G}{h_{\xi,\delta,1} \opLOne G}[\Rn]=\FormQOne[G][G],\quad \forall G\in \CinftycptSpace*[\Cuben{1/2}][\C^M].
    \end{equation*}
    Thus, \eqref{Eqn::Proof::Completion::InternalMaxSubEst::One} can be replaced with
    \begin{equation*}
        \sum_{|\alpha|=\kappa}
                \LtNorm*{ \left( V^{\xi,\delta,1} \right)^{\alpha} G }[\Rn]^2
                \leq D_1 \Real \FormQOne[G][G]
                +D_2 \LtNorm*{G}[\Rn]^2.
    \end{equation*}
    After making this replacement, the proofs of 
    \ref{Item::Proof::Completion::InternalMaxSubEst::One}
    and \ref{Item::Proof::Completion::InternalMaxSubEst::Two}
    are nearly identical, so we prove only \ref{Item::Proof::Completion::InternalMaxSubEst::Two}.

    Remark \ref{Rmk::Result::InteriorMaxSubEstimates}
    (along with the fact that the \(\CinftySpace[\ManifoldNOne][\R]\)-module generated by \(V_0,\ldots, V_r\)
    is the same as the \(\CinftySpace[\ManifoldNOne][\R]\)-module generated by \(W_1,\ldots, W_r\))
    shows \(\forall f\in \TestFunctionsZero[U][\C^M]\),
    \begin{equation}\label{Eqn::Proof::Completion::InternalMaxSubEst::Tmp1}
        \sum_{|\alpha|\leq \kappa} \LtNorm*{V^{\alpha} f}[\ManifoldN,\Vol]^2
        \approx \sum_{|\alpha|\leq \kappa} \LtNorm*{W^{\alpha} f}[\ManifoldN,\Vol]^2
        \lesssim \left| \FormQ[f][f] \right| + \LtNorm{f}[\ManifoldN,\Vol]^2.
    \end{equation}
    Because \(\FormQ\) is sectorial (see \eqref{Eqn::Result::Sectorial}),
    \eqref{Eqn::Proof::Completion::InternalMaxSubEst::Tmp1} implies
    \(\exists C_1,C_2\geq 0\),
    \begin{equation}\label{Eqn::Proof::Completion::InternalMaxSubEst::Tmp2}
        \sum_{|\alpha|\leq \kappa} \LtNorm*{V^{\alpha} f}[\ManifoldN,\Vol]^2\leq C_1\Real \FormQ[f][f] + C_2 \LtNorm{f}[\ManifoldN,\Vol]^2,
        \quad \forall f\in \TestFunctionsZero[U][\C^M].
    \end{equation}
    Using Assumption \ref{Assumption::Result::FormQEqualsFormQF}, we have
    \(\FormQ[f][f]=\FormQF[f][f]\), \(\forall f\in \TestFunctionsZero[U][\C^M]\subseteq  \TestFunctionsZero[\Omega][\C^M]\),
    and therefore \eqref{Eqn::Proof::Completion::InternalMaxSubEst::Tmp2} 
    implies 
    \begin{equation}\label{Eqn::Proof::Completion::InternalMaxSubEst::Tmp3}
        \sum_{|\alpha|\leq \kappa} \LtNorm*{V^{\alpha} f}[\ManifoldN,\Vol]^2\leq C_1\Real \FormQF[f][f] + C_2 \LtNorm{f}[\ManifoldN,\Vol]^2,
        \quad \forall f\in \TestFunctionsZero[U][\C^M].
    \end{equation}
    Multiplying both sides of \eqref{Eqn::Proof::Completion::InternalMaxSubEst::Tmp3}
    by \(\delta^{2\kappa}\), for \(\delta\in (0,\delta_0]\) (and restricting the sum to \(|\alpha|=\kappa\))
    gives
    \begin{equation}\label{Eqn::Proof::Completion::InternalMaxSubEst::Tmp4}
        \begin{split}
            &\sum_{|\alpha|= \kappa} \LtNorm*{\left( \delta V \right)^{\alpha} f}[\ManifoldN,\Vol]^2
            \leq C_1\Real \delta^{2\kappa}\FormQF[f][f] + C_2 \delta^{2\kappa}\LtNorm{f}[\ManifoldN,\Vol]^2
            \\&\leq C_1\Real \delta^{2\kappa}\FormQF[f][f] + C_2 \LtNorm{f}[\ManifoldN,\Vol]^2,\quad \forall f\in \TestFunctionsZero[U][\C^M].
        \end{split}
    \end{equation}

    Let \((\xi,\delta)\in \SSetTwo\cap \left( \CompactZero\times (0,\delta_0] \right)\)
    and \(G\in \CinftycptSpace*[\Cubeng{1/2}][\C^M]\).
    Set \(f:=G\circ \Psi_{\xi,\delta}^{-1}\). 
    Note that, by Theorem \ref{Thm::Scaling::MainScalingThm},
    \(\Psi_{\xi,\delta}(\Cubengeq{1})\subseteq \BV{\xi}{\delta/2}\subseteq \BV{\xi}{\delta}\subseteq \BV{\xi}{\delta_0}\),
    and by Lemma \ref{Lemma::Proof::Completion::ExistsMetricAndDefineDelta0}\ref{Item::Proof::Completion::ExistsMetricAndDefineDelta0::CompactZeroCompact},
    \(\BV{\xi}{\delta_0}\subseteq U\). Thus, \(f\in \CinftycptSpace[U\cap \InteriorN][\C^M]\subseteq \TestFunctionsZero[U][\C^M]\).
    Applying \eqref{Eqn::Proof::Completion::InternalMaxSubEst::Tmp4}
    with this choice of \(f\), and freely using Lemma \ref{Lemma::Proof::Completion::EquivNorms}, we see
    \begin{equation}\label{Eqn::Proof::Completion::InternalMaxSubEst::Tmp5}
    \begin{split}
        & \Vol[\BV{\xi}{\delta}]\sum_{|\alpha|=\kappa} \LtNorm*{ \left( V^{\xi,\delta,2} \right)^{\alpha}G}[\Rngeq]^2
        \approx \Vol[\BV{\xi}{\delta}]\sum_{|\alpha|=\kappa} \LtNorm*{ \left( V^{\xi,\delta,2} \right)^{\alpha} G(y)}[\Rngeq][h_{\xi,\delta,2}(y)dy]^2
        \\&=\sum_{|\alpha|=\kappa}
        \Vol[\BV{\xi}{\delta}]\LtNorm*{ \left( \left( \delta  V\right)^{\alpha} f \right)\circ \Psi_{\xi,\delta}(y)}[\Rngeq][h_{\xi,\delta,2}(y)dy]^2
        =\sum_{|\alpha|= \kappa} \LtNorm*{\left( \delta V \right)^{\alpha} f}[\ManifoldN,\Vol]^2
        \\&\leq C_1\Real \delta^{2\kappa}\FormQF[f][f] + C_2 \LtNorm{f}[\ManifoldN,\Vol]^2
        =C_1\Real \Vol[\BV{\xi}{\delta}] \FormQTwo[G][G] + C_2 \LtNorm{f}[\ManifoldN,\Vol]^2,
    \end{split}    
    \end{equation}
    where the final equality used \eqref{Eqn::Proof::Completion::PullBackQF::Two}.
    Using Lemma \ref{Lemma::Proof::Completion::EquivNorms} again shows
    \begin{equation}\label{Eqn::Proof::Completion::InternalMaxSubEst::Tmp6}
        \LtNorm{f}[\ManifoldN,\Vol]^2\approx \Vol[\BV{\xi}{\delta}] \LtNorm*{G}[\Rngeq]^2.
    \end{equation}
    Using \eqref{Eqn::Proof::Completion::InternalMaxSubEst::Tmp6} to estimate the
    right-hand side of \eqref{Eqn::Proof::Completion::InternalMaxSubEst::Tmp5}
    shows \eqref{Eqn::Proof::Completion::InternalMaxSubEst::Two} holds, completing the proof.
\end{proof}

Fix \(\theta(t)\in \CinftycptSpace[(-15/32,15/32)]\) with \(\theta=1\) on \((-7/16,7/16)\) and \(0\leq \theta\leq 1\).
Set, for \((\xi,\delta)\in \SSetTwo\),
\begin{equation}\label{Eqn::Proof::Completion::DefineDSet}
    \DSet:=\left\{ \theta(t) u\left( t,\Psi_{\xi,\delta}(x) \right) : u:(-1/2,1/2)\rightarrow \DomainLinfty\text{ is a }\left( -\opL, \xi,\delta \right)\text{-heat function} \right\}.
\end{equation}
In our application of \cite[Corollary \ref*{Form::Cor::EstBdry::MainThm::CorNEquals1}]{StreetAPrioriEstimatesMaximallySubellipticQuadraticForms}
we will take \(\mathcal{D}_1\) in that result equal to \(\DSet\).

\begin{lemma}\label{Lemma::Proof::Completion::DSetAreSmooth}
    For \((\xi,\delta)\in  \SSetTwo\cap\left( \CompactZero\times (0,\delta_0] \right)\),
    \(\DSet\subseteq \CinftycptSpace*[(-15/32,15/32)][\CinftySpace*[\Cubengeq{1}][\C^M]]\).
\end{lemma}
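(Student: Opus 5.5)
The plan is to use Lemma \ref{Lemma::Proof::Completion::HeatFuncsAreSmooth} together with the fact that \(\Psi_{\xi,\delta}\) is a smooth diffeomorphism onto a subset of \(\BV{\xi}{\delta}\). Let \(u:(-1/2,1/2)\rightarrow\DomainLinfty\) be a \((-\opL,\xi,\delta)\)-heat function and set \(F(t,x):=\theta(t)u(t,\Psi_{\xi,\delta}(x))\).

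First, since \((\xi,\delta)\in \CompactZero\times(0,\delta_0]\), Lemma \ref{Lemma::Proof::Completion::HeatFuncsAreSmooth} gives \(u\big|_{(-1/2,1/2)\times\BV{\xi}{\delta}}\in\CinftySpace[(-1/2,1/2)\times \BV{\xi}{\delta}][\C^M]\). Theorem \ref{Thm::Scaling::MainScalingThm}\ref{Item::Scaling::MainScalingThm::Containments} and \ref{Item::Scaling::MainScalingThm::Diffeomorphism} show \(\Psi_{\xi,\delta}(\Cubengeq{1})\subseteq\BV{\xi}{\delta/2}\subseteq\BV{\xi}{\delta}\), with \(\Psi_{\xi,\delta}\) a \(\CinftySpace\)-diffeomorphism of manifolds with boundary. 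Hence \((t,x)\mapsto u(t,\Psi_{\xi,\delta}(x))\) is jointly smooth on \((-1/2,1/2)\times\Cubengeq{1}\). Smoothness is understood up to the boundary \(x_n=0\), which is legitimate because \(\BV{\xi}{\delta}\) is an open subset of the manifold with boundary \(\ManifoldN\).

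Next, multiplying by \(\theta\), whose support lies in \([-15/32,15/32]\subset(-1/2,1/2)\), shows \(F\in\CinftySpace[\R\times\Cubengeq{1}][\C^M]\) with \(t\)-support in a compact subset of \((-15/32,15/32)\).

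The only point requiring care is converting joint smoothness into smoothness as a map \(t\mapsto F(t,\cdot)\) into the Fréchet space \(\CinftySpace[\Cubengeq{1}][\C^M]\). That space carries the topology of uniform convergence of all derivatives on compact subsets of \(\Cubengeq{1}\). This conversion is the standard fact that a jointly smooth function on a product is a smooth map into \(C^\infty\) of the second factor. It follows from Taylor's theorem with integral remainder in \(t\), using local uniform bounds on the mixed derivatives \(\partial_t^j\partial_x^\beta F\) over \([-15/32,15/32]\times\) compact sets, which show that difference quotients converge in each seminorm. This step is routine and I expect no real obstacle. The main input is Lemma \ref{Lemma::Proof::Completion::HeatFuncsAreSmooth}, which in turn rests on the subelliptic estimates of Lemma \ref{Lemma::Proof::Subellip::DerivOfHeatIsSmooth}.
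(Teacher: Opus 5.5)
Your proposal is correct and follows the same route as the paper. You apply Lemma~\ref{Lemma::Proof::Completion::HeatFuncsAreSmooth} on \((-1/2,1/2)\times \BV{\xi}{\delta}\), use \(\Psi_{\xi,\delta}(\Cubengeq{1})\subseteq \BV{\xi}{\delta}\) from Theorem~\ref{Thm::Scaling::MainScalingThm} to get joint smoothness of \(u(t,\Psi_{\xi,\delta}(x))\), and then multiply by \(\theta\); the only difference is that you spell out the passage from joint smoothness to smoothness as a map into \(\CinftySpace[\Cubengeq{1}][\C^M]\), which the paper treats as immediate.
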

\begin{proof}
    Let \(u:(-1/2,1/2)\rightarrow \DomainLinfty\) be a \(\left( -\opL, \xi,\delta \right)\)-heat function.
    Lemma \ref{Lemma::Proof::Completion::HeatFuncsAreSmooth}
    shows \(u\big|_{(-1/2,1/2)\times \BV{\xi}{\delta}}\in \CinftySpace*[(-1/2,1/2)\times \BV{\xi}{\delta}][\C^M]\).
    Theorem \ref{Thm::Scaling::MainScalingThm} shows
    \(\Psi_{\xi,\delta}(\Cubengeq{1})\subseteq \BV{\xi}{\delta}\),
    and therefore, \((t,x)\mapsto u(t,\Psi_{\xi,\delta}(x))\in \CinftySpace*[(-1/2,1/2)\times \Cubengeq{1}][\C^M]\).
    Since \(\theta(t)\in \CinftycptSpace[(-15/32,15/32)]\),
    it follows that \((t,x)\mapsto\theta(t)u(t,\Psi_{\xi,\delta}(x))\in \CinftycptSpace*[(-15/32,15/32)][\CinftySpace*[\Cubengeq{1}][\C^M]]\),
    completing the proof.
\end{proof}

\begin{lemma}\label{Lemma::Proof::Completion::VerifyDerivsAssumptions}
    \cite[Assumption \ref*{Form::Assumption::EstBdry::MaxSubOnSmoothing}\ref*{Form::Item::EstBdry::MaxSubOnSmoothing::InHkWOnCubegeqOneHalf},\ref*{Form::Item::EstBdry::MaxSubOnSmoothing::dtplusOplOnDIsFunction}]{StreetAPrioriEstimatesMaximallySubellipticQuadraticForms}
    hold. Namely,
    \(\forall U(t,x)\in \DSet\), 
    \begin{equation*}
        U\big|_{\R\times \Cubengeq{1/2}}\in \LtSpace*[\R][\HsVTwoSpace{\kappa}[\Cubengeq{1/2}][\C^M]],
    \end{equation*}
    where \(\HsVTwoSpace{\kappa}\) is defined in \eqref{Eqn::Result::DefineHsWSpace}.
    Moreover, \(\opLTwo U\big|_{\R\times \Cubengeq{1/2}}\), when taken in the sense of distributions,
    agrees with a function in \(\LtSpace*[\R \times \Cubengeq{1/2}][\C^M]\).
\end{lemma}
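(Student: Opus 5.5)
The plan is to take \(U=\theta(t)u(t,\Psi_{\xi,\delta}(x))\in\DSet\), with \(u\) a \((-\opL,\xi,\delta)\)-heat function on \((-1/2,1/2)\), and to obtain both claims from the smoothness of \(u\) on \(\BV{\xi}{\delta}\). We may take \((\xi,\delta)\in\SSetTwo\cap(\CompactZero\times(0,\delta_0])\), as in Lemma \ref{Lemma::Proof::Completion::DSetAreSmooth}.

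By Lemma \ref{Lemma::Proof::Completion::DSetAreSmooth}, \(U\in\CinftycptSpace*[(-15/32,15/32)][\CinftySpace*[\Cubengeq{1}][\C^M]]\). In fact, the proof of that lemma shows that \(U\) is jointly smooth on \(\R\times\Cubengeq{1}\), and it vanishes for \(|t|\ge 15/32\). The set \(\overline{\Cubengeq{1/2}}\) is a compact subset of \(\Cubengeq{1}\). Consequently, all \(x\)-derivatives of \(U\) are bounded on \(\R\times\Cubengeq{1/2}\), which is a set of finite measure.

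For the first claim, the vector fields \(V^{\xi,\delta,2}_j\) are smooth on \(\Cubengeq{1}\). Hence each \((V^{\xi,\delta,2})^\alpha U\), for \(|\alpha|\le\kappa\), is a bounded continuous function on \(\R\times\Cubengeq{1/2}\) that is compactly supported in \(t\). This gives \(U|_{\R\times\Cubengeq{1/2}}\in\LtSpace*[\R][\HsVTwoSpace{\kappa}[\Cubengeq{1/2}][\C^M]]\). Strong measurability in \(t\) holds because \(U\) is continuous into \(\HsVTwoSpace{\kappa}\); this follows from uniform continuity of the derivatives on compact sets.

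For the second claim, \(\opLTwo\) is a differential operator with smooth coefficients on \(\Cubengeq{1}\). Indeed, \(h_{\xi,\delta,2}>0\) is smooth, and the formal adjoints of the \(V^{\xi,\delta,2}_j\) are smooth first-order operators. Since \(U\) is smooth up to the boundary, the distribution \(\opLTwo U\) in \(\DistributionsZero\) on \(\Cubeng{1/2}\) is the classical smooth function obtained by applying the operator pointwise. On \(\R\times\Cubengeq{1/2}\) this function is bounded and compactly supported in \(t\), so it lies in \(L^2\).

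We will also record the identity \(\opLTwo U=\theta(t)\bigl(\delta^{2\kappa}\opL u\bigr)\circ\Psi_{\xi,\delta}\), which follows from Lemma \ref{Lemma::Proof::Completion::PullBackOfopLisOpLk} applied to \(u(t)\in\DomainL\). This identity may be needed downstream, and it is consistent with the claim, since the right side equals \(-\theta\,\partial_t u\circ\Psi_{\xi,\delta}\) by Definition \ref{Defn::Proof::Completion::HeatFunction}\ref{Item::Proof::Completion::HeatFunction::tDerivIsOpLDeriv}, and that is smooth.

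The main subtlety is to interpret "in the sense of distributions" correctly on the half-space, namely as \(\DistributionsZero\), that is, tested against functions vanishing to infinite order at \(x_n=0\). With this interpretation, no boundary terms arise, and the classical and distributional actions coincide. Once that is settled, everything reduces to the smoothness already established.
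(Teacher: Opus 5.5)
Your proposal is correct and follows the paper's argument. The paper also deduces both claims directly from Lemma \ref{Lemma::Proof::Completion::DSetAreSmooth}: \(U\), and hence \(\opLTwo U\), is smooth up to \(x_n=0\) and compactly supported in \(t\). You have filled in the routine details the paper leaves implicit (compactness of \(\overline{\Cubengeq{1/2}}\), strong measurability in \(t\), absence of boundary terms), and your extra identity via Lemma \ref{Lemma::Proof::Completion::PullBackOfopLisOpLk} is correct but not needed.
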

\begin{proof}
    This follows immediately from Lemma \ref{Lemma::Proof::Completion::DSetAreSmooth}.
    In fact, that lemma shows \(U\in \CinftycptSpace*[(-15/32,15/32)][\CinftySpace*[\Cubengeq{1}][\C^M]]\),
    and therefore \(\opLTwo U \in \CinftycptSpace*[(-15/32,15/32)][\CinftySpace*[\Cubengeq{1}][\C^M]]\) as well.
    The result follows.
\end{proof}


\begin{lemma}\label{Lemma::Proof::Completion::FinalAssumptions}
    \cite[Assumption \ref*{Form::Assumption::EstBdry::MaxSubOnSmoothing}\ref*{Form::Item::EstBdry::MaxSubOnSmoothing::MaxSubEstimate},\ref*{Form::Item::EstBdry::MaxSubOnSmoothing::FormQHGivesOp}]{StreetAPrioriEstimatesMaximallySubellipticQuadraticForms}
    hold. Namely, for \(U_1(t,x),U_2(t,x):\R\times \Cubengeq{1}\rightarrow \C^M\), set (whenever it makes sense)
    \begin{equation*}
        \FormQHTwo[U_1][U_2]:=\int \FormQTwo[U_1(t,\cdot)][U_2(t,\cdot)]\: dt + \Ltip*{U_1(t,x)}{h_{\xi,\delta,2}(x) \partial_t U_2(t,x)}[\Ropngeq][\C^M].
    \end{equation*}
    We have
    \begin{enumerate}[(i)]
        \item\label{Item::Proof::Completion::FinalAssumptions::MaxSub} \(\exists E_1,E_2\geq 0\), \(\forall (\xi,\delta)\in \SSetTwo\cap \left( \CompactZero\times (0,\delta_0] \right)\),
             \(\forall U\in \DSet\), \(\forall S\) of the form \eqref{Eqn::Results::SOperator}, we have
             \begin{equation*}
                \sum_{|\alpha|=\kappa}
                \LtNorm*{\left( V^{\xi,\delta,2} \right)^{\alpha} S U}[\Ropngeq]^2
                \leq E_1 \Real \FormQHTwo[S U][S U] + E_2 \LtNorm{SU}[\Ropngeq]^2.
             \end{equation*}
        \item\label{Item::Proof::Completion::FinalAssumptions::FormToInnerProd} \(\forall (\xi,\delta)\in \SSetTwo\cap \left( \CompactZero\times (0,\delta_0] \right)\),
             \(\forall U\in \DSet\), \(\forall S\) of the form \eqref{Eqn::Results::SOperator},
             \begin{equation*}
                \FormQHTwo[S U][U] = \Ltip*{SU}{ h_{\xi,\delta,2} \left( \partial_t+\opLTwo \right)U}[\Ropngeq].
             \end{equation*}
    \end{enumerate}
\end{lemma}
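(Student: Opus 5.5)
The plan is to pull everything back from \(\Cubengeq{1}\) to \(\ManifoldN\) via \(\Psi_{\xi,\delta}\), apply the global form identities and Lemma \ref{Lemma::Result::BoundaryMaxSubNewLemma} there, and push the result forward again. Lemmas \ref{Lemma::Proof::Completion::PullBackQF} and \ref{Lemma::Proof::Completion::EquivNorms} make the constants uniform in \((\xi,\delta)\).

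Fix \((\xi,\delta)\in \SSetTwo\cap(\CompactZero\times(0,\delta_0])\), a heat function \(u\) with \(U=\theta(t)u(t,\Psi_{\xi,\delta}(x))\in\DSet\), and \(S\) of the form \eqref{Eqn::Results::SOperator}. Since \(K\) is supported in \(\Cubengeq{1/2}\times\Cubenmo{1/2}\), the function \(SU\) depends only on \(U\) over \(\Cubengeq{1/2}\). Choose a cutoff \(\chi\in \CinftycptSpace[\R\times \Cubengeq{1}]\) equal to \(1\) near \(\supp\theta\times\overline{\Cubengeq{1/2}}\) and depending only on \((t,x)\). Set \(\tilde u(t,\cdot):=\theta(t)u(t)\). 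Then \(SU=\Psi_{\xi,\delta}^{*}(T_{\xi,\delta}\tilde u)\) with \(T_{\xi,\delta}=(\Psi_{\xi,\delta})_*S\Psi_{\xi,\delta}^*\). Here \(\tilde u\in \LtSpace[\R][\DomainL]\) by Definition \ref{Defn::Proof::Completion::HeatFunction}\ref{Item::Proof::Completion::HeatFunction::L2loc}, using \(\theta\in \CinftycptSpace[(-1/2,1/2)]\), since \(\LtlocSpace\) becomes global \(\LtSpace\) on compact time support. By Lemma \ref{Lemma::Result::BoundaryMaxSubNewLemma}, \(T_{\xi,\delta}\tilde u\in \CinftycptSpace[\R][\DomainQ]\), it is supported in \(\Psi_{\xi,\delta}(\Cubengeq{1/2})\subseteq\Omega\), and it satisfies \eqref{Eqn::Result::BoundaryMaxWSubNewEquation}.

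For (i), I will run the argument of Lemma \ref{Lemma::Proof::Completion::InternalMaxSubEst} with time integrated in. First obtain, for \(f\in \DomainQ\) supported in the fixed compact \(\Compact_2\Subset\Omega\), the estimate \(\HsWNorm{f}{\kappa}^2\le C_1\Real\FormQ[f][f]+C_2\LtNorm{f}^2\). This comes from \eqref{Eqn::Result::MaxSubEstimateIsRestrictDomain} together with sectoriality \eqref{Eqn::Result::Sectorial}, which bounds \(|\FormQ[f][f]|\) by \(C(\Real\FormQ[f][f]-\gamma\LtNorm{f}^2)\). Next, replace \(W\) by \(V\) using the equality of modules, restrict to \(|\alpha|=\kappa\), and multiply by \(\delta^{2\kappa}\le1\). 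Apply this with \(f=T_{\xi,\delta}\tilde u(t)\), integrate in \(t\), use \(\FormQ=\FormQF\) (Assumption \ref{Assumption::Result::FormQEqualsFormQF}), and rescale by Lemmas \ref{Lemma::Proof::Completion::PullBackQF} and \ref{Lemma::Proof::Completion::EquivNorms}; after dividing by \(\Vol[\BV{\xi}{\delta}]\) this gives
\[
\sum_{|\alpha|=\kappa}\LtNorm*{(V^{\xi,\delta,2})^\alpha SU}[\Ropngeq]^2\lesssim \Real\int\FormQTwo[SU(t)][SU(t)]\,dt+\LtNorm{SU}[\Ropngeq]^2 .
\]
It remains to add the time term \(\Real\Ltip{SU}{h_{\xi,\delta,2}\partial_t SU}\). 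Since \(h_{\xi,\delta,2}\) is independent of \(t\) and \(SU\) is smooth with compact support in \(t\), this term equals \(\tfrac12\int\partial_t\LtNorm{SU(t)}[h\,dx]^2dt=0\). So \(\FormQHTwo[SU][SU]\) has the same real part, and (i) follows with \(E_1,E_2\) independent of \((\xi,\delta)\).

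For (ii), the time parts on both sides are identical, so it suffices to show \(\int\FormQTwo[SU(t)][U(t)]\,dt=\Ltip{SU}{h\,\opLTwo U}\). By Lemma \ref{Lemma::Proof::Completion::PullBackQF}, Assumption \ref{Assumption::Result::FormQEqualsFormQF} (the first argument \(T_{\xi,\delta}\tilde u(t)\in\DomainQ\) is supported in \(\Omega\)), and the relation \(\Ltip{f}{\opL g}=\FormQ[f][g]\) for \(g=\tilde u(t)\in\DomainL\), one gets for each \(t\)
\[
\Vol[\BV{\xi}{\delta}]\,\FormQTwo[SU(t)][U(t)]=\delta^{2\kappa}\Ltip{T_{\xi,\delta}\tilde u(t)}{\opL\tilde u(t)}.
\]
Lemma \ref{Lemma::Proof::Completion::PullBackOfopLisOpLk} turns the right-hand side into \(\Vol[\BV{\xi}{\delta}]\Ltip{SU(t)}{h\,\opLTwo U(t)}\); integrating in \(t\) finishes.

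I expect the main obstacle to be the justification in (ii). The pairing is over \(\Cubengeq{1}\) while \(\tilde u\) lives on all of \(\ManifoldN\), and \(SU(t)\) does not vanish to infinite order at the boundary. So Lemma \ref{Lemma::Proof::Completion::PullBackOfopLisOpLk}, which holds only in \(\DistributionsZero\), cannot be tested directly against \(SU\). I will avoid this by working on \(\ManifoldN\) with \(\Ltip{T_{\xi,\delta}\tilde u}{\opL\tilde u}\), where \(\opL\tilde u(t)\in\LtSpace\). The restriction of \(\opL\tilde u(t)\) to \(\Psi_{\xi,\delta}(\Cubengeq{1})\) is the smooth function \(\delta^{-2\kappa}(\opLTwo U(t))\circ\Psi_{\xi,\delta}^{-1}\): equality holds in \(\DistributionsZero\) and both sides are \(L^2\), hence equal almost everywhere. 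The identity then follows by change of variables.
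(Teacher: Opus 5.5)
Your proposal is correct and follows the paper's proof essentially step for step. For (i), the paper likewise applies the maximal subelliptic estimate of Lemma \ref{Lemma::Result::BoundaryMaxSubNewLemma} to \(T_{\xi,\delta}\tilde u\), then uses sectoriality, \(\FormQ=\FormQF\), the \(W\)/\(V\) equivalence, and rescaling via Lemmas \ref{Lemma::Proof::Completion::PullBackQF} and \ref{Lemma::Proof::Completion::EquivNorms}, and kills the time term because \(h_{\xi,\delta,2}\) is \(t\)-independent; for (ii) it uses the same chain \(\FormQTwo\to\FormQF\to\FormQ\to\langle T_{\xi,\delta}\tilde u,\opL\tilde u\rangle\) followed by Lemma \ref{Lemma::Proof::Completion::PullBackOfopLisOpLk}. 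Your a.e.\ identification of \(\opL\tilde u(t)\) with \(\delta^{-2\kappa}(\opLTwo U(t))\circ\Psi_{\xi,\delta}^{-1}\) on \(\Psi_{\xi,\delta}(\Cubengeq{1})\) usefully makes explicit a point the paper passes over at \eqref{Eqn::Proof::Completion::FinalAssumptions::Tmp14}, and the cutoff \(\chi\) you introduce is never used.
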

\begin{proof}
    Let \((\xi,\delta)\in \SSetTwo\cap \left( \CompactZero\times (0,\delta_0] \right)\)
    and \(S\) an operator as in \eqref{Eqn::Results::SOperator}.
    Define \(T_{\xi,\delta}:=\left( \Psi_{\xi,\delta} \right)_{*} S \Psi_{\xi,\delta}^{*}\)
    as in Assumption \ref{Assumption::Results::BoundaryMaximalSubellip}.

    Take \(U\in \DSet\) and let \(u:(-1/2,1/2)\rightarrow \DomainLinfty\) be a
    \(\left( -\opL, \xi,\delta \right)\)-heat function such that \(U(t,x)=\theta(t) u(t,\Psi_{\xi,\delta}(x))\).
    Let \(\uh(t,\xi)=\theta(t)u(t,\xi)\) so that \(U(t,x)=\uh(t,\Psi_{\xi,\delta}(x))\).
    Lemma \ref{Lemma::Result::BoundaryMaxSubNewLemma}
    shows
    \begin{equation}\label{Eqn::Proof::Completion::FinalAssumptions::Tmp1}
    \begin{split}
         &\int \HsWNorm*{T_{\xi,\delta} \uh(t,\cdot)}{\kappa}^2\: dt
         \lesssim \int \left| \FormQ[T_{\xi,\delta} \uh(t,\cdot)][T_{\xi,\delta} \uh(t,\cdot)] \right|\: dt
         +\LtNorm*{T_{\xi,\delta}\uh}[\R\times \ManifoldN]^2.
    \end{split}
    \end{equation}
    Using the fact that the  \(\CinftySpace[\ManifoldNOne][\R]\)-module generated by \(V_0,\ldots, V_r\)
    is the same as the \(\CinftySpace[\ManifoldNOne][\R]\)-module generated by \(W_1,\ldots, W_r\),
    and that \(T_{\xi,\delta}\uh(t,\cdot)\) is supported in \(\BV{\xi}{\delta}\subseteq U\Subset \ManifoldNOne\) (see Lemma \ref{Lemma::Proof::Completion::ExistsMetricAndDefineDelta0}\ref{Item::Proof::Completion::ExistsMetricAndDefineDelta0::CompactZeroCompact}),
    we have
    \begin{equation}\label{Eqn::Proof::Completion::FinalAssumptions::Tmp2}
        \HsWNorm*{T_{\xi,\delta} \uh(t,\cdot)}{\kappa}^2 \approx \HsVNorm*{T_{\xi,\delta} \uh(t,\cdot)}{\kappa}^2.
    \end{equation}
    Using that \(\FormQ\) is sectorial (see \eqref{Eqn::Result::Sectorial}) and 
    \eqref{Eqn::Proof::Completion::FinalAssumptions::Tmp2}, 
    \eqref{Eqn::Proof::Completion::FinalAssumptions::Tmp1} implies \(\exists C_1,C_2\geq 0\)
    with
    \begin{equation}\label{Eqn::Proof::Completion::FinalAssumptions::Tmp3}
    \begin{split}
         &\int \HsVNorm*{T_{\xi,\delta} \uh(t,\cdot)}{\kappa}^2\: dt
         \leq C_1  \int \Real \FormQ*[T_{\xi,\delta} \uh(t,\cdot)][T_{\xi,\delta}\uh(t,\cdot)]\: dt
         +C_2 \LtNorm*{T_{\xi,\delta}\uh}[\R\times \ManifoldN]^2.
    \end{split}
    \end{equation}
    Again using that \(T_{\xi,\delta}\uh(t,\cdot)\) is supported in \(\BV{\xi}{\delta}\subseteq U\subseteq \Omega\)
    and that \(T_{\xi,\delta}\uh\in \CinftycptSpace[\R][\DomainQ]\) (see Lemma \ref{Lemma::Result::BoundaryMaxSubNewLemma}),
    Assumption \ref{Assumption::Result::FormQEqualsFormQF}
     shows
    \(\FormQ*[T_{\xi,\delta} \uh(t,\cdot)][T_{\xi,\delta}\uh(t,\cdot)]=\FormQF*[T_{\xi,\delta} \uh(t,\cdot)][T_{\xi,\delta}\uh(t,\cdot)]\).
    Thus, \eqref{Eqn::Proof::Completion::FinalAssumptions::Tmp3} implies
    \begin{equation}\label{Eqn::Proof::Completion::FinalAssumptions::Tmp4}
    \begin{split}
         &\int \sum_{|\alpha|=\kappa}\LtNorm*{V^{\alpha}T_{\xi,\delta} \uh(t,\cdot)}[\ManifoldN,\Vol]^2\: dt
         \leq 
         \int \HsVNorm*{T_{\xi,\delta} \uh(t,\cdot)}{\kappa}^2\: dt
         \\&\leq C_1  \int \Real \FormQF*[T_{\xi,\delta} \uh(t,\cdot)][T_{\xi,\delta}\uh(t,\cdot)]\: dt
         +C_2 \LtNorm*{T_{\xi,\delta}\uh}[\R\times \ManifoldN]^2.
    \end{split}
    \end{equation}
    Multiplying both sides of \eqref{Eqn::Proof::Completion::FinalAssumptions::Tmp4}
    by \(\delta^{2\kappa}\) and using \(\delta\in (0,\delta_0]\subseteq (0,1]\),
    \eqref{Eqn::Proof::Completion::FinalAssumptions::Tmp4} implies
    \begin{equation}\label{Eqn::Proof::Completion::FinalAssumptions::Tmp5}
    \begin{split}
         &\int \sum_{|\alpha|=\kappa}\LtNorm*{\left( \delta V \right)^{\alpha}T_{\xi,\delta} \uh(t,\cdot)}[\ManifoldN,\Vol]^2\: dt
         \leq C_1  \int \Real \delta^{2\kappa} \FormQF*[T_{\xi,\delta} \uh(t,\cdot)][T_{\xi,\delta}\uh(t,\cdot)]\: dt
         +C_2 \LtNorm*{T_{\xi,\delta}\uh}[\R\times \ManifoldN]^2.
    \end{split}
    \end{equation}
    
    From the definitions, we have
    \begin{equation}\label{Eqn::Proof::Completion::FinalAssumptions::Tmp51}
        \left( \delta V \right)^{\alpha}T_{\xi,\delta} \uh(t,\eta)
        = \left( \left( V^{\xi,\delta,2} \right)^\alpha S U \right)(t, \Psi_{\xi,\delta}^{-1}(\eta)),
    \end{equation}
    and therefore, by \eqref{Eqn::Proof::Completion::EquivNorms::Two},
    \begin{equation}\label{Eqn::Proof::Completion::FinalAssumptions::Tmp6}
        \LtNorm*{\left( \delta V \right)^{\alpha}T_{\xi,\delta} \uh(t,\cdot)}[\ManifoldN,\Vol]
        \approx \Vol[\BV{\xi}{\delta}]^{1/2} \LtNorm*{\left( V^{\xi,\delta,2} \right)^{\alpha} S U(t,\cdot)}[\Rngeq],
    \end{equation}
    and similarly using \eqref{Eqn::Proof::Completion::FinalAssumptions::Tmp51} with \(|\alpha|=0\),
    \begin{equation}\label{Eqn::Proof::Completion::FinalAssumptions::Tmp7}
        \LtNorm*{T_{\xi,\delta}\uh}[\R\times \ManifoldN]
        \approx \Vol[\BV{\xi}{\delta}]^{1/2}\LtNorm*{S U}[\Ropngeq].
    \end{equation}
    Using Lemma \ref{Lemma::Proof::Completion::PullBackQF}
    and \eqref{Eqn::Proof::Completion::FinalAssumptions::Tmp51} with \(|\alpha|=0\), we see
    \begin{equation}\label{Eqn::Proof::Completion::FinalAssumptions::Tmp8}
         \delta^{2\kappa} \FormQF*[T_{\xi,\delta} \uh(t,\cdot)][T_{\xi,\delta}\uh(t,\cdot)]
         =\Vol[\BV{\xi}{\delta}] \FormQTwo[ S U(t,\cdot)][S U(t,\cdot)].
    \end{equation}

    Plugging \eqref{Eqn::Proof::Completion::FinalAssumptions::Tmp6}, \eqref{Eqn::Proof::Completion::FinalAssumptions::Tmp7},
    and \eqref{Eqn::Proof::Completion::FinalAssumptions::Tmp8} into \eqref{Eqn::Proof::Completion::FinalAssumptions::Tmp5}
    shows \(\exists E_1, E_2\geq 0\) with
    \begin{equation}\label{Eqn::Proof::Completion::FinalAssumptions::Tmp9}
        \begin{split}
            &\int \sum_{|\alpha|=\kappa} \LtNorm*{\left( V^{\xi,\delta,2} \right)^{\alpha} S U(t,\cdot)}[\Rngeq]^2\: dt
            \leq E_1 \int \Real  \FormQTwo[ S U(t,\cdot)][S U(t,\cdot)]\: dt
            +E_2 \LtNorm*{S U}[\Ropngeq]^2.
        \end{split}
    \end{equation}
    Since \(h_{\xi,\delta,2}(x)\) does not depend on the \(t\)-variable,
    we have
    \(\Real\Ltip*{S U}{h_{\xi,\delta,2}\partial_t SU}[\Ropngeq]=0\), and therefore
    \begin{equation}\label{Eqn::Proof::Completion::FinalAssumptions::Tmp10}
        \begin{split}
            & \int  \Real \FormQTwo[ S U(t,\cdot)][S U(t,\cdot)]\: dt
            =\Real \FormQHTwo[SU][SU].
        \end{split}
    \end{equation}
    Plugging \eqref{Eqn::Proof::Completion::FinalAssumptions::Tmp10} into 
    \eqref{Eqn::Proof::Completion::FinalAssumptions::Tmp9} gives \ref{Item::Proof::Completion::FinalAssumptions::MaxSub}.

    Turning to \ref{Item::Proof::Completion::FinalAssumptions::FormToInnerProd},
    by definition, we have
    \begin{equation}\label{Eqn::Proof::Completion::FinalAssumptions::Tmp11}
    \begin{split}
         &\FormQHTwo*[SU][U]
         =\int \FormQTwo*[SU(t,\cdot)][U(t,\cdot)] \: dt
         + \Ltip*{SU}{h_{\xi,\delta,2}\partial_t U}[\Ropngeq]
         \\&=
         \int \FormQTwo*[S \uh(t,\Psi_{\xi,\delta}(\cdot))][\uh(t,\Psi_{\xi,\delta}(\cdot))] \: dt
         + \Ltip*{SU}{h_{\xi,\delta,2}\partial_t U}[\Ropngeq]
         \\&=
         \int \Vol[\BV{\xi}{\delta}]^{-1}
         \delta^{2\kappa}\FormQF*[T_{\xi,\delta} \uh(t,\cdot)][\uh(t,\cdot)]\: dt
         +\Ltip*{SU}{h_{\xi,\delta,2}\partial_t U}[\Ropngeq],
    \end{split}
    \end{equation}
    where the final equality used Lemma \ref{Lemma::Proof::Completion::PullBackQF}.
    Since \(T_{\xi,\delta}\uh(t,\cdot)\in \DomainQ\) (see Lemma \ref{Lemma::Result::BoundaryMaxSubNewLemma})
    and \(\supp(T_{\xi,\delta}\uh(t,\cdot))\subseteq \Psi_{\xi,\delta}(\Cubengeq{1/2})\subseteq \BV{\xi}{\delta}\subseteq U\subseteq \Omega\),
    Assumption \ref{Assumption::Result::FormQEqualsFormQF}
    shows
    \begin{equation}\label{Eqn::Proof::Completion::FinalAssumptions::Tmp12}
        \FormQF*[T_{\xi,\delta} \uh(t,\cdot)][\uh(t,\cdot)]=\FormQ*[T_{\xi,\delta} \uh(t,\cdot)][\uh(t,\cdot)].
    \end{equation}
    Since \(\uh(t,\cdot)\in \DomainL\) (by the definition of heat functions), the definition of \(\opL\)
    gives
    \begin{equation}\label{Eqn::Proof::Completion::FinalAssumptions::Tmp13}
        \FormQ*[T_{\xi,\delta} \uh(t,\cdot)][\uh(t,\cdot)]=\Ltip*{T_{\xi,\delta} \uh(t,\cdot)}{\opL \uh(t,\cdot)}[\ManifoldN,\Vol].
    \end{equation}
    Using \(d\Psi_{\xi,\delta}^{*}\Vol = h_{\xi,\delta,2}(y)\Vol[\BV{\xi}{\delta}]\: dy\), \eqref{Eqn::Proof::Completion::PullBackOfopLisOpLk::Two},
    and the definition of \(T_{\xi,\delta}\), we have
    \begin{equation}\label{Eqn::Proof::Completion::FinalAssumptions::Tmp14}
    \begin{split}
         &\delta^{2\kappa}\Ltip*{T_{\xi,\delta} \uh(t,\cdot)}{\opL \uh(t,\cdot)}[\ManifoldN,\Vol]
         =\Vol[\BV{\xi}{\delta}]\Ltip*{ \Psi_{\xi,\delta}^{*} T_{\xi,\delta} \uh(t,\cdot)}{h_{\xi,\delta,2} \opLTwo \Psi_{\xi,\delta}^{*}\uh(t,\cdot)}[\Rngeq]
         \\&=\Vol[\BV{\xi}{\delta}]\Ltip*{ S U(t,\cdot)}{ h_{\xi,\delta, 2} \opLTwo U(t,\cdot)}[\Rngeq].
    \end{split}
    \end{equation}
    Combining \eqref{Eqn::Proof::Completion::FinalAssumptions::Tmp12},
    \eqref{Eqn::Proof::Completion::FinalAssumptions::Tmp13},
    and \eqref{Eqn::Proof::Completion::FinalAssumptions::Tmp14}
    shows
    \begin{equation}\label{Eqn::Proof::Completion::FinalAssumptions::Tmp15}
    \begin{split}
         &\int \Vol[\BV{\xi}{\delta}]^{-1}
         \delta^{2\kappa}\FormQF*[T_{\xi,\delta} \uh(t,\cdot)][\uh(t,\cdot)]\: dt
         =\Ltip*{SU}{h_{\xi,\delta,2}\opLTwo U}[\Ropngeq].
    \end{split}
    \end{equation}
    Plugging \eqref{Eqn::Proof::Completion::FinalAssumptions::Tmp15}
    into the right-hand side of \eqref{Eqn::Proof::Completion::FinalAssumptions::Tmp11}
    gives \ref{Item::Proof::Completion::FinalAssumptions::FormToInnerProd}.
\end{proof}

Thus far, we have verified the assumptions of 
\cite[Corollaries \ref*{Form::Cor::EstBdry::MainThm::CorNEquals1} and \ref*{Form::Cor::EstBdryInt::MainCor}]{StreetAPrioriEstimatesMaximallySubellipticQuadraticForms}.
Those corollaries give uniform subelliptic estimates
for \(\partial_t+\opLOne\) and \(\partial_t+\opLTwo\).
We use this to obtain the next result, where we only consider functions in the null space
of either \(\partial_t+\opLOne\) or \(\partial_t+\opLTwo\).
For it we use the non-isotropic Sobolev spaces \(\HsSpace{s}[\Ropn]\) and \(\HsSpace{s}[\Ropngeq]\)
defined in \eqref{Eqn::Proof::Subellip::HsNormRopn} and \eqref{Eqn::Proof::Subellip::HsNormRopngeq}.

\begin{lemma}\label{Lemma::Proof::Completion::APriori}
    Fix \(\phi_1,\phi_2\in \CinftycptSpace[(-7/16,7/16)\times \Cuben{7/16}]\)
    with \(\phi_1\prec \phi_2\) and \(\phi_1\) equal to \(1\) on a neighborhood of
    \([-3/8,3/8]\times \overline{\Cuben{3/8}}\).
    For all \(l\in \Zg\) with \(l\geq 2\kappa-1\), \(\exists C_l=C_{l}(\phi_1,\phi_2)\geq 0\)
    such that the following hold:
    \begin{enumerate}[(i)]
        \item\label{Item::Proof::Completion::APriori::One} \(\forall (\xi,\delta)\in \SSetOne\cap \left( \CompactZero\times (0,\delta_0] \right)\),
            \(\forall U\in \LtSpace*[(-15/32,15/32)\times \Cuben{15/32}][\C^M]\)
            with \(\left( \partial_t+\opLOne \right)U =0\) in the sense of distributions
            on \((-15/32,15/32)\times \Cuben{15/32}\), we have
            \begin{equation*}
                \HsNorm*{\phi_1 U}{l+1}[\Ropn]
                \leq C_l \LtNorm{\phi_2 U}[\Ropn].
            \end{equation*}
        \item\label{Item::Proof::Completion::APriori::Two} \(\forall (\xi,\delta)\in \SSetTwo\cap \left( \CompactZero\times (0,\delta_0] \right)\),
            \(\forall U\in \DSet\),
            \begin{equation*}
                \HsNorm{\phi_1 U}{l+1}[\Ropngeq]
                \leq C_l \LtNorm{\phi_2 U}[\Ropngeq].
            \end{equation*}
    \end{enumerate}
\end{lemma}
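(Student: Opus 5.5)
The plan is to read off both parts from the uniform a priori estimates of \cite[Corollaries \ref*{Form::Cor::EstBdryInt::MainCor} and \ref*{Form::Cor::EstBdry::MainThm::CorNEquals1}]{StreetAPrioriEstimatesMaximallySubellipticQuadraticForms}, whose hypotheses have now been verified uniformly in \((\xi,\delta)\), and then drop the forcing term using the heat equation. All constants must be independent of \((\xi,\delta)\). This holds because every hypothesis was verified with uniform constants: uniform H\"ormander vector fields and densities from Theorem \ref{Thm::Scaling::MainScalingThm}, uniformly bounded coefficients from Lemma \ref{Lemma::Proof::Completion::aalphabetaBounded}, and uniform maximal subelliptic estimates from Lemmas \ref{Lemma::Proof::Completion::InternalMaxSubEst} and \ref{Lemma::Proof::Completion::FinalAssumptions}.

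For \ref{Item::Proof::Completion::APriori::One}, we apply \cite[Corollary \ref*{Form::Cor::EstBdryInt::MainCor}]{StreetAPrioriEstimatesMaximallySubellipticQuadraticForms} with \(\opL,W,\sigma,a_{\alpha,\beta},N\) replaced by \(\opLOne, V^{\xi,\delta,1}, h_{\xi,\delta,1}, a^{\xi,\delta,1}_{\alpha,\beta}, 1\). Its hypothesis is the interior estimate of Lemma \ref{Lemma::Proof::Completion::InternalMaxSubEst}\ref{Item::Proof::Completion::InternalMaxSubEst::One}, together with the uniform boundedness of the data. The corollary should give
\[
\HsNorm*{\phi_1 U}{l+1}[\Ropn]\leq C_l\left(\HsNorm*{\phi_2(\partial_t+\opLOne)U}{l+1-\epsilon_0}[\Ropn]+\LtNorm{\phi_2 U}[\Ropn]\right),
\]
with \(C_l,\epsilon_0\) uniform over \(\SSetOne\cap(\CompactZero\times(0,\delta_0])\). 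It should also show that the left-hand side is finite whenever the right-hand side is. Since \((\partial_t+\opLOne)U=0\) on a neighborhood of \(\supp\phi_2\), the first term vanishes, which gives \ref{Item::Proof::Completion::APriori::One}. If the corollary is stated only for \(U\) that are a priori smooth, we first need qualitative smoothness. This follows from hypoellipticity of \(\partial_t+\opLOne\) (interior H\"ormander-type subellipticity), or equivalently by applying the corollary iteratively with nested cutoffs, starting at \(l+1-\epsilon_0\) and bootstrapping from \(L^2\).

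For \ref{Item::Proof::Completion::APriori::Two}, we apply \cite[Corollary \ref*{Form::Cor::EstBdry::MainThm::CorNEquals1}]{StreetAPrioriEstimatesMaximallySubellipticQuadraticForms} with \(\mathcal D_1=\DSet\). Its hypotheses are exactly what was checked: membership and smoothness in Lemmas \ref{Lemma::Proof::Completion::DSetAreSmooth} and \ref{Lemma::Proof::Completion::VerifyDerivsAssumptions}, the tangentially regularized maximal subelliptic estimate and the identity \(\FormQHTwo[SU][U]=\Ltip{SU}{h_{\xi,\delta,2}(\partial_t+\opLTwo)U}\) in Lemma \ref{Lemma::Proof::Completion::FinalAssumptions}, \(V_0^{\xi,\delta,2}=\pm D\partial_{x_n}\) with the tangency of the other fields from Theorem \ref{Thm::Scaling::MainScalingThm}, and the interior estimate in Lemma \ref{Lemma::Proof::Completion::InternalMaxSubEst}\ref{Item::Proof::Completion::InternalMaxSubEst::Two}. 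The corollary yields the same estimate on \(\Ropngeq\) with forcing term \(\HsNorm{\phi_2(\partial_t+\opLTwo)U}{l+1-\epsilon_0}[\Ropngeq]\). We now show that \(\phi_2(\partial_t+\opLTwo)U=0\). Write \(U=\theta(t)u(t,\Psi_{\xi,\delta}(x))\). Because \(\theta=1\) on \((-7/16,7/16)\supseteq\) the \(t\)-support of \(\phi_2\), we have \(\phi_2\partial_t U=\phi_2(\partial_t u)\circ\Psi_{\xi,\delta}\). Lemma \ref{Lemma::Proof::Completion::PullBackOfopLisOpLk} gives \(\opLTwo(u\circ\Psi_{\xi,\delta})=(\delta^{2\kappa}\opL u)\circ\Psi_{\xi,\delta}\). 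Definition \ref{Defn::Proof::Completion::HeatFunction}\ref{Item::Proof::Completion::HeatFunction::tDerivIsOpLDeriv} gives \(\partial_t u=-\delta^{2\kappa}\opL u\) on \(\BV{\xi}{\delta}\supseteq\Psi_{\xi,\delta}(\Cubengeq{1})\). Together these show the product vanishes.

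The main obstacle is bookkeeping rather than new analysis. The corollaries may be stated with specific cutoff geometries or may require \(\phi_2\) to be built from a fixed family. In that case we insert an intermediate cutoff \(\phi_1\prec\phi_3\prec\phi_2\) supported in \((-7/16,7/16)\times\Cuben{7/16}\), apply the corollaries with \(\phi_3\), and bound \(\LtNorm{\phi_3U}\leq\LtNorm{\phi_2U}\) using \(0\leq\phi_3\leq1\) on \(\supp\phi_2\) where \(\phi_2=1\). The second thing to check is that uniformity of the constants in \((\xi,\delta)\) is asserted in the cited corollaries, which the text above states explicitly.
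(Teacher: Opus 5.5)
Your proposal is correct and follows the paper's proof. For (i), the paper applies the interior corollary of \cite{StreetAPrioriEstimatesMaximallySubellipticQuadraticForms} with \(N=1\) and \(s+\epsilon_0=l+1\); for (ii), it applies the boundary corollary with \(\mathcal{D}_1=\DSet\), relying on the verified uniformity in \((\xi,\delta)\). It then removes the forcing term exactly as you do, via \(\theta=1\) on \((-7/16,7/16)\), Lemma \ref{Lemma::Proof::Completion::PullBackOfopLisOpLk}, and Definition \ref{Defn::Proof::Completion::HeatFunction}\ref{Item::Proof::Completion::HeatFunction::tDerivIsOpLDeriv} with \(j=1\). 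Your contingencies (qualitative smoothness, intermediate cutoffs) are unnecessary, since the corollaries are applied directly with the given \(\phi_1,\phi_2\).
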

\begin{proof}
    The key to this result is that we have verified the assumptions
    of 
    \cite[Sections \ref*{Form::Section::EstNearBdry} and \ref*{Form::Section::EstInt}]{StreetAPrioriEstimatesMaximallySubellipticQuadraticForms}
    uniformly in \((\xi,\delta)\), and so the results in those sections give estimates which are uniform
    in \((\xi,\delta)\); indeed, those results state explicitly what their estimates depend on.

    In both cases, \ref{Item::Proof::Completion::APriori::One} and \ref{Item::Proof::Completion::APriori::Two},
    Theorem \ref{Thm::Scaling::MainScalingThm}\ref{Item::Scaling::MainScalingThm::UniformlyHormander}
    shows the vector fields \(V_0^{\xi,\delta,k},\ldots, V_r^{\xi,\delta,k}\), \(k=1,2\), are H\"ormander vector fields
    uniformly in \((\xi,\delta)\).  Theorem \ref{Thm::Scaling::MainScalingThm}\ref{Item::Scaling::MainScalingThm::DensityUniformlyBounded},\ref{Item::Scaling::MainScalingThm::DensityUniformlyPos}
    shows \(h_{\xi,\delta,k}\) is uniformly smooth and uniformly positive in \((\xi,\delta)\).
    Moreover, Theorem \ref{Thm::Scaling::MainScalingThm}\ref{Item::Scaling::MainScalingThm::PullBackV0IsDxn},\ref{Item::Scaling::MainScalingThm::PullBackVjDoesntContainDxn}
    show that for \((\xi,\delta)\in \SSetTwo\), \(V_0^{\xi,\delta,2},\ldots, V_r^{\xi,\delta,2}\)
    are of the same form as the vector fields studied in 
    \cite[Section \ref*{Form::Section::EstNearBdry}]{StreetAPrioriEstimatesMaximallySubellipticQuadraticForms}.
    Finally,
    Lemmas \ref{Lemma::Proof::Completion::InternalMaxSubEst}, \ref{Lemma::Proof::Completion::VerifyDerivsAssumptions},
    and \ref{Lemma::Proof::Completion::FinalAssumptions}
    show 
    \cite[Assumptions \ref*{Form::Assumption::EstBdry::MaxSubOnInterior} and \ref*{Form::Assumption::EstBdry::MaxSubOnSmoothing}]{StreetAPrioriEstimatesMaximallySubellipticQuadraticForms}
    hold uniformly for \((\xi,\delta)\in \SSetTwo\cap \left( \CompactZero\times (0,\delta_0] \right)\),
    while Lemma \ref{Lemma::Proof::Completion::InternalMaxSubEst} shows
    \cite[Assumption \ref*{Form::Assumption::EstBdryInt::MaxSubAssump}]{StreetAPrioriEstimatesMaximallySubellipticQuadraticForms}
    holds uniformly for 
    \((\xi,\delta)\in \SSetOne\cap \left( \CompactZero\times (0,\delta_0] \right)\).

    For \ref{Item::Proof::Completion::APriori::Two}, we apply 
    \cite[Corollary \ref*{Form::Cor::EstBdry::MainThm::CorNEquals1}]{StreetAPrioriEstimatesMaximallySubellipticQuadraticForms}
    (with \(\mathcal{D}_1=\DSet\)) to see that for \(U\in \DSet\),
    \begin{equation}\label{Eqn::Proof::Completion::APriori::Tmp0}
        \begin{split}
            &\HsNorm*{\phi_1 U}{l+1}[\Ropngeq] \leq C_l\left( \HsNorm*{\phi_2 \left( \partial_t+\opLTwo \right)U}{l+1-\epsilon_0}[\Ropngeq] + \LtNorm*{\phi_2 U}[\Ropngeq] \right),
        \end{split}
    \end{equation}
    for some \(\epsilon_0>0\).  Since \(U\in \DSet\), \eqref{Eqn::Proof::Completion::DefineDSet} shows \(U(t,x)=\theta(t)u(t,\Psi_{\xi,\delta}(x))\),
    where \(u:(-1/2,1/2)\rightarrow \DomainLinfty\) is a \((-\opL,\xi,\delta)\)-heat function.
    Using \eqref{Eqn::Proof::Completion::PullBackOfopLisOpLk::Two} and the fact that \(\theta=1\) on \((-7/16,7/16)\), we have
    \begin{equation}\label{Eqn::Proof::Completion::APriori::Tmp1}
        \begin{split}
            &\phi_2 \left( \partial_t +\opLTwo  \right) U(t,x)
            =\phi_2\left( \partial_t +\opLTwo  \right) u(t,\Psi_{\xi,\delta}(x))
            = \left(\phi_2\circ \Psi_{\xi,\delta}^{-1}(\cdot) \left( \partial_t+\delta^{2\kappa}\opL \right) u(t,\cdot)  \right)\circ \Psi_{\xi,\delta}(x).
        \end{split}
    \end{equation}
    Since \(\phi_2\in \CinftycptSpace[(-7/16,7/16)\times \Cuben{7/16}]\), we have
    \(\supp(\phi_2\circ \Psi_{\xi,\delta}^{-1})\subseteq \BV{\xi}{\delta/2}\) (see Theorem \ref{Thm::Scaling::MainScalingThm}).
    Definition \ref{Defn::Proof::Completion::HeatFunction}\ref{Item::Proof::Completion::HeatFunction::tDerivIsOpLDeriv}
    with \(j=1\) shows \( \left( \partial_t+\delta^{2\kappa}\opL \right) u(t,\eta)=0\) for \(\eta \in \BV{\xi}{\delta}\).
    We conclude that  \(\phi_2\circ \Psi_{\xi,\delta}^{-1}(\cdot) \left( \partial_t+\delta^{2\kappa}\opL \right) u(t,\cdot)=0\),
    and therefore, by \eqref{Eqn::Proof::Completion::APriori::Tmp1},
    \begin{equation}\label{Eqn::Proof::Completion::APriori::Tmp2}
        \phi_2 \left( \partial_t +\opLTwo  \right) U(t,x)=0.
    \end{equation}
    Plugging \eqref{Eqn::Proof::Completion::APriori::Tmp2} into
    \eqref{Eqn::Proof::Completion::APriori::Tmp0}
    gives \ref{Item::Proof::Completion::APriori::Two}.

    For \ref{Item::Proof::Completion::APriori::One}, we apply
    \cite[Corollary \ref*{Form::Cor::EstBdryInt::MainCor}]{StreetAPrioriEstimatesMaximallySubellipticQuadraticForms}
    with \(N=1\), \(s+\epsilon_0=l+1\), to see
    \begin{equation*}
        \begin{split}
            &\HsNorm*{\phi_1 U}{l+1}[\Ropn] \leq C_l\left( \HsNorm*{\phi_2 \left( \partial_t+\opLOne \right)U}{l+1-\epsilon_0}[\Ropn] + \LtNorm*{\phi_2 U}[\Ropn] \right).
        \end{split}        
    \end{equation*}
    By assumption, \(\left( \partial_t+\opLOne \right)U =0\), completing the proof.
\end{proof}

Lemma \ref{Lemma::Proof::Completion::APriori} will be more useful to us when applied directly to heat functions.

\begin{lemma}\label{Lemma::Proof::Completion::APrioriHeat}
     Fix \(\phi_1,\phi_2\in \CinftycptSpace[(-7/16,7/16)\times \Cuben{7/16}]\)
    with \(\phi_1\prec \phi_2\) and \(\phi_1\) equal to \(1\) on a neighborhood of
    \([-3/8,3/8]\times \overline{\Cuben{3/8}}\).
    For all \(l\in \Zg\) with \(l\geq 2\kappa-1\), \(\exists C_l=C_{l}(\phi_1,\phi_2)\geq 0\)
    such that the following hold.
    For \((\xi,\delta)\in \CompactZero\times (0,\delta_0]\), let \(u:(-1/2,1/2)\rightarrow \DomainLinfty\)
    be a \(\left( -\opL, \xi,\delta \right)\)-heat function (as in Definition \ref{Defn::Proof::Completion::HeatFunction}).
    Then,
    \begin{enumerate}[(i)]
        \item\label{Item::Proof::Completion::APrioriHeat::One} 
            If \((\xi,\delta)\in \SSetOne\), set \(U(t,x):=u(t,\Phi_{\xi,\delta}(x))\). Then,
            \begin{equation*}
                \HsNorm*{\phi_1 U}{l+1}[\Ropn]
                \leq C_l \LtNorm{\phi_2 U}[\Ropn].
            \end{equation*}
        \item\label{Item::Proof::Completion::APrioriHeat::Two} 
            If \((\xi,\delta)\in \SSetTwo\), set \(U(t,x):=u(t,\Psi_{\xi,\delta}(x))\). Then,
            \begin{equation*}
                \HsNorm{\phi_1 U}{l+1}[\Ropngeq]
                \leq C_l \LtNorm{\phi_2 U}[\Ropngeq].
            \end{equation*}
    \end{enumerate}
\end{lemma}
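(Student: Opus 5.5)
The plan is to reduce Lemma \ref{Lemma::Proof::Completion::APrioriHeat} directly to Lemma \ref{Lemma::Proof::Completion::APriori}, using the cutoff \(\theta\) to pass between \(u\) and an element to which that lemma applies. The point is that \(\phi_1,\phi_2\) are supported in \((-7/16,7/16)\times\Cuben{7/16}\), where \(\theta\equiv 1\). So on the support of \(\phi_2\), the function \(U\) of the present lemma agrees with \(\theta(t)U(t,x)\), and inserting \(\theta\) changes neither side of the desired inequalities. Throughout, I take \(C_l\) to be the constant from Lemma \ref{Lemma::Proof::Completion::APriori}; it is already uniform in \((\xi,\delta)\).

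\textbf{Case (ii), \((\xi,\delta)\in\SSetTwo\).}
\begin{enumerate}[(1)]
    \item Set \(\widetilde U(t,x):=\theta(t)u(t,\Psi_{\xi,\delta}(x))\). By the definition \eqref{Eqn::Proof::Completion::DefineDSet}, \(\widetilde U\in\DSet\).
    \item Since \(\theta\equiv1\) on \((-7/16,7/16)\), we have \(\phi_1\widetilde U=\phi_1U\) and \(\phi_2\widetilde U=\phi_2U\).
    \item Lemma \ref{Lemma::Proof::Completion::APriori}\ref{Item::Proof::Completion::APriori::Two} applied to \(\widetilde U\) then gives (ii) immediately.
\end{enumerate}

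\textbf{Case (i), \((\xi,\delta)\in\SSetOne\).}
\begin{enumerate}[(1)]
    \item Set \(\widetilde U(t,x):=\theta(t)u(t,\Phi_{\xi,\delta}(x))\) on \((-15/32,15/32)\times\Cuben{15/32}\). I must check the two hypotheses of Lemma \ref{Lemma::Proof::Completion::APriori}\ref{Item::Proof::Completion::APriori::One}.
    \item \(L^2\): Lemma \ref{Lemma::Proof::Completion::HeatFuncsAreSmooth} shows \(u\) is smooth on \((-1/2,1/2)\times\BV{\xi}{\delta}\). Theorem \ref{Thm::Scaling::MainScalingThm}\ref{Item::Scaling::MainScalingThm::Containments} gives \(\Phi_{\xi,\delta}(\Cuben{1})\subseteq\BV{\xi}{\delta/2}\). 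Hence \(\widetilde U\) is smooth on \([-15/32,15/32]\times\overline{\Cuben{15/32}}\), which is a compact subset of the open set \((-1/2,1/2)\times\Cuben{1}\), so \(\widetilde U\) is bounded and lies in \(L^2\).
    \item The equation \((\partial_t+\opLOne)\widetilde U=0\) on \((-15/32,15/32)\times\Cuben{15/32}\) does not hold, because \(\theta\) is not constant there. I see two ways around this.
    \begin{itemize}
        \item \emph{Option 1.} Note that \((\partial_t+\opLOne)\widetilde U=\theta'(t)\,u\circ\Phi_{\xi,\delta}\), which vanishes on the smaller region \((-7/16,7/16)\times\Cuben{15/32}\). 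Apply Lemma \ref{Lemma::Proof::Completion::APriori}\ref{Item::Proof::Completion::APriori::One} on that region, after translating and dilating the time variable slightly; the dilation changes the constants only harmlessly. Its proof only uses the vanishing of \(\phi_2(\partial_t+\opLOne)U\).
        \item \emph{Option 2 (cleaner).} Work with \(U\) itself rather than \(\widetilde U\). \(U\) is smooth on \((-1/2,1/2)\times\Cuben{1}\), so it restricts to an element of \(L^2((-15/32,15/32)\times\Cuben{15/32})\).
    \end{itemize}
    \item To verify the heat equation for \(U\) in Option 2, combine Lemma \ref{Lemma::Proof::Completion::PullBackOfopLisOpLk}, \eqref{Eqn::Proof::Completion::PullBackOfopLisOpLk::One}, with Definition \ref{Defn::Proof::Completion::HeatFunction}\ref{Item::Proof::Completion::HeatFunction::tDerivIsOpLDeriv} for \(j=1\). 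This gives \((\partial_t+\delta^{2\kappa}\opL)u=0\) on \(\BV{\xi}{\delta}\supseteq\Phi_{\xi,\delta}(\Cuben{1})\), and pulling back gives \((\partial_t+\opLOne)U=0\) in the sense of distributions.
    \item Applying \eqref{Eqn::Proof::Completion::PullBackOfopLisOpLk::One} needs a little care, since it is stated for fixed \(t\) with \(f\in\DomainL\). By Definition \ref{Defn::Proof::Completion::HeatFunction}\ref{Item::Proof::Completion::HeatFunction::L2loc}, \(u(t)\in\DomainL\) for a.e. \(t\), so the identity holds for a.e. \(t\). Integrating against test functions in \((t,x)\) then gives the distributional identity.
    \item With this in hand, Lemma \ref{Lemma::Proof::Completion::APriori}\ref{Item::Proof::Completion::APriori::One} applied to \(U\) yields (i).
\end{enumerate}

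The main obstacle is step (5) of case (i): correctly justifying the distributional identity \((\partial_t+\opLOne)U=0\) from the a.e.-\(t\) pullback identity. Everything else is bookkeeping with supports and the fact that \(\theta\equiv1\) on the support of the \(\phi_k\).
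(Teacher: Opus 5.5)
Your argument is correct and is essentially the paper's proof. Case (ii) is verbatim: insert $\theta$ to land in $\DSet$ and use $\theta\equiv 1$ on $(-7/16,7/16)$. Your Option 2 for case (i) is also exactly what the paper does: restrict $U$ itself to $(-15/32,15/32)\times\Cuben{15/32}$, and use \eqref{Eqn::Proof::Completion::PullBackOfopLisOpLk::One} together with $\Phi_{\xi,\delta}(\Cuben{1})\subseteq\BV{\xi}{\delta/2}$ to get $(\partial_t+\opLOne)U=0$.

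Drop Option 1. It is unnecessary, and the time dilation would replace $\opLOne$ by a constant multiple of itself, which Lemma \ref{Lemma::Proof::Completion::APriori} does not cover as stated.
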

\begin{proof}
    For \ref{Item::Proof::Completion::APrioriHeat::One}, using the definition of heat functions
    (Definition \ref{Defn::Proof::Completion::HeatFunction}), we have
    \begin{equation*}
        U(t,x)\in \LtlocSpace[(-1/2,1/2)\times \Cuben{1}][\C^M]
    \end{equation*}
    and therefore
    \begin{equation*}
        U_1:=U\big|_{(-15/32,15/32)\times \Cuben{15/32}}\in \LtSpace[(-15/32,15/32)\times \Cuben{15/32}][\C^M].
    \end{equation*}
    Also, since \(\Phi_{\xi,\delta}(\Cuben{1})\subseteq \BV{\xi}{\delta/2}\) (see Theorem \ref{Thm::Scaling::MainScalingThm}),
    and since \((\partial_t+\delta^{2\kappa}\opL) u=0\) on \((-1/2,1/2)\times \BV{\xi}{\delta}\),
    \eqref{Eqn::Proof::Completion::PullBackOfopLisOpLk::One} shows
    \((\partial_t+\opLOne)U=0\), and therefore
    \((\partial_t+\opLOne)U_1=0\) on \((-15/32,15/32)\times \Cuben{15/32}\).
    Applying Lemma \ref{Lemma::Proof::Completion::APriori}\ref{Item::Proof::Completion::APriori::One}
    with \(U\) replaced by \(U_1\), we see
    \begin{equation*}
    \begin{split}
         &\HsNorm*{\phi_1 U}{l+1}[\Ropn]
         =\HsNorm*{\phi_1 U_1}{l+1}[\Ropn]
                \leq C_l \LtNorm{\phi_2 U_1}[\Ropn]
                =C_l \LtNorm{\phi_2 U}[\Ropn],
         \end{split}
    \end{equation*}
    completing the proof of \ref{Item::Proof::Completion::APrioriHeat::One}.

    For \ref{Item::Proof::Completion::APrioriHeat::Two}, set \(U_1(t,x):=\theta(t)U(t,x)\in \DSet\).
    Then, since \(\theta=1\) on \((-7/16,7/16)\), Lemma \ref{Lemma::Proof::Completion::APriori}\ref{Item::Proof::Completion::APriori::Two} gives
    \begin{equation*}
        \begin{split}
            &\HsNorm{\phi_1 U}{l+1}[\Ropngeq]
            =\HsNorm{\phi_1 U_1}{l+1}[\Ropngeq]
                \leq C_l \LtNorm{\phi_2 U_1}[\Ropngeq]
                =C_l \LtNorm{\phi_2 U}[\Ropngeq],
        \end{split}
    \end{equation*}
    completing the proof.
\end{proof}

\begin{lemma}\label{Lemma::Proof::Completion::APrioriPartialt}
    There exists \(C_1\geq 0\) such that, for every \((\xi,\delta)\in \CompactZero\times (0,\delta_0]\) and every \((-\opL, \xi,\delta)\)-heat function \(u:(-1/2,1/2)\rightarrow \DomainLinfty\), the following statements hold.
    \begin{enumerate}[(i)]
        \item\label{Item::Proof::Completion::APrioriPartialt::One}  
        If \((\xi,\delta)\in \SSetOne\), set \(U(t,x):=u(t,\Phi_{\xi,\delta}(x))\). Then,
            \begin{equation*}
                \LtNorm*{\partial_t U}[(-3/8,3/8)\times \Cuben{3/8}]
                \leq C_1 \LtNorm{U}[(-7/16,7/16)\times \Cuben{7/16}].
            \end{equation*}
        \item\label{Item::Proof::Completion::APrioriPartialt::Two} 
        If \((\xi,\delta)\in \SSetTwo\), set \(U(t,x):=u(t,\Psi_{\xi,\delta}(x))\). Then,
            \begin{equation*}
                \LtNorm*{\partial_t U}[(-3/8,3/8)\times \Cubengeq{3/8}]
                \leq C_1 \LtNorm{U}[(-7/16,7/16)\times \Cubengeq{7/16}].                
            \end{equation*}
    \end{enumerate}
\end{lemma}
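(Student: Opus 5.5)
The plan is to deduce the bound from Lemma \ref{Lemma::Proof::Completion::APrioriHeat}, applied with a fixed pair of cutoffs and the smallest admissible value \(l=2\kappa-1\). Since the anisotropic norm \eqref{Eqn::Proof::Subellip::HsNormRopn} counts \(\partial_t\) as having order \(2\kappa\), the space \(\HsSpace{2\kappa}\) controls \(\partial_t\) in \(L^2\). Fix once and for all \(\phi_1,\phi_2\in \CinftycptSpace[(-7/16,7/16)\times \Cuben{7/16}]\) with \(\phi_1\prec\phi_2\), with \(\phi_1=1\) near \([-3/8,3/8]\times\overline{\Cuben{3/8}}\), and with \(0\leq\phi_2\leq 1\). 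Because the constant \(C_l(\phi_1,\phi_2)\) from that lemma depends only on these fixed choices and on \(l\), the resulting \(C_1\) will be uniform in \((\xi,\delta)\) and in \(u\).

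In case \ref{Item::Proof::Completion::APrioriPartialt::One}, the Fourier multiplier definition gives \(\LtNorm{\partial_t(\phi_1U)}[\Ropn]\leq \HsNorm{\phi_1U}{2\kappa}[\Ropn]\), since \(|t^*|\leq(1+|t^*|^2+|x^*|^{4\kappa})^{2\kappa/4\kappa}\). On \((-3/8,3/8)\times\Cuben{3/8}\) we have \(\phi_1=1\), so there \(\partial_tU=\partial_t(\phi_1U)\). We then bound \(\HsNorm{\phi_1U}{2\kappa}\leq C\LtNorm{\phi_2U}\leq C\LtNorm{U}[(-7/16,7/16)\times\Cuben{7/16}]\), using \(|\phi_2|\leq 1\) and the support of \(\phi_2\).

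In case \ref{Item::Proof::Completion::APrioriPartialt::Two}, the norm on \(\Ropngeq\) is a quotient norm \eqref{Eqn::Proof::Subellip::HsNormRopngeq}. Take any extension \(\tilde U\in\HsSpace{2\kappa}[\Ropn]\) of \(\phi_1U\). The distribution \(\partial_t\tilde U\) is in \(L^2(\Ropn)\) with norm at most \(\HsNorm{\tilde U}{2\kappa}[\Ropn]\). Restriction to the open half space commutes with \(\partial_t\), because \(\partial_t\) is tangential, so \(\partial_t(\phi_1U)\) on \(\{x_n>0\}\) equals the restriction of \(\partial_t\tilde U\). Taking the infimum over extensions gives \(\LtNorm{\partial_t(\phi_1U)}[\Ropngeq]\leq\HsNorm{\phi_1U}{2\kappa}[\Ropngeq]\). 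The rest of the argument is exactly as in case \ref{Item::Proof::Completion::APrioriPartialt::One}, with \(\Cubengeq{\cdot}\) in place of \(\Cuben{\cdot}\).

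The only delicate point is justifying that \(\partial_tU\) coincides with the distributional \(t\)-derivative controlled by the Sobolev norm. In case \ref{Item::Proof::Completion::APrioriPartialt::Two} this is immediate from Lemma \ref{Lemma::Proof::Completion::DSetAreSmooth}, since \(U\) is smooth up to the boundary on the relevant region. In case \ref{Item::Proof::Completion::APrioriPartialt::One} it follows from Lemma \ref{Lemma::Proof::Completion::HeatFuncsAreSmooth}. Apart from this, the proof only requires checking that the boundary cube and the quotient-norm definition introduce no \((\xi,\delta)\)-dependence, and they do not.
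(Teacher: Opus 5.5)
Your proof is correct and follows the paper's argument: apply Lemma \ref{Lemma::Proof::Completion::APrioriHeat} with \(l=2\kappa-1\), and use that the anisotropic \(\HsSpace{2\kappa}\) norm dominates \(\LtNorm{\partial_t F}\) on \(\Ropn\), and hence on \(\Ropngeq\) by taking the infimum over extensions. You also spell out details the paper leaves implicit: \(\phi_1=1\) on the smaller box, \(|\phi_2|\leq 1\), and the quotient-norm step.
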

\begin{proof}
    Using the definition of \(\HsNorm{\cdot}{s}\) (see \eqref{Eqn::Proof::Subellip::HsNormRopn} and \eqref{Eqn::Proof::Subellip::HsNormRopngeq}),
    we have
    \begin{equation*}
        \LtNorm{\partial_t F}[\Ropn]\leq \HsNorm{F}{2\kappa}[\Ropn],\quad \LtNorm{\partial_t F}[\Ropngeq]\leq \HsNorm{F}{2\kappa}[\Ropngeq].
    \end{equation*}
    Using this, the result follows directly from Lemma \ref{Lemma::Proof::Completion::APrioriHeat}
    with \(l=2\kappa-1\).
\end{proof}

\begin{proof}[Proof of Proposition \ref{Prop::Proof::Completion::Hypoellip}\ref{Item::Proof::Completion::Hypoellip::EstTDeriv}]
    Let \((\xi,\delta)\in \CompactZero\times (0,\delta_0]\); we consider two cases:
    \((\xi,\delta)\in\SSetOne\) and \((\xi,\delta)\in \SSetTwo\).
    The proofs in these two cases are nearly identical, so we prove only the case 
    \((\xi,\delta)\in \SSetTwo\).
    Let \(U(t,x)=u(t,\Psi_{\xi,\delta}(x))\) (where \(u:(-1/2,1/2)\rightarrow \DomainLinfty\) is a \(\left( -\opL,\xi,\delta \right)\)-heat function as in the statement of the proposition).
    Then, by Lemma \ref{Lemma::Proof::Completion::APrioriPartialt}\ref{Item::Proof::Completion::APrioriPartialt::Two},
    \begin{equation*}
        \LtNorm*{\partial_t U}[(-a_1,a_1)\times \Cubengeq{1/4}]
        \leq 
        \LtNorm*{\partial_t U}[(-3/8,3/8)\times \Cubengeq{3/8}]
        \leq C_1 \LtNorm{U}[(-7/16,7/16)\times \Cubengeq{7/16}].
    \end{equation*}
    Thus, using \eqref{Eqn::Proof::Completion::EquivNorms::Two}, we have
    \begin{equation}\label{Eqn::Proof::Completion::Hypoellip::Tmp1}
    \begin{split}
         &\LtNorm*{\partial_t u}[(-a_1,a_1)\times \Psi_{\xi,\delta}(\Cubengeq{1/4})]
         \approx \Vol[\BV{\xi}{\delta}]^{1/2}\LtNorm*{\partial_t U}[(-a_1,a_1)\times (\Cubengeq{1/4})]
         \\&\lesssim \Vol[\BV{\xi}{\delta}]^{1/2}\LtNorm{U}[(-7/16,7/16)\times \Cubengeq{7/16}]
         \\&\leq \Vol[\BV{\xi}{\delta}]^{1/2}\LtNorm{U}[(-1/2,1/2)\times \Cubengeq{1}]
         \approx \LtNorm{u}[(-1/2,1/2)\times \Psi_{\xi,\delta}\left( \Cubengeq{1} \right)].
    \end{split}
    \end{equation}
    Theorem \ref{Thm::Scaling::MainScalingThm}\ref{Item::Scaling::MainScalingThm::Containments::Two}
    shows
    \begin{equation}\label{Eqn::Proof::Completion::Hypoellip::Tmp2}
        \BV{\xi}{a_1\delta}\subseteq \BV{\xi}{\gamma_0\delta}
        \subseteq \Psi_{\xi,\delta}\left( \Cubengeq{1/4} \right),
        \quad \Psi_{\xi,\delta}\left( \Cubengeq{1} \right)\subseteq \BV{\xi}{\delta/2}.
    \end{equation}
    Using \eqref{Eqn::Proof::Completion::Hypoellip::Tmp2}, \eqref{Eqn::Proof::Completion::Hypoellip::Tmp1}
    shows
    \begin{equation*}
        \begin{split}
            &\LtNorm*{\partial_t u}[(-a_1,a_1)\times \BV{\xi}{a_1 \delta}]
            \leq \LtNorm*{\partial_t u}[(-a_1,a_1)\times \Psi_{\xi,\delta}(\Cubengeq{1/4})] 
            \\&\lesssim \LtNorm{u}[(-1/2,1/2)\times \Psi_{\xi,\delta}\left( \Cubengeq{1} \right)]
            \leq \LtNorm{u}[(-1/2,1/2)\times \BV{\xi}{\delta/2}],
        \end{split}
    \end{equation*}
    completing the proof.
\end{proof}

It remains to prove Proposition \ref{Prop::Proof::Completion::Hypoellip}\ref{Item::Proof::Completion::Hypoellip::EstVDeriv}.
We use the following lemma.
\begin{lemma}\label{Lemma::Proof::Completion::APrioriV}
    For every \(\alpha\), there exists \(C_\alpha\geq 1\) such that, for every \((\xi,\delta)\in \CompactZero\times (0,\delta_0]\) and every \(\left( -\opL, \xi, \delta \right)\)-heat function \(u:(-1/2,1/2)\rightarrow \DomainLinfty\), the following statements hold.
    \begin{enumerate}[(i)]
        \item\label{Item::Proof::Completion::APrioriV::One}  
        If \((\xi,\delta)\in \SSetOne\), set \(U(t,x):=u(t,\Phi_{\xi,\delta}(x))\). Then,
            \begin{equation*}
                \sup_{\substack{t\in (-3/8,3/8) \\ y\in \Cuben{3/8}}}
                \left| \left( V^{\xi,\delta,1}\right)^{\alpha} U(t,y)  \right|
                \leq C_\alpha \LtNorm{U}[(-7/16,7/16)\times \Cuben{7/16}].
            \end{equation*}
        \item\label{Item::Proof::Completion::APrioriV::Two} 
        If \((\xi,\delta)\in \SSetTwo\), set \(U(t,x):=u(t,\Psi_{\xi,\delta}(x))\). Then,
            \begin{equation*}
                \sup_{\substack{t\in (-3/8,3/8) \\ y\in \Cubengeq{3/8}}}
                \left| \left( V^{\xi,\delta,2}\right)^{\alpha} U(t,y)  \right|
                \leq C_\alpha \LtNorm{U}[(-7/16,7/16)\times \Cubengeq{7/16}].
            \end{equation*}
    \end{enumerate}
\end{lemma}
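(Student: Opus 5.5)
The plan is to derive the estimate from Lemma \ref{Lemma::Proof::Completion::APrioriHeat}, combined with the uniform smoothness of the rescaled vector fields and the Sobolev embedding theorem. I treat case \ref{Item::Proof::Completion::APrioriV::Two}; case \ref{Item::Proof::Completion::APrioriV::One} is identical with \(\Ropn\) in place of \(\Ropngeq\).

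Fix \(\phi_1,\phi_2\) as in Lemma \ref{Lemma::Proof::Completion::APrioriHeat}. Since \(\phi_1=1\) near \([-3/8,3/8]\times\overline{\Cuben{3/8}}\), on \((-3/8,3/8)\times\Cubengeq{3/8}\) we have \((V^{\xi,\delta,2})^\alpha U=(V^{\xi,\delta,2})^\alpha(\phi_1U)\).

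By Theorem \ref{Thm::Scaling::MainScalingThm}\ref{Item::Scaling::MainScalingThm::UniformlyHormander}, the coefficients of \(V_j^{\xi,\delta,2}\) lie in a bounded subset of \(\CbinftySpace[\Cubengeq{1}][\Rn]\). Expanding \((V^{\xi,\delta,2})^\alpha\) in coordinates then gives
\begin{equation*}
\sup\left|(V^{\xi,\delta,2})^\alpha(\phi_1 U)\right|\le C_\alpha'\,\CbjNorm{\phi_1U}{|\alpha|}[\R\times\Cubengeq{1}],
\end{equation*}
with \(C_\alpha'\) independent of \((\xi,\delta)\).

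Next I use the definition \eqref{Eqn::Proof::Subellip::HsNormRopngeq} of \(\HsSpace{s}[\Ropngeq]\) as a restriction space. For \(s\) large, any extension \(F\in\HsSpace{s}[\Ropn]\) of \(\phi_1U\) has \(\CbjNorm{F}{L}\lesssim\HsNorm{F}{s}[\Ropn]\). This follows from Sobolev embedding, because the non-isotropic weight dominates \((1+|t^*|+|x^*|)^{s/2\kappa}\) up to constants. Taking the infimum over extensions gives \(\CbjNorm{\phi_1U}{L}\lesssim\HsNorm{\phi_1U}{s}[\Ropngeq]\).

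I then choose \(l=l(\alpha)\ge2\kappa-1\) with \(l+1\ge s\) large enough for \(L=|\alpha|\). Lemma \ref{Lemma::Proof::Completion::APrioriHeat}\ref{Item::Proof::Completion::APrioriHeat::Two} gives \(\HsNorm{\phi_1U}{l+1}[\Ropngeq]\le C_l\LtNorm{\phi_2U}[\Ropngeq]\). Since \(\phi_2\) is supported in \((-7/16,7/16)\times\Cuben{7/16}\) and \(|\phi_2|\lesssim1\), this is \(\lesssim\LtNorm{U}[(-7/16,7/16)\times\Cubengeq{7/16}]\). Chaining these bounds gives the claim with \(C_\alpha\) independent of \((\xi,\delta)\).

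The main point needing care is this uniformity: every constant must depend only on \(\alpha\), \(\phi_1\), \(\phi_2\). This holds because Lemma \ref{Lemma::Proof::Completion::APrioriHeat} is already uniform in \((\xi,\delta)\), the coefficient bounds are uniform, and the embedding constants do not involve \((\xi,\delta)\).

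A secondary point is to justify applying pointwise derivatives to \(U\). This is legitimate because \(U\) is smooth on \((-1/2,1/2)\times\Cubengeq{1}\), by Lemma \ref{Lemma::Proof::Completion::HeatFuncsAreSmooth} together with \(\Psi_{\xi,\delta}(\Cubengeq{1})\subseteq\BV{\xi}{\delta}\).
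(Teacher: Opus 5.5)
Your proposal is correct and follows essentially the same route as the paper. The paper also uses the uniform $C_b^\infty$ bounds on the rescaled vector fields to reduce $(V^{\xi,\delta,k})^{\alpha}U$ to coordinate derivatives, bounds those by $\|\phi_1 U\|_{H^L}$ via Sobolev embedding, and then applies Lemma~\ref{Lemma::Proof::Completion::APrioriHeat} with $l=l(\alpha)$ to obtain a constant uniform in $(\xi,\delta)$. Your comparison of the non-isotropic weight with an isotropic one, and the infimum over extensions for the half-space norm, just spell out the Sobolev embedding step that the paper cites directly.
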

\begin{proof}
    By Theorem \ref{Thm::Scaling::MainScalingThm}\ref{Item::Scaling::MainScalingThm::UniformlyHormander},
    the vector fields \(V_0^{\xi,\delta,k},\ldots, V_r^{\xi,\delta,k}\), \(k=1,2\), are smooth with all derivatives bounded
    uniformly in \((\xi,\delta)\). Thus,
    \begin{equation}\label{Eqn::Proof::Completion::APrioriV::Tmp1}
                \left| \left( V^{\xi,\delta,1}\right)^{\alpha} U(t,y)  \right|
        \lesssim 
        \sum_{|\beta|\leq |\alpha|}
                \left| \partial_y^{\beta} U(t,y)  \right|,
    \end{equation}
    \begin{equation}\label{Eqn::Proof::Completion::APrioriV::Tmp2}
                \left| \left( V^{\xi,\delta,2}\right)^{\alpha} U(t,y)  \right|
        \lesssim 
        \sum_{|\beta|\leq |\alpha|}
                \left| \partial_y^{\beta} U(t,y)  \right|.
    \end{equation}
    By the Sobolev embedding theorem, for every \(l\) there exists \(L=L(l)\) such that
    \begin{equation}\label{Eqn::Proof::Completion::APrioriV::Tmp3}
        \sup_{\substack{t\in (-3/8,3/8) \\ y\in \Cuben{3/8}}}
        \sum_{|\beta|\leq l}
                \left| \partial_y^{\beta} U(t,y)  \right|
        \lesssim \HsNorm{\phi_1 U}{L}[\Ropn],
    \end{equation}
    \begin{equation}\label{Eqn::Proof::Completion::APrioriV::Tmp4}
        \sup_{\substack{t\in (-3/8,3/8) \\ y\in \Cubengeq{3/8}}}
        \sum_{|\beta|\leq |\alpha|}
                \left| \partial_y^{\beta} U(t,y)  \right|
            \lesssim \HsNorm{\phi_1 U}{L}[\Ropngeq].
    \end{equation}
    For \((\xi,\delta)\in \SSetTwo\) and \(U\) as in the statement of \ref{Item::Proof::Completion::APrioriV::Two}, combining \eqref{Eqn::Proof::Completion::APrioriV::Tmp2}, \eqref{Eqn::Proof::Completion::APrioriV::Tmp4}, and Lemma \ref{Lemma::Proof::Completion::APrioriHeat}\ref{Item::Proof::Completion::APrioriHeat::Two} shows that, for \(L=L(|\alpha|)\) sufficiently large,
    \begin{equation*}
    \begin{split}
        &\sup_{\substack{t\in (-3/8,3/8) \\ y\in \Cubengeq{3/8}}}
                \left| \left( V^{\xi,\delta,2}\right)^{\alpha} U(t,y)  \right|
        \lesssim 
            \sup_{\substack{t\in (-3/8,3/8) \\ y\in \Cubengeq{3/8}}}
        \sum_{|\beta|\leq |\alpha|}
                \left| \partial_y^{\beta} U(t,y)  \right|
        \\&\lesssim \HsNorm*{\phi_1 U}{L}[\Ropngeq]
        \lesssim \LtNorm*{\phi_2 U}[\Ropngeq]
        \lesssim \LtNorm{U}[(-7/16,7/16)\times \Cubengeq{7/16}],
    \end{split}
    \end{equation*}
    completing the proof of \ref{Item::Proof::Completion::APrioriV::Two}.
    For \ref{Item::Proof::Completion::APrioriV::One}, a similar proof works,
    using  \eqref{Eqn::Proof::Completion::APrioriV::Tmp1}, 
    \eqref{Eqn::Proof::Completion::APrioriV::Tmp3}, and 
    Lemma \ref{Lemma::Proof::Completion::APrioriHeat}\ref{Item::Proof::Completion::APrioriHeat::One}.
\end{proof}

\begin{proof}[Proof of Proposition \ref{Prop::Proof::Completion::Hypoellip}\ref{Item::Proof::Completion::Hypoellip::EstVDeriv}]
    We separate the proof into two cases:
    \((\xi,\delta)\in \SSetOne\cap \left( \CompactZero\times (0,\delta_0] \right)\)
    and \((\xi,\delta)\in \SSetTwo\cap \left( \CompactZero\times (0,\delta_0] \right)\).
    The proofs are nearly identical, so we prove only the case \((\xi,\delta)\in \SSetTwo\cap \left( \CompactZero\times (0,\delta_0] \right)\).
    Let \(u:(-1/2,1/2)\rightarrow \DomainLinfty\) be a \(\left( -\opL, \xi,\delta \right)\)-heat function.
    Set \(U(t,x):=u(t,\Psi_{\xi,\delta}(x))\).
    Note that
    \begin{equation}\label{Eqn::Proof::Completion::HypoellipEstVDeriv::Tmp1}
        \left( V^{\xi,\delta,2} \right)^{\alpha}U(t,y)= \left( \left( \delta V \right)^{\alpha} u \right)(t,\Psi_{\xi,\delta}(y)).
    \end{equation}
    Theorem \ref{Thm::Scaling::MainScalingThm}\ref{Item::Scaling::MainScalingThm::Containments::Two}
    shows
    \begin{equation}\label{Eqn::Proof::Completion::HypoellipEstVDeriv::Tmp2}
        \BV{\xi}{a_1\delta}\subseteq \BV{\xi}{\gamma_0\delta}
        \subseteq \Psi_{\xi,\delta}\left( \Cubengeq{1/4} \right),
        \quad \Psi_{\xi,\delta}\left( \Cubengeq{1} \right)\subseteq \BV{\xi}{\delta/2}.
    \end{equation}
    Thus, using \eqref{Eqn::Proof::Completion::HypoellipEstVDeriv::Tmp2},
    \eqref{Eqn::Proof::Completion::HypoellipEstVDeriv::Tmp1},
    Lemma \ref{Lemma::Proof::Completion::APrioriV}\ref{Item::Proof::Completion::APrioriV::Two},
    and \eqref{Eqn::Proof::Completion::EquivNorms::Two}, we have
    \begin{equation*}
    \begin{split}
         &\sup_{\substack{ t\in (-a_1,a_1)\\ \eta\in \BV{\xi}{a_1\delta} }} \left|  \psi \delta^{|\alpha|} V^{\alpha} u(t,\eta) \right|
         \lesssim \sup_{\substack{ t\in (-3/8,3/8)\\ y\in \Cubengeq{1/4} }} \left|  \left( V^{\xi,\delta,2} \right)^{\alpha} U(t,y) \right|
         \leq \sup_{\substack{ t\in (-3/8,3/8)\\ y\in \Cubengeq{3/8} }} \left|  \left( V^{\xi,\delta,2} \right)^{\alpha} U(t,y) \right|
         \\&\lesssim \LtNorm{U}[(-7/16,7/16)\times \Cubengeq{7/16}]
         \leq \LtNorm{U}[(-1/2,1/2)\times \Cubengeq{1}]
         \\&\approx \Vol[\BV{\xi}{\delta}]^{-1/2}\LtNorm{u}[(-1/2,1/2)\times \Psi_{\xi,\delta}\left( \Cubengeq{1} \right)]
         \leq \Vol[\BV{\xi}{\delta}]^{-1/2}\LtNorm{u}[(-1/2,1/2)\times \BV{\xi}{\delta/2}].
    \end{split}
    \end{equation*}
    By taking \(\epsilon_{\alpha}'>0\) sufficiently small, the result follows.
\end{proof}

With Proposition \ref{Prop::Proof::Completion::Hypoellip} proved, we are now prepared to show
\cite[Assumptions \ref*{Heat::Assumption::HypoellipticityI} and \ref*{Heat::Assumption::HypoellipticityII}]{StreetHypoellipticityAndHigherOrderGaussianBounds}
hold.
Let \(\epsilon_{\alpha}'>0\) be as in Proposition \ref{Prop::Proof::Completion::Hypoellip}.
Because our assumptions are symmetric in \(\opL\) and \(\opL^{*}\)
(see Remark \ref{Rmk::Result::SummetricInLAndLStar}), Proposition \ref{Prop::Proof::Completion::Hypoellip}
also holds with \(\opL^{*}\) in place of \(\opL\). Let \(\epsilon_\alpha''>0\) denote the constant \(\epsilon_{\alpha}'\)
from Proposition \ref{Prop::Proof::Completion::Hypoellip} with \(\opL^{*}\) in place of \(\opL\).
Set \(\epsilon_\alpha:=\min\{\epsilon_{\alpha}',\epsilon_\alpha''\}>0\).
Recall that we will apply \cite[Theorem \ref*{Heat::Thm::Results::MainThm}]{StreetHypoellipticityAndHigherOrderGaussianBounds}
with \(X=\psi V^{\alpha}\), \(Y=\psi V^{\beta}\), \(S_X(\delta)=\epsilon_\alpha \delta^{|\alpha|}\), and
\(S_Y(\delta)=\epsilon_\beta \delta^{|\beta|}\) (for each \(\alpha\) and \(\beta\)).

\begin{lemma}\label{Lemma::Proof::Completion::Assumptions4And6Hold}
    \cite[Assumptions \ref*{Heat::Assumption::HypoellipticityI} and \ref*{Heat::Assumption::HypoellipticityII}]{StreetHypoellipticityAndHigherOrderGaussianBounds} hold for all \(\alpha,\beta\), with \(a_1=a_2\) and \(a_1>0\) as in Proposition \ref{Prop::Proof::Completion::Hypoellip}. In fact,
    \(\exists C_1\geq 0\), \(\forall \alpha\), \(\forall \xi_0\in \Compact\), \(\forall \xi\in \BV{\xi_0}{\delta_0}\),
    \(\forall \delta\in (0,\delta_0)\), \(\forall u:(-1/2,1/2)\rightarrow \DomainLinfty\) a
    \(\left( -\opL, \xi,\delta \right)\)-heat function,
    the following hold.
    \begin{enumerate}[(i)]
        \item\label{Item::Proof::Completion::Assumption4::A} \(\psi V^{\alpha}u\big|_{(-1/2,1/2)\times \BV{\xi}{\delta}}\in \CinftySpace*[(-1/2,1/2)\times \BV{\xi}{\delta}][\C^M]\).
        \item\label{Item::Proof::Completion::Assumption4::B} \begin{equation*}
            \sup_{\substack{ t\in (-a_1,a_1)\\ \eta\in \BV{\xi}{a_1\delta} }} \left| \epsilon_{\alpha} \psi \delta^{|\alpha|} V^{\alpha} u(t,\eta) \right|
            \leq \Vol*[\BV{\xi}{\delta}]^{-1/2} \LtNorm{u}[(-1/2,1/2)\times \BV{\xi}{\delta/2}].
        \end{equation*}
        \item\label{Item::Proof::Completion::Assumption4::C} \(\partial_t \psi V^{\alpha}u(t,\xi)\big|_{(-1/2,1/2)\times \BV{\xi}{\delta}}=-\psi V^{\alpha} \delta^{2\kappa}\opL u(t,\xi)\big|_{(-1/2,1/2)\times \BV{\xi}{\delta}}\).
        \item\label{Item::Proof::Completion::Assumption6} \(\LtNorm*{\partial_t u}[(-a_1,a_1)\times \BV{\xi}{a_1\delta}] \leq C_1 \LtNorm*{u}[(-1/2,1/2)\times \BV{\xi}{\delta/2}]\).
    \end{enumerate}
    The above also holds with \(\opL\) replaced by \(\opL^{*}\), throughout.
\end{lemma}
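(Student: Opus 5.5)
This lemma should follow almost directly from Proposition \ref{Prop::Proof::Completion::Hypoellip}, applied to both \(\opL\) and \(\opL^{*}\). The plan is to reduce the quantifiers in the lemma to those of that proposition, and then adjust the constant \(\epsilon_\alpha\).

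\emph{Reduction of the parameters.} Let \(\xi_0\in\Compact\), \(\xi\in\BV{\xi_0}{\delta_0}\) and \(\delta\in(0,\delta_0)\). By the definition of \(\CompactZero\) in Lemma \ref{Lemma::Proof::Completion::ExistsMetricAndDefineDelta0}\ref{Item::Proof::Completion::ExistsMetricAndDefineDelta0::CompactZeroCompact}, we have \(\xi\in\CompactZero\). Hence \((\xi,\delta)\in\CompactZero\times(0,\delta_0]\), which is exactly the parameter range of Proposition \ref{Prop::Proof::Completion::Hypoellip}.

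\emph{Items \ref{Item::Proof::Completion::Assumption4::A}, \ref{Item::Proof::Completion::Assumption4::C}, \ref{Item::Proof::Completion::Assumption6}.} These are verbatim copies of Proposition \ref{Prop::Proof::Completion::Hypoellip}\ref{Item::Proof::Completion::Hypoellip::HeatFuncSmooth}, \ref{Item::Proof::Completion::Hypoellip::CommuteDerivs} and \ref{Item::Proof::Completion::Hypoellip::EstTDeriv}, with the same \(a_1\) and \(C_1\). In particular \(C_1\) does not depend on \(\alpha\), as required by the order of quantifiers.

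\emph{Item \ref{Item::Proof::Completion::Assumption4::B}.} Since \(\epsilon_\alpha\le\epsilon_\alpha'\), the left-hand side with \(\epsilon_\alpha\) is at most the one with \(\epsilon_\alpha'\). That in turn is bounded by Proposition \ref{Prop::Proof::Completion::Hypoellip}\ref{Item::Proof::Completion::Hypoellip::EstVDeriv}.

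\emph{The adjoint case.} Because the assumptions are symmetric in \(\opL\) and \(\opL^{*}\) (Remark \ref{Rmk::Result::SummetricInLAndLStar}), Proposition \ref{Prop::Proof::Completion::Hypoellip} also holds for \(\opL^{*}\), with constants \(\epsilon_\alpha''\) and some \(C_1'\). I will replace \(C_1\) by \(\max\{C_1,C_1'\}\) and use \(\epsilon_\alpha\le\epsilon_\alpha''\); the same argument then goes through for \(\opL^{*}\).

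Finally, the heat functions in \cite{StreetHypoellipticityAndHigherOrderGaussianBounds} are interpreted via Definition \ref{Defn::Proof::Completion::HeatFunction}, which Proposition \ref{Prop::Proof::Completion::ChangeHeatDefn} allows. I expect no step to be a real obstacle. The only care needed is to match the cited assumptions' quantifiers (ranges of \(\xi\) and \(\delta\), and \(a_1=a_2\)) with those proven here; the reduction of the parameters above does exactly this.
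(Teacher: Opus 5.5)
Your proposal is correct and follows the paper's own proof. You reduce $(\xi,\delta)$ to $\CompactZero\times(0,\delta_0]$, read off (i)--(iv) from Proposition \ref{Prop::Proof::Completion::Hypoellip} using $\epsilon_\alpha\le\epsilon_\alpha'$, and invoke the $\opL\leftrightarrow\opL^{*}$ symmetry. The paper makes one further detail explicit: the second hypoellipticity assumption in \cite{StreetHypoellipticityAndHigherOrderGaussianBounds} is stated for heat functions on $(-1,1)$, so one first restricts such a heat function to $(-1/2,1/2)$ (which gives a heat function again) and then applies (iv).
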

\begin{proof}
    If \((\xi,\delta)\) is as in the statement of the lemma, then \((\xi,\delta)\in \CompactZero\times (0,\delta_0]\)
    (see Lemma \ref{Lemma::Proof::Completion::ExistsMetricAndDefineDelta0}\ref{Item::Proof::Completion::ExistsMetricAndDefineDelta0::CompactZeroCompact}).
    Using \(\epsilon_{\alpha}\leq \epsilon_\alpha'\), the statements \ref{Item::Proof::Completion::Assumption4::A}, \ref{Item::Proof::Completion::Assumption4::B}, \ref{Item::Proof::Completion::Assumption4::C}, and \ref{Item::Proof::Completion::Assumption6} for \(\opL\) now follow directly from Proposition \ref{Prop::Proof::Completion::Hypoellip}.
    As described above, Proposition \ref{Prop::Proof::Completion::Hypoellip} also holds with \(\opL\) replaced by \(\opL^{*}\), so these statements hold for \(\opL^{*}\) as well.

    \cite[Assumption \ref*{Heat::Assumption::HypoellipticityI}]{StreetHypoellipticityAndHigherOrderGaussianBounds}
    follows immediately from \ref{Item::Proof::Completion::Assumption4::A},
    \ref{Item::Proof::Completion::Assumption4::B}, and \ref{Item::Proof::Completion::Assumption4::C}.

    In \cite[Assumption \ref*{Heat::Assumption::HypoellipticityII}]{StreetHypoellipticityAndHigherOrderGaussianBounds}, heat functions \(u:(-1,1)\rightarrow \DomainLinfty\) are used. Restricting them to \((-1/2,1/2)\) yields heat functions \(u:(-1/2,1/2)\rightarrow \DomainLinfty\), so \cite[Assumption \ref*{Heat::Assumption::HypoellipticityII}]{StreetHypoellipticityAndHigherOrderGaussianBounds} follows from \ref{Item::Proof::Completion::Assumption6}.
\end{proof}

\begin{lemma}\label{Lemma::Proof::Completion::Assumption5Holds}
    \cite[Assumption \ref*{Heat::Assumption::DoublingOfSXandSY}]{StreetHypoellipticityAndHigherOrderGaussianBounds}
    holds. Namely, for each \(\alpha\), if \(S_\alpha(\delta):=\epsilon_{\alpha}\delta^{|\alpha|}\), then
    \(\exists D_\alpha\geq 0\), \(\forall \delta>0\), \(S_\alpha(2\delta)\leq D_\alpha S_\alpha(\delta)\).
\end{lemma}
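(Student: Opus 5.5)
The plan is to compute directly with the explicit power-law form of \(S_\alpha\). For \(\delta>0\) we have
\begin{equation*}
    S_\alpha(2\delta)=\epsilon_\alpha (2\delta)^{|\alpha|}=2^{|\alpha|}\epsilon_\alpha\delta^{|\alpha|}=2^{|\alpha|}S_\alpha(\delta).
\end{equation*}
We therefore set \(D_\alpha:=2^{|\alpha|}\geq 1\) and the inequality \(S_\alpha(2\delta)\leq D_\alpha S_\alpha(\delta)\) holds, in fact with equality, for every \(\delta>0\).

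A few points need checking against the precise formulation of \cite[Assumption \ref*{Heat::Assumption::DoublingOfSXandSY}]{StreetHypoellipticityAndHigherOrderGaussianBounds}.

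\begin{itemize}
    \item That assumption may only require doubling for \(\delta\) in a bounded range such as \((0,\delta_0]\). The identity above holds for all \(\delta>0\), so it covers any such range.
    \item That assumption may also require \(S_X\) and \(S_Y\) to be positive or nondecreasing. Both properties are immediate from \(\epsilon_\alpha>0\) and \(\delta\mapsto\delta^{|\alpha|}\) being nondecreasing. When \(|\alpha|=0\), \(S_\alpha\) is the constant \(\epsilon_\alpha\) and \(D_\alpha=1\).
    \item The constant \(D_\alpha\) depends on \(\alpha\) only through \(|\alpha|\). It does not depend on \(\epsilon_\alpha\), on \((\xi,\delta)\), or on \(\Compact\). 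This matters for the later claim that the admissible constant \(c_\Compact\) does not depend on \(\alpha\) or \(\beta\), since doubling constants of \(S_X,S_Y\) there should enter only the \(C_{\alpha,\beta,j,\Compact}\) constants.
\end{itemize}

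The same argument with \(\beta\) in place of \(\alpha\) handles \(S_Y\).
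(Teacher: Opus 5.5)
Your proof is correct and matches the paper's, which likewise notes that the doubling condition is immediate with \(D_\alpha=2^{|\alpha|}\), since \(S_\alpha(2\delta)=2^{|\alpha|}S_\alpha(\delta)\). Your side remarks are consistent with how the paper uses this assumption: \(D_\alpha\) is positive and independent of \((\xi,\delta)\), and dependence on \(\alpha,\beta\) enters only the \(j\)-admissible constants.
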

\begin{proof}
    This is immediate with \(D_\alpha=2^{|\alpha|}\).
\end{proof}

We complete the proof of Theorem \ref{Thm::Results:MainResult} by proving Proposition \ref{Prop::Proof::Completion::ReduceToV}.

\begin{proof}[Proof of Proposition \ref{Prop::Proof::Completion::ReduceToV}]
    We have shown above that all the hypotheses of \cite[Theorem \ref*{Heat::Thm::Results::MainThm}]{StreetHypoellipticityAndHigherOrderGaussianBounds}
    hold (see also Proposition \ref{Prop::Proof::Completion::ChangeHeatDefn}). Recall that we are applying that theorem for each \(\alpha\) and \(\beta\)
    with \(p_0=2\), \(\NSubsetx=\NSubsety=\Compact\), 
    \(\mu=\Vol\),
    \(\omega_0=0\), \(X=\psi V^\alpha\), \(Y=\psi V^\beta\),
    \(S_X(\delta)=\epsilon_\alpha \delta^{|\alpha|}\),  \(S_Y(\delta)=\epsilon_\beta \delta^{|\beta|}\),
    and \(\delta_0=\delta_0\), where the \(\delta_0\) on the left denotes the parameter in that theorem and the \(\delta_0\) on the right denotes the value chosen above.
    Unraveling the definitions,
    admissible constants from \cite[Definition \ref*{Heat::Defn::Results::AdmissibleConsts}]{StreetHypoellipticityAndHigherOrderGaussianBounds}
    depend only on \(\kappa\), \(a_1\), \(C_1\) from Lemma \ref{Lemma::Proof::Completion::Assumptions4And6Hold}\ref{Item::Proof::Completion::Assumption6},
    and a constant \(D_1\geq 1\) such that \(\Vol[\BV{\xi}{2\delta}]\leq D_1 \Vol[\BV{\xi}{\delta}]\), \(\forall(\xi,\delta)\in \CompactZero\times (0,\delta_0]\)
    (see Theorem \ref{Thm::Scaling::MainScalingThm}\ref{Item::Scaling::MainScalingThm::Doubling}).
    In particular, admissible constants \emph{do not depend on \(\alpha\) or \(\beta\)}.
    \(j\)-admissible constants in \cite[Definition \ref*{Heat::Defn::Results::AdmissibleConsts}]{StreetHypoellipticityAndHigherOrderGaussianBounds}
    can depend on all of the above, as well as \(j\), \(\alpha\), and \(\beta\).

        \cite[Theorem \ref*{Heat::Thm::Results::MainThm}]{StreetHypoellipticityAndHigherOrderGaussianBounds}
        and Proposition \ref{Prop::Proof::Completion::ChangeHeatDefn}
    show
    there exists an admissible constant \(c>0\) such that, for every \(\alpha,\beta,j\), there exists a \(j\)-admissible constant
    \(C_{\alpha,\beta,j}\geq 0\), \(\forall \xi,\eta\in \Compact\),
    \eqref{Eqn::Proof::Completion::ReduceToV::GaussianBoundsForKt} holds with \(\delta_\Compact=\delta_0\)
    (here we have used \(\psi=1\) on a neighborhood of \(\Compact\)).
    The result follows by taking \(\delta_\Compact=\delta_0\) and \(c_\Compact=c>0\).
    As noted above, since \(c>0\) is an admissible constant, it does not depend on \(\alpha\), \(\beta\), or \(j\).
\end{proof}